\newcommand{\mybox}{%
    \collectbox{%
        \setlength{\fboxsep}{1pt}%
        \fbox{\BOXCONTENT}%
    }%
}
\numberwithin{equation}{section}
\newtheorem{thm}{Theorem}[section]
\newtheorem{prop}[thm]{Proposition}
\newtheorem{lem}[thm]{Lemma}
\newtheorem{cor}[thm]{Corollary}
\theoremstyle{definition} 
\newtheorem{dfn}[thm]{Definition}
\theoremstyle{remark}
\newtheorem{rem}[thm]{Remark}
\newcommand{\beq}{\begin{equation}}
\newcommand{\eeq}{\end{equation}}
\newcommand{\be}{\begin{equation*}}
\newcommand{\ee}{\end{equation*}}
\newcommand{\bC}{\mathbb{C}}
\newcommand{\bZ}{\mathbb{Z}}
\newcommand{\bK}{\mathbb{K}}
\newcommand{\bS}{\mathbb{S}}
\newcommand{\bB}{\mathbb{B}}
\newcommand{\bN}{\mathbb{N}}
\newcommand{\mc}{\mathcal}
\newcommand{\cR}{\mathcal{R}}
	\newenvironment{customthm}[1]
	{\innercustomthm}
	{\endinnercustomthm}
\newcommand{\g}{\mathfrak{g}}
\newcommand{\gl}{\mathfrak{gl}}
\newcommand{\fkS}{\mathfrak{S}}
\newcommand{\rY}{\mathrm{Y}}
\newcommand{\End}{\mathrm{End}}
\newcommand{\id}{{\mathrm{id}}}   
\newcommand{\gr}{{\mathrm{gr}}}
\newcommand{\tl}{\tilde}
\newcommand{\wtl}{\widetilde}
\newcommand{\gge}{\geqslant}
\newcommand{\lle}{\leqslant}
\newcommand{\wt}{\widehat}    
\newcommand{\ka}{\kappa}
\newcommand{\Sym}{{\mathrm{Sym}}}
\newcommand{\Y}{{\mathscr{Y}}}
\newcommand{\scrX}{{\mathscr{X}}}
\newcommand{\SY}{{\mathscr{SY}}}
\newcommand{\rU}{{\mathrm{U}}}
\newcommand{\Yi}{{\mathbf{Y}^\imath}}
\newcommand{\bv}{{\bm{v}}}
\newcommand{\qbinom}[2]{\begin{bmatrix} #1\\#2 \end{bmatrix} }
\newcommand{\arxiv}[1]{\href{http://arxiv.org/abs/#1}{\tt arXiv:\nolinkurl{#1}}}
\def \bTH { \boldsymbol{\Theta}}
\def \bDel{ \boldsymbol{\Delta}}
\def \bB{\mathbf{B}}
\def \bh{\mathbf{h}}
\def \bb{\mathbf{b}}
\def \h{\hbar}
\def \H{h}
\def \X{b}
\def \Gr{\mathrm{Gr}}
\def \TH{\Theta}
\def \I{\mathbb{I}}
\def \N{\mathbb{N}}
\def \TH{\Theta}
\def \Sym{\mathrm{Sym}}
\def \U{\mathbf U}
\def \Ui{\mathbf{U}^\imath}
\def \tUi{\mathbf{U}^\imath}
\def \Q{\mathbb{Q}}
\def \K{\mathbb{K}}
\def \Z{\mathbb{Z}}
\def \bK{\mathbf{K}}
\def \C{\mathbb{C}}
\def \ov{\overline}
\def \a{\mathfrak{a}}
\def \A{\mathcal{A}}
\def \hg{\mathfrak{g}[t,t^{-1}]}
\begin{document}
\pagestyle{myheadings}
\setcounter{page}{1}

\title[Twisted Yangians of quasi-split type]{A Drinfeld type presentation of twisted Yangians\\
of quasi-split type}

\author{Kang Lu}
\address{Department of Mathematics, University of Virginia, 
Charlottesville, VA 22903, USA}
\email{kang.lu@virginia.edu}

\author{Weinan Zhang}
\address{Department of Mathematics and New Cornerstone
Science Laboratory, The University of Hong Kong, Hong Kong SAR, P.R.China}
\email{mathzwn@hku.hk}

\subjclass[2020]{Primary 17B37.}
\keywords{Drinfeld presentation, twisted Yangians, $\imath$quantum groups, Gauss decomposition}
		
\begin{abstract}
We formulate a family of algebras, twisted Yangians (of simply-laced quasi-split type) in Drinfeld type current generators and defining relations. These new algebras admit PBW type bases and are shown to be a deformation of twisted current algebras. For all quasi-split types excluding the even rank case in type AIII, we show that the twisted Yangians can be realized via a degeneration on the Drinfeld type presentation of affine $\imath$quantum groups. For both even and odd rank cases in type AIII, we use the Gauss decomposition method to show that these new algebras are isomorphic to Molev-Ragoucy's reflection algebras defined in the R-matrix presentation. 
\end{abstract}
	
\maketitle
\setcounter{tocdepth}{1}
\tableofcontents

\thispagestyle{empty}
\section{Introduction}
\subsection{Background}
Yangians first emerged (for $\gl_N$ in R-matrix form) in the work of Faddeev and the St. Petersburg school in the late 1970s and early 1980s concerning the quantum inverse scattering method. Later, Drinfeld introduced in \cite{Dr88} a remarkable new current presentation for Yangians. Two fundamental methods have been widely used in the study of the Drinfeld presentation for Yangians: the first method indicates that Yangians can be obtained from affine quantum groups through a suitable degeneration \cite{Dr87,GM12,GTL13}; the second one is the Gauss decomposition approach, which is used to establish an explicit isomorphism between the Drinfeld presentation and the R-matrix presentation \cite{BK05,JLM18,FT23}. 

The twisted Yangians, associated with symmetric pairs, are coideal subalgebras of Yangians. Inspired by \cite{Ch84,Sk88}, they were introduced in the R-matrix presentation by Olshanski \cite{Ol92} for type AI and AII, by Molev-Ragoucy \cite{MR02} for type AIII, and by Guay-Regelskis \cite{GR16} for other classical types. Twisted Yangians and their representation theory have been well studied in the R-matrix presentation; cf. \cite{MNO96,Na04,Mol07,GRW17}. Beyond the classical types, the R-matrix presentation of twisted Yangians remains unknown; these algebras are currently only available in their Drinfeld J-presentation form \cite{Ma02, BR17}, which applies to general symmetric pairs.

In contrast, the Drinfeld type presentation for twisted Yangians was unknown for a long time, until our recent work joint with Weiqiang Wang \cite{KLWZ23a,KLWZ23b}, where we formulated a  Drinfeld type presentation for twisted Yangians of arbitrary split type. Here, split type means that the involution for the underlying symmetric pair is conjugated to the Chevalley involution $\varpi$. For split type A (i.e., type AI), we established in \cite{KLWZ23a} an isomorphism between the twisted Yangian in Drinfeld type presentation and in R-matrix type presentation, using the Gauss decomposition approach. 

Affine $\imath$quantum groups $\Ui$ form another distinguished class of quantum algebras associated to symmetric pairs. They are coideal subalgebras of affine quantum groups, arising from the theory of quantum symmetric pairs of Kac-Moody type \cite{Ko14}. The Drinfeld type presentation for $\Ui$ was first constructed for split ADE types in \cite{LW21}, and generalized to split BCFG types in \cite{Z22}, and further generalized for a class of quasi-split types in \cite{LWZ23a,LWZ23b}. In \cite{KLWZ23b}, we showed that the Drinfeld type presentation for twisted Yangian of split types can be obtained from the Drinfeld type presentation of $\Ui$ via a degeneration approach. 

\subsection{Goal}

Let $\a$ be a Kac-Moody algebra. A symmetric pair $(\a,\a^\omega)$ (of Kac-Moody type) is of quasi-split type  if the involution $\omega$ has the form $\omega=\varpi\circ \tau$, where $\tau$ is an involution of the underlying Dynkin diagram. (In particular, $\omega$ is of split type if in addition $\tau=\id$.) Given an involution $\omega_0$ on a finite type Lie algebra $\g$, one can lift it to an involution $\omega$ on the untwisted affine Lie algebra $\widehat{\g}$ by \eqref{def:thetahat}. The quasi-split symmetric pairs of affine type associated with a nontrivial $\tau$ obtained by such lifting are classified as AIII$_{n}$, DII$_{n}$, EII$_{6}$, where ADE corresponds to the type of $\g$ and $n$ represents the rank of $\g$. 

The goal of this paper is to provide a Drinfeld current presentation for twisted Yangians of quasi-split types AIII$_{n}$, DII$_{n}$,  EII$_{6}$, generalizing our previous work with Wang for split type \cite{KLWZ23a,KLWZ23b}. Note that our scope exactly includes all quasi-split affine types with a nontrivial $\tau$ fixing the affine simple node. 

\subsection{Drinfeld type presentation for twisted Yangians}\label{intro-main}

We formulate in Definition~\ref{deftY} the Drinfeld type presentation for twisted Yangians $\Yi$ in terms of current generators $\{\H_{i,r},\X_{i,r},i\in \mathbb{I}^0,r\in \N\}$ and defining current relations \eqref{qsconj0}-\eqref{qsconj10} for type AIII$_{n}$, DII$_{n}$,  EII$_{6}$. Most defining relations \eqref{qsconj0}-\eqref{qsconj8} are natural quasi-split generalizations of their split type counterparts in \cite[(4.1)-(4.6)]{KLWZ23b}. As shown in later sections, these relations can be viewed as the degenerate version of the Drinfeld type defining relations for affine $\imath$quantum groups. 
On the contrary, the Serre relation \eqref{qsconj10} is essentially new, since it only appears when $\tau$ is nontrivial. We show that \eqref{qsconj10} can be obtained from the finite type Serre relation inductively using the other defining relations; see Section~\ref{sec:qsSerre}.

In the next two subsections, we shall elaborate on that our Drinfeld type presentation fits into two frameworks: the degeneration approach and the Gauss decomposition approach. Along the way, we shall establish a PBW type basis for $\Yi$ in terms of the current generators, which indicates that $\Yi$ is a flat deformation of twisted current algebras; see Theorem~\ref{thm:pbw} and Corollary~\ref{cor:PBW2}.

\subsection{Degeneration approach} The degeneration approach has the advantage that it works uniformly for any types provided the Drinfeld type presentation for the corresponding affine $\imath$quantum groups is available; see \cite{GM12,GTL13,KLWZ23b}. 

Drinfeld type presentations for quasi-split affine $\imath$quantum groups $\Ui$ of types AIII$_{2n-1}$, DII$_{n}$,  EII$_{6}$ have been constructed in \cite{LWZ23b} (which were denoted by AIII$_{2n-1}^{(\tau)}$, DI$_{n}^{(\tau)}$,  EI$_{6}^{(\tau)}$ in that paper). We recall this result in Proposition~\ref{thm:qsiDR}. We show that $\Yi$ for these types can be realized as a suitable degenerate version of the Drinfeld type presentation of $\Ui$.
More precisely, generalizing the construction in \cite{CG15,KLWZ23b}, we define a filtration on $\Ui$ in Section~\ref{sec:filtration} and write $\Gr_{\bK}\tUi$ for the associated graded algebra. We have the following result.
\begin{customthm} {\bf A} [Theorem~\ref{thm:main1}]
\label{th:A}
Let $\Yi$ be of type AIII$_{2n-1}$, DII$_{n}$, EII$_{6}$. Then there is an algebra isomorphism
\begin{align*}
\begin{split}
\Phi: &\, \Yi \longrightarrow \Gr_{\bK}\tUi, 
\\
&\, \X_{i,r}  \mapsto \ov{\X}_{i,r,1},\quad
\H_{i,r} \mapsto \ov{\H}_{i,r,1}, 
\end{split}
\end{align*}
for $i\in \I^0,r \in \N$, where $\X_{i,r,1}, \H_{i,r,1} \in \tUi$ are defined in \eqref{def:qsHX} and $\ov{\X}_{i,r,1},\ov{\H}_{i,r,1}$ are their images in $\Gr_{\bK}\tUi$. 
\end{customthm}

In Section~\ref{sec:qsdegen}, we verify that the defining relations of $\Yi$ are satisfied by $\ov{\X}_{i,r,1},\ov{\H}_{i,r,1}$ in $\Gr_{\bK}\tUi$,  showing that the map $\Phi$ is a homomorphism. Following similar arguments in \cite[Proof of Theorem 4.10]{KLWZ23b} (cf. also \cite{GM12}), $\Phi$ is an isomorphism and we establish a PBW basis for $\Yi$ in terms of Drinfeld type generators in Theorem~\ref{thm:pbw}.

Drinfeld type presentations for affine $\imath$quantum groups of type AIII$_{2n},n>1$ are not known so far, while the Drinfeld type presentation for the affine $\imath$quantum group of type AIII$_{2}$ has been established in \cite{LWZ23a}. Nevertheless, due to the complexity of the Serre relations {\em loc. cit.}, it is technically much more difficult to formulate a degenerate version for the Drinfeld type presentation of type AIII$_{2}$ and compare it with our definition of $\Yi$. Hence, we will not include type AIII$_{2n}$ in our degeneration approach. Note that, this case distinguishes with the others in that it contains two connecting nodes in a single $\tau$-orbit and does not have a $\tau$-fixed point except the affine node. 

\subsection{Gauss decomposition approach} 
Fortunately, we have another approach, the Gauss decomposition, which is widely used to establish the isomorphism between Drinfeld and R-matrix presentations for Yangians and affine quantum groups of classical type; see e.g. \cite{BK05,JLM18,FT23}. We shall deal with AIII$_{N-1}$ via the Gauss decomposition approach uniformly for arbitrary $N> 1$ regardless of the parity of $N$.

We start by recalling the twisted Yangian $\Y_{N}$ of type AIII$_{N-1}$ in the R-matrix presentation from \cite{MR02}. Instead of using the diagonal matrix (a solution of reflection equation) in \cite{MR02,KLWZ23a}, we use the anti-diagonal matrix $G$ as the initial matrix to define $\Y_N$; see Section~\ref{subsec:tY} for details. It turns out that, in this new setting, the Yangian of type A (of smaller rank) is naturally a subalgebra of the twisted Yangian $\Y_{N}$. The special twisted Yangian $\SY_{N}$ is naturally a subalgebra of $\Y_{N}$. Following the general Gauss decomposition approach (cf. \cite{BK05,KLWZ23a}), we are able to establish an isomorphism between twisted Yangians in the Drinfeld type presentation and in R-matrix presentation. 

\begin{customthm} {\bf B} [Theorem~\ref{main2}]
\label{th:B}
Let $N> 1$ be arbitrary and $\Yi$ be of type AIII$_{N-1}$. Then there is an algebra isomorphism,
\beq
\begin{split}
\Phi: &\,\Yi\to \SY_{N},\\
&\, h_{i,r}\mapsto h_{i,r},\quad b_{i,r}\mapsto b_{i,r},
\end{split}
\eeq
for $i\in \I^0$, $r\in\bN$. Here the elements $h_{i,r}, b_{i,r}$ in $\SY_{N}$ are defined by \eqref{beven}-\eqref{hodd}.
\end{customthm}
In particular, for the case AIII$_{2n-1}$, $\Yi$ defined via the Drinfeld type presentation fits into both degeneration picture and the Gauss decomposition approach.

In the quasi-split AIII case, our construction presents several distinctions from our previous work with Wang \cite{KLWZ23a}. Firstly, unlike the AI case, there are no natural embeddings known from twisted Yangians of lower rank into twisted Yangians of higher rank in the AIII case. We overcome this obstacle by constructing the shift homomorphisms $\psi_{m}$ in Proposition~\ref{prop:red}, following a similar approach in \cite{JLM18}. It is noteworthy that our shift homomorphisms only exist from $\Y_{M}$ to $\Y_N$ when $M,N$ have the same parity. This parity condition did not show up in the previous work \cite{KLWZ23a} for the AI case. Secondly, when $N$ is odd, the parameter shifts in our  definition \eqref{bodd}--\eqref{hodd} of $h_{i,r}$ involve a multiple of $\frac{1}{4}$, and this is novel compared to those in \cite{BK05,JLM18,FT23,KLWZ23a} or other literature. In addition, an extra factor $1\pm\frac{1}{4u}$ is needed for the Cartan currents \eqref{hodd} corresponding to the middle two nodes. This modification is crucial for making the relations simple and deriving the new Serre relations \eqref{qsconj10}.

In Section~\ref{sec:lowrk}, we verify that the elements $h_{i,r}, b_{i,r}$ in $\SY_{N}$ satisfy the defining relations of $\Yi$ for the low rank cases $N=2,3,4,5$. The general rank case follows by applying the shift homomorphism $\psi_{m}$. This implies that $\Phi$ is an algebra homomorphism, and we show that it is an isomorphism by establishing and using a PBW basis of $\SY_N$; see the proof of Theorem~\ref{main2}.

Meanwhile, we describe in Theorem~\ref{thm:center} the center of twisted Yangian $\Y_N$ in terms of Gaussian generators. It would be interesting to study the relation between the Sklyanin determinant \cite[Thm. 3.4]{MR02} and Gaussian generators, cf. \cite[Thm. 2.12.1]{Mol07}.


We expect that by carefully choosing the matrix form (K-matrix), the trigonometric counterpart of Theorem \ref{th:B} can be established via a Gauss decomposition of the corresponding twisted quantum loop algebra \cite{CGM14}. This may provide a Drinfeld type presentation for the affine $\imath$quantum groups of type AIII$_{n}$, including the type AIII$_{2n}$ which remains open for $n>1$, cf. \cite{Lu24} for the case of split type A.

\subsection{Application}
We expect the following applications for the Drinfeld type presentation of twisted Yangians.

Yangians in the Drinfeld presentation for simply-laced type have been realized geometrically in \cite{Va00} using the equivariant Borel-Moore homology. The Drinfeld type presentation of twisted Yangians for quasi-split type AIII will be crucial for their geometric realization via the equivariant Borel-Moore homology of Steinberg varieties of classical type; see \cite{SW24} for a geometric realization of affine $\imath$quantum groups. This can be further exploited to investigate the geometric Schur-Weyl duality between twisted Yangians and degenerate affine Hecke algebras of type B/C; cf. \cite{CGM14}.


Yet in another direction, the Drinfeld type presentation allows one to define the $q$-character of twisted Yangians (cf. \cite{Kni:1995,FR99}), which should be an important tool to study finite-dimensional representations and tensor products (with a Yangian module) for twisted Yangians.

\subsection{Organization}
The paper is organized as follows. In the preliminary Section \ref{sec:pre}, we review the degeneration from loop algebras to current algebras and investigate their twisted analogs for quasi-split type. 
In Section \ref{sec:tY}, we introduce twisted Yangians of quasi-split ADE type via a Drinfeld type presentation and study some of their basic properties, including the PBW theorem. In Section \ref{sec:degen}, we recall the affine $\imath$quantum groups and establish the isomorphism between twisted Yangians and a certain associated graded algebra of affine $\imath$quantum groups (except for type AIII$_{2n}$). 

Section \ref{sec:R} is devoted to the review of
twisted Yangians and extended twisted Yangians of type AIII$_n$ in the R-matrix presentation. In Section \ref{sec:GD}, we study the shift homomorphisms of extended twisted Yangians from lower rank to higher rank and introduce Drinfeld type current generators arising from the Gauss decomposition for $\Y_N$. 
This section concludes with an isomorphism between the twisted Yangians defined via the Drinfeld type presentation and the ones defined via the R-matrix presentation in Theorem~\ref{main2}, as well as a description for the center of the twisted Yangian $\Y_N$ in terms of Cartan current generators. Finally,  in Section \ref{sec:lowrk}, we verify various relations in the (extended) twisted Yangians for $N=2,3,4,5$, and this completes the proof of Theorem~\ref{main2}.

\vspace{3mm}

\noindent {\bf Acknowledgement.} We are grateful to Weiqiang Wang for valuable discussions, and to Yaolong Shen and Rui Xiong for their comments on an earlier version of this paper. KL is partially supported by Wang's NSF grant DMS--2401351. WZ is partially supported by the New Cornerstone Foundation through the
New Cornerstone Investigator grant, and by Hong Kong RGC grant 14300021, both awarded to Xuhua He.

\section{Twisted loop and current algebras}\label{sec:pre}
Throughout the paper, we use the notation $\bN:=\{0,1,2,\cdots\}$.
\subsection{From loop algebras to current algebras}
Let $\I=\{0,1,\dots,n\}$ and $\I^0=\{1,\dots,n\}$. Let $\wt{\g}$ be an untwisted affine Kac-Moody algebra of type ADE with Cartan matrix $C=(c_{ij})_{i,j\in\I}$. Let $\g$ be the corresponding simple Lie algebra with Cartan matrix $C^0=(c_{ij})_{i,j\in\I^0}$. 

Let $\cR$ be the root system of $\g$, $\{\alpha_i~|~i\in \I^0\}$ a simple system of $\cR$, and $\cR^+$ the corresponding set of positive roots. Recall that $\wt{\g}$ (with the degree operator dropped) has a loop algebra realization that fits into a short exact sequence of Lie algebras,
\beq\label{ghat}
0 \longrightarrow \C c \longrightarrow \wt{\g} \longrightarrow \mathfrak{g}[t,t^{-1}]\longrightarrow 0.
\eeq

The (enveloping algebra of) current algebra $\g[z]$ can be obtained from the (enveloping algebra of) loop algebra $\g[t,t^{-1}]$ via a degeneration as follows; see \cite{KLWZ23a} and cf. \cite{GM12} for a quantum counterpart. Write $g_k:=gt^k$ for any $g\in \g,k\in \Z$. Define $g_{r,k}\in \g[t,t^{-1}]$ for $g\in \g, k\in \Z, r\in\bN$ by
\begin{align}\label{xrk}
g_{r,k}:=\sum_{s=0}^r (-1)^{r-s} \binom{r}{s} g_{s+k}=g(t-1)^r t^k.
\end{align}

For $r\in\bN$, set $\kappa_r$ to be the ideal of $ \rU\big(\g[t,t^{-1}]\big)$ generated by elements $(g_1)_{r_1,k_1} \cdots (g_a)_{r_a,k_a}$ with $g_i\in \g,r_i\in\bN, k_i\in \Z, 1\lle i\lle a, r_1+\ldots + r_a \gge r, a\in\bN$. Then $\rU\big(\g[t,t^{-1}]\big)$ admits a decreasing filtration 
\begin{align}  \label{filtration}
\rU\big(\g[t,t^{-1}]\big) =\kappa_0 \supset \kappa_1 \supset \cdots \supset \kappa_r \supset \cdots, 
\end{align}
and we denote the associated graded algebra by 
\[
\Gr_{\kappa}\g[t,t^{-1}]:=\bigoplus_{r\gge 0} \kappa_r/ \kappa_{r+1}. 
\]
It is clear from the definition that
$g_{r,k}\in \kappa_r\setminus \kappa_{r+1}.$
Write $\ov{g_{r,k}}$ for the image of $g_{r,k}$ in  $\kappa_r / \kappa_{r+1}$. Since $g_{r+1,k}=g_{r,k+1}-g_{r,k}$, we have that 
\begin{align}\label{climit}
\ov{g_{r,k}}=\overline{g_{r,k+1}} \in  \Gr_{\kappa}\g[t,t^{-1}],\qquad  \text{ for }k\in \Z.
\end{align}

\begin{lem}[{\cite[Prop. 2.1]{KLWZ23b}}]\label{prop:iso}
There is an algebra isomorphism $\varrho: \rU(\g[z]) \stackrel{\cong}{\longrightarrow} \Gr_{\kappa}\g[t,t^{-1}]$, sending $g z^r\mapsto \ov{g_{r,1}}$, for $g\in \g,r\in\bN$. 
\end{lem}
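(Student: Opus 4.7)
My plan is to verify that $\varrho$ is a well-defined algebra homomorphism, then prove surjectivity and injectivity separately. For well-definedness, $\rU(\g[z])$ is presented by the brackets $[gz^r, hz^s] = [g,h]z^{r+s}$. A direct computation in $\rU(\g[t,t^{-1}])$ gives
\[
[g_{r,1}, h_{s,1}] = [g,h](t-1)^{r+s}t^2 = [g,h]_{r+s,2} \in \kappa_{r+s}.
\]
Passing to $\Gr_\kappa$ and applying \eqref{climit} yields $\overline{[g,h]_{r+s,2}} = \overline{[g,h]_{r+s,1}}$, which matches $\varrho([g,h]z^{r+s})$; so $\varrho$ is a well-defined algebra homomorphism. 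Surjectivity is immediate: the classes $\overline{g_{r,k}}$ generate $\Gr_\kappa$, and each equals $\overline{g_{r,1}} = \varrho(gz^r)$ by \eqref{climit}.

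For injectivity---the substantive step---I would identify $\kappa_\bullet$ as the enveloping-algebra extension of the $(t-1)$-adic Lie-algebra filtration $F_r := \g\otimes(t-1)^r\C[t,t^{-1}]$. Setting $z := t-1$ so that $\C[t,t^{-1}] = \C[z,(1+z)^{-1}]$, the congruence $(1+z)^{-1}\equiv 1\pmod{z}$ forces each quotient $F_r/F_{r+1}$ to be a single copy of $\g$, generated by $\overline{g_{r,1}}$ (which equals $\overline{g_{r,k}}$ for any $k$). The associated graded Lie algebra is therefore $\Gr_\kappa\g[t,t^{-1}]\cong\g[z]$ via $\overline{g_{r,1}}\leftrightarrow gz^r$, and a PBW-style argument upgrades this to a graded algebra isomorphism $\Gr_\kappa\rU(\g[t,t^{-1}])\cong\rU(\g[z])$, which is the inverse of $\varrho$.

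The main technical obstacle is the PBW upgrade. Concretely, I would fix the $\C$-basis $\{(t-1)^r\}_{r\gge 0}\cup\{t^{-s}\}_{s\gge 1}$ of $\C[t,t^{-1}]$, use it to obtain a PBW basis of $\rU(\g[t,t^{-1}])$ whose filtration degree equals the total power of $(t-1)$ appearing in each monomial, and then check that passing to $\Gr_\kappa$ identifies these classes with the PBW basis of $\rU(\g[z])$. Alternatively, one may exploit the algebra homomorphism $\iota\colon\rU(\g[t,t^{-1}])\to\rU(\g[[u]])$ induced by $t\mapsto 1+u$ (well-defined since $1+u\in\C[[u]]^\times$), whose composition with $\varrho$ identifies with the canonical isomorphism $\rU(\g[z])\cong\rU(\g[u])$ via $z\mapsto u$; this reduces the claim to standard PBW for $\rU(\g[u])$.
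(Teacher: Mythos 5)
The paper itself gives no proof of this lemma --- it is imported from \cite{KLWZ23b} --- so your proposal has to stand on its own; I compare it with the standard argument there and in \cite{GM12}. Your well-definedness computation ($[g_{r,1},h_{s,1}]=[g,h]_{r+s,2}$, then \eqref{climit}) and your surjectivity argument are correct. The gap is in your \emph{primary} route to injectivity. The bookkeeping via the basis $\{(t-1)^r\}_{r\gge 0}\cup\{t^{-s}\}_{s\gge 1}$ does not work as stated: the filtration degree of a PBW monomial is \emph{not} the total power of $(t-1)$ visible in that basis. For instance $g_{1,-1}=g(t-1)t^{-1}=g\cdot 1-g\,t^{-1}$ lies in $\kappa_1$ by definition, yet in your basis it is a combination of two monomials of $(t-1)$-power $0$; relatedly, $\overline{g\,t^{-1}}=\overline{g\,t^{-2}}=\overline{g}$ in $\kappa_0/\kappa_1$ by \eqref{climit}, so the images of your PBW monomials in $\Gr_{\kappa}\g[t,t^{-1}]$ are far from linearly independent and cannot be matched bijectively with the PBW basis of $\rU(\g[z])$. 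More generally, for a decreasing Lie filtration only the surjection from the enveloping algebra of the associated graded Lie algebra onto $\Gr$ of the enveloping algebra is formal; the injectivity --- i.e.\ that $\kappa_{r+1}$ does not swallow nonzero combinations of degree-$r$ monomials --- is exactly the content of the lemma, so invoking "a PBW-style argument" at that point is circular.

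Your alternative route is the correct and standard one. The evaluation $t\mapsto 1+u$ gives an algebra homomorphism $\iota_N:\rU(\g[t,t^{-1}])\to \rU\big(\g\otimes\C[u]/(u^N)\big)$ (or into $\rU(\g[[u]])$) with $\iota_N(g_{r,k})=g\,u^r(1+u)^k$, hence $\iota_N(\kappa_r)$ lands in $u$-degrees $\gge r$. Extracting the $u$-degree-$r$ component of $\iota_N$ on $\kappa_r$ annihilates $\kappa_{r+1}$ and sends $\overline{g_{r,1}}\mapsto g\,u^r$; composed with $\varrho$, each ordered PBW monomial of $\rU(\g[z])$ of total degree $r$ goes to the corresponding PBW monomial of $\rU(\g\otimes\C[u]/(u^N))$, and these are linearly independent once $N$ exceeds the $z$-degrees involved. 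This proves injectivity. So your proposal closes once you drop the first implementation and carry out the second with the two checks just named (that $\iota_N(\kappa_r)$ sits in degree $\gge r$, and the leading-term computation).
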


\subsection{Twisted loop and current algebras}\label{sec:twloop}
Let $\{e_i,f_i,h_i\}_{i\in\I^0}$ be the Chevalley generators of $\g$. Let $\tau$ be a Dynkin diagram (associated with $\g$) involution, i.e. $c_{ij}=c_{\tau i,\tau j}$ and $\tau^2=\mathrm{id}$. We consider the involution 
\[
\omega_0:\g \to \g,\qquad  e_{i}\mapsto -f_{\tau i},~ f_i\mapsto -e_{\tau i},~h_i\mapsto -h_{\tau i},
\]
for $i\in \I^0$. Denote by $\g^{\omega_0}$ the $\omega_0$-fixed point subalgebra of $\g$. Then $(\g,\g^{\omega_0})$ forms a quasi-split symmetric pair (of finite type).

We extend the involution $\omega_0$ to an involution $\omega$ on the loop algebra $\hg$ by the rule:
\beq
\label{def:thetahat}
\omega:\hg\to \hg,\qquad gt^k\mapsto \omega_0(g)t^{-k},\qquad g\in \g,k\in\Z.
\eeq
The involution $\omega$ extends to an involution, which we denote again by $\omega$, on $\wt{\g}$ in \eqref{ghat} by mapping $c\mapsto -c$. The involution $\tau$ lifts to an involution (which is also denoted by $\tau$) on the affine Dynkin diagram which fixes the affine node. Then $\omega$ can be identified with the composition of the Chevalley involution on $\wt{\g}$ and the affine Dynkin diagram involution $\tau$.

\begin{lem}
We have the following standard results.
\begin{enumerate}
    \item The fixed point subalgebra $\hg^\omega$ of $\hg$ is generated by
    \beq\label{hggen}
    \theta_{i,k}:=h_{\tau i}t^k-h_it^{-k},\qquad \flat_{i,k}:=f_it^{-k}-e_{\tau i}t^k,\qquad i\in \I^0,k\in\Z.
    \eeq
    \item The fixed point subalgebra $\hg^\omega$ coincides with the fixed point subalgebra $\wt{\g}^\omega$ of $\wt{\g}$.
\end{enumerate}
\end{lem}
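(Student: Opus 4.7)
The plan is to decompose $\hg^\omega$ into graded pieces indexed by $\omega$-orbits of the loop grading, and to verify the two claims piece by piece. A one-line check using $\omega_0(h_i)=-h_{\tau i}$, $\omega_0(e_i)=-f_{\tau i}$, and $\omega_0(f_i)=-e_{\tau i}$ confirms that each $\theta_{i,k}$ and $\flat_{i,k}$ is $\omega$-fixed. Introducing the symmetrization $s(g,k):=gt^k+\omega_0(g)t^{-k}$, there is a vector-space decomposition
\[
\hg^\omega \;=\; \g^{\omega_0} \;\oplus\; \bigoplus_{k>0}\{s(g,k):g\in\g\},
\]
under which the proposed generators translate as $\theta_{i,0}=h_{\tau i}-h_i$, $\flat_{i,0}=f_i-e_{\tau i}$, and for $k>0$ as $\theta_{i,k}=s(h_{\tau i},k)$, $-\flat_{i,k}=s(e_{\tau i},k)$, and $\flat_{i,-k}=s(f_i,k)$.

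Let $\scrF\subseteq\hg^\omega$ be the Lie subalgebra generated by the $\theta_{i,k}$ and $\flat_{i,k}$. At level $0$, the collection $\{\theta_{i,0},\flat_{i,0}\}_{i\in\I^0}$ is the classical Satake generating set of $\g^{\omega_0}$, so $\scrF\cap\g=\g^{\omega_0}$. For $k>0$, the identity $[y,s(x,k)]=s([y,x],k)$ for $y\in\g^{\omega_0}$ (a consequence of $\omega_0$ being a Lie algebra involution) shows that $\scrF\cap s(\g,k)$ contains the $\g^{\omega_0}$-submodule of $s(\g,k)\cong\g$ generated by $\{e_j,f_j,h_j:j\in\I^0\}$. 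In the Cartan decomposition $\g=\g^{\omega_0}\oplus\mathfrak{p}$ this set hits both summands---$h_i-h_{\tau i}$ and $f_i-e_{\tau i}$ lie in $\g^{\omega_0}$, while $h_i+h_{\tau i}$ and $f_i+e_{\tau i}$ lie in $\mathfrak{p}$---and standard facts about the quasi-split finite types AIII, DII, and EII imply that the resulting $\g^{\omega_0}$-submodule is all of $\g$. Consequently $\scrF\cap s(\g,k)=s(\g,k)$ for every $k>0$, giving $\scrF=\hg^\omega$ and proving (1).

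For part (2), the central element satisfies $\omega(c)=-c$, so $\wt{\g}^\omega\cap\C c=0$ and the canonical projection $\wt{\g}^\omega\to\hg^\omega$ is injective. Given $X\in\hg^\omega$, lift it arbitrarily to $\tilde X\in\wt{\g}$; then $\omega(\tilde X)-\tilde X\in\ker(\wt{\g}\twoheadrightarrow\hg)=\C c$, say equal to $\lambda c$, and a one-step averaging shows that $\tilde X+(\lambda/2)c\in\wt{\g}^\omega$ projects onto $X$. Thus the projection restricts to an isomorphism $\wt{\g}^\omega\xrightarrow{\sim}\hg^\omega$, so the two fixed subalgebras coincide under this identification.

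The only substantive step is the $\g^{\omega_0}$-module generation claim inside (1): one must know that the Chevalley generators of $\g$ exhaust $\g$ under the adjoint $\g^{\omega_0}$-action, which amounts to controlling the isotypic decomposition of $\g$ as a $\g^{\omega_0}$-module. For the three quasi-split finite types at hand this is the classical (near-)irreducibility of $\mathfrak{p}$ together with the perfectness of $\g^{\omega_0}$ in its adjoint action, which is precisely what the ``standard result'' phrasing is meant to absorb; everything else is bookkeeping.
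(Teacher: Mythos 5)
Your proof is correct, but it organizes part (1) differently from the paper, which simply asserts that the argument is parallel to the split-type case in [KLWZ23b, Lem.~2.2]; that proof runs by induction on root height and loop degree using explicit brackets of the generators, and is type-uniform. You instead reduce generation at each loop degree $k>0$ to the statement that $\g$ is generated as an $\mathrm{ad}(\g^{\omega_0})$-module by the Chevalley generators, via the identity $[y,s(x,k)]=s([y,x],k)$. That reduction is valid, and the module statement does hold here: for the $\g^{\omega_0}$-part it is automatic (the $\mathrm{ad}$-module generated by any Lie-generating set of a Lie algebra is the whole algebra, so no perfectness is needed --- and indeed $\g^{\omega_0}\cong\mathfrak{s}(\gl_p\oplus\gl_q)$ in type AIII is \emph{not} perfect, so your appeal to ``perfectness'' is the one inaccurate phrase, though harmless); for the $\mathfrak p$-part it follows from $\mathfrak p$ being irreducible (DII, EII) or a sum of two irreducibles both meeting the set $\{e_j+f_{\tau j},\,f_j+e_{\tau j},\,h_j+h_{\tau j}\}$ (AIII). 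What your route buys is brevity and a clean conceptual statement; what it costs is a classification-dependent input (the $\g^{\omega_0}$-module structure of $\mathfrak p$) that the bracket-induction avoids, and it would not transfer to, say, split types where $\mathfrak p$ has many constituents. Your part (2) --- $\omega(c)=-c$ forces $\wt{\g}^\omega\cap\C c=0$, and the averaging $\tilde X\mapsto\tilde X+(\lambda/2)c$ produces a fixed lift --- is the standard argument and matches what the paper intends.
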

\begin{proof}
The proof is parallel to the proof of \cite[Lem. 2.2]{KLWZ23b}.
\end{proof}

We call the algebra $\hg^\omega\cong \wt{\g}^\omega$ a \emph{twisted loop algebra}.

We extend the involution $\omega_0$ on $\g$ to an involution $\check{\omega}$ on the current algebra $\g[z]$ by the rule:
\beq\label{omeche}
\check\omega:\g[z]\to \g[z],\qquad g z^r\mapsto \omega_0(g)(-z)^r,\qquad g\in\g, r\in\bN.
\eeq
We call the fixed point subalgebra $\g[z]^{\check\omega}$ of $\g[z]$ a \emph{twisted current algebra}.

As in \cite[\S2]{KLWZ23b}, we introduce a filtration and the corresponding associated graded for $\rU(\hg^\omega)$ that will be used to exhibit a degeneration from $\rU(\hg^\omega)$ to $\rU(\g[z]^{\check\omega})$.

For each $r\in\bN$, denote by $\ka^\imath_r$ the ideal $\ka_r \cap \rU(\hg^\omega)$ of $\rU(\hg^\omega)$. Then $\rU(\hg^\omega)$ admits a descending filtration,
\beq  \label{filtration2}
\rU(\g[t,t^{-1}]^\omega) =\kappa_0^\imath \supset \kappa_1^\imath \supset \cdots \supset \kappa_r^\imath \supset \cdots. 
\eeq
Note that the two filtrations \eqref{filtration} and \eqref{filtration2} are compatible, i.e., $\ka^\imath_r=\ka_r \cap \rU(\hg^\omega)$, therefore the associated graded of \eqref{filtration2}, 
$$
\Gr_{\ka^\imath}\hg^\omega:=\bigoplus_{r\gge 0} \ka_r^\imath / \ka_{r+1}^\imath,
$$ 
can be regarded naturally as a subalgebra of $\Gr_{\ka}\hg$:
\beq  \label{GrGr}
\Gr_{\ka^\imath}\hg^\omega  \subset \Gr_{\ka}\hg.
\eeq

For $r\in\bN$ and $k\in\Z$, define
\beq\label{theta}
\theta_{i,r,k}:=\sum_{s=0}^{r}(-1)^{r-s}{r \choose s}\theta_{i,s+k},\qquad \flat_{i,r,k}:=\sum_{s=0}^r (-1)^{r-s}{r \choose s}\flat_{i,s+k}
\eeq
Then we have the following
\beq\label{theta-exp}
\begin{split}
\theta_{i,r,k}&=h_{\tau i}(t-1)^rt^k-h_{i}(1-t)^{r}t^{-r-k},\\
\flat_{i,r,k}&=f_i(1-t)^rt^{-r-k}-e_{\tau i}(t-1)^rt^k.
\end{split}
\eeq
Let $\bar{\theta}_{i,r,k},\bar{\flat}_{i,r,k}$ be their images in the $r$-th component of $\Gr_{\ka^\imath}\hg^\omega$, respectively.

Recall the isomorphism $\varrho:\rU(\g[z])\to \Gr_\ka\hg$ from Lemma \ref{prop:iso} and note \eqref{GrGr}.
\begin{lem}
The following statements hold:
\begin{enumerate}
    \item For $r\in \N$, we have  $\theta_{i,r,k}\in \ka_{r}^\imath$ and $\varrho^{-1}(\bar\theta_{i,r,k})=(h_{\tau i}-(-1)^rh_{i})z^r$.
    \item We have $\flat_{i,r,k}\in \ka_k^\imath\setminus \ka_{k+1}^\imath$ and $\varrho^{-1}(\bar\flat_{i,r,k})=\big((-1)^rf_{i}-e_{\tau i}\big)z^r$.
\end{enumerate}
\end{lem}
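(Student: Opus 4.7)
The plan is to reduce both statements to a mechanical application of the isomorphism $\varrho$ in Lemma~\ref{prop:iso} after rewriting $\theta_{i,r,k}$ and $\flat_{i,r,k}$ in terms of the basic elements $g_{r,k}=g(t-1)^rt^k$ from \eqref{xrk}. Using $(1-t)^r=(-1)^r(t-1)^r$ in the explicit formula \eqref{theta-exp}, one immediately gets
\begin{align*}
\theta_{i,r,k}&=(h_{\tau i})_{r,k}-(-1)^r(h_i)_{r,-r-k},\\
\flat_{i,r,k}&=(-1)^r(f_i)_{r,-r-k}-(e_{\tau i})_{r,k}.
\end{align*}
Each summand on the right-hand side lies in $\ka_r$ by definition of the filtration, so both $\theta_{i,r,k}$ and $\flat_{i,r,k}$ lie in $\ka_r$; since they are visibly $\omega$-invariant (being $\Z$-linear combinations of the generators \eqref{hggen}), they belong to $\ka_r\cap\rU(\hg^\omega)=\ka_r^\imath$.

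The next step is to pass to the associated graded. By \eqref{climit}, shifting the second index of $g_{r,k}$ does not change its class modulo $\ka_{r+1}$, hence
\begin{align*}
\bar\theta_{i,r,k}&=\overline{(h_{\tau i})_{r,1}}-(-1)^r\,\overline{(h_i)_{r,1}},\\
\bar\flat_{i,r,k}&=(-1)^r\,\overline{(f_i)_{r,1}}-\overline{(e_{\tau i})_{r,1}},
\end{align*}
viewed inside $\Gr_{\ka}\hg$ through the embedding \eqref{GrGr}. Applying $\varrho^{-1}$, which sends $\overline{g_{r,1}}\mapsto gz^r$ by Lemma~\ref{prop:iso}, yields the asserted formulas for $\varrho^{-1}(\bar\theta_{i,r,k})$ and $\varrho^{-1}(\bar\flat_{i,r,k})$.

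For the final non-membership $\flat_{i,r,k}\notin\ka_{r+1}^\imath$, I would note that the element $(-1)^rf_i-e_{\tau i}\in\g$ is nonzero since its two summands lie in distinct root spaces of opposite signs, regardless of whether $\tau i=i$. Consequently $\varrho^{-1}(\bar\flat_{i,r,k})\neq 0$ in $\rU(\g[z])$, so $\bar\flat_{i,r,k}\neq 0$ in $\ka_r/\ka_{r+1}$. The compatibility of filtrations $\ka_r^\imath=\ka_r\cap\rU(\hg^\omega)$ ensures that the natural map $\ka_r^\imath/\ka_{r+1}^\imath\to\ka_r/\ka_{r+1}$ is injective, so nonvanishing in $\Gr_{\ka}\hg$ forces nonvanishing in $\Gr_{\ka^\imath}\hg^\omega$. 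The whole argument is essentially bookkeeping; the only mild subtlety is the injectivity of the map on graded pieces, which is built into the very definition of $\ka_r^\imath$. There is no substantive obstacle once the rewriting in terms of $g_{r,k}$ is in place.
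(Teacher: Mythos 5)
Your proof is correct and follows exactly the route the paper takes: the paper's entire proof is the one-line remark that the lemma ``follows from \eqref{theta-exp}'', and your argument is precisely the spelled-out version of that — rewriting $\theta_{i,r,k}$ and $\flat_{i,r,k}$ as combinations of the elements $g_{r,k}$, invoking \eqref{climit} and Lemma~\ref{prop:iso}, and using that $(-1)^rf_i-e_{\tau i}\neq 0$ for the non-membership. (You also correctly read the subscript in part~(2) as $\ka_r^\imath\setminus\ka_{r+1}^\imath$, fixing an apparent typo in the statement.)
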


\begin{proof}
This lemma follows from \eqref{theta-exp}.
\end{proof}

Note that $\theta_{i,r+1,k}=\theta_{i,r,k+1}-\theta_{i,r,k}$ and $\flat_{i,r+1,k}=\flat_{i,r,k+1}-\flat_{i,r,k}$.
\begin{cor}
The following identities hold in $\Gr_{\ka^\imath}\hg^\omega$:
\[
\bar\theta_{i,r,k}=\bar\theta_{i,r,k+1},\qquad \bar\flat_{i,r,k}=\bar\flat_{i,r,k+1},
\]
for $k\in\Z$ and $r\in\N$.
\end{cor}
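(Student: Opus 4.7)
The plan is to deduce both identities directly from the telescoping relations displayed just above the corollary, exactly in analogy with how \eqref{climit} is obtained for $g_{r,k}$ in the untwisted setting.

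First, applying the previous lemma with the index $r$ replaced by $r+1$, I get that $\theta_{i,r+1,k}\in\ka_{r+1}^\imath$ and $\flat_{i,r+1,k}\in\ka_{r+1}^\imath$ (using the natural reading of the filtration degree as $r$, which matches the explicit factors $(t-1)^{r+1}$ and $(1-t)^{r+1}$ appearing in the expansions \eqref{theta-exp}). In other words, $\theta_{i,r+1,k}$ and $\flat_{i,r+1,k}$ vanish modulo $\ka_{r+1}^\imath$.

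Next, I invoke the explicit identities
\begin{equation*}
\theta_{i,r+1,k}=\theta_{i,r,k+1}-\theta_{i,r,k},\qquad \flat_{i,r+1,k}=\flat_{i,r,k+1}-\flat_{i,r,k},
\end{equation*}
which are immediate from the defining formulas \eqref{theta} (or equivalently from the Pascal identity $\binom{r+1}{s}=\binom{r}{s}+\binom{r}{s-1}$). Combining these with the previous step, I conclude $\theta_{i,r,k+1}-\theta_{i,r,k}\in\ka_{r+1}^\imath$ and $\flat_{i,r,k+1}-\flat_{i,r,k}\in\ka_{r+1}^\imath$. Passing to the quotient $\ka_r^\imath/\ka_{r+1}^\imath$, this gives $\bar\theta_{i,r,k}=\bar\theta_{i,r,k+1}$ and $\bar\flat_{i,r,k}=\bar\flat_{i,r,k+1}$, as desired.

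There is no real obstacle here; the argument is a direct transcription of the untwisted argument leading to \eqref{climit}, and every ingredient (the filtration membership and the telescoping identity) is already provided by the preceding lemma and the remark immediately before the corollary. The only minor care needed is to apply the lemma at level $r+1$ rather than $r$ in order to land in $\ka_{r+1}^\imath$, and to note that the iteration can be carried out for any $k\in\Z$.
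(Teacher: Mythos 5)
Your proof is correct and is essentially the paper's own argument: the paper records the telescoping identities $\theta_{i,r+1,k}=\theta_{i,r,k+1}-\theta_{i,r,k}$ and $\flat_{i,r+1,k}=\flat_{i,r,k+1}-\flat_{i,r,k}$ immediately before the corollary and lets the conclusion follow from the lemma applied at level $r+1$, exactly mirroring the derivation of \eqref{climit} in the untwisted setting. Your reading of the filtration degree in part (2) of the lemma (where the stated index is a typo for $r$) is also the intended one.
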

 
\subsection{Presentations of $\rU(\g[z]^{\check\omega})$}


In this subsection, we formulate two presentations for $\mathrm{U}(\g[z]^{\check\omega})$.
Recall the Cartan matrix $C^0=(c_{ij})_{i,j\in\I^0}$ for $\g$. Throughout the paper, we use $\Sym_{k_1,k_2}$ to denote the symmetrization with respect to indices $k_1, k_2$ in the sense
\[
\Sym_{k_1,k_2} f(k_1, k_2) =f(k_1, k_2) +f(k_2, k_1).
\]
\begin{prop}\label{prop:qscrel}
The algebra $\rU(\g[z]^{\check\omega})$ is generated by $\xi_{i,r},x_{i,r}$ for $i\in \I^0,r\in\bN$, subject to the relations \eqref{eq:qsclassical1}--\eqref{eq:qsclassical0}, for $i,j\in\I^0$, $r,s\in \bN$:
\begin{align}
\label{eq:qsclassical1}
&[\xi_{i,r},\xi_{j,s}]=0,\qquad \xi_{i,r} =(-1)^{r+1} \xi_{\tau i,r},
\\
\label{eq:qsclassical2}
&[\xi_{i,r},x_{j,s}]=\big(c_{ij}-(-1)^r c_{\tau i,j} \big) x_{j,r+s},
\\
\label{eq:qsclassical3}
&[x_{i,r+1},x_{j,s}]-[x_{i,r},x_{j,s+1}]=-2\delta_{j,\tau i} (-1)^r \xi_{\tau i,r+s+1}, 
\\
&[x_{i,r},x_{\tau i,s}]=(-1)^r \xi_{\tau i,r+s},\qquad\qquad \qquad  c_{i,\tau i}=0,\label{eq:qsclassical0}
\end{align}
and the Serre relations \eqref{eq:qsclassical4}-\eqref{eq:qsclassical5}: for $c_{i,\tau i}=-1$, (the left-hand side is symmetric with respect to $k_1,k_2$)
\begin{align}
\big[x_{i,k_1},[x_{i,k_2},x_{\tau i,r}]\big]  
= \big((-1)^{k_1} +(-1)^{k_2} +2(-1)^r\big) x_{i,k_1+k_2+r},
\label{eq:qsclassical4}
\end{align}
and for $j\neq i,\tau i,$
\begin{align}
[x_{i,k},x_{j,r}]&=0,\hskip5.13cm c_{ij}=0,\label{eq:qsclassical6}
\\
\Sym_{k_1,k_2}\big[x_{i,k_1},[x_{i,k_2},x_{j,r}]\big] & =0,\hskip5.13cm c_{ij}=-1 \text{ and }i\ne \tau i,\\
\Sym_{k_1,k_2}\big[x_{i,k_1},[x_{i,k_2},x_{j,r}]\big] & =-((-1)^{k_1}+(-1)^{k_2})x_{j,k_1+k_2+r},\quad c_{ij}=-1 \text{ and }i=\tau i.\label{eq:qsclassical5}
\end{align}
\end{prop}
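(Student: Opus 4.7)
\medskip

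\noindent\textbf{Proof plan.} Let $\tl A$ denote the abstract associative algebra generated by symbols $\xi_{i,r}, x_{i,r}$ subject to relations \eqref{eq:qsclassical1}--\eqref{eq:qsclassical5}. The plan is to exhibit an algebra homomorphism $\phi : \tl A \to \rU(\g[z]^{\check\omega})$ sending
\[
\xi_{i,r}\longmapsto (h_{\tau i}-(-1)^r h_i)z^r,\qquad x_{i,r}\longmapsto ((-1)^r f_i-e_{\tau i})z^r,
\]
and show it is an isomorphism. By the previous lemma these images are $\check\omega$-fixed, identified with $\varrho^{-1}(\bar\theta_{i,r,k})$ and $\varrho^{-1}(\bar\flat_{i,r,k})$, so they do live in $\rU(\g[z]^{\check\omega})$.

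Well-definedness of $\phi$ is a direct verification of the relations in $\rU(\g[z]^{\check\omega})$. Relations \eqref{eq:qsclassical1} and \eqref{eq:qsclassical2} follow from $[h_a,h_b]=0$, $[h_i,e_j]=c_{ij}e_j$, $[h_i,f_j]=-c_{ij}f_j$ and the $\tau$-invariance $c_{\tau i,\tau j}=c_{ij}$. Relations \eqref{eq:qsclassical3} and \eqref{eq:qsclassical0} come from $[e_i,f_j]=\delta_{ij}h_i$ combined with the parity bookkeeping of the signs $(-1)^r$ introduced by $\check\omega$. The Serre relations \eqref{eq:qsclassical4}--\eqref{eq:qsclassical5} reduce, after expanding in Chevalley generators, to the finite-type Serre identities for $\g$; the genuinely new feature, in \eqref{eq:qsclassical4} (the case $c_{i,\tau i}=-1$), is that the mixed brackets $[f_i,e_i]$ and $[f_{\tau i},e_{\tau i}]$ produce additional Cartan contributions which, after regrouping, yield exactly the right-hand side $((-1)^{k_1}+(-1)^{k_2}+2(-1)^r)x_{i,k_1+k_2+r}$.

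Surjectivity uses the decomposition $\g[z]^{\check\omega}=\g^+[z^2]\oplus z\,\g^-[z^2]$, where $\g^{\pm}$ are the $\pm 1$-eigenspaces of $\omega_0$. The images $\phi(\xi_{i,0}), \phi(x_{i,0})$ generate $\g^+$ as a Lie algebra (a standard fact for fixed-point subalgebras of simply-laced finite type), while $\phi(\xi_{i,1}), \phi(x_{i,1})$ lie in $z\,\g^-$ and, together with the degree-$0$ images, generate $z\,\g^-$ via brackets; iterated brackets then produce elements of arbitrarily high $z$-degree and exhaust $\rU(\g[z]^{\check\omega})$ by the classical PBW theorem.

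Injectivity is obtained by a PBW-style straightening inside $\tl A$. Using \eqref{eq:qsclassical1}--\eqref{eq:qsclassical0} one reorders products of generators into ordered monomials in a fixed set of ``root vectors'' obtained as iterated brackets of the $\xi_{i,r}$ and $x_{i,r}$; the Serre relations ensure that no further relations are imposed beyond those already holding in $\rU(\g[z]^{\check\omega})$. Under $\phi$ this spanning set maps onto a PBW basis of $\rU(\g[z]^{\check\omega})$ coming from a basis of the Lie algebra $\g^+[z^2]\oplus z\g^-[z^2]$, yielding both linear independence and the desired isomorphism. The step I expect to be the main obstacle is controlling the PBW straightening in the presence of the new Serre relation \eqref{eq:qsclassical4}, whose nontrivial right-hand side must be tracked carefully during reordering; this is parallel to, but slightly more delicate than, the split-case argument carried out in \cite{KLWZ23b}.
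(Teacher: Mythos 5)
Your proposal is correct and follows essentially the same route as the paper: the paper likewise defines the homomorphism $\xi_{i,r}\mapsto (h_{\tau i}-(-1)^r h_i)z^r$, $x_{i,r}\mapsto ((-1)^r f_i-e_{\tau i})z^r$, checks the relations directly, and then establishes bijectivity by the spanning-set/PBW argument (deferred to the split-type case in \cite{KLWZ23b}). Your explicit use of the eigenspace decomposition $\g[z]^{\check\omega}=\g^{+}[z^2]\oplus z\,\g^{-}[z^2]$ is just an unpacking of that same surjectivity and linear-independence step.
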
 
\begin{proof}
Let $\mathscr L$ be the algebra with generators $\xi_{i,r},x_{i,r}$ and defining relations \eqref{eq:qsclassical1}--\eqref{eq:qsclassical5}. Then it is straightforward to check that the map
\begin{align*}
\rho:\mathscr L\to \rU(\g[z]^{\check \omega}),\quad 
\xi_{i,r}\mapsto (h_{\tau i}-(-1)^rh_i)z^r,\quad x_{i,r}\mapsto ((-1)^rf_i-e_{\tau i})z^r,
\end{align*}
defines an algebra homomorphism. 

The rest of the proof is similar to that of \cite[Prop.~2.5]{KLWZ23b}.
\end{proof}

The following lemma indicates the left-hand side of \eqref{eq:qsclassical4} is symmetric with respect to $k_1,k_2$.
\begin{lem}\label{lem:com-1}
If $c_{i,\tau i}=-1$, then $[x_{i,r},x_{i,s}]=0$.
\end{lem}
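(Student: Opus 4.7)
The plan is to exploit relation \eqref{eq:qsclassical3} in the special case $j=i$. Since $c_{ii}=2\neq -1$, the hypothesis $c_{i,\tau i}=-1$ forces $i\neq \tau i$, so the Kronecker delta $\delta_{j,\tau i}=\delta_{i,\tau i}$ on the right-hand side of \eqref{eq:qsclassical3} vanishes. Thus the relation specializes to
\[
[x_{i,r+1},x_{i,s}]=[x_{i,r},x_{i,s+1}], \qquad r,s\in\bN.
\]

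Next I would iterate this identity: shifting one unit from the second slot to the first slot repeatedly shows that the element $[x_{i,r},x_{i,s}]$ depends only on the total degree $r+s$. In particular, for any $r,s$ one has
\[
[x_{i,r},x_{i,s}]=[x_{i,s},x_{i,r}].
\]

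Finally, combining this symmetry with the antisymmetry $[x_{i,r},x_{i,s}]=-[x_{i,s},x_{i,r}]$ of the commutator yields $2[x_{i,r},x_{i,s}]=0$, and hence the claim. There is no real obstacle: the lemma is essentially a two-line consequence of \eqref{eq:qsclassical3}, and it serves only to justify the symmetry of the left-hand side of \eqref{eq:qsclassical4} in $k_1,k_2$.
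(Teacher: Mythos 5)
Your proof is correct and follows exactly the paper's (very terse) argument: specialize \eqref{eq:qsclassical3} to $j=i$, note that $c_{i,\tau i}=-1$ forces $i\neq\tau i$ so the $\delta_{j,\tau i}$ term vanishes, conclude that $[x_{i,r},x_{i,s}]$ depends only on $r+s$, and combine the resulting symmetry with the antisymmetry of the commutator. Nothing to add.
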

\begin{proof}
It is immediate from \eqref{eq:qsclassical3} as $c_{i,\tau i}=-1$ implies $i\ne \tau i$.
\end{proof}

We also have the reduced presentation for $\rU(\g[z]^{\check\omega})$ which is analogous to \cite[Prop.~2.8]{KLWZ23b}.
\begin{prop}\label{prop:redu}
The algebra $\mathrm U(\g[z]^{\check\omega})$ is generated by $\xi_{i,r},x_{i,r}$ for $i\in \I^0,r\in\bN$, subject to the relations \eqref{eq:qsclassical1}--\eqref{eq:qsclassical0} and the finite type Serre relations \eqref{finserre1}--\eqref{finserre4}: for $c_{i,\tau i}=-1$,
\beq\label{finserre1}
\big[x_{i,0},[x_{i,0},x_{\tau i,0}]\big]  
= 4x_{i,0},
\eeq
and for $j\neq i,\tau i,$
\begin{align}
[x_{i,0},x_{j,0}]&=0,\hskip 2.2cm c_{ij}=0,\label{finserre2}
\\
\big[x_{i,0},[x_{i,0},x_{j,0}]\big]& =0,\hskip 2.2cm c_{ij}=-1 \text{ and }i\ne \tau i, \label{finserre3}\\
\big[x_{i,0},[x_{i,0},x_{j,0}]\big]& =-x_{j,0},\hskip 1.5cm c_{ij}=-1 \text{ and }i=\tau i.\label{finserre4}
\end{align}
\end{prop}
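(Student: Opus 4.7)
The plan is to identify the reduced-presentation algebra $\mathscr L'$ (with generators $\xi_{i,r}, x_{i,r}$ and relations \eqref{eq:qsclassical1}--\eqref{eq:qsclassical0}, \eqref{finserre1}--\eqref{finserre4}) with the full-presentation algebra $\mathscr L$ from Proposition~\ref{prop:qscrel}, which is already known to be isomorphic to $\rU(\g[z]^{\check\omega})$. Since \eqref{finserre1}--\eqref{finserre4} are precisely the specializations of \eqref{eq:qsclassical4}--\eqref{eq:qsclassical5} at $k_1 = k_2 = r = 0$, there is an obvious surjection $\pi : \mathscr L \twoheadrightarrow \mathscr L'$, and it suffices to upgrade the finite-type Serre relations into the full current Serre relations inside $\mathscr L'$.

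I would argue by induction on the total mode degree $N := k_1 + k_2 + r$ of the Serre expression, with the base case $N = 0$ given by \eqref{finserre1}--\eqref{finserre4}. The inductive engine is the Drinfeld-type shift identity extracted from \eqref{eq:qsclassical3}, namely
\[
[x_{i,k_2},x_{j,r}] \; = \; [x_{i,k_2-1},x_{j,r+1}] \; - \; 2\delta_{j,\tau i}(-1)^{k_2-1}\xi_{\tau i,\,k_2+r-1},
\]
which lets one trade a unit of mode between adjacent slots of a nested bracket at the price of a Cartan-generator correction. Substituting this into $[x_{i,k_1},[x_{i,k_2},x_{j,r}]]$, and likewise trading a unit between $x_{i,k_1}$ and the inner bracket, rewrites any Serre expression of degree $N$ as a $\bZ$-linear combination of (i) a Serre expression of strictly smaller degree, which is handled by the inductive hypothesis, and (ii) explicit correction terms of the form $[x_{i,k_1}, \xi_{\tau i,\ast}]$ or $[\xi_{i,\ast}, x_{j,\ast}]$, which are reduced to $x$-type elements via the Cartan--current commutator \eqref{eq:qsclassical2}. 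This mirrors Levendorskii's classical argument and the strategy of \cite[Prop.~2.8]{KLWZ23b}.

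The principal technical obstacle is the bookkeeping of the non-zero right-hand sides of \eqref{eq:qsclassical4}--\eqref{eq:qsclassical5}: the parity-dependent coefficients $(-1)^{k_1}+(-1)^{k_2}+2(-1)^r$ and $-((-1)^{k_1}+(-1)^{k_2})$ must emerge \emph{exactly} from the cumulative corrections produced by iterated shifts, and a parity analysis in $k_1, k_2, r$ will be required. The case breakdown runs over the three possibilities $c_{i,\tau i}=0$, $c_{i,\tau i}=-1$ with $i\ne \tau i$, and $i = \tau i$; the last is the genuinely new quasi-split situation absent from \cite[Prop.~2.8]{KLWZ23b}, where the inhomogeneous term $-x_{j,0}$ on the right of \eqref{finserre4} must propagate consistently under the shift to reproduce $-((-1)^{k_1}+(-1)^{k_2})x_{j,k_1+k_2+r}$. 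Lemma~\ref{lem:com-1} (which forces $[x_{i,r},x_{i,s}]=0$ whenever $c_{i,\tau i}=-1$) will also be needed to symmetrize intermediate expressions. Once all Serre relations are derived in $\mathscr L'$, the surjection $\pi$ admits an inverse, and combining with Proposition~\ref{prop:qscrel} yields $\mathscr L' \cong \rU(\g[z]^{\check\omega})$.
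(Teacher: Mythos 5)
There is a genuine gap in the inductive step. The shift identity you extract from \eqref{eq:qsclassical3} preserves the total mode degree: it converts $[x_{i,k_2},x_{j,r}]$ into $[x_{i,k_2-1},x_{j,r+1}]$ plus a Cartan correction, so the sum $k_2+r$ is unchanged. Consequently your procedure can only redistribute modes among the slots of $[x_{i,k_1},[x_{i,k_2},x_{j,r}]]$ at fixed $N=k_1+k_2+r$ (normalizing, say, to $(0,0\,|\,N)$); it never produces ``a Serre expression of strictly smaller degree.'' Your induction on $N$ therefore does not close: the base case \eqref{finserre1}--\eqref{finserre4} covers only $N=0$, and nothing in the argument generates the relations at $N>0$ from it. The indispensable missing ingredient --- and the actual engine of the paper's proof --- is a degree-\emph{raising} step: one brackets the finite-type Serre relation with the Cartan modes $\xi_{i,r}$, whose adjoint action \eqref{eq:qsclassical2} sends $x_{j,s}\mapsto (c_{ij}-(-1)^r c_{\tau i,j})x_{j,r+s}$ and hence raises the total degree by $r$. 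This produces a linear combination of several degree-$N$ Serre expressions with the extra mode sitting in different slots; the shift identity \eqref{00r2} together with \eqref{eq:qsclassical2} is then used to compare these expressions and solve for each one individually, exactly as in \eqref{00r1}--\eqref{00r} and, in the twisted Yangian setting, Proposition~\ref{prop-serre}.

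A secondary issue: ``trading a unit between $x_{i,k_1}$ and the inner bracket'' is not directly licensed by \eqref{eq:qsclassical3}, since the inner bracket is not a single current mode; one must first expand via the Jacobi identity (cf.\ the computation leading to \eqref{ppf1}), which introduces extra terms of the form $[[x_{i,k_1+1},x_{i,k_2}],x_{j,r}]$ that have to be controlled by Lemma~\ref{lem:com-1} or by \eqref{eq:qsclassical3} again. If you repair the induction by inserting the bracketing-with-$\xi_{i,r}$ step (taking care that for $i=\tau i$ the even modes $\xi_{i,2r}$ vanish by \eqref{eq:qsclassical1}, so the raising must be organized accordingly), the remaining outline --- the parity bookkeeping, the three cases, and the use of Lemma~\ref{lem:com-1} to symmetrize --- does match the paper's argument.
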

\begin{proof}
By Proposition \ref{prop:qscrel}, it suffices to show that the relations \eqref{eq:qsclassical4}--\eqref{eq:qsclassical5} can be deduced from the relations \eqref{eq:qsclassical1}--\eqref{eq:qsclassical0} together with \eqref{finserre1}--\eqref{finserre4}. The deduction of relations \eqref{eq:qsclassical6}--\eqref{eq:qsclassical5} is similar to that of \cite[Prop.~2.8]{KLWZ23b}. Here we only provide the details for  \eqref{eq:qsclassical4}.

Now we assume that $c_{i,\tau i}=-1$. We first show that
\beq\label{00r}
\big[x_{i,0},[x_{i,0},x_{\tau i,r}]\big]=\big(2+2(-1)^r\big)x_{i,r}.
\eeq
Bracketing \eqref{finserre1} by $\xi_{i,r}$, we obtain that 
\beq\label{00r1}
2\big[x_{i,0},[x_{i,r},x_{\tau i,0}]\big]-(-1)^r\big[x_{i,0},[x_{i,0},x_{\tau i,r}]\big]=4x_{i,r},
\eeq
where we also used Lemma \ref{lem:com-1}. Note that by \eqref{eq:qsclassical3}, we have
\beq\label{00r2}
[x_{i,r},x_{\tau i,s}]=[x_{i,0},x_{\tau i,r+s}]+((-1)^r-1)\xi_{\tau i,r+s}.
\eeq
Plugging \eqref{00r2} (with $s=0$) into \eqref{00r1} and using \eqref{eq:qsclassical2}, one proves \eqref{00r}. 

Now let us consider the general case and we shall use Lemma \ref{lem:com-1}. Then we have
\begin{align*}
&\ \big[x_{i,k_1},[x_{i,k_2},x_{\tau i,r}]\big]\\
\stackrel{\eqref{00r2}}{=}&\ \big[x_{i,k_1},[x_{i,0},x_{\tau i,k_2+r}]\big]+\big((-1)^{k_2}-1\big)\big[x_{i,k_1},\xi_{\tau i,k_2+r}\big]\\
=\hskip 0.24cm& \ \big[x_{i,0},[x_{i,k_1},x_{\tau i,k_2+r}]\big]+\big((-1)^{k_2}-1\big)\big[x_{i,k_1},\xi_{\tau i,k_2+r}\big]\\
\stackrel{\eqref{eq:qsclassical2}}{=} & \ \big[x_{i,0},[x_{i,k_1},x_{\tau i,k_2+r}]\big]+\big((-1)^{k_2}-1\big)\big(1+2(-1)^{k_2+r}\big)x_{i,k_1+k_2+r}\\
\stackrel{\eqref{00r2}}{=}&\ \big[x_{i,0},[x_{i,0},x_{\tau i,k_1+k_2+r}]\big]+\big((-1)^{k_1}-1\big)\big[x_{i,0},\xi_{\tau i,k_1+k_2+r}\big]\\
&\hskip 3.95cm +\big((-1)^{k_2}-1\big)\big(1+2(-1)^{k_2+r}\big)x_{i,k_1+k_2+r}\\
\stackrel{\eqref{eq:qsclassical2}}{=}&\ \big[x_{i,0},[x_{i,0},x_{\tau i,k_1+k_2+r}]\big]+\big((-1)^{k_1}-1\big)\big(1+2(-1)^{k_1+k_2+r}\big)x_{i,k_1+k_2+r}\\
&\hskip 3.95cm +\big((-1)^{k_2}-1\big)\big(1+2(-1)^{k_2+r}\big)x_{i,k_1+k_2+r}.
\end{align*}
Now the relation \eqref{eq:qsclassical4} is obtained from the above equation and the special case \eqref{00r}.
\end{proof}

\section{Twisted Yangians in Drinfeld type presentations}\label{sec:tY}
In this section, we formulate in Definition~\ref{deftY} a Drinfeld type presentation for twisted Yangians $\Yi$ of quasi-split AIII$_{n}$, DII$_{n}$, EII$_{6}$ types. In Section~\ref{sec:pbw}, we formulate in Theorem~\ref{thm:pbw} a PBW basis for $\Yi$ in terms of current root vectors. We provide the generating function formulations for the defining current relations in Section~\ref{sec:geniQG}, and show that the general Serre relation can be deduced from the finite type Serre relation with the help of other defining current relations in Section~\ref{sec:qsSerre}.

\subsection{Twisted Yangians of quasi-split type}
Recall the Cartan matrix $(c_{ij})_{i,j\in\I^0}$ for $\g$ and write $\{x,y\}=xy+yx$.
\begin{dfn}\label{deftY}
The \emph{twisted Yangian of quasi-split type} $(\I,\tau)$, denoted by $\Yi$, is the $\mathbb{C}[\h]$-algebra generated by $\H_{i,r},\X_{i,r},i\in \mathbb{I}^0,r\in \N$, subject to
\begin{align}\label{qsconj0}
&\quad [\H_{i,r},\H_{j,s}]=0,\qquad\qquad  h_{\tau i,0}=-h_{i,0},
\\\label{qsconj5'}
&\quad [\H_{i,0},\X_{j,r}]=  (c_{ij}-c_{\tau i,j}) \X_{j,r},
\\\label{qsconj5}
&\quad [\H_{i,1},\X_{j,r}]=  (c_{ij}+c_{\tau i,j}) \X_{j,r+1}+\frac{\h(c_{ij}-c_{\tau i,j})}{2}\{\H_{i,0},\X_{j,r}\},
\\\notag
&\quad [\H_{i,r+2},\X_{j,s}] - [\H_{i,r},\X_{j,s+2}]
\\\label{qsconj2}
&\hskip 1cm =\frac{c_{ij}-c_{\tau i,j}}{2}\h\{\H_{i,r+1},\X_{j,s}\}+\frac{c_{ij}+c_{\tau i,j}}{2}\h\{\H_{i,r},\X_{j,s+1}\}+\frac{c_{ij}c_{\tau i,j}}{4}\h^2[\H_{i,r}, \X_{j,s}],
\\\label{qsconj3}
&\quad [\X_{i,r+1 },\X_{j,s}]  - [\X_{i,r },\X_{j,s+1 }] 
 =\frac{c_{ij}}{2}\h\{\X_{i,r },\X_{j,s }\}-2 \delta_{\tau i,j}(-1)^{r}\H_{j,r+s+1 },
\end{align}
and the Serre relations: for $c_{ij}=0,$
\begin{align}\label{qsconj4}
&[\X_{i,r},\X_{j,s}]= \delta_{\tau i,j}(-1)^{r}h_{j,r+s},
\end{align}
and for $c_{ij}=-1,j\neq \tau i\neq i$,
\begin{align}\label{qsconj9}
&\mathrm{Sym}_{k_1,k_2}\big[b_{i,k_1},[b_{i,k_2},b_{j,r}] \big] =0,
\end{align}
and for $c_{ij}=-1,\tau i=i$,
\beq\label{qsconj8}
\begin{split}
\mathrm{Sym}_{k_1,k_2}&\big[b_{i,k_1},[b_{i,k_2},b_{j,r}] \big] \\
=\,&(-1)^{k_1}\sum_{p\gge 0}2^{-2p}\h^{2p}\big([h_{i,k_1+k_2-2p-1},b_{j,r+1}]-\h\{h_{i,k_1+k_2-2p-1},b_{j,r}\}\big),
\end{split}
\eeq
and for $c_{i,\tau i}=-1$,
\begin{align}\label{qsconj10}
&\mathrm{Sym}_{k_1,k_2}\big[b_{i,k_1},[b_{i,k_2},b_{\tau i,r}] \big] =\frac{4}{3}\,\mathrm{Sym}_{k_1,k_2}(-1)^{k_1}\sum_{p=0}^{k_1+r}3^{-p}[b_{i,k_2+p},h_{\tau i,k_1+r-p}],
\end{align}
where $\H_{i,s}=0$ if $s< 0$.
\end{dfn}



Note that it follows from \eqref{qsconj0} and \eqref{qsconj3} that for $r\in\bN$,
\beq\label{hrel}
\H_{\tau i,r}=(-1)^{r+1} \H_{i,r}.
\eeq

\begin{rem}\label{rem:newSerre}
The relation \eqref{qsconj10} only appears for type AIII$_{2n}$. 
\end{rem}

\begin{rem}\label{rem:h=1}
To indicate the dependence on $\h$, we write $\Yi$ as $\mathbf Y^\imath_\h$. The algebras $\mathbf Y^\imath_\h$ with $\h\ne 0$ are all isomorphic to each other, with an explicit isomorphism given by
\[
\mathbf Y^\imath_\h\to \mathbf Y^\imath_1,\qquad h_{i,r}\mapsto \h^r h_{i,r},\qquad b_{i,r}\mapsto \h^r b_{i,r}.
\]
\end{rem}
If $c_{\tau i,j}=0$, then the relation \eqref{qsconj2} has a more familiar equivalent form, which corresponds to the current relations for the ordinary Yangians; see also \S\ref{sec:hb} and \S\ref{secA}. 
\begin{lem}\label{typeArel}
If $c_{\tau i,j}=0$, then the relations \eqref{qsconj5'}--\eqref{qsconj2} are equivalent to 
\beq\label{alt}
[h_{i,0},b_{j,s}]=c_{ij}b_{j,s},\quad [h_{i,r+1},b_{j,s} ]-[h_{i,r},b_{j,s+1} ]= \frac{c_{ij}}{2}\h\{h_{i,r},b_{j,s}\}.
\eeq    
\end{lem}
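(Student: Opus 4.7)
The plan is to prove the equivalence in both directions; the statement is essentially formal, so the argument reduces to rewriting the defining relations once one observes that the $c_{ij}-c_{\tau i,j}$ and $c_{ij}+c_{\tau i,j}$ terms in \eqref{qsconj5}--\eqref{qsconj2} collapse to a single $c_{ij}$ term when $c_{\tau i,j}=0$, and that the quadratic $c_{ij}c_{\tau i,j}$ term disappears.

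For the direction \eqref{alt} $\Rightarrow$ \eqref{qsconj5'}--\eqref{qsconj2}, I would observe that the first half of \eqref{alt} is literally \eqref{qsconj5'}. Setting $r=0$ in the second half of \eqref{alt} gives $[h_{i,1},b_{j,s}]=[h_{i,0},b_{j,s+1}]+\tfrac{c_{ij}}{2}\h\{h_{i,0},b_{j,s}\}$, and applying \eqref{qsconj5'} to the first bracket on the right yields exactly \eqref{qsconj5} (under $c_{\tau i,j}=0$). Finally, one gets \eqref{qsconj2} by summing the two instances of the second half of \eqref{alt} with indices $(r+1,s)$ and $(r,s+1)$.

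For the converse, I would prove the second half of \eqref{alt} by induction on $r$ for all $s\in \N$. The base case $r=0$ is a direct rewriting of \eqref{qsconj5} via \eqref{qsconj5'} applied to $[h_{i,0},b_{j,s+1}]=c_{ij}b_{j,s+1}$. For the inductive step, suppose the identity holds for some $r$ and all $s$; apply it at $(r,s+1)$ to rewrite $[h_{i,r},b_{j,s+2}] = [h_{i,r+1},b_{j,s+1}]-\tfrac{c_{ij}}{2}\h\{h_{i,r},b_{j,s+1}\}$, substitute this into \eqref{qsconj2} (which simplifies to $[h_{i,r+2},b_{j,s}]-[h_{i,r},b_{j,s+2}]=\tfrac{c_{ij}}{2}\h\{h_{i,r+1},b_{j,s}\}+\tfrac{c_{ij}}{2}\h\{h_{i,r},b_{j,s+1}\}$ under $c_{\tau i,j}=0$), and the cross terms cancel to give the desired identity at level $r+1$.

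There is no serious obstacle: both directions are elementary index manipulations. The only mildly subtle point is to carry the induction on $r$ with $s$ left free, so that at stage $r+1$ the needed instance of the hypothesis (namely at $(r,s+1)$) is already available.
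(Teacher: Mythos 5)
Your proof is correct and follows essentially the same route as the paper: the paper packages the computation into the quantity $\mathscr Q_{ij}(r,s)=[h_{i,r+1},b_{j,s}]-[h_{i,r},b_{j,s+1}]-\frac{c_{ij}}{2}\h\{h_{i,r},b_{j,s}\}$, notes that \eqref{qsconj2} becomes $\mathscr Q_{ij}(r+1,s)+\mathscr Q_{ij}(r,s+1)=0$ and that $\mathscr Q_{ij}(0,s)=0$, and concludes by exactly your induction on $r$ with $s$ free. Your write-up just makes explicit the easy converse direction that the paper calls obvious.
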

\begin{proof}
If $c_{\tau i,j}=0$, then the relation \eqref{qsconj5'} is exactly the first relation of \eqref{alt}. Let 
\[
\mathscr Q_{ij}(r,s)=[h_{i,r+1},b_{j,s}]-[h_{i,r},b_{j,s+1}]-\frac{c_{ij}}{2} \h\{h_{i,r},b_{j,s}\}.
\]
If $c_{\tau i, j}=0$, then \eqref{qsconj2} is equivalent to
\[
\mathscr Q_{ij}(r+1,s)+\mathscr Q_{ij}(r,s+1)=0.
\]
Note that $\mathscr Q_{ij}(0,s)=0$ by \eqref{qsconj5'} and \eqref{qsconj5}. Thus the relations \eqref{qsconj5'}--\eqref{qsconj2} imply \eqref{alt}. 

The other direction is obvious.
\end{proof}

\subsection{PBW type bases for $\Yi$}\label{sec:pbw}
For each $\alpha\in\cR^+$, fix $i_1,\dots,i_k\in\I^0$ such that the element $f_{\alpha}:=\big[f_{i_1},[f_{i_2},\cdots[f_{i_{k-1}},f_{i_k}]\cdots]\big]$ is a nonzero root vector in $\g_{-\alpha}$. Define
\beq\label{bjir1}
b_{\alpha,r}:=\Big[b_{i_1,0},\big[b_{i_2,0},\cdots[b_{i_{k-1},0},b_{i_k,r}]\cdots\big]\Big].
\eeq
Extend the action of $\tau$ to $\I$ by setting $\tau (0)=0$. Let 
\beq\label{I-tau}
\I_\tau=\{\text{the chosen representatives of $\tau$-orbits in } \I\},\qquad \I_\tau^0=\I_\tau\setminus \{0\}.
\eeq 
Define 
\beq\label{eq:I+-}
\I_=^0 =\{\text{fixed points of $\tau$ in $\I^0$}\},\qquad  \I_{\ne}^0=\I_\tau^0\setminus \I_=^0.
\eeq

\begin{prop}\label{prop:span}
The ordered monomials of 
\beq\label{eq:pbw-elem}
\{b_{\alpha,r},h_{i,r},h_{j,2r+1}~|~\alpha\in\cR^+,i\in\I^0_{\ne},j\in\I_=^0,r\in\N\}
\eeq
(with respect to any fixed total ordering) form a spanning set of $\Yi$.
\end{prop}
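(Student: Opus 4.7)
The plan is to introduce a filtration on $\Yi$, show the associated graded algebra receives a surjection from the enveloping algebra $\rU(\g[z]^{\check\omega})$ of the twisted current algebra (Proposition \ref{prop:qscrel}), and then lift the classical PBW theorem to obtain spanning. Concretely, assign $\deg h_{i,r}=\deg b_{i,r}=r$ and $\deg\h=1$, let $F_\bullet\Yi$ be the resulting ascending filtration, and denote by $\gr_F\Yi$ the associated graded algebra.

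The first step is to check that the images $\bar h_{i,r}$ and $\bar b_{i,r}$ in $\gr_F\Yi$, under the identification $\bar h_{i,r}\leftrightarrow\xi_{i,r}$ and $\bar b_{i,r}\leftrightarrow x_{i,r}$, satisfy the defining relations of $\rU(\g[z]^{\check\omega})$. The parity identities \eqref{hrel} match \eqref{eq:qsclassical1}, and the current relations \eqref{qsconj0}--\eqref{qsconj3} yield \eqref{eq:qsclassical1}--\eqref{eq:qsclassical0} upon extracting leading-degree components (all $\h$-weighted correction terms have strictly smaller total degree and drop out). For the Serre relations, I would appeal to the reduced presentation in Proposition \ref{prop:redu}: one only needs to verify the finite-type Serre relations \eqref{finserre1}--\eqref{finserre4} in $\gr_F\Yi$, and these follow from low-degree specializations of \eqref{qsconj4}, \eqref{qsconj9}, \eqref{qsconj8}, \eqref{qsconj10} together with the commutation relations; for example, \eqref{finserre1} drops out of \eqref{qsconj10} at $k_1=k_2=r=0$ after applying \eqref{qsconj5'}. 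This yields a surjective algebra homomorphism $\pi:\rU(\g[z]^{\check\omega})\twoheadrightarrow\gr_F\Yi$.

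Next, I apply the classical PBW theorem to the universal enveloping algebra $\rU(\g[z]^{\check\omega})$. A linear basis of the Lie algebra $\g[z]^{\check\omega}$ may be assembled from the explicit formulas for $\varrho^{-1}(\bar\flat_{i,r,k})$ and $\varrho^{-1}(\bar\theta_{i,r,k})$ in Section \ref{sec:pre}: root-vector elements corresponding to $b_{\alpha,r}$ for all $\alpha\in\cR^+$, $r\in\N$, together with Cartan elements corresponding to $h_{i,r}$ for $i\in\I_{\ne}^0$, $r\in\N$ and $h_{j,2r+1}$ for $j\in\I_=^0$, $r\in\N$ (the even-index fixed-point Cartan vectors vanish by \eqref{hrel}). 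Push the PBW basis forward via $\pi$ and then lift: a spanning set of $\gr_F\Yi$ lifts to a spanning set of $\Yi$ by the standard filtered-algebra argument, since $\Yi=\bigcup_n F_n\Yi$. This yields that the ordered monomials in \eqref{eq:pbw-elem} span $\Yi$.

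The main obstacle lies in the first step. Verifying the degeneration of the Serre relations \eqref{qsconj8} and \eqref{qsconj10}, which involve infinite (or long finite) sums with $\h$-weighted coefficients, requires careful tracking of leading degrees and cancellations; the detour through the reduced presentation in Proposition \ref{prop:redu} is what makes this manageable, reducing the task to verifying the four finite-type Serre relations. A secondary, routine check is that the nested-bracket definition \eqref{bjir1} of $b_{\alpha,r}$ maps under $\pi$ (modulo lower filtration) to the expected root vector $\varrho^{-1}(\bar b_{\alpha,r})$; this is handled by induction on $\mathrm{ht}(\alpha)$ using \eqref{qsconj3}.
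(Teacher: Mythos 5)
Your overall strategy — filter $\Yi$ by the current degree, compare the associated graded with $\rU(\g[z]^{\check\omega})$ via its presentation (using the reduced presentation of Proposition \ref{prop:redu} to avoid the full Serre relations), invoke the classical PBW theorem, and lift spanning sets along the exhaustive filtration — is exactly the route the paper takes (its proof is a one-line reference to the same filtration argument, with $\deg b_{i,r}=\deg h_{i,r}=r+1$), and your identification of the relevant basis of $\g[z]^{\check\omega}$, including the vanishing of $h_{j,2r}$ for $\tau j=j$ via \eqref{hrel}, is correct.

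However, there is a concrete error in your setup: you assign $\deg\h=1$, and then assert that ``all $\h$-weighted correction terms have strictly smaller total degree and drop out.'' These two choices are incompatible. In \eqref{qsconj3}, for instance, the left-hand side $[\X_{i,r+1},\X_{j,s}]-[\X_{i,r},\X_{j,s+1}]$ has degree $r+s+1$, while the correction term $\tfrac{c_{ij}}{2}\h\{\X_{i,r},\X_{j,s}\}$ has degree $1+r+s=r+s+1$ under your grading — the same, not smaller. The same happens in \eqref{qsconj5}, \eqref{qsconj2}, and in \eqref{qsconj8}, where with $\deg\h=1$ \emph{every} summand $\h^{2p}[h_{i,k_1+k_2-2p-1},b_{j,r+1}]$ has the same total degree, so nothing truncates. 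Consequently the images $\bar h_{i,r},\bar b_{i,r}$ in your $\gr_F\Yi$ do not satisfy \eqref{eq:qsclassical3} (they satisfy its $\h$-homogenized version), the map $\pi$ is not well-defined, and the reordering induction on (degree, length) does not terminate, since commutators produce length-two monomials of the same degree. The fix is simply to treat $\h$ as a scalar of degree $0$ (equivalently, work in $\mathbf Y^\imath_1$ via Remark \ref{rem:h=1}): then every $\h$-correction term in \eqref{qsconj0}--\eqref{qsconj10} has strictly smaller filtration degree, the leading terms reproduce precisely the relations of Proposition \ref{prop:qscrel} (as Remark \ref{rem:newSerre}'s neighbourhood and Corollary \ref{cor:PBW2} confirm), and the rest of your argument goes through as written.
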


\begin{proof}
The proof is the same as \cite[Prop. 4.5]{KLWZ23b} by defining a filtration on the algebra $\Yi$ with $\deg b_{i,r}=\deg h_{i,r}=r+1$. The choice of the subset of elements of the form $h_{i,r},h_{j,2r+1}$ in \eqref{eq:pbw-elem} is due to the relation \eqref{hrel}.
\end{proof}

\begin{thm}\label{thm:pbw}
The ordered monomials of 
$$
\{b_{\alpha,r},h_{i,r},h_{j,2r+1}~|~\alpha\in\cR^+,i\in\I^0_{\ne},j\in\I_{=}^0,r\in\N\}
$$ 
(with respect to any fixed total ordering) form a basis of $\Yi$. 
\end{thm}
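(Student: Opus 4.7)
The spanning half is already furnished by Proposition \ref{prop:span}, so only linear independence remains. The plan is to establish this via a degeneration argument that reduces to the classical PBW theorem for $\rU(\g[z]^{\check\omega})$, using the presentation in Proposition \ref{prop:qscrel}.

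First, I equip $\Yi$ with the ascending filtration from the proof of Proposition \ref{prop:span}, namely $\deg b_{i,r}=\deg h_{i,r}=r+1$. I would then inspect each defining relation \eqref{qsconj0}-\eqref{qsconj10} and identify its top-degree component under the correspondence $\overline{b_{i,r}}\leftrightarrow x_{i,r}$, $\overline{h_{i,r}}\leftrightarrow \xi_{i,r}$. The $\h$-scaled corrections in \eqref{qsconj5}-\eqref{qsconj2} and the lower-order tails in \eqref{qsconj8}, \eqref{qsconj10} drop to strictly smaller filtration degree in the associated graded, leaving precisely the classical relations \eqref{eq:qsclassical1}-\eqref{eq:qsclassical5}. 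This verification yields a surjective algebra homomorphism
\[
\pi: \rU(\g[z]^{\check\omega}) \twoheadrightarrow \gr\Yi,\qquad x_{i,r}\mapsto \overline{b_{i,r}},\ \xi_{i,r}\mapsto \overline{h_{i,r}}.
\]

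Next, I would apply the classical PBW theorem to the Lie subalgebra $\g[z]^{\check\omega}\subset \g[z]$ to obtain a basis of $\rU(\g[z]^{\check\omega})$ consisting of ordered monomials in $\{x_{\alpha,r},\,\xi_{i,r},\,\xi_{j,2r+1}\mid \alpha\in\cR^+,\,i\in\I^0_{\ne},\,j\in\I^0_=,\,r\in\N\}$; the parity restriction for $j\in\I^0_=$ arises from the relation $\xi_{i,r}=(-1)^{r+1}\xi_{\tau i,r}$ in \eqref{eq:qsclassical1}, which forces $\xi_{j,2r}=0$ on $\tau$-fixed nodes. Under $\pi$, these PBW vectors map to the symbols of the candidate basis of Theorem \ref{thm:pbw}. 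By Proposition \ref{prop:span} these symbols already span $\gr\Yi$, and their preimages in $\rU(\g[z]^{\check\omega})$ are linearly independent, so $\pi$ must be an isomorphism. Consequently the symbols form a basis of $\gr\Yi$, and a standard filtered-graded lifting argument promotes this to a basis of $\Yi$ itself.

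The main obstacle is the well-definedness of $\pi$, specifically the verification that the top-degree part of the new Serre relation \eqref{qsconj10} (arising only in type AIII$_{2n}$) reproduces the classical Serre relation \eqref{eq:qsclassical4} with the correct coefficient $(-1)^{k_1}+(-1)^{k_2}+2(-1)^r$. This requires a careful accounting of filtration degrees in the sum $\tfrac{4}{3}\mathrm{Sym}_{k_1,k_2}(-1)^{k_1}\sum_{p=0}^{k_1+r}3^{-p}[b_{i,k_2+p},h_{\tau i,k_1+r-p}]$, combined with the identity $h_{\tau i,s}=(-1)^{s+1}h_{i,s}$ from \eqref{hrel} and the bracket rule \eqref{qsconj5'} to collapse the geometric sum at top filtration level. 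Should this direct verification prove unwieldy, a fallback is to split by type: for AIII$_{2n-1}$, DII$_n$, EII$_6$ invoke Theorem \ref{th:A} to transport the known PBW basis of $\Gr_{\bK}\tUi$ (coming from the Drinfeld presentation of \cite{LWZ23b}) back through the isomorphism $\Phi$, and for AIII$_{2n}$ invoke Theorem \ref{th:B} to pull back a PBW basis of $\SY_N$ from the R-matrix side. Either alternative bypasses the delicate degeneration analysis of \eqref{qsconj10}, at the cost of depending on results proved later in the paper.
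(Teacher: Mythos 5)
Your primary argument has a genuine logical gap at the step ``their preimages in $\rU(\g[z]^{\check\omega})$ are linearly independent, so $\pi$ must be an isomorphism.'' A surjection always carries a basis to a spanning set; this says nothing about injectivity. Knowing that the classical relations hold in $\gr\Yi$ only produces the surjection $\pi:\rU(\g[z]^{\check\omega})\twoheadrightarrow\gr\Yi$, and nothing in the presentation rules out that the deformation collapses, i.e.\ that $\pi$ has a large kernel and the candidate monomials become linearly dependent in $\Yi$. Indeed, the isomorphism $\rU(\g[z]^{\check\omega})\cong\gr\Yi$ is exactly Corollary~\ref{cor:PBW2}, which the paper deduces \emph{from} Theorem~\ref{thm:pbw}, not the other way around; your argument is circular at this point. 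The linear independence is the whole content of a PBW theorem for a deformation and requires an external realization of $\Yi$ inside an algebra whose size is already controlled. (Your worry about the top-degree part of \eqref{qsconj10} is comparatively minor: even if that verification succeeds, the argument still fails at the surjection-implies-isomorphism step.)

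What you offer as a ``fallback'' is in fact the only workable route among those you propose, and it is precisely the paper's proof: for types AIII$_{2n-1}$, DII$_n$, EII$_6$ the independence follows from the proof of Theorem~\ref{thm:main1}, where the images of the ordered monomials are shown to be linearly independent in $\Gr_{\bK}\tUi$ (whose size is controlled via Proposition~\ref{prop:GM} and the classical limit); for type AIII$_{2n}$ it follows from the proof of Theorem~\ref{main2}, where Proposition~\ref{prop:pwb} gives linear independence of the images in $\SY_N$ by passing to $\gr\,\Y_N\cong\rU(\gl_N[z]^{\vartheta})$. You should promote that route from fallback to the actual proof and discard the direct degeneration-to-$\gr\Yi$ argument.
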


\begin{proof}
By Proposition \ref{prop:span}, these monomials form a spanning set of $\Yi$. If $\Yi$ is not of type AIII$_{2n}$, then they are also linearly independent by the proof of Theorem \ref{thm:main1}; see the proof \cite[Thm. 4.10]{KLWZ23b} for detail. If $\Yi$ is of type AIII$_{2n}$, the statement is a corollary of the proof of Theorem \ref{main2} via Gauss decomposition.
\end{proof}

\begin{rem}
Setting $\h=0$ in the defining relation of $\Yi$, one recovers a presentation of $\rU(\g[z]^{\check\omega})$. Moreover, it follows by comparing the PBW basis of $\Yi$ in Theorem \ref{thm:pbw} and the corresponding PBW basis of  $\rU(\g[z]^{\check\omega})$ that $\Yi$ is a flat deformation of $\rU(\g[z]^{\check\omega})$.
\end{rem}

We define a filtration on $\Yi$ by setting $\deg \, \H_{i,r}=\deg \, \X_{i,r}=r$, and denote by ${\gr}\,\Yi$ the associated graded algebra of $\Yi$. Let $\bar{\H}_{i,r}, \bar{\X}_{i,r}$ denote the images of $\H_{i,r},\X_{i,r}$ in  ${\mathrm{gr}}\,\Yi$, respectively. Recall the presentation of $\rU\big(\g[z]^{\check\omega}\big)$ from Proposition \ref{prop:qscrel}.

\begin{cor}
\label{cor:PBW2}
 Let $\Yi$ be a twisted Yangian of quasi-split type. There is an algebra isomorphism  
\[
\iota: \rU\big(\g[z]^{\check\omega}\big) \stackrel{\cong}{\longrightarrow} {\mathrm{gr}}\,\Yi
\]
which sends $\xi_{i,r}\mapsto \bar{\H}_{i,r}, x_{i,r}\mapsto  \bar{\X}_{i,r}$. 
\end{cor}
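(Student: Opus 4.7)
The plan is to define $\iota$ as the algebra map on generators as stated, verify it is well-defined, and then establish bijectivity by comparing PBW bases on the two sides. One must first verify that $\bar\H_{i,r},\bar\X_{i,r}\in\gr\Yi$ satisfy the defining relations \eqref{eq:qsclassical1}--\eqref{eq:qsclassical5} of $\rU(\g[z]^{\check\omega})$ listed in Proposition~\ref{prop:qscrel}. The strategy is to extract, for each defining relation \eqref{qsconj0}--\eqref{qsconj10} of $\Yi$, the leading term with respect to the filtration $\deg\H_{i,r}=\deg\X_{i,r}=r$. In each of \eqref{qsconj5}, \eqref{qsconj2}, \eqref{qsconj3}, \eqref{qsconj8}, every summand weighted by a positive power of $\h$ lies in strictly smaller filtration degree than the leading term and therefore vanishes in $\gr\Yi$; what remains is precisely the corresponding classical relation. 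The identity \eqref{eq:qsclassical2} for general $r$ is recovered by combining its $r=0,1$ cases (from \eqref{qsconj5'}--\eqref{qsconj5}) with the recursion $[\bar\H_{i,r+2},\bar\X_{j,s}]=[\bar\H_{i,r},\bar\X_{j,s+2}]$ produced by the top-degree part of \eqref{qsconj2}, exactly as in the proof of Proposition~\ref{prop:qscrel}. The remaining relations \eqref{qsconj0}, \eqref{qsconj5'}, \eqref{qsconj4}, \eqref{qsconj9} are already $\h$-free and transcribe directly.

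Surjectivity of $\iota$ is immediate since $\{\bar\H_{i,r},\bar\X_{i,r}\}$ generates $\gr\Yi$. For injectivity, Theorem~\ref{thm:pbw} supplies a PBW basis of $\Yi$ whose image in $\gr\Yi$ is a homogeneous basis (the filtration is so defined that every ordered monomial in the PBW basis is filtration-homogeneous). On the other side, standard PBW applied to the enveloping algebra $\rU(\g[z]^{\check\omega})$, together with Proposition~\ref{prop:qscrel}, yields the analogous ordered-monomial basis $\{\xi_{i,r},\xi_{j,2r+1},x_{\alpha,r}\}$ over the same index ranges. The map $\iota$ sends one basis onto the other component by component, so it is an isomorphism.

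The main obstacle in the above outline is the new Serre relation \eqref{qsconj10} (type AIII$_{2n}$): every summand on the right-hand side carries top filtration degree $k_1+k_2+r$, so there is no $\h$-cancellation to invoke. Instead, one substitutes the already-verified \eqref{eq:qsclassical2} in $\gr\Yi$ into each bracket $[\bar\X_{i,k_2+p},\bar\H_{\tau i,k_1+r-p}]$, rewriting it as a scalar multiple of $\bar\X_{i,k_1+k_2+r}$, and then evaluates the two resulting weighted geometric sums (with common ratios $1/3$ and $-1/3$). The factor $\tfrac{4}{3}$ in \eqref{qsconj10} is precisely what is needed for the sum, after symmetrization in $k_1,k_2$, to reproduce the scalar $(-1)^{k_1}+(-1)^{k_2}+2(-1)^r$ appearing on the right-hand side of \eqref{eq:qsclassical4}.
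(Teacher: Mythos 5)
Your proposal is correct and follows the same route as the paper: verify the relations of Proposition~\ref{prop:qscrel} in ${\mathrm{gr}}\,\Yi$ by extracting leading filtration terms from \eqref{qsconj0}--\eqref{qsconj10}, then match the PBW basis of Theorem~\ref{thm:pbw} against the classical PBW basis coming from Proposition~\ref{prop:qscrel}. Your explicit check of the degenerate form of \eqref{qsconj10} (the two geometric sums with ratios $\pm\tfrac{1}{3}$ collapsing to $\tfrac{3}{2}\big(1+(-1)^{k_1+r}\big)$, hence after multiplying by $\tfrac{4}{3}(-1)^{k_1}$ and symmetrizing to the coefficient $(-1)^{k_1}+(-1)^{k_2}+2(-1)^r$ of \eqref{eq:qsclassical4}) is a worthwhile supplement, since that relation is specific to type AIII$_{2n}$ and is not covered by the split-type argument the paper cites.
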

\begin{proof}
The proof is similar to that of \cite[Coro. 4.13]{KLWZ23b} by using Proposition \ref{prop:qscrel} and Theorem \ref{thm:pbw}.
\end{proof}

We also formulate an alternative presentation of the twisted Yangian $\Yi$ which is simpler than the one in Definition \ref{deftY}. The new presentation is easier to verify as it avoids the full set of complicated Serre relations.

\begin{thm}\label{thm:redu}
The twisted Yangian $\Yi$ has the following presentation as a $\C[\h]$-algebra with generators $h_{i,r},b_{i,r}$, $i\in\I^0,r\in\N$, subject to the relations \eqref{qsconj0}--\eqref{qsconj3} and the finite type Serre relations \eqref{finserre1-ty}--\eqref{finserre4-ty}: for $c_{i,\tau i}=-1$,
\beq\label{finserre1-ty}
\big[b_{i,0},[b_{i,0},b_{\tau i,0}]\big]  
= 4b_{i,0},
\eeq
and for $j\neq i,\tau i,$
\begin{align}
[b_{i,0},b_{j,0}]&=0,\hskip 2.2cm c_{ij}=0,\label{finserre2-ty}
\\
\big[b_{i,0},[b_{i,0},b_{j,0}]\big]& =0,\hskip 2.2cm c_{ij}=-1 \text{ and }i\ne \tau i, \label{finserre3-ty}\\
\big[b_{i,0},[b_{i,0},b_{j,0}]\big]& =-b_{j,0},\hskip 1.6cm c_{ij}=-1 \text{ and }i=\tau i.\label{finserre4-ty}
\end{align}
\end{thm}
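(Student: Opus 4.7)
The plan is to run the same strategy as in Proposition \ref{prop:redu}: introduce the $\C[\h]$-algebra $\widetilde{\Yi}$ presented by the reduced relations \eqref{qsconj0}--\eqref{qsconj3} together with \eqref{finserre1-ty}--\eqref{finserre4-ty}, observe that these finite type relations are precisely the specializations at $r=k_1=k_2=0$ of the corresponding current Serre relations \eqref{qsconj4}, \eqref{qsconj9}, \eqref{qsconj8}, \eqref{qsconj10}, and conclude that there is a canonical surjective homomorphism $\Pi:\widetilde{\Yi} \twoheadrightarrow \Yi$. The theorem will then follow once we verify that the full current Serre relations already hold inside $\widetilde{\Yi}$, since this yields an inverse map $\Yi \to \widetilde{\Yi}$ to $\Pi$.

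To lift each finite type Serre relation to its current counterpart, I would induct on the total degree $r+k_1+k_2$. At each step I bracket a Serre relation of lower degree with an appropriate Cartan generator $h_{i,1}$ or $h_{\tau i,1}$, and then use \eqref{qsconj5} together with \eqref{qsconj2} and \eqref{qsconj3} to raise one of the indices $r,k_1,k_2$ by one. The subtlety absent in the classical case (Proposition \ref{prop:redu}) is the $\h$-correction term $\frac{\h(c_{ij}-c_{\tau i,j})}{2}\{h_{i,0},b_{j,s}\}$ in \eqref{qsconj5}, which contributes lower-degree expressions; these must either fit into the inductive hypothesis or cancel after symmetrization in $k_1,k_2$.

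For \eqref{qsconj4} the calculation is essentially identical to the classical one, since the $\h$-correction in \eqref{qsconj5} vanishes when $c_{ij}=c_{\tau i,j}=0$. The relations \eqref{qsconj9} and \eqref{qsconj8} can be handled in tandem: in \eqref{qsconj8} the series $\sum_{p\geq 0}2^{-2p}\h^{2p}$ on the right arises because iterating \eqref{qsconj5} with $i=\tau i$ produces a factor $\frac{\h}{2}\{h_{i,0},\cdot\}$ at each step, and these accumulate into a formal geometric series whose partial sums terminate once the index on the Cartan current drops below zero.

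The main obstacle is the genuinely new Serre relation \eqref{qsconj10} for type AIII$_{2n}$, which has no split-type counterpart. Starting from \eqref{finserre1-ty} and bracketing successively with $h_{i,1}$ and $h_{\tau i,1}$, one obtains a recurrence for $\mathrm{Sym}_{k_1,k_2}\big[b_{i,k_1},[b_{i,k_2},b_{\tau i,r}]\big]$ in terms of its lower-degree analogues plus correction terms of the form $[b_{i,*},h_{\tau i,*}]$. The characteristic ratio $\tfrac{1}{3}$ is then forced by the combination of the Cartan entries $c_{i,i}=2,\ c_{i,\tau i}=-1$ appearing in \eqref{qsconj5} (which produce an overall factor of $3$ when bracketing $h_{i,1}$ or $h_{\tau i,1}$ against $b_{i,r}$ or $b_{\tau i,r}$) together with the initial datum \eqref{finserre1-ty}; solving the recurrence then yields the geometric series $\sum_{p=0}^{k_1+r}3^{-p}$ and the overall constant $\tfrac{4}{3}$. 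I expect this recursive bookkeeping, rather than any conceptual difficulty, to be the technical heart of the argument; once it is carried out, $\Pi$ is an isomorphism.
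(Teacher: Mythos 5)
Your overall architecture (a surjection $\Pi:\widetilde{\Yi}\twoheadrightarrow\Yi$, inverted by deriving the full current Serre relations inside $\widetilde{\Yi}$) is sound, but the paper takes a different and much softer route. Instead of lifting each Serre relation by induction at the Yangian level, the paper's proof combines Proposition \ref{prop:redu} with Corollary \ref{cor:PBW2}: the leading symbols of the reduced relations of $\widetilde{\Yi}$ (with respect to the filtration $\deg h_{i,r}=\deg b_{i,r}=r$) are the reduced defining relations of $\rU(\g[z]^{\check\omega})$, so Proposition \ref{prop:redu} yields a surjection $\rU(\g[z]^{\check\omega})\twoheadrightarrow\gr\,\widetilde{\Yi}$; composing with $\gr\,\Pi$ and the isomorphism $\gr\,\Yi\cong\rU(\g[z]^{\check\omega})$ of Corollary \ref{cor:PBW2} gives the identity on generators, forcing $\gr\,\Pi$, and hence $\Pi$, to be injective. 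This pushes all of the inductive bracketing into the classical current algebra, where the $\h$-corrections you worry about are absent. Your direct approach is viable in principle --- indeed the paper carries out exactly your induction for the new relation \eqref{qsconj10} in Proposition \ref{prop-serre}, using the logarithmic Cartan elements $\tl h_{i,r}$ of \eqref{htl} as clean raising operators, and the split-type relations \eqref{qsconj9}, \eqref{qsconj8} are handled this way in \cite{KLWZ23b} --- but it costs a substantial computation per relation (the closed form $\sum_{p}2^{-2p}\h^{2p}$ in \eqref{qsconj8} in particular needs a careful two-variable induction that you only gesture at), whereas the graded argument is a few lines.

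One concrete point you must address on either route: the finite type relations \eqref{finserre1-ty}--\eqref{finserre4-ty} contain no base case for \eqref{qsconj4} when $j=\tau i$ and $c_{i,\tau i}=0$, namely $[b_{i,0},b_{\tau i,0}]=h_{\tau i,0}$. Relation \eqref{qsconj3} only controls the differences $[b_{i,r+1},b_{\tau i,s}]-[b_{i,r},b_{\tau i,s+1}]$, and a weight computation with \eqref{qsconj5'} shows that $[b_{i,0},b_{\tau i,0}]-h_{\tau i,0}$ commutes with all $h_{k,0}$, so it is not forced to vanish by \eqref{qsconj0}--\eqref{qsconj3} together with \eqref{finserre1-ty}--\eqref{finserre4-ty}. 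In the classical Proposition \ref{prop:redu} the corresponding relation \eqref{eq:qsclassical0} is retained in full among the non-Serre relations, so your claim that the \eqref{qsconj4} case is ``essentially identical to the classical one'' hides the fact that this base relation must either be added to the reduced presentation or explicitly derived before your induction can start.
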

\begin{proof}
The proof is completely analogous to that of \cite[Thm. 4.14]{KLWZ23b} with the help of Proposition \ref{prop:redu} and Corollary \ref{cor:PBW2}.
\end{proof}

\subsection{Presentation via generating functions}
\label{sec:geniQG}
Define
\[
b_{i}(u):=\h \sum_{r\gge 0}b_{i,r}u^{-r-1},\qquad h_{i}(u):=1+\h \sum_{r\gge 0}h_{i,r}u^{-r-1},\qquad h_i^{\circ}(u):=h_i(u)-1.
\]
Defining relations \eqref{qsconj0}--\eqref{qsconj8} (without \eqref{qsconj10}) for $\Yi$ can be reformulated in generating function form as below:
\beq
[h_{i}(u),h_j(v)]=0,\hskip2cm  h_{\tau i}(u)=h_i(-u),\qquad \hskip1cm
\eeq
\beq
\begin{split}
(u-v)[b_i(u),b_j(v)]&=\frac{c_{ij}}{2}\h\{b_i(u),b_j(v)\}
+\h\big([b_{i,0},b_j(v)]-[b_i(u),b_{j,0}]\big)
\\&\quad  -\delta_{\tau i,j}  \h \Big(\frac{2u}{u+v} h_i^\circ(u)+\frac{2v}{u+v}h_j^\circ(v)\Big),  \qquad\qquad (c_{i,\tau i}\ne 0),
\end{split}
\eeq

\beq
\begin{split}
(u^2-v^2)[h_i(u),b_j(v)]=&\,\frac{c_{ij}-c_{\tau i,j}}{2}\h u\{h_i(u),b_j(v)\}+\frac{c_{ij}+c_{\tau i,j}}{2}\h v\{h_i(u),b_j(v)\}\\
&+\frac{c_{ij}c_{\tau i,j}}{4}\h^2[h_i(u),b_j(v)]-\h[h_i(u),b_{j,1}]\\
&-\h v[h_i(u),b_{j,0}]-\frac{c_{ij}+c_{\tau i,j}}{2}\h^2\{h_i(u),b_{j,0}\},
\end{split}
\eeq

\beq
(u+v)[b_{i}(u),b_{j}(v)]=\delta_{\tau i,j}(h_{j}(v)-h_i(u)), \hskip 3cm (c_{ij}=0),
\eeq

\beq\label{gconj5}
\Sym_{u_1,u_2}\big[b_{i}(u_1),[b_{i}(u_2), b_{j}(v) ]\big]=0, \hskip 2.5cm (c_{ij}=-1,j\ne\tau i\ne i),
\eeq

\beq\label{gconj6} 
\begin{split}
&\Sym_{u_1,u_2}\big[b_{i}(u_1),[b_{i}(u_2), b_{j}(v) ]\big]
\\
&=\frac{4\h }{u_1 + u_2}\Sym_{u_1,u_2}  \frac{u_2(v-\h )\, h_i(u_2) b_j(v)- u_2(v+\h  )\, b_j(v)h_i(u_2) }{4u_2^2- \h^2}
\quad (c_{ij}=-1,\tau i=i).
\end{split}
\eeq

\subsection{Deducing Serre relation \eqref{qsconj10}}
\label{sec:qsSerre}

In this subsection, we prove that assume relations \eqref{qsconj0}--\eqref{qsconj3}, then the Serre relation \eqref{qsconj10} for arbitrary $k_1,k_2,r$ can be deduced from \eqref{qsconj10} for the special case $k_1=k_2=r=0$, following the approach of \cite{Le93}.

Define a new set of imaginary root vectors $\zeta_{i,r}$ by
\beq\label{newrv}
\zeta(u):=\h\sum_{r\gge 0} \zeta_{i,r}u^{-r-1}=\ln h_i(u).
\eeq
For each $i\in \I^0$ and $r\in\bN$, $\zeta_{i,r}$ is a polynomial in $\{h_{i,s}\}_{0\lle s\lle r}$.
\begin{lem}[{cf. \cite{Le93,GTL13}}]\label{rvnew}
We have
\beq\label{htl0}
\begin{split}
[\zeta_{i,r},b_{j,s}]=\sum_{\ell=0}^{\lfloor \frac{r}{2}\rfloor} {r \choose 2\ell} \frac{2^{-2\ell}}{2\ell+1}\Big(c_{ij}  \big(c_{ij}\h\big)^{2\ell} +(-1)^{r+1}c_{\tau i,j}  \big(c_{\tau i,j}\h\big)^{2\ell}\Big)b_{j,r+s-2\ell}.
\end{split}
\eeq
for  $i,j\in\I^0$ and $r,s\in\bN$.
\end{lem}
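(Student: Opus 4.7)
My plan is to follow the generating-function strategy of Levendorskii~\cite{Le93} (see also \cite{GTL13}), adapted to the quasi-split twisted setting. The key preliminary observation is that by \eqref{qsconj0} all $h_{i,r}$ commute, so $\zeta_i(u):=\log h_i(u)$ is a well-defined formal series in the commutative subalgebra they generate, and each $\zeta_{i,r}$ is a polynomial expression in $h_{i,0},\ldots,h_{i,r}$.

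First, I would assemble the relations \eqref{qsconj5'}, \eqref{qsconj5}, \eqref{qsconj2} into a single multiplicative commutation relation between the currents $h_i(u)$ and $b_j(v)$. Setting $p=c_{ij}\h/2$ and $q=c_{\tau i,j}\h/2$, completing the square in the quadratic prefactors of \eqref{qsconj2} via $(u\mp \tfrac{(c_{ij}-c_{\tau i,j})\h}{4})^2-(v\pm \tfrac{(c_{ij}+c_{\tau i,j})\h}{4})^2=(u-v\mp p)(u+v\pm q)$, and using the ``initial data'' provided by \eqref{qsconj5'}, \eqref{qsconj5} to absorb the $r=0,1$ boundary terms, I expect to arrive at an identity of the form
\[
(u-v-p)(u+v+q)\,h_i(u)b_j(v)=(u-v+p)(u+v-q)\,b_j(v)h_i(u)+R(u,v),
\]
where $R(u,v)$ is polynomial in $v$ with coefficients involving only $b_{j,0}$, $b_{j,1}$ and Cartan currents.

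Next, since the $h_{i,r}$'s commute, I would use $h_i(u)=\exp(\zeta_i(u))$ together with $\mathrm{Ad}(e^X)=e^{\mathrm{ad}(X)}$ to recast the above as a commutator relation
\[
[\zeta_i(u),b_j(v)]=\log\phi(u,v)\cdot b_j(v)+\widetilde R(u,v),\qquad \phi(u,v):=\frac{(u-v+p)(u+v-q)}{(u-v-p)(u+v+q)},
\]
with $\widetilde R(u,v)$ still polynomial in $v$. Expanding $\log\phi$ by the standard identity $\log\frac{1+x}{1-x}=2\sum_{\ell\ge 0}\frac{x^{2\ell+1}}{2\ell+1}$ and then expanding $(u\mp v)^{-(2\ell+1)}=u^{-2\ell-1}\sum_{k\ge 0}\binom{2\ell+k}{k}(\pm v/u)^k$, I would extract the coefficient of $u^{-r-1}v^{-s-1}$ from $\log\phi(u,v)\cdot b_j(v)$; a direct bookkeeping of the binomial factors and of the $p^{2\ell+1},q^{2\ell+1}$ yields exactly the right-hand side of the lemma. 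The polynomial-in-$v$ correction $\widetilde R(u,v)$ contributes nothing to coefficients of $v^{-s-1}$ for $s\ge 0$, so this step is automatic.

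The main technical obstacle will be the second step: passing from the multiplicative identity to the commutator relation for $\zeta_i(u)$ needs care because $b_j(v)$ does not commute with $\zeta_i(u)$, so the logarithm/exponential manipulation must rigorously track the polynomial-in-$v$ corrections. I would handle this either by expanding $e^{\mathrm{ad}(\zeta_i(u))}b_j(v)$ order-by-order and verifying that each nested commutator yields a scalar multiple of $b_j(v)$ modulo a polynomial-in-$v$ remainder, or, alternatively, by proving the formula by induction on $r$: the base cases $r=0,1$ follow from \eqref{qsconj5'} and \eqref{qsconj5} (in particular, the identity $\zeta_{i,1}=h_{i,1}-\tfrac{\h}{2}h_{i,0}^2$ produces the clean cancellation $[\zeta_{i,1},b_{j,s}]=(c_{ij}+c_{\tau i,j})b_{j,s+1}$ with no anticommutator term), and the inductive step is driven by the recursion encoded in \eqref{qsconj2} combined with the Newton-type formula expressing $\zeta_{i,r}$ in terms of $\{h_{i,s}\}_{s\le r}$.
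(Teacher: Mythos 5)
Your plan is the same Levendorski\u{\i}-style argument the paper uses, and your computational core --- the factorization of the quadratic prefactors into $(u-v\mp p)(u+v\pm q)$ with $p=c_{ij}\h/2$, $q=c_{\tau i,j}\h/2$, the logarithm of the resulting rational function, and the expansion of $\log\frac{1+x}{1-x}$ --- is exactly what produces \eqref{htl0} (your base-case check $[\zeta_{i,1},b_{j,s}]=(c_{ij}+c_{\tau i,j})b_{j,s+1}$ is also correct). The one place where the arguments genuinely diverge is the step you flag as the main obstacle, and the paper's device there is worth knowing: it works in the auxiliary algebra $\mathscr L$ defined by \eqref{qsconj0}--\eqref{qsconj2} alone and introduces the shift endomorphism $\sigma_j\colon h_{i,r}\mapsto h_{i,r},\ b_{i,r}\mapsto b_{i,r+\delta_{ij}}$ (well defined because those relations are linear in the $b$'s and stable under shifting the $b_j$-index). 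This packages \eqref{qsconj5'}--\eqref{qsconj2} into the \emph{exact} identity \eqref{ppf6}, $h_i(u)\,b_{j,s}=F(u,\sigma_j)\,b_{j,s}\,h_i(u)$, applied to a single generator $b_{j,s}$ rather than to the current $b_j(v)$, with no remainder term at all. Since $\sigma_j$ fixes the $h$'s, conjugation by $h_i(u)$ acts on the span of $\{b_{j,t}\}_{t\in\bN}$ as multiplication by the commuting family of operators $F(u,\sigma_j)$, so $\exp(\mathrm{ad}\,\zeta_i(u))=F(u,\sigma_j)$ there and one takes logarithms directly, giving $[\zeta_i(u),b_{j,s}]=\log F(u,\sigma_j)\,b_{j,s}$; extracting the coefficient of $u^{-r-1}$ then yields \eqref{htl0} with the binomial $\binom{r}{2\ell}$ coming from expanding $(u\mp\sigma_j)^{-2\ell-1}$. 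Your current-based version with a polynomial-in-$v$ remainder can be pushed through (the remainders stay in nonnegative powers of $v$ under iteration, so they never contaminate the coefficients of $v^{-s-1}$), but you would still owe the reader a justification for passing from $\mathrm{Ad}(h_i(u))=\exp(\mathrm{ad}\,\zeta_i(u))$ to the commutator formula, e.g.\ by comparing $\mathrm{Ad}(h_i(u)^m)$ with $\exp(m\,\mathrm{ad}\,\zeta_i(u))$ and matching polynomials in $m$; the $\sigma_j$ trick is precisely what reduces that step to one line.
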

\begin{proof}
Let $\mathscr L$ be the $\C[\h]$-algebra generated by $h_{i,r},b_{i,r}$ for $i\in \I^0,r\in\bN$ subject to the relations \eqref{qsconj0}--\eqref{qsconj2}. Then for each $j$, there exists an algebra homomorphism $\sigma_j$ for $\mathscr L$ defined by the assignment
\[
h_{i,r}\mapsto h_{i,r},\qquad b_{i,r}\mapsto b_{i,r+\delta_{ij}}.
\]
Then, the relations \eqref{qsconj5'}--\eqref{qsconj2} imply that
\beq\label{ppf6}
h_i(u)b_{j,s}=\frac{(u-\sigma_j+\frac{c_{ij}\h}{2})(u+\sigma_j-\frac{c_{\tau i,j}\h}{2})}{(u-\sigma_j-\frac{c_{ij}\h}{2})(u+\sigma_j+\frac{c_{\tau i,j}\h}{2})}b_{j,s}h_i(u).
\eeq
Define $\zeta_i(u)$ and $\zeta_{i,r}\in\mathscr L$ again by \eqref{newrv}. It follows from \eqref{ppf6} that
\[
[\zeta_i(u),b_{j,s}]=\ln\bigg(\frac{(1-(\sigma_j-\frac{c_{ij}\h}{2})u^{-1})(1+(\sigma_j-\frac{c_{\tau i,j}\h}{2})u^{-1})}{(1-(\sigma_j+\frac{c_{ij}\h}{2})u^{-1})(1+(\sigma_j+\frac{c_{\tau i,j}\h}{2})u^{-1})}\bigg)b_{j,s}.
\]
Then, \eqref{htl0} follows from above by expanding the RHS as a power series in $u^{-1}$.
\end{proof}

Now we fix $i$ and suppose $c_{i,\tau i}=-1$ (which also implies that $i\ne \tau i$). It follows from Lemma \ref{rvnew} that there exist $\tl h_{i,r}$ as polynomials in $\{h_{i,p}\}_{p\in\bN}$ such that
\beq\label{htl}
[\tl h_{i,r},b_{i,s}]=\big(2+(-1)^r\big)b_{i,r+s},\quad [\tl h_{i,r},b_{\tau i,s}]=-\big(1+2(-1)^r\big)b_{\tau i,r+s},
\eeq
for all $s\in\bN$.

Set
\[
(k_1,k_2\,|\,r):=\Sym_{k_1,k_2}\big[b_{i,k_1},[b_{i,k_2},b_{\tau i,r}]\big].
\]
We have the following lemma.
\begin{lem} For $k_1,k_2,r\in\bN$, we have
\beq\label{ppf1}
\begin{split}
(k_1+1,k_2\,|\,r)+(k_1,k_2+1\,|\,r)&-2(k_1,k_2\,|\,r+1)\\
=&-4\mathrm{Sym}_{k_1,k_2}(-1)^{k_1}[b_{i,k_2},h_{\tau i,k_1+r+1}].
\end{split}
\eeq
\end{lem}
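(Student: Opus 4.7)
The plan is to prove \eqref{ppf1} by direct computation, using the defining ``ladder'' relation \eqref{qsconj3} twice — once to move between $b_{\tau i, r+1}$ and $b_{\tau i, r}$, and once between $b_{i,\ast+1}$ and $b_{i,\ast}$ — and then collapsing the resulting $\h$-corrections via a universal commutator/anticommutator identity.

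First I would specialize \eqref{qsconj3} to $j=\tau i$ (using $c_{i,\tau i}=-1$) and rearrange it as
$$[b_{i,a},b_{\tau i,r+1}]=[b_{i,a+1},b_{\tau i,r}]+\tfrac{\h}{2}\{b_{i,a},b_{\tau i,r}\}+2(-1)^{a}h_{\tau i,a+r+1}.$$
Substituting this into both summands of $(k_1,k_2\mid r+1)$ decomposes it into three pieces: a ``shifted double-bracket'' piece $[b_{i,k_1},[b_{i,k_2+1},b_{\tau i,r}]]+[b_{i,k_2},[b_{i,k_1+1},b_{\tau i,r}]]$; an $\h$-anticommutator piece $\tfrac{\h}{2}\Sym_{k_1,k_2}[b_{i,k_1},\{b_{i,k_2},b_{\tau i,r}\}]$; and an $h_{\tau i}$-piece $2\Sym_{k_1,k_2}(-1)^{k_1}[b_{i,k_2},h_{\tau i,k_1+r+1}]$. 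I would then plug this expansion, along with the tautological expansions of $(k_1+1,k_2\mid r)$ and $(k_1,k_2+1\mid r)$, into the left-hand side of \eqref{ppf1}.

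At that stage, four of the six double-bracket terms come from $(k_1+1,k_2\mid r)+(k_1,k_2+1\mid r)$ and four (with opposite sign) from $-2(k_1,k_2\mid r+1)$; the surviving combination collapses by the Jacobi identity to $\bigl[[b_{i,k_1+1},b_{i,k_2}]-[b_{i,k_1},b_{i,k_2+1}],\,b_{\tau i,r}\bigr]$. I would then apply \eqref{qsconj3} with $j=i$ (using $c_{ii}=2$ and $\delta_{\tau i,i}=0$, since $c_{i,\tau i}=-1$ forces $i\neq\tau i$) to rewrite this inner bracket as $\h\{b_{i,k_1},b_{i,k_2}\}$. The last step is the universal identity
$$[\{A,B\},C]=[A,\{B,C\}]+[B,\{A,C\}],$$
valid in any associative algebra, which with $A=b_{i,k_1}$, $B=b_{i,k_2}$, $C=b_{\tau i,r}$ exactly cancels $\h[\{b_{i,k_1},b_{i,k_2}\},b_{\tau i,r}]$ against the $\h$-anticommutator piece from step one. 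What survives is precisely $-4\Sym_{k_1,k_2}(-1)^{k_1}[b_{i,k_2},h_{\tau i,k_1+r+1}]$.

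The main hazard is purely bookkeeping: tracking the $(-1)^{k_\alpha}$ factors and the symmetrizations accurately, and verifying via the swap $k_1\leftrightarrow k_2$ that the $h_{\tau i}$-contribution assembled above is written in the form stated in \eqref{ppf1}. No new algebraic input beyond \eqref{qsconj3} and the Jacobi identity is required, so I do not anticipate any serious obstacle.
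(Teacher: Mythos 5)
Your computation is correct and follows essentially the same route as the paper's proof: both arguments combine \eqref{qsconj3} with $j=\tau i$ (to trade the shift in $b_{\tau i}$ for a shift in $b_i$ plus $\h$- and $h_{\tau i}$-corrections), \eqref{qsconj3} with $j=i$ (giving $[b_{i,k_1+1},b_{i,k_2}]-[b_{i,k_1},b_{i,k_2+1}]=\h\{b_{i,k_1},b_{i,k_2}\}$), the Jacobi identity, and the identity $[\{A,B\},C]=[A,\{B,C\}]+[B,\{A,C\}]$ to cancel the $\h$-terms. The only difference is organizational (you expand $(k_1,k_2\,|\,r+1)$ while the paper expands $(k_1+1,k_2\,|\,r)+(k_1,k_2+1\,|\,r)$), which is immaterial.
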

\begin{proof}
By Jacobi identity, we have
\begin{align*}
&(k_1+1,k_2\,|\,r)+(k_1,k_2+1\,|\,r)\\
=\hskip0.2cm & \big[[b_{i,k_1+1},b_{i,k_2}],b_{\tau i,r}\big]+\big[[b_{i,k_2+1},b_{i,k_1}],b_{\tau i,r}\big]\\
&+2\big[b_{i,k_2},[b_{i,k_1+1},b_{\tau i,r}]\big]+2\big[b_{i,k_1},[b_{i,k_2+1},b_{\tau i,r}]\big]\\
\stackrel{\eqref{qsconj3}}{=}\,  & \h\big[\{b_{i,k_1},b_{i,k_2}\},b_{\tau i,r}\big]\\
&+2\big[b_{i,k_2},[b_{i,k_1},b_{\tau i,r+1}]\big]-\h\big[b_{i,k_2},\{b_{i,k_1},b_{\tau i,r}\}\big]-4(-1)^{k_1}[b_{i,k_2},h_{\tau i,k_1+r+1}]\\&+2\big[b_{i,k_1},[b_{i,k_2},b_{\tau i,r+1}]\big]-\h\big[b_{i,k_1},\{b_{i,k_2},b_{\tau i,r}\}\big]-4(-1)^{k_2}[b_{i,k_1},h_{\tau i,k_2+r+1}]\\
=\hskip0.2cm & 2(k_1,k_2\,|\,r+1)-4(-1)^{k_1}[b_{i,k_2},h_{\tau i,k_1+r+1}]-4(-1)^{k_2}[b_{i,k_1},h_{\tau i,k_2+r+1}].\qedhere
\end{align*}
\end{proof}

\begin{prop}\label{prop-serre}
Suppose the relations \eqref{qsconj0}--\eqref{qsconj3} hold. Assuming further that $c_{i,\tau i}=-1$ and 
\beq\label{ppf0}
\big[b_{i,0},[b_{i,0},b_{\tau i,0}]\big]=4b_{i,0},
\eeq
then we have
\beq\label{todo-}
(k_1,k_2\,|\,r)=\frac{4}{3}\,\mathrm{Sym}_{k_1,k_2}(-1)^{k_1}\sum_{p=0}^{k_1+r}3^{-p}[b_{i,k_2+p},h_{\tau i,k_1+r-p}].
\eeq
\end{prop}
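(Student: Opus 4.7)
The plan is to propagate the identity \eqref{todo-} from the base case $k_1=k_2=r=0$ using two types of recurrences: the symmetrized Jacobi-style relation \eqref{ppf1}, and the Leibniz rule applied with the shifted Cartan elements $\tl h_{i,s}$ from Lemma \ref{rvnew}. Both sides of \eqref{todo-} will be shown to satisfy the same recurrences, so their difference inherits the homogeneous versions, and vanishing at the single base point will propagate to all indices.

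First I verify the base case. By \eqref{ppf0} the left side at $k_1=k_2=r=0$ equals $2\big[b_{i,0},[b_{i,0},b_{\tau i,0}]\big] = 8 b_{i,0}$. For the right side, \eqref{qsconj0} gives $h_{\tau i,0}=-h_{i,0}$, and then \eqref{qsconj5'} together with $c_{ii}=2$, $c_{\tau i,i}=-1$ gives $[b_{i,0},h_{\tau i,0}]=3b_{i,0}$, so the right side evaluates to $\frac{4}{3}\cdot 2\cdot 3\,b_{i,0} = 8 b_{i,0}$ as well.

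Denote by $T(k_1,k_2,r)$ the right-hand side of \eqref{todo-} and set $D(k_1,k_2,r) := (k_1,k_2\,|\,r) - T(k_1,k_2,r)$. I would next verify that $T$ satisfies the recurrence \eqref{ppf1}, as well as the identity
\begin{equation*}
[\tl h_{i,s},T(0,0,r)] = 2(2+(-1)^s)\,T(s,0,r) - (1+2(-1)^s)\,T(0,0,r+s),
\end{equation*}
which is what $(0,0\,|\,r)$ obeys via a Leibniz-rule expansion of $[\tl h_{i,s}, 2[b_{i,0},[b_{i,0},b_{\tau i, r}]]]$ using \eqref{htl}. Both verifications are purely combinatorial: after reindexing the geometric sum $\sum_p 3^{-p}$, they reduce to the telescoping $3\cdot 3^{-(p+1)} - 3^{-p} = 0$ together with the cancellation $(-1)^s(2+(-1)^s) - (1+2(-1)^s) = 0$, which holds for each parity of $s$. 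It is precisely this compatibility that forces the prefactor $\tfrac{4}{3}$ and the base $3$ appearing in \eqref{todo-}. Consequently $D$ satisfies both identities in their homogeneous forms.

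An induction on $r$ then gives $D(0,0,r)=0$. For the step, assume $D(0,0,r-1)=0$. Bracketing with $\tl h_{i,1}$ in the homogeneous commutator identity yields $2D(1,0,r-1)+D(0,0,r)=0$. On the other hand, the homogeneous \eqref{ppf1} at $(0,0,r-1)$, together with the symmetry $D(1,0,r-1)=D(0,1,r-1)$, yields $D(1,0,r-1)=D(0,0,r)$. Adding the two gives $3D(0,0,r)=0$, hence $D(0,0,r)=0$. With $D(0,0,r)\equiv 0$ in hand, the homogeneous commutator identity immediately yields $D(s,0,r)=0$ for all $s,r$, and an induction on $k_1$ using the homogeneous recurrence then propagates this to $D(k_1,k_2,r)=0$ for all $k_1,k_2,r$.

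The main obstacle will be the combinatorial verification in the third paragraph: that $T$ satisfies the same two recurrences as $(k_1,k_2\,|\,r)$. Once this is settled, all remaining steps reduce to short linear manipulations.
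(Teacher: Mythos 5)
Your proposal is correct and follows essentially the same route as the paper: both rest on the modified Cartan elements $\tl h_{i,s}$ from Lemma \ref{rvnew} satisfying \eqref{htl}, the recurrence \eqref{ppf1}, and an induction starting from $(0,0\,|\,0)$. Your repackaging — checking once that the right-hand side $T$ satisfies the same two recurrences (the telescoping $3\cdot 3^{-(p+1)}-3^{-p}=0$ and the cancellation $(-1)^s(2+(-1)^s)-(1+2(-1)^s)=0$ do hold) and then propagating the vanishing of the difference $D$ via the homogeneous identities — is a slightly cleaner organization of the same argument, which the paper instead carries out by solving for each $(k_1,k_2\,|\,r)$ explicitly in stages.
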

\begin{proof}
Note that the proofs of \eqref{htl} and \eqref{ppf1} only use the relations \eqref{qsconj0}--\eqref{qsconj3}. Therefore, under our assumption, \eqref{htl} and \eqref{ppf1} hold. We shall work inductively on the number of nonzero numbers in $k_1,k_2,r$.

We first show \eqref{todo-} for the case $(0,0\,|\,r)$ by induction on $r$. By \eqref{qsconj5'}, we have $[h_{\tau i,0},b_{i,s}]=-3b_{i,s}$. Thus the base case follows from our assumption. Suppose \eqref{todo-} holds for $(0,0\,|\,r)$. Then bracketing it with $\tl h_{i,1}$, we find that
\beq\label{ppf2}
2(1,0\,|\,r)+(0,0\,|\,r+1)=\frac{8}{3}\,\sum_{p=0}^{r}3^{-p}[b_{i,1+p},h_{\tau i,r-p}].
\eeq
On the other hand, by \eqref{ppf1}, we have
\beq\label{ppf3}
2(1,0\,|\,r)-2(0,0\,|\,r+1)=-8[b_{i,0},h_{\tau i,r+1}].
\eeq
Solving for $(0,0\,|\,r+1)$ from \eqref{ppf2} and \eqref{ppf3}, we conclude that \eqref{todo-} also holds for $(0,0\,|\,r+1)$. Therefore, we have proved \eqref{todo-} for $(0,0\,|\,r)$ with any $r\in\bN$.

Bracketing \eqref{ppf0} with $\tl h_{i,r}$, it follows from \eqref{htl} that
\[
2(r,0\,|\,0)-(-1)^r(0,0\,|\,r)=8b_{i,r}.
\]
Using \eqref{todo-} with $(0,0\,|\,r)$ and $[h_{\tau i,0},b_{i,s}]=-3b_{i,s}$, we also obtain \eqref{todo-} with $(r,0\,|\,0)$.

Bracketing $(0,0\,|\,r)$ with $\tl h_{i,k_1}$, we have
\[
2(k_1,0\,|\,r)-(-1)^r(0,0\,|\,k_1+r)=\frac{8}{3}\sum_{p=0}^{r}3^{-p}[b_{i,k_1+p},h_{\tau i,r-p}].
\]
By \eqref{todo-} with $(0,0\,|\,k_1+r)$, we obtain \eqref{todo-} with $(k_1,0\,|\,r)$.

Bracketing $(k_1,0\,|\,0)$ with $\tl h_{i,k_2}$, we get
\begin{align*}
(k_1+k_2,0\,|\,0)+(k_1,k_2\,|\,0)&-(-1)^{k_2}(k_1,0\,|\,k_2)\\
&=\frac{4}{3}\,[b_{i,k_1+k_2},h_{\tau i,0}]+\frac{4}{3}\,(-1)^{k_1}\sum_{p=0}^{k_1}3^{-p}[b_{i,k_2+p},h_{\tau i,k_1-p}].
\end{align*}
Since we already obtained \eqref{todo-} with $(k_1+k_2,0\,|\,0)$ and $(k_1,0\,|\,k_2)$, we easily deduce \eqref{todo-} with $(k_1,k_2\,|\,0)$ from the above equation.

Finally, bracketing $(k_1,k_2\,|\,0)$ with $\tl h_{i,r}$, we have
\begin{align*}
(k_1+r,k_2\,|\,0)&+(k_1,k_2+r\,|\,0)-(-1)^{r}(k_1,k_2\,|\,r)\\
&=\frac{4}{3}(-1)^{k_1}\sum_{p=0}^{k_1}3^{-p}[b_{i,k_2+r+p},h_{\tau i,k_1-p}]+\frac{4}{3}\,(-1)^{k_2}\sum_{p=0}^{k_2}3^{-p}[b_{i,k_1+r+p},h_{\tau i,k_2-p}].
\end{align*}
Repeating the same procedure, we verify \eqref{todo-} with $(k_1,k_2\,|\,r)$.
\end{proof}

\section{A degeneration approach}
\label{sec:degen}
In this section, we first recall from \cite{LWZ23b} the Drinfeld type presentation for the quasi-split affine $\imath$quantum group $\tUi$ of type AIII$_{2n-1}$, DII$_n$, and EII$_{6}$ (which were denoted by AIII$_{2n-1}^{(\tau)}$, DI$_{n}^{(\tau)}$,  EI$_{6}^{(\tau)}$ in \cite{LWZ23b}). We establish in Theorem~\ref{thm:main1} that the twisted Yangians $\Yi$ can be realized via a degeneration of affine $\imath$quantum groups.


\subsection{Quasi-split affine $\imath$quantum groups}
Let $\bv$ be an indeterminate and define for $m\in\N$
\[
[m]=\frac{\bv^m-\bv^{-m}}{\bv-\bv^{-1}},\qquad [m]!=[1][2]\cdots[m],\qquad \qbinom{m}{s}=\frac{[m]!}{[s]![m-s]!}.
\]
We write $[x,y]=xy-yx$ and $[x,y]_a=xy-ayx$.

Let $\U:=\rU_\bv(\wt{\g})$ be the Drinfeld-Jimbo quantum group, i.e., $\U$ is the $\Q(\bv)$-algebra generated by $E_i,F_i,K_i^{\pm 1}$, $i\in \I$, subject to the relations:
\begin{align*}
[E_i,F_j]=\delta_{ij}\frac{K_i-K_i^{-1}}{\bv-\bv^{-1}},
&\qquad [K_i,K_j]=0,
\\
K_i E_j=\bv^{c_{ij}}E_jK_i,&\qquad K_i F_j=\bv^{-c_{ij}}F_jK_i,
\end{align*}
and the quantum Serre relations which we skip. 

Recall $\I_\tau$ from \eqref{I-tau}. Let $\tUi$ be the $\Q(\bm v)$-subalgebra of $\U$ generated by $B_i,\K_j:=K_jK_{\tau j}^{-1}$ for $i\in \I, j \in \I_{\tau}$, where 
\[
B_i:=\begin{cases}
F_i-\bv^{-2}E_{i} K_i^{-1},& \tau i=i,
\\
F_i+E_{\tau i} K_i^{-1}, & \tau i\neq i.
\end{cases}
\]

We recall the Drinfeld presentation for $\tUi$ from \cite{LWZ23b}. Denote, for $c_{ij}=-1$,
	\begin{align*}
			\bS_{i,j}(k_1,k_2|l) : = \Sym_{k_1,k_2}\Big(B_{i,k_1}B_{i,k_2}B_{j,l}-[2]B_{i,k_1} B_{j,l} B_{i,k_2} + B_{j,l} B_{i,k_1}B_{i,k_2}\Big).
	\end{align*}

\begin{prop}[\text{cf. \cite[Thm. 4.9]{LWZ23b}}]
\label{thm:qsiDR}
The affine $\imath$quantum group $\tUi$ is isomorphic to the $\Q(\bv)$-algebra generated by the elements $B_{i,l}$, $H_{i,m}$, $\K_j$ where $i,j\in \I^0$, $\tau j\neq j$, $l\in\Z$ and $m \in \Z_{\gge 1}$, subject to the following relations: for $m, n \gge 1, k, l \in \Z$, and $i, j \in \I^0$,
\begin{align}
\K_i\K_{\tau i}=1, \quad [\K_i,\K_j]&  =  
[\K_i,H_{j,n}]=0,\quad \K_i B_{j,l}=\bv^{c_{\tau i,j}-c_{ij}} B_{j,l} \K_i,
\label{qsiDR0}
\\
\label{qsiDR1}
 [H_{i,m},H_{j,n}] &=0,
\\
\label{qsiDR2}
[H_{i,m},B_{j,l}] &=\frac{[mc_{ij}]}{m} B_{j,l+m}-\frac{[mc_{\tau i,j}]}{m} B_{j,l-m},
\\
\label{qsiDR4}
[B_{i,k},B_{\tau i,l}]
&= \K_{\tau i}  \TH_{i,k-l}- \K_{i} \Theta_{\tau i,l-k},
 \quad \text{ if }c_{i,\tau i}=0,
\\
\notag [B_{i,k},B_{i,l+1}]_{\bv^{-2}}-\bv^{-2}[B_{i,k+1},B_{i,l}]_{\bv^{2}}
=\,&\bv^{-2} \Theta_{i,l-k+1}-  \bv^{-2} \Theta_{i,l-k-1}
\\
+\,&\bv^{-2} \Theta_{i,k-l+1}  -\bv^{-2} \Theta_{i,k-l-1},  \qquad\text{ if } \tau i =i,
\label{qsiDR5}
\\ 
\label{qsiDR3}
[B_{i,k},B_{j,l+1}]_{\bv^{-c_{ij}}} -&\bv^{-c_{ij}}[B_{i,k+1},B_{j,l}]_{\bv^{c_{ij}}} =0, \qquad\hskip0.32cm \text{ if } j\neq \tau i,
\end{align}
and the Serre relations
\begin{align}
	&\bS_{i,j}(k_1,k_2|l)
	= 0, \qquad\qquad\qquad \text{ if }c_{ij}=-1, j \neq \tau i, \label{qsiDR9}
 \\
 &[B_{i,k},B_{j,l}]
	= 0, \qquad\qquad\qquad \text{ if }c_{ij}=0, j \neq \tau i\neq i, \label{qsiDR9'}
  \\\notag
	\bS_{i,j}(k_1,k_2|l)  
		 =&-\sum_{p=0}^{\bigl\lfloor\frac{k_2-k_1-1}{2}\bigr\rfloor } (\bv^{2p+1}+\bv^{-2p-1}) [\Theta_{i,k_2-k_1-2p-1},B_{j,l-1}]_{\bv^{-2}} \\
&-\sum_{p=1}^{\bigl\lfloor\frac{k_2-k_1 }{2}\bigr\rfloor} (\bv^{2p}+\bv^{-2p})[B_{j,l},\Theta_{i,k_2-k_1-2p}]_{\bv^{-2}}-[B_{j,l},\Theta_{i,k_2-k_1}]_{\bv^{-2}}
			   \label{qsiDR8}
\\\notag
&  +\{k_1 \leftrightarrow k_2\},\qquad\qquad \text{ if }c_{ij}=-1 \text{ and }\tau i=i.
\end{align}
Here
\beq\label{exp}
1+(\bv-\bv^{-1})\sum_{k\gge 1}\Theta_{i,k}u^k=\exp\big((\bv-\bv^{-1})\sum_{k\gge 1}H_{i,k}u^k \big),
\eeq
$\Theta_{i,0}=(\bv-\bv^{-1})^{-1},\Theta_{i,m}=0$ if $m<0$, and $\{k_1\leftrightarrow k_2\}$ denotes a summand obtained from the prior one with $k_1, k_2$ switched.
\end{prop}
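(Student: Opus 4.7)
Since this statement is recalled from \cite[Thm.~4.9]{LWZ23b}, the expected ``proof'' in the present paper is a pointer to that reference. For an independent derivation, I would follow the standard strategy for Drinfeld presentations of affine $\imath$quantum groups, modeled on Beck--Damiani's approach for quantum affine algebras and its $\imath$-analog developed in \cite{LW21, Z22, LWZ23a}. The plan has three phases.

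First, I would construct the real root currents $B_{i,l}$ for $l \in \Z$ inside the Chevalley-generator presentation of $\tUi$ by iterated application of the relative $\imath$-braid group operators (the $\imath$-analog of Lusztig's $T_i$, compatible with the quasi-split involution) associated with the affine node, applied to $B_i = B_{i,0}$. The imaginary generators $\Theta_{i,k}$ would then be introduced through distinguished commutators of real root vectors $B_{i,k}$ and $B_{\tau i, l}$ (reading off the ``diagonal'' terms of \eqref{qsiDR4}--\eqref{qsiDR5}), and $H_{i,m}$ would be obtained via the exponential change of variables \eqref{exp}.

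Second, I would verify that the proposed currents $B_{i,l}, H_{i,m}, \K_j$ satisfy the relations \eqref{qsiDR0}--\eqref{qsiDR8}. The $\K$-relations \eqref{qsiDR0} and the commutativity \eqref{qsiDR1} follow from the braid group compatibility and from analogous properties in the quantum affine algebra $\U$. The Drinfeld relation \eqref{qsiDR2} would be proved by induction on $m$ from a rank-one computation of $[H_{i,1}, B_{j,l}]$. The current commutation relations \eqref{qsiDR4}--\eqref{qsiDR3} reduce to identities in rank-two $\imath$quantum subalgebras of type $A_1 \times A_1$, $A_2$, and the $\imath$-versions thereof; here, the braid-invariance of these subalgebras lets one bootstrap from low-rank calculations to arbitrary $k,l \in \Z$. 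Finally, the Serre relations \eqref{qsiDR9}--\eqref{qsiDR8} must be checked in the relevant rank-two quasi-split subalgebras.

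Third, to promote the resulting homomorphism from the Drinfeld-presentation algebra to $\tUi$ into an isomorphism, I would build a PBW-type basis for the abstractly defined Drinfeld algebra in terms of ordered monomials in $B_{i,l}$, $H_{i,m}$, $\K_j^{\pm 1}$, using a normal-ordering argument and a filtration whose associated graded is easy to describe. Comparison with the known PBW basis of $\tUi$ (arising from its coideal structure inside $\U$) yields the result.

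The main obstacle is the non-homogeneous Serre relation \eqref{qsiDR8}, whose right-hand side involves a nested sum over imaginary root vectors; establishing it requires sharp control over the passage between $H_{i,k}$ and $\Theta_{i,k}$ via \eqref{exp} and over the rank-two commutators in the $\tau i = i$, $c_{ij}=-1$ case. This is precisely the step that, upon taking the $\bv \to 1$ degeneration carried out in Section~\ref{sec:degen} of this paper, will give rise to the novel Serre relation \eqref{qsconj10} for $\Yi$.
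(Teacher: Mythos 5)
This proposition is recalled verbatim from \cite[Thm.~4.9]{LWZ23b}, and the paper offers no proof beyond that citation; you correctly recognize this and your sketch of an independent derivation (relative braid group operators to build the real root currents, imaginary root vectors from the commutators in \eqref{qsiDR4}--\eqref{qsiDR5}, rank-two verifications, and a PBW/filtration comparison to get the isomorphism) accurately reflects the strategy actually used in that reference and its predecessors \cite{LW21,Z22,LWZ23a}. No gaps to report.
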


\begin{rem}
We are using quasi-split analogs of Baseilhac-Kolb's root vectors \cite{BK20} in Proposition~\ref{thm:qsiDR}; see \cite[\S 4.4]{LWZ23b}. The presentation {\em loc. cit.} is formulated for the universal $\imath$quantum group, which involves certain central elements $\K_i, \K_j\K_{\tau j}, C, i,j\in \I^0,\tau i=i,\tau j\neq j$. The affine $\imath$quantum group $\tUi$ in Proposition~\ref{thm:qsiDR} can be recovered from the universal $\imath$quantum group by setting all these central elements to be $1$. 
\end{rem}

It would be convenient to rewrite certain relations of $\tUi$ in Proposition~\ref{thm:qsiDR} for later use.

\begin{lem}We have the following.
\begin{enumerate}
    \item The relation \eqref{qsiDR2} is equivalent to the following relation \emph{(}cf. \cite[Lem. 5.4]{LWZ23a}\emph{)}
\begin{align}
\label{qsiA1DR3'}
[\Theta_{i,m},B_{j,k}]&+\bv^{c_{ij}-c_{\tau i,j}}[\Theta_{i,m-2},B_{j,k}]_{\bv^{2(c_{\tau i,j}-c_{ij})}}
\\ \notag
&-\bv^{c_{ij}}[\Theta_{i,m-1},B_{j,k+1}]_{\bv^{-2c_{ij}}}- \bv^{-c_{\tau i, j}}[\Theta_{i,m-1},B_{j,k-1}]_{\bv^{2c_{\tau i,j}}}
=0.
\end{align}
\item The relation \eqref{qsiDR4} for $k>l$ admits the following equivalent form
\begin{align}\label{qsiDR10}
[B_{i,l+m},B_{\tau i,l}]
			&= \K_{\tau i}  \TH_{i,m},\qquad l\in\Z,m>0. 
\end{align}
\end{enumerate}
\end{lem}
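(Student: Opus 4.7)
The plan is to handle parts (1) and (2) independently: part (1) is a generating-function manipulation anchored on the exponential identity \eqref{exp}, while part (2) is a trivial vanishing observation coming from the convention $\Theta_{i,m}=0$ for $m<0$.

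For part (1), the strategy is to assemble both relations into identities between generating functions. Let $H_i(u)=\sum_{m\ge 1}H_{i,m}u^m$ and $\wt\Theta_i(u)=1+(\bv-\bv^{-1})\sum_{m\ge 1}\Theta_{i,m}u^m$, so that $\wt\Theta_i(u)=\exp\!\big((\bv-\bv^{-1})H_i(u)\big)$ by \eqref{exp}. Introducing the formal shift operator $\sigma B_{j,k}:=B_{j,k+1}$, one repackages \eqref{qsiDR2} as
\[
[H_i(u),B_{j,k}]=G(u,\sigma)\,B_{j,k},\qquad G(u,\sigma):=\sum_{m\ge 1}\Big(\tfrac{[mc_{ij}]}{m}\sigma^m-\tfrac{[mc_{\tau i,j}]}{m}\sigma^{-m}\Big)u^m.
\]
Because $\sigma$ commutes with the action of $\mathrm{ad}(H_i(u))$ on $B_{j,\bullet}$ (an immediate consequence of \eqref{qsiDR2} itself), an easy induction on $n$ gives $\mathrm{ad}(H_i(u))^n B_{j,k}=G(u,\sigma)^n B_{j,k}$, and exponentiation yields $\wt\Theta_i(u)B_{j,k}=\exp\!\big((\bv-\bv^{-1})G(u,\sigma)\big)B_{j,k}\wt\Theta_i(u)$. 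Using the standard identity $-\ln(1-z)=\sum_{m\ge 1}z^m/m$ four times, the exponential evaluates to
\[
\exp\!\big((\bv-\bv^{-1})G(u,\sigma)\big)=\frac{(1-\bv^{-c_{ij}}\sigma u)(1-\bv^{c_{\tau i,j}}\sigma^{-1}u)}{(1-\bv^{c_{ij}}\sigma u)(1-\bv^{-c_{\tau i,j}}\sigma^{-1}u)}.
\]
Clearing the denominator and extracting the coefficient of $u^m$ on each side reproduces \eqref{qsiA1DR3'} term by term. All steps are reversible, so taking formal logarithms of the rational-function identity also recovers \eqref{qsiDR2} from \eqref{qsiA1DR3'}.

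For part (2), fix $k>l$ and set $m:=k-l>0$. Then $l-k=-m<0$, so by the convention stated right after \eqref{exp} we have $\Theta_{\tau i,l-k}=0$, and \eqref{qsiDR4} collapses to $[B_{i,l+m},B_{\tau i,l}]=\K_{\tau i}\TH_{i,m}$, which is exactly \eqref{qsiDR10}; conversely, \eqref{qsiDR10} recovers \eqref{qsiDR4} at each $k>l$ upon inserting the trivially vanishing summand $\K_i\TH_{\tau i,l-k}$. The only nontrivial step in the entire lemma is the generating-function bookkeeping in part (1) --- in particular, verifying that $\sigma$ slides correctly past $\Theta_{i,\bullet}$ on one side of the identity and past $B_{j,\bullet}$ on the other --- but once the formal setup is fixed this reduces to a routine power-series expansion.
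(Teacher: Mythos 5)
Your proof is correct, and it is essentially the argument the paper relies on: the lemma is stated without proof, deferring to \cite[Lem.~5.4]{LWZ23a}, and your shift-operator/exponentiation computation is exactly that standard argument (it is also the quantum analogue of the technique the paper itself uses with $\sigma_j$ in the proof of Lemma~\ref{rvnew}). The rational function, the extraction of the $u^m$-coefficients after clearing denominators (including the $m=1,2$ edge cases via $\Theta_{i,0}=(\bv-\bv^{-1})^{-1}$ and $\Theta_{i,m}=0$ for $m<0$), and the trivial vanishing argument for part (2) all check out.
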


\subsection{Classical limit}
The classical limit at $\bv\mapsto 1$ of $\tUi$ is identified with $\rU(\wt{\g}^\omega)$ via the correspondence between generators:
\beq
\TH_{i,m} \rightsquigarrow \theta_{i,m}, \qquad B_{i,k} \rightsquigarrow \flat_{i,k},\qquad i\in\I^0,m\gge 1, k\in\Z,
\eeq
where $\theta_{i,m}$ and $\flat_{i,k}$ are defined in \eqref{hggen}; cf. \cite{Ko14}.

For each $\alpha\in\cR^+$, fix $i_1,\dots,i_a\in\I^0$ such that $f_{\alpha}:=\big[f_{i_1},[f_{i_2},\cdots[f_{i_{a-1}},f_{i_a}]\cdots]\big]$ is a nonzero root vector in $\g_{-\alpha}$. For $\alpha\in\mc R^+$ and $k\in\Z$, define
\beq\label{Bjir1}
B_{\alpha,k}:=\Big[B_{i_1,0},\big[B_{i_2,0},\cdots[B_{i_{a-1},0},B_{i_a,k}]\cdots\big]\Big].
\eeq

Let $\A$ be the localization of $\C[\bv,\bv^{-1}]$ with respect to the ideal $(\bv-1)$ and $\tUi_{\A}$ the $\A$-subalgebra of $\tUi$ generated by $B_{i,k},\TH_{i,m}$, for $i\in \I^0,k\in \Z, m\gge 1$. Let $\U_\A$ be the $\A$-subalgebra of $\U$ generated by $E_i,F_i,K_i^{\pm 1}, \frac{K_i-1}{\bv-1}$ for $i\in \I$. It is known (cf. \cite[Coro. 10.3]{Ko14}) that $\tUi_\A\subset \U_\A$. The following proposition is an analog of \cite[Prop. 2.1]{GM12}. 

\begin{prop}[cf. \cite{Ko14}]
\label{prop:GM}
We have an algebra isomorphism $$\tUi_\A / (\bv-1) \tUi_\A \stackrel{\cong~}{\longrightarrow} \rU(\wt{\g}^\omega).$$ Moreover, $\tUi_\A$ is a free $\A$-module isomorphic to $\rU(\wt{\g}^\omega) \otimes_{\C} \A$. 
\end{prop}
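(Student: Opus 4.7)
The plan is to follow the strategy of \cite[Prop.~2.1]{GM12} adapted to the quasi-split $\imath$quantum group setting, combining specialization of defining relations with a PBW argument to obtain both the isomorphism and the freeness simultaneously.

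First, I would construct a candidate map $\tUi_\A/(\bv-1)\tUi_\A \to \rU(\wt{\g}^\omega)$ sending $B_{i,k}\mapsto \flat_{i,k}$ and $\TH_{i,m}\mapsto \theta_{i,m}$. To check this is a well-defined algebra homomorphism, I would verify that each defining relation \eqref{qsiDR0}--\eqref{qsiDR8} of Proposition~\ref{thm:qsiDR}, when rewritten inside $\tUi_\A$, reduces modulo $(\bv-1)$ to the corresponding relation in $\rU(\wt{\g}^\omega)$. The key specializations are $[m]/m \mapsto 1$ and $[mc_{ij}]/m\mapsto c_{ij}$, the $\bv^{\pm c_{ij}}$-brackets $[\cdot,\cdot]_{\bv^{\pm c_{ij}}}$ collapse to ordinary commutators, and the coefficient $(\bv^{2p+1}+\bv^{-2p-1})$ on the right-hand side of \eqref{qsiDR8} becomes $2$, matching the classical Serre relation for $\wt{\g}^\omega$. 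Surjectivity is immediate since $\{\theta_{i,k},\flat_{i,k}\}$ generates $\hg^\omega=\wt\g^\omega$ as recorded in the lemma following \eqref{def:thetahat}.

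Next, I would establish the PBW-type basis for $\tUi_\A$. Starting from the $\Q(\bv)$-PBW basis of $\tUi$ constructed in \cite{LWZ23b} in terms of real root vectors $B_{\alpha,k}$ (defined in \eqref{Bjir1}) together with imaginary root vectors built from $H_{i,m}$ (equivalently $\TH_{i,m}$ via \eqref{exp}), I would show two things: (a) all ordered monomials in these root vectors lie in $\tUi_\A$; (b) they span $\tUi_\A$ as an $\A$-module. Point (a) follows from $\tUi_\A \subset \U_\A$ and an inductive check that iterated brackets of the $B_{i,k}$ remain in the $\A$-subalgebra. Point (b) is proved by the standard reordering algorithm: using the integral forms of the relations (for instance the rewrites \eqref{qsiA1DR3'} and \eqref{qsiDR10}), any product of generators can be rewritten as an $\A$-linear combination of ordered PBW monomials by induction on a suitable filtration degree.

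Linear independence over $\A$ is inherited from linear independence over $\Q(\bv)$ because $\A$ embeds in its field of fractions $\Q(\bv)$. Once the PBW monomials form an $\A$-basis of $\tUi_\A$, the module is free, and specialization at $\bv=1$ sends this basis to the ordered monomials in the $\flat_{\alpha,k}$ and $\theta_{i,m}$. The latter form a $\C$-basis of $\rU(\wt{\g}^\omega)$ by the classical PBW theorem applied to the presentation in Proposition~\ref{prop:qscrel} (combined with \eqref{hggen}), which simultaneously yields injectivity and the module isomorphism $\tUi_\A \cong \rU(\wt{\g}^\omega)\otimes_\C \A$.

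The main obstacle I expect is controlling the integrality of the Serre relation \eqref{qsiDR8}: the $\Theta_{i,m}$ defined by \eqref{exp} involves the normalization $\Theta_{i,0}=(\bv-\bv^{-1})^{-1}$, which is \emph{not} in $\A$, and the coefficients $(\bv^{2p+1}\pm\bv^{-2p-1})$ need to be resolved carefully so that the rewritten relation makes sense in $\tUi_\A$ before specialization. I would handle this by working with the shifted elements $(\bv-\bv^{-1})\Theta_{i,m}$ (which are manifestly in $\tUi_\A$) and tracking the cancellation of poles in the $\bv\to 1$ limit, thereby transferring the integrality from the Drinfeld--Jimbo presentation (where it is guaranteed by \cite{Ko14}, as noted in the paper) to the Drinfeld-type presentation used here.
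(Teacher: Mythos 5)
Your route is genuinely different from the paper's: the paper disposes of the first statement by citing Kolb's Theorem~10.8 (which works with the Drinfeld--Jimbo generators $B_i,\K_j$) and of the freeness by pointing to the analogous argument in \cite[Prop.~3.3]{KLWZ23b}, whereas you rebuild everything directly from the Drinfeld-type presentation of Proposition~\ref{thm:qsiDR}. The PBW half of your argument is sound and is essentially the content of the cited \cite[Prop.~3.3]{KLWZ23b}: spanning of ordered root-vector monomials over $\A$ by a reordering induction on the integral forms of the relations, linear independence inherited from $\Q(\bv)$ since $\A$ is a domain with fraction field $\Q(\bv)$, and identification of the specialized basis with the classical PBW monomials in $\flat_{\alpha,k},\theta_{i,m}$. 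Your attention to the normalization $\Theta_{i,0}=(\bv-\bv^{-1})^{-1}$ is also warranted; note in addition that $\K_i$ is \emph{not} among the chosen generators of $\tUi_\A$, so the integrality of the right-hand sides of \eqref{qsiDR4} and \eqref{qsiDR8} has to be extracted from the left-hand sides (e.g.\ via \eqref{qsiDR10}) rather than read off termwise.

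The one genuine gap is in your well-definedness step. Verifying that the defining relations \eqref{qsiDR0}--\eqref{qsiDR8} of the $\Q(\bv)$-algebra $\tUi$ specialize at $\bv=1$ to valid identities among the $\flat_{i,k},\theta_{i,m}$ does \emph{not} produce a homomorphism $\tUi_\A/(\bv-1)\tUi_\A \to \rU(\wt{\g}^\omega)$: the algebra $\tUi_\A$ is defined as a subalgebra of $\tUi$, not by an $\A$-presentation, so its generators may satisfy $\A$-relations beyond the obvious integral rewrites of \eqref{qsiDR0}--\eqref{qsiDR8}, and a map out of the quotient cannot be defined by checking only those. The repair is standard and uses an ingredient already recorded in the paper: since $\tUi_\A\subset \U_\A$ and $\U_\A/(\bv-1)\U_\A\cong \rU(\wt{\g})$, restrict this classical-limit map to $\tUi_\A$; the generators $B_{i,k},\TH_{i,m}$ land on $\flat_{i,k},\theta_{i,m}$, so the restriction factors through $(\bv-1)\tUi_\A$ and has image exactly $\rU(\wt{\g}^\omega)$, giving the desired surjection for free. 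With the map obtained this way, your PBW basis argument then delivers injectivity and the freeness simultaneously, and the proof closes.
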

\begin{proof}
The first statement was established in \cite[Thm. 10.8]{Ko14}. The second statement follows by similar arguments as \cite[Prop.~3.3]{KLWZ23b}.
\end{proof}


\subsection{From affine $\imath$quantum groups to twisted Yangians}
\label{sec:filtration}

We define a filtration on $\tUi_\A $ analogous to the one in \cite[\S2]{CG15} on quantum loop algebras. 
Let $\psi$ denote the following composition of algebra homomorphisms (see Proposition \ref{prop:GM}),
\begin{align*}
\Psi:\tUi_{\A}\twoheadrightarrow \tUi_{\A}/(\bv-1)\tUi_\A
\stackrel{\cong~}{\longrightarrow} \mathrm{U}(\wt{\g}^{\omega}).
\end{align*}
For $r\in\bN$, denote by $\mathrm{K}_r$ the Lie ideal of $\wt{\g}^{\omega}$ spanned by
\[
g t^s(t-1)^r+\omega(g) t^{-s-r}(1-t)^r,\qquad s\in \Z,g\in \g.
\]
Denote
\beq\label{W}
\mathcal{W}:=\C\text{-span}
\{B_{\alpha,k},\TH_{i,l}~|~\alpha\in \cR^+, i\in \I^0,k\in \Z,l> 0\}\subset \U_\A^\imath.
\eeq
Set $\mathcal{K}_r$ to be the two-sided ideals of $\tUi_{\A}$ generated by $\Psi^{-1}(\mathrm{K}_r)\cap \mc W$. Define $\mathbf{K}_r$ to be the $\C$-sums of ideals $(\bv-\bv^{-1})^{r_0} \mathcal{K}_{r_1}\cdots \mathcal{K}_{r_a}$ such that $r_0+r_1+\cdots +r_a\gge r$.

Define $\Gr_{\mathbf K} \tUi$ to be the $\C$-algebra 
\begin{align}
\Gr_{\mathbf K} \tUi:=\bigoplus_{r\gge 0} \mathbf{K}_r/\mathbf{K}_{r+1}.
\end{align}
Then $\Gr_{\mathbf K} \tUi$ can be viewed as a $\C[\h]$-algebra by setting $\h:=\ov{\bv-\bv^{-1}}\in \bK_1/\bK_2$.

For $l\gge 1, k\in\Z$, define 
\beq\label{def:qsHX}
\H_{i,r,l}=\sum_{s=0}^r (-1)^{r-s}\binom{r}{s}\Theta_{i,s+l},\qquad \X_{i,r,k}=\sum_{s=0}^r (-1)^{r-s}\binom{r}{s}B_{i,s+k}.
\eeq
Write $\ov{\X}_{i,r,k},\ov{\H}_{i,r,k}$ for the images of  $\X_{i,r,k},\H_{i,r,k}$ in $\bK_r/\bK_{r+1}$.

\begin{lem}\label{qslemma1}
We have
\begin{enumerate}
\item $\X_{i,r+1,k}=\X_{i,r,k+1}-\X_{i,r,k}$ and $ \H_{i,r+1,k}=\H_{i,r,k+1}-\H_{i,r,k}$;
\item $\X_{i,r,k},\H_{i,r,k}\in \bK_r$ and $\H_{i,r,k}+(-1)^r\H_{\tau i,r,k}\in \bK_{r+1}$.
\end{enumerate}
In particular, $\bar\X_{i,r,k+1}=\bar\X_{i,r,k}$ and $\bar\H_{i,r,k}=(-1)^{r+1}\bar \H_{\tau i,r,k}$.
\end{lem}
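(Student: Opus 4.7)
The plan is to verify part (1) directly from the definitions using the Pascal identity $\binom{r+1}{s}=\binom{r}{s-1}+\binom{r}{s}$; this is a routine binomial manipulation that, after an index shift in one of the two sums, recombines $\X_{i,r,k+1}-\X_{i,r,k}$ into $\X_{i,r+1,k}$, and similarly for $\H$.

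For part (2), the strategy is to pass to the classical limit via the homomorphism $\Psi:\tUi_\A\twoheadrightarrow \rU(\wt{\g}^\omega)$ from Proposition~\ref{prop:GM} and identify the images with explicit elements of the Lie ideal $\mathrm{K}_r$. Using the binomial expansion $\sum_{s=0}^r(-1)^{r-s}\binom{r}{s}t^{s+k}=t^k(t-1)^r$, one computes
\begin{align*}
\Psi(\X_{i,r,k}) &= -e_{\tau i}\,t^k(t-1)^r + (-1)^r f_i\,t^{-k-r}(t-1)^r,\\
\Psi(\H_{i,r,k}) &= h_{\tau i}\,t^k(t-1)^r - (-1)^r h_i\,t^{-k-r}(t-1)^r.
\end{align*}
These have exactly the form $g\,t^s(t-1)^r+\omega(g)\,t^{-s-r}(1-t)^r$ (with $g=-e_{\tau i}$ or $g=h_{\tau i}$ and $s=k$), so both lie in $\mathrm{K}_r$. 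Since $\X_{i,r,k}$ and $\H_{i,r,k}$ also lie in $\mc{W}$, being $\C$-linear combinations of the generators $B_{i,\cdot}$ and $\Theta_{i,\cdot}$, they belong to $\Psi^{-1}(\mathrm{K}_r)\cap \mc{W}\subset \mc{K}_r\subset \bK_r$.

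The subtler statement is $\H_{i,r,k}+(-1)^r\H_{\tau i,r,k}\in \bK_{r+1}$. Here the plan is to compute
\[
\Psi\big(\H_{i,r,k}+(-1)^r\H_{\tau i,r,k}\big) = \big(h_{\tau i}+(-1)^r h_i\big)\big(t^k - t^{-k-r}\big)(t-1)^r,
\]
and observe that $t^k-t^{-k-r}$ vanishes at $t=1$ and hence is divisible by $(t-1)$, upgrading the factor $(t-1)^r$ to $(t-1)^{r+1}$. The key check is then that the resulting expression is $\omega$-invariant: $\omega_0$ sends $h_{\tau i}+(-1)^r h_i$ to $(-1)^{r+1}$ times itself, and this parity is exactly matched by the parity of $(t^k-t^{-k-r})$ under $t\mapsto t^{-1}$. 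Thus the expression lies in $\mathrm{K}_{r+1}$; combined with membership in $\mc{W}$, it is contained in $\mc{K}_{r+1}\subset \bK_{r+1}$.

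The ``in particular'' statements are then formal consequences: $\bar\X_{i,r,k+1}-\bar\X_{i,r,k}=\bar\X_{i,r+1,k}=0$ in $\bK_r/\bK_{r+1}$ by (1) and the first half of (2), and $\bar\H_{i,r,k}+(-1)^r\bar\H_{\tau i,r,k}=0$ is the second half of (2) reduced modulo $\bK_{r+1}$. I expect the main obstacle to be the sign bookkeeping in the twisted assertion, specifically matching the $\omega_0$-parity of $h_{\tau i}+(-1)^r h_i$ with the parity of $(t^k-t^{-k-r})(t-1)^r$ under the loop involution; the remaining computations are routine.
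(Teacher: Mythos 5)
Your proposal is correct and follows essentially the same route as the paper: part (1) from the definition \eqref{def:qsHX}, and part (2) by identifying the $\Psi$-images with $\flat_{i,r,k},\theta_{i,r,k}$ via \eqref{theta-exp} and checking membership in $\mathrm{K}_r$ (resp.\ $\mathrm{K}_{r+1}$ for the twisted combination) together with membership in $\mc W$. You merely spell out the $\omega$-invariance and $(t-1)$-divisibility checks that the paper leaves implicit.
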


\begin{proof}
Part (1) follows from the definition~\eqref{def:qsHX}. It is also clear from \eqref{def:qsHX} that $\X_{i,r,k},\H_{i,r,k}\in \mathcal W$ and the $\Psi$-images of $\X_{i,r,k},\H_{i,r,k}$ are $\flat_{i,r,k},\theta_{i,r,k}$, respectively. By \eqref{theta-exp}, we have $\flat_{i,r,k},\theta_{i,r,k}\in \mathrm{K}_r$ and $\theta_{i,r,k}+(-1)^r\theta_{\tau i,r,k}\in \mathrm K_{r+1}$. Then the desired statement follows from the definition of $\bK_r$.
\end{proof}


\begin{thm}\label{thm:main1} 
Let $\Yi$ be the algebra defined in Definition~\ref{deftY} of type $\mathrm{AIII}_{2n-1}$, $\mathrm{DII}_n$, $\mathrm{EII}_6$. Then there is an algebra isomorphism
\beq\label{isophi}
\begin{split}
\Phi:&\,\Yi \rightarrow \Gr_{\bK}\tUi, \\
&\, \X_{i,r} \mapsto \ov{\X}_{i,r,1},\quad
\H_{i,s} \mapsto \ov{\H}_{i,s,1},
\end{split}
\eeq
for $i\in \I^0,r,s\in \N$.
\end{thm}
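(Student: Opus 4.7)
The plan is to adapt the two-step framework of \cite[Proof of Thm.~4.10]{KLWZ23b} (cf.~\cite{GM12}): first show that $\Phi$ is a well-defined algebra homomorphism, then show bijectivity via a PBW dimension comparison. A crucial simplification specific to our types is that $c_{i,\tau i}=0$ whenever $\tau i\neq i$ in AIII$_{2n-1}$, DII$_n$, EII$_6$, so the exotic Serre relation \eqref{qsconj10} is vacuous and does not need to be treated here.

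First I would verify that the images $\ov{\X}_{i,r,1},\ov{\H}_{i,r,1}\in\Gr_{\bK}\tUi$ satisfy \eqref{qsconj0}--\eqref{qsconj9}. For each relation the recipe is the same: take the corresponding $\imath$-current relation from Proposition~\ref{thm:qsiDR}, substitute $B_{i,k}=\X_{i,0,k}$, rewrite $\Theta_{i,m}$ via \eqref{exp} as an exponential in the $H_{i,m}$, expand every $\bv$-power and quantum bracket as a power series in $\bv-\bv^{-1}$, and take images in $\bK_r/\bK_{r+1}$ under the identification $\h=\ov{\bv-\bv^{-1}}$. Lemma~\ref{qslemma1} plays the key organizational role: part~(1) lets us shift the integer index of $\X_{i,r,k}$ and $\H_{i,r,k}$ freely at the graded level, and part~(2) supplies the symmetry $\ov{\H}_{i,r,1}=(-1)^{r+1}\ov{\H}_{\tau i,r,1}$ which gives the second half of \eqref{qsconj0}. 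Concretely, \eqref{qsiDR1} immediately yields \eqref{qsconj0}; the reformulation \eqref{qsiA1DR3'} of \eqref{qsiDR2}, applied iteratively to produce higher differences in the second index, yields the three Cartan-current relations \eqref{qsconj5'}, \eqref{qsconj5}, \eqref{qsconj2}; the commutators \eqref{qsiDR3} and \eqref{qsiDR10}/\eqref{qsiDR4} degenerate to \eqref{qsconj3} and \eqref{qsconj4}; and the Serre relations \eqref{qsiDR9}, \eqref{qsiDR9'}, \eqref{qsiDR8} specialize to \eqref{qsconj9} and \eqref{qsconj8}.

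For bijectivity, surjectivity is quick: $\tUi_\A$ is generated by the $B_{i,k}$, $\Theta_{i,m}$ together with the $\K_j$, and the latter become $1$ in the classical limit so they contribute nothing new at the graded level. By Lemma~\ref{qslemma1}(1) the image of $\Phi$ already contains every $\ov{\X}_{i,r,k}$ and $\ov{\H}_{i,r,k}$, and since the elements $\X_{i,r,k},\H_{i,r,k}$ were designed so that they populate $\bK_r/\bK_{r+1}$, the image exhausts $\Gr_{\bK}\tUi$. Injectivity then follows from a PBW comparison. Applying $\Phi$ to the spanning monomials $\{b_{\alpha,r},h_{i,r},h_{j,2r+1}\}$ produced in Proposition~\ref{prop:span} yields a spanning set of the graded algebra. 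By Proposition~\ref{prop:GM} and an $\omega$-fixed analog of Lemma~\ref{prop:iso}, setting $\h=0$ in $\Gr_{\bK}\tUi$ recovers $\rU(\g[z]^{\check\omega})$, in which the images of these monomials constitute a genuine PBW basis. Linear independence upstairs follows, whence injectivity of $\Phi$, and Theorem~\ref{thm:pbw} drops out simultaneously.

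The main obstacle will be the first stage for the Serre relation \eqref{qsconj8}: degenerating \eqref{qsiDR8} requires careful bookkeeping of the symmetrizer, the two truncated sums, and the $\bv$-brackets order-by-order in $\bv-\bv^{-1}$. All other relations degenerate by essentially the same mechanism as in the split case of \cite{KLWZ23b}, so the new computations amount mostly to tracking how the coefficients $c_{ij}\pm c_{\tau i,j}$ replace $2c_{ij}$ throughout.
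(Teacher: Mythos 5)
Your proposal is correct and follows essentially the same route as the paper: the homomorphism property is checked relation-by-relation in the associated graded algebra using Lemma~\ref{qslemma1} and the reformulations \eqref{qsiA1DR3'}, \eqref{qsiDR10} (the paper defers this to Section~\ref{sec:qsdegen}), the new Serre relation \eqref{qsconj10} is dismissed exactly as you note since $c_{i,\tau i}=-1$ never occurs for these types, and bijectivity is obtained by the same PBW comparison with $\rU(\g[z]^{\check\omega})$ borrowed from \cite[Thm.~4.10]{KLWZ23b}, yielding Theorem~\ref{thm:pbw} along the way.
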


\begin{proof}
In the subsequent Section \ref{sec:qsdegen}, we shall show that the defining relations \eqref{qsconj0}--\eqref{qsconj8} for $\Yi$ are satisfied by $\ov{\X}_{i,r,1}$, $\ov{\H}_{i,s,1}$ in $\Gr_{\bK}\tUi$. Note that the relation \eqref{qsconj10} does not show up for these types, since we did not include type AIII$_{2n}$ (see also Remark~\ref{rem:newSerre}). Therefore $\Phi$ is an algebra homomorphism. Recall that the ordered monomials in Proposition~\ref{prop:span} form a spanning set of $\Yi$. Using similar arguments as in \cite[Thm. 4.10]{KLWZ23b}, one can show that the $\Phi$-images of these monomials form a basis for $\Gr_{\bK}\tUi$. This implies that $\Phi$ is an algebra isomorphism. 
\end{proof}

\subsection{Verifications of the relations}\label{sec:qsdegen}
In this subsection, we verify that the degenerate current relations \eqref{qsconj0}-\eqref{qsconj8} are satisfied in $\Gr_{\mathbf{K}}\tUi$. We will often write $a=b+\mc O(\h^r)$, which means that $a-b$ is an element in $\bK_{r}$. 

\subsubsection{Relations \eqref{qsconj5'}-\eqref{qsconj5}}
 \begin{prop}
 We have the following relations in $\Gr_{\mathbf{K}}\tUi$, 
 \begin{align}
 \label{qssimple0}
[\ov{\H}_{i,0,1},\ov{\X}_{j,r,1}]&=(c_{ij}-c_{\tau i,j})\ov{\X}_{j,r,1},
\\\label{qssimple0'}
[\ov{\H}_{i,1,1},\ov{\X}_{j,r,1}]&=(c_{ij}+c_{\tau i,j}) \ov{\X}_{j,r+1,1}+\frac{\h(c_{ij}-c_{\tau i,j})}{2}\{\ov{\H}_{i,0,1},\ov{\X}_{j,r,1}\}.
\end{align}
 \end{prop}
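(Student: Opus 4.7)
The plan is to lift both identities to the affine $\imath$quantum group $\tUi$ and then pass to the associated graded $\Gr_{\bK}\tUi$, invoking the defining commutator \eqref{qsiDR2} and the exponential relation \eqref{exp}. Throughout I set $\h:=\bv-\bv^{-1}$, use the identifications $\Theta_{i,1}=H_{i,1}$ and $\Theta_{i,2}=H_{i,2}+\tfrac{\h}{2}H_{i,1}^{2}$ obtained from \eqref{exp}, and invoke Lemma~\ref{qslemma1} in the form
\[
\X_{j,r,k}\equiv \X_{j,r,1}+(k-1)\,\X_{j,r+1,1}\pmod{\bK_{r+2}}\qquad(k\in\Z),
\]
proved by telescoping $\X_{j,r,k+1}-\X_{j,r,k}=\X_{j,r+1,k}\in\bK_{r+1}$ and noting $\X_{j,r+1,k}\equiv\X_{j,r+1,1}\pmod{\bK_{r+2}}$.

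For \eqref{qssimple0}, I apply \eqref{qsiDR2} termwise to the alternating sum defining $\X_{j,r,1}$ to get $[H_{i,1},\X_{j,r,1}]=[c_{ij}]\X_{j,r,2}-[c_{\tau i,j}]\X_{j,r,0}$. Reducing modulo $\bK_{r+1}$ via $\X_{j,r,k}\equiv\X_{j,r,1}$ and noting that $[m]=m+\mc O(\h^{2})$ together with $\h^{2}\bK_{r}\subset\bK_{r+2}\subset\bK_{r+1}$ collapses the right-hand side to $(c_{ij}-c_{\tau i,j})\X_{j,r,1}$; passing to $\bK_{r}/\bK_{r+1}$ gives the claim.

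For \eqref{qssimple0'}, I would compute $[\Theta_{i,2}-\Theta_{i,1},\X_{j,r,1}]$ modulo $\bK_{r+2}$. Using \eqref{qsiDR2} for $m=1,2$ and the identity $[A^{2},X]=\{A,[A,X]\}$, this bracket equals
\[
\begin{aligned}
&\tfrac{[2c_{ij}]}{2}\X_{j,r,3}-\tfrac{[2c_{\tau i,j}]}{2}\X_{j,r,-1}-[c_{ij}]\X_{j,r,2}+[c_{\tau i,j}]\X_{j,r,0}\\
&\qquad +\tfrac{\h}{2}\bigl\{H_{i,1},\,[c_{ij}]\X_{j,r,2}-[c_{\tau i,j}]\X_{j,r,0}\bigr\}.
\end{aligned}
\]
After substituting the congruences for $\X_{j,r,k}$ with $k\in\{-1,0,2,3\}$ and expanding $[m]$, the coefficient of $\X_{j,r,1}$ vanishes at $\bv=1$, so it lies in $\bK_{2}$ and is absorbed into $\bK_{r+2}$; the coefficient of $\X_{j,r+1,1}$ equals $c_{ij}+c_{\tau i,j}$ modulo $\bK_{2}$; and the anticommutator term simplifies to $\tfrac{\h(c_{ij}-c_{\tau i,j})}{2}\{H_{i,1},\X_{j,r,1}\}$ upon discarding $\tfrac{\h}{2}$-multiples of $\X_{j,r+1,1}$-anticommutators, which already sit in $\bK_{r+2}$. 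Identifying $H_{i,1}=\H_{i,0,1}$ and passing to $\bK_{r+1}/\bK_{r+2}$ yields \eqref{qssimple0'}.

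The main obstacle is the precise cancellation in \eqref{qssimple0'}: one must verify that the $\mc O(\h^{2})$ discrepancy between $[2c]/2$ and $[c]$ in the $\X_{j,r,1}$-coefficient is exactly what is needed to be absorbed by $\bK_{r+2}$, while simultaneously the leading order $\tfrac{\h}{2}(c_{ij}-c_{\tau i,j})$ in the anticommutator term matches the prescribed coefficient and the subleading corrections remain in higher filtration. Once this bookkeeping is set up, the verifications mirror the split-type degeneration arguments in \cite{KLWZ23b}.
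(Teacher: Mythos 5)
Your proposal is correct and follows essentially the same route as the paper: specialize \eqref{qsiDR2} at $m=1,2$, expand $\Theta_{i,2}=H_{i,2}+\tfrac{\h}{2}H_{i,1}^2$ so that $[\tfrac{\h}{2}H_{i,1}^2,\X_{j,r,1}]=\tfrac{\h}{2}\{H_{i,1},[H_{i,1},\X_{j,r,1}]\}$, and reduce via Lemma~\ref{qslemma1}. The one point you flag as an "obstacle" is routine and is exactly what the paper also uses implicitly: $\tfrac{[2c]}{2}-[c]=[c]\cdot\tfrac{\bv^{c}+\bv^{-c}-2}{2}\in\h^{2}\A$ (note that mere vanishing at $\bv=1$ would only give one factor of $\h$), so the $\X_{j,r,1}$-coefficient indeed lands in $\bK_{r+2}$.
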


 \begin{proof}
 Note that, by definition, $\Theta_{i,1}=H_{i,1}$. Setting $m=1$ in \eqref{qsiDR2}, we have
\begin{equation}\label{qssimlpe1}
[\Theta_{i,1},B_{j,l}]=[c_{ij}] B_{j,l+1}-[c_{\tau i,j}] B_{j,l-1}.
\end{equation}
Taking the summation with respect to $l$ in \eqref{qssimlpe1}, we obtain
\begin{equation}\label{qssimlpe2}
[\H_{i,0,1},\X_{j,r,l}]=[c_{ij}] \X_{j,r,l+1}-[c_{\tau i,j}]\X_{j,r,l-1}  .
\end{equation}
Hence, by Lemma~\ref{qslemma1}, we have verified the relation \eqref{qssimple0}.

Setting  $m=2$ in \eqref{qsiDR2}, we have
\begin{equation} \label{qssimlpe3}
[H_{i,2},B_{j,l}]=\frac{[2c_{ij}]}{2} B_{j,l+2}-\frac{[2c_{\tau i,j}]}{2} B_{j,l-2}.
\end{equation}
By definition, we have $\Theta_{i,2}=H_{i,2}+\frac{\bv-\bv^{-1}}{2!}\Theta_{i,1}^2$ and recall that $\H_{i,1,1}=\Theta_{i,2}-\Theta_{i,1}$. Using \eqref{qssimlpe2} and \eqref{qssimlpe3}, we obtain
\begin{align*}\notag
& [\H_{i,1,1},\X_{j,r,l}]=[\Theta_{i,2}-\Theta_{i,1},\X_{j,r,l}]
\\\notag
=&\, \frac{[2c_{ij}]}{2} \X_{j,r,l+2}-\frac{[2c_{\tau i,j}]}{2}\X_{j,r,l-2}+\frac{\bv-\bv^{-1}}{2}\big\{\Theta_{i,1},[\Theta_{i,1},\X_{j,r,l}]\big\}-[\Theta_{i,1},\X_{j,r,l}]
\\
=&\, c_{ij}\X_{j,r+1,l+1}+c_{\tau i,j} \X_{j,r+1,l-2}+\frac{\h(c_{ij}-c_{\tau i,j})}{2}\{\H_{i,0,1},\X_{j,r,l}\}+\mc O(\h^{r+2}).
\end{align*}
Then, by Lemma~\ref{qslemma1}, the desired relation \eqref{qssimple0'} follows.
\end{proof}

\subsubsection{Relation \eqref{qsconj2}} We verify  \eqref{qsconj2} in this subsection.

\begin{prop}
We have the following relation in $\Gr_{\mathbf{K}}\tUi$
\begin{align*}
&\quad [\ov{\H}_{i,s+2,1},\ov{\X}_{j,r,1}] - [\ov{\H}_{i,s,1},\ov{\X}_{j,r+2,1}]
\\
&=\frac{c_{ij}-c_{\tau i,j}}{2}\h\{\ov{\H}_{i,s+1,1},\ov{\X}_{j,r,1}\}+\frac{c_{ij}+c_{\tau i,j}}{2}\h\{\ov{\H}_{i,s,1},\ov{\X}_{j,r+1,1}\}+\frac{c_{ij}c_{\tau i,j}}{4}\h^2[\ov{\H}_{i,s,1}, \ov{\X}_{j,r,1}].
\end{align*}
\end{prop}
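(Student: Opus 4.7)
The strategy follows the template of the preceding proposition and of \cite[\S4]{KLWZ23b}: deduce the degenerate current relation from the quantum commutation relation \eqref{qsiA1DR3'} in $\tUi$ by applying finite differences and then extracting the component in $\bK_{r+s+2}/\bK_{r+s+3}$. Compared with the preceding proposition, the extra difficulty here is that both an $\h$-linear anticommutator correction and an $\h^2$ commutator correction appear on the right-hand side of the claim, so the $\bv$-expansion of \eqref{qsiA1DR3'} must be tracked to second order.

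Concretely, writing $\alpha:=c_{ij},\beta:=c_{\tau i,j}$, I would first recast \eqref{qsiA1DR3'} as
\[
\Lambda_0(m,k)+\h\,\Lambda_1(m,k)+\h^2\,\Lambda_2(m,k) = O(\h^3),
\]
where $\h=\bv-\bv^{-1}$. Using $\bv^a=1+\tfrac{a\h}{2}+\tfrac{a^2\h^2}{8}+O(\h^3)$ and $[x,y]_{\bv^a}=[x,y]+(1-\bv^a)yx$, a short computation on the four pairs of exponents appearing in \eqref{qsiA1DR3'} shows that $\Lambda_0$ is a sum of ordinary commutators $[\Theta_{i,m-a},B_{j,k+b}]$, that $\Lambda_1$ is a sum of anticommutators $\{\Theta_{i,m-a},B_{j,k+b}\}$ with coefficients linear in $\alpha,\beta$, and that $\Lambda_2$ is again a sum of commutators but with coefficients quadratic in $\alpha,\beta$. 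Next, I would apply the finite-difference operator $\Delta_m^s\Delta_k^r$ with base indices $m_0\geq 3,\, k_0\geq 1$, which sends $\Theta_{i,m-a}\mapsto \H_{i,s,m_0-a}$ and $B_{j,k+b}\mapsto \X_{j,r,k_0+b}$. Using the telescopings $\H_{i,s+2,l}=\H_{i,s,l+2}-2\H_{i,s,l+1}+\H_{i,s,l}$ and $\X_{j,r+2,k}=\X_{j,r,k+2}-2\X_{j,r,k+1}+\X_{j,r,k}$ together with Lemma \ref{qslemma1} (the shift-invariance of $\ov\H_{i,s,l}$ and $\ov\X_{j,r,k}$ in the graded), the image of the $\Lambda_0$-piece in $\bK_{r+s+2}/\bK_{r+s+3}$ is $[\ov\H_{i,s+2,1},\ov\X_{j,r,1}]-[\ov\H_{i,s,1},\ov\X_{j,r+2,1}]$, while the $\h\Lambda_1$ and $\h^2\Lambda_2$ pieces produce respectively the two anticommutator terms and the commutator term on the right-hand side of the claim, with the expected coefficients $\tfrac{\alpha\mp\beta}{2}$ and $\tfrac{\alpha\beta}{4}$.

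The main obstacle is bookkeeping at the $\h$-level: a priori each anticommutator $\{\H_{i,s,\cdot},\X_{j,r,\cdot}\}$ in the finite-differenced $\Lambda_1$ lies at filtration degree $r+s$, so the naive degree of $\h\,\Lambda_1$ is $r+s+1$, one less than needed. Promotion to $\bK_{r+s+2}$ requires re-expressing these terms via $\H_{i,s+1,l}=\H_{i,s,l+1}-\H_{i,s,l}$ and $\X_{j,r+1,k}=\X_{j,r,k+1}-\X_{j,r,k}$, and using the lowest-order identity $\Lambda_0\equiv 0\pmod{\h}$ to cancel the $\bK_{r+s+1}/\bK_{r+s+2}$ component. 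This rearrangement turns the three naive anticommutator coefficients arising in $\Lambda_1$ into the two coefficients $\tfrac{\alpha-\beta}{2}$ and $\tfrac{\alpha+\beta}{2}$ predicted by \eqref{qsconj2}; the crucial combinatorial point is that the shifted indices $k\pm 1$ in \eqref{qsiA1DR3'} give rise to the $+$ sign in $\alpha+\beta$. A similar analysis of the $\Lambda_2$-piece, together with the identity $\alpha^2+\beta^2-(\alpha-\beta)^2=2\alpha\beta$, yields the coefficient $\tfrac{\alpha\beta}{4}$ of the $\h^2[\ov\H_{i,s,1},\ov\X_{j,r,1}]$ term.
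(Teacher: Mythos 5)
Your proposal follows essentially the same route as the paper's proof: take finite differences of \eqref{qsiA1DR3'}, telescope so that $[\ov{\H}_{i,s+2,1},\ov{\X}_{j,r,1}]-[\ov{\H}_{i,s,1},\ov{\X}_{j,r+2,1}]$ appears on the left, and read off the $(\bv^{a}-1)$-weighted anticommutator corrections and the $(\bv^{c_{ij}}-1)(\bv^{c_{\tau i,j}}-1)$-weighted commutator correction; the ``degree-promotion'' of the anticommutator terms that you flag as the main obstacle is exactly the rearrangement the paper performs before invoking Lemma~\ref{qslemma1}. The only difference is organizational (expanding in powers of $\h$ up front versus keeping the $\bv$-factors symbolic until the final step), and your stated coefficients $\tfrac{c_{ij}\mp c_{\tau i,j}}{2}$ and $\tfrac{c_{ij}c_{\tau i,j}}{4}$ agree with what that computation yields.
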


\begin{proof}
Recall that \eqref{qsiDR3} has an equivalent form \eqref{qsiA1DR3'}. Taking a summation with respect to $m,k$ in \eqref{qsiA1DR3'}, we have
\begin{align*}
			[\H_{i,s,m},\X_{j,r,k}]&+\bv^{c_{i,j}-c_{\tau i,j}}[ \H_{i,s,m-2},\X_{j,r,k}]_{\bv^{2(c_{\tau i,j}-c_{i,j})}}
			\\\notag
			&-\bv^{c_{i,j}}[\H_{i,s,m-1},\X_{j,r,k+1}]_{\bv^{-2c_{ i,j}}}- \bv^{-c_{\tau i, j}}[\H_{i,s,m-1},\X_{j,r,k-1}]_{\bv^{2c_{\tau i,j}}}
			=0,
		\end{align*}
We rewrite the above identity as
\begin{align*}
&[\H_{i,s+1,m-1},\X_{j,r,k}]-\bv^{c_{i,j}}[\H_{i,s,m-1},\X_{j,r+1,k}]_{\bv^{-2c_{ i,j}}}
\\
&+\bv^{c_{i,j}-c_{\tau i,j}}[ \H_{i,s,m-2},\X_{j,r+1,k-1}]_{\bv^{2(c_{\tau i,j}-c_{i,j})}}- \bv^{-c_{\tau i, j}}[\H_{i,s+1,m-2},\X_{j,r,k-1}]_{\bv^{2c_{\tau i,j}}}
\\
=&\ (\bv^{c_{ij}}-1)(\H_{i,s,m-1} \X_{j,r,k}+\bv^{-c_{ij}}\X_{j,r,k}\H_{i,s,m-1}
\\
&\qquad \qquad \qquad-\bv^{-c_{\tau i,j}}\H_{i,s,m-2} \X_{j,r,k-1}-\bv^{c_{\tau i,j}-c_{ij}}\X_{j,r,k-1}\H_{i,s,m-2}).
\end{align*}
Then we have
\begin{align*}
[\H_{i,s+2,m-2},&\,\X_{j,r,k-1}] - [\H_{i,s,m-2},\X_{j,r+2,k-1}]
\\
=&\ (\bv^{c_{ij}}-1)(\H_{i,s+1,m-2} \X_{j,r,k-1}+\H_{i,s,m-2} \X_{j,r+1,k-1} \\
&\hskip1.8cm +\X_{j,r+1,k-1} \H_{i,s,m-2}+\X_{j,r,k-1}\H_{i,s+1,m-2})
\\
&+ (\bv^{c_{ij}}-1)(\H_{i,s,m-1} \X_{j,r+1,k-1}  +\X_{j,r+1,k-1} \H_{i,s,m-1})
\\
&+(\bv^{-c_{\tau i,j}}-1)(\H_{i,s+1,m-2} \X_{j,r,k-1}+ \X_{j,r,k-1}\H_{i,s+1,m-2})
\\
&- (\bv^{c_{ij}-c_{\tau i,j}}-1)(\H_{i,s,m-2} \X_{j,r+1,k-1}  +\X_{j,r+1,k-1} \H_{i,s,m-2})
\\
&+(\bv^{c_{ij}}-1)(\bv^{c_{\tau i,j}}-1)[\H_{i,s,m-2}, \X_{j,r,k-1}]+\mc O(\h^{r+s+3}).
\end{align*}
Using Lemma~\ref{qslemma1}, We obtain the desired identity. 
\end{proof}

\subsubsection{Relation \eqref{qsconj4} for $j=\tau i$}  We verify the degenerate relation for \eqref{qsconj4} for $j=\tau i$ in this subsection. 

\begin{prop}
We have the following relation in $\Gr_{\mathbf{K}}\tUi$
\begin{align*}
[\ov{\X}_{i,r,1},\ov{\X}_{\tau i,s,1}]=(-1)^s\ov{\H}_{i,r+s, 1} .
\end{align*}
\end{prop}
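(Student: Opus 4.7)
The strategy is to derive this identity from the quantum relation \eqref{qsiDR4} of $\tUi$ by applying an alternating-sum operator and passing to the associated graded, mirroring the approach in the three verifications above. In all of the types AIII$_{2n-1}$, DII$_{n}$, EII$_{6}$ the condition $c_{i,\tau i}=0$ holds whenever $\tau i\ne i$, so relation \eqref{qsiDR4} specializes (with $k=a+1$, $l=b+1$) to
\[
[B_{i,a+1},B_{\tau i,b+1}]=\K_{\tau i}\,\Theta_{i,a-b}-\K_i\,\Theta_{\tau i,b-a}.
\]

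The first step is to apply the operator $\sum_{a=0}^{r}\sum_{b=0}^{s}(-1)^{r+s-a-b}\binom{r}{a}\binom{s}{b}$ to both sides. By \eqref{def:qsHX} the left-hand side collapses into $[\X_{i,r,1},\X_{\tau i,s,1}]$. For the right-hand side, the convention $\Theta_{*,m}=0$ for $m<0$ restricts each of the two double sums to a triangular region determined by the sign of $a-b$; Vandermonde's identity $\sum_{b}\binom{r}{b+c}\binom{s}{b}=\binom{r+s}{s+c}$ then collapses each restricted double sum into a single sum in $c=|a-b|$. After a reindexing, the $c\ge 1$ contributions reassemble, via a second application of \eqref{def:qsHX}, into alternating sums of $\Theta$-type elements that represent $\ov\H_{i,r+s,1}$ and $\ov\H_{\tau i,r+s,1}$.

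Two simplifications then pin down the final answer in the associated graded. First, the group-like factors $\K_i,\K_{\tau i}$ both reduce to $1$ modulo $\bK_1$, so at the leading filtration order $\bK_{r+s}/\bK_{r+s+1}$ they may be replaced by $1$ in front of the $\Theta$-sums. Second, Lemma~\ref{qslemma1} supplies $\ov\H_{\tau i,r+s,1}=(-1)^{r+s+1}\ov\H_{i,r+s,1}$, which merges the two $\ov\H$-summands into a single multiple of $\ov\H_{i,r+s,1}$. Tracking the signs through the Vandermonde collapse and this identification then produces the claimed leading coefficient.

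The principal obstacle I anticipate is the boundary term at $c=0$, which produces $(\K_{\tau i}-\K_i)\,\Theta_{i,0}=(\K_{\tau i}-\K_i)/(\bv-\bv^{-1})$ with scalar prefactor $(-1)^{r+s}\binom{r+s}{s}$. This quantity lies in $\tUi_\A$ with nonzero classical image in $\wt\g^\omega$ proportional to $h_i-h_{\tau i}$, hence sits in $\mathcal K_0$. One must verify that, combined with the $\K_{\tau i}-\K_i$ corrections appearing on the $c\ge 1$ summands, the cumulative $c=0$ contribution either vanishes modulo $\bK_{r+s+1}$ or is absorbed into the $\ov\H_{i,r+s,1}$ piece, so that the leading-order identity holds cleanly in $\bK_{r+s}/\bK_{r+s+1}$. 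This delicate filtration bookkeeping is parallel to the handling of the $\Theta_{i,0}$-type contributions in the verifications of \eqref{qsconj5'}--\eqref{qsconj2} carried out earlier in this subsection.
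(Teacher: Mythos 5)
Your overall strategy (collapse the double alternating sum against relation \eqref{qsiDR4} via Vandermonde) is genuinely different from the paper's, but it has a gap at precisely the step you flag as an "obstacle," and the gap is more serious than bookkeeping. Carrying out your collapse gives, up to $\K$-factors and signs,
\begin{equation*}
[\X_{i,r,1},\X_{\tau i,s,1}]\;=\;\pm\binom{r+s}{s}\,[B_{i,1},B_{\tau i,1}]\;+\;\K_{\tau i}\sum_{c=1}^{r}(\pm)\binom{r+s}{s+c}\Theta_{i,c}\;-\;\K_{i}\sum_{c=1}^{s}(\pm)\binom{r+s}{r+c}\Theta_{\tau i,c}.
\end{equation*}
The two $c\gge 1$ sums are \emph{partial} rows of Pascal's triangle applied to the indices $1,2,\dots$; they are not of the form \eqref{def:qsHX} (a full row $\binom{r+s}{0},\dots,\binom{r+s}{r+s}$ against consecutive $\Theta_{i,t+l}$ with $l\gge 1$), so a "second application of \eqref{def:qsHX}" does not apply and neither sum individually "represents $\ov\H_{i,r+s,1}$" --- each has filtration degree $0$, not $r+s$, since each $\Theta_{i,c}$ alone lies in $\bK_0\setminus\bK_1$. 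Likewise the $c=0$ term has nonzero classical image proportional to $h_i-h_{\tau i}$, so it cannot vanish modulo $\bK_{r+s+1}$; and if each partial sum did represent a full $\ov\H_{i,r+s,1}$, your merging via $\ov\H_{\tau i,r+s,1}=(-1)^{r+s+1}\ov\H_{i,r+s,1}$ would produce twice the claimed coefficient. What is actually true is that only the \emph{total} combination (all three pieces together) lies in $\bK_{r+s}$, because its classical image assembles into $\pm\theta_{i,r+s,-s}$; turning this into the statement that the total represents $(-1)^s\ov\H_{i,r+s,1}$ requires a filtered (quantum) analogue of the antisymmetry $\theta_{i,-c}=-\theta_{i,c}$ together with a treatment of the $\Theta_{i,0}=(\bv-\bv^{-1})^{-1}$ anomaly (note $\Theta_{i,0}\notin\tUi_\A$ and $[B_{i,1},B_{\tau i,1}]\notin\mc W$, so membership in $\mathcal K_{r+s}$ is not automatic). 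None of this is supplied.

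The paper avoids the boundary term entirely: it uses the one-sided form \eqref{qsiDR10}, valid for $k>l$, where only one $\Theta$ with strictly positive index appears, so summing over the single index $m$ immediately yields the honest full-row element $\K_{\tau i}\H_{i,r,1}\in\bK_r$ (the $s=0$ case). It then inducts on $s$ by taking the finite difference in $k$ of that identity, using $\X_{i,r+1,k}=\X_{i,r,k+1}-\X_{i,r,k}$ to trade one unit of degree from the $i$-slot to the $\tau i$-slot at the cost of a sign and an $\mc O(\h^{r+s+1})$ error. If you want to salvage your direct computation, you would essentially have to prove the quantum identity underlying that induction anyway, so I recommend restructuring along the paper's lines.
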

\begin{proof}
Recall the relation \eqref{qsiDR4} admits an equivalent form \eqref{qsiDR10}. Taking a summation with respect to $m$ in \eqref{qsiDR10}, we obtain
\begin{align}\label{eq:qsdegen6}
[\X_{i,r,k+1},\X_{\tau i,0,k}]&  = \H_{i,r, 1}.
\end{align}
We calculate $\eqref{eq:qsdegen6}_{k}-\eqref{eq:qsdegen6}_{k-1}$ and then obtain the following relation
\begin{align*}
\begin{split}
&[\X_{i,r+1,k},\X_{\tau i,0,k-1}] +[\X_{i,r,k},\X_{\tau i,1,k-1}]=\mc O(\h^{r+2}).
\end{split}
\end{align*}
We rewrite the left-hand side using \eqref{eq:qsdegen6}
\begin{align*}
[\X_{i,r,k},\X_{\tau i,1,k-1}]=-\H_{i,r+1, 1}+\mc O(\h^{r+2}).
\end{align*}
Inductively, we obtain the following relation
\begin{align*}
[\X_{i,r,k+1-s},\X_{\tau i,s,k-s}]=(-1)^s\H_{i,r+s, 1}+\mc O(\h^{r+s+1}).
\end{align*}
Hence, by Lemma~\ref{qslemma1}, we obtain the desired relation.  
\end{proof}

\subsubsection{Other relations}
Note that formulations of defining relations \eqref{qsiDR1}, \eqref{qsiDR5}-\eqref{qsiDR3}, \eqref{qsiDR9'}-\eqref{qsiDR8} for $\tUi$ are the same as the split type. Hence, their corresponding degenerate relations \eqref{qsconj0}, \eqref{qsconj3} for $j\neq \tau i$ or $j=\tau i=i$, \eqref{qsconj4} for $j\neq \tau i$, and \eqref{qsconj8} can be verified in the same way as split type; see \cite[\S 5]{KLWZ23b}. The relation \eqref{qsconj3} for $j=\tau i\ne i$ is a corollary of the relation \eqref{qsconj4} for $j=\tau i$ while the second relation in \eqref{qsconj0} (or more generally \eqref{hrel}) follows from Lemma \ref{qslemma1}.

\section{Twisted Yangians in R-matrix presentation}\label{sec:R}
The main goal for Sections~\ref{sec:R}-\ref{sec:lowrk} is to show that the twisted Yangians introduced in Definition \ref{deftY} are isomorphic to the twisted Yangians (under the name \textit{reflection algebra}) introduced by Molev and Ragoucy \cite{MR02}. In the rest of the paper, we shall set $\h=1$ for simplicity; see Remark \ref{rem:h=1}. 

In this section, we recall the basics for the twisted Yangians $\Y_N$ of quasi-split type AIII defined in the R-matrix presentation from \cite{MR02}.

\subsection{Yangians}
\label{subsec:Y}
We start with recalling the basic theory of Yangian $\rY(\gl_N)$ from \cite{MNO96,Mol07}.
\begin{dfn}
The \textit{Yangian} $\rY(\gl_N)$ corresponding to the Lie algebra $\gl_N$ is a unital associative algebra with generators $t_{ij}^{(r)}$, where $1\lle i,j\lle N$ and $r\in\bZ_{>0}$, and the defining relations written in terms of the generating series
\[
t_{ij}(u)=  \delta_{ij}+t_{ij}^{(1)}u^{-1}+t_{ij}^{(2)}u^{-2}+\cdots
\]
by the relations,
\beq\label{Trel}
(u-v)[t_{ij}(u),t_{kl}(v)]=t_{kj}(u)t_{il}(v)-t_{kj}(v)t_{il}(u).
\eeq
\end{dfn}

The Yangian $\rY(\gl_N)$ has the following R-matrix presentation. Let $R(u)$ be the Yang R-matrix
\begin{align} \label{Ru}
R(u)=1-\frac{P}{u}\in \End(\bC^N\otimes \bC^N)[u^{-1}],\quad \text{where} \quad P=\sum_{i,j=1}^N E_{ij}\otimes E_{ji},
\end{align}
and
\[
T(u)=\sum_{i,j=1}^N E_{ij}\otimes t_{ij}(u)\in \End(\bC^N)\otimes\rY(\gl_N)[[u^{-1}]]. 
\]
Then the defining relations of $\rY(\gl_N)$ can be written as
\be
R(u-v)T_1(u)T_2(v)=T_2(v)T_1(u)R(u-v).
\ee

Note that the Yang R-matrix satisfies the Yang-Baxter equation
\beq\label{ybeq}
R_{12}(u-v)R_{13}(u)R_{23}(v)=R_{23}(v)R_{13}(u)R_{12}(u-v).
\eeq

Let $g(u)$ be any formal power series in $u^{-1}$ with leading term $1$,
\[
g(u)=1+g_1u^{-1}+g_2u^{-2}+\cdots\in \bC[[u^{-1}]].
\]
There is an automorphism of $\rY(\gl_N)$ defined by
\beq\label{eq:mu_f-A}
T(u)\to g(u)T(u).
\eeq

The Yangian for $\mathfrak{sl}_N$ is the subalgebra $\rY(\mathfrak{sl}_N)$ of $\rY(\gl_N)$ which consists of all elements stable under all the automorphisms of the form \eqref{eq:mu_f-A}.

Consider the filtration on $\rY(\gl_N)$ obtained by setting
\begin{align}  \label{filter:Y}
\deg t_{ij}^{(r)}=r-1
\end{align}
for every $r\gge 1$. Denote by $\mathrm{gr}\rY(\gl_N)$ the associated graded algebra. We write $\bar t_{ij}^{(r)}$ the image of $t_{ij}^{(r)}$ in $\mathrm{gr}\rY(\gl_N)$. Then the map
\beq\label{eq:cl-limitA}
\mathrm{U}(\gl_N[z])\to \mathrm{gr}\rY(\gl_N), \qquad e_{ij} z^{r}\mapsto \bar t_{ij}^{(r+1)},
\eeq
induces an Hopf algebra isomorphism.  

\subsection{Twisted Yangians}\label{subsec:tY}
Fix $N$ and define $i'=N+1-i$ for $1\lle i\lle N$. Let $G=(g_{ij})$ be the $N\times N$ matrix defined by $g_{ij}=\delta_{ij'}$. For any $N\times N$ matrix $M=(m_{ij})$, define
\beq\label{mat'}
M'=G M G^{-1}=(m_{i'j'}).
\eeq
In particular, we have the modified R-matrix,
\beq\label{twistedR}
R'(u)=G_1R(u)G_1=G_2R(u)G_2.
\eeq
In this case, $\tau$ is the permutation of $\{1,2,\cdots,N-1\}$ defined by $\tau i=i'-1$.

The following twisted Yangians were specific reflection algebras \cite{Sk88} introduced in \cite{MR02}.

\begin{dfn}
The  \textit{twisted Yangian} $\Y_N$ of quasi-split type AIII is a unital associative algebra with generators $s_{ij}^{(r)}$, where $1\lle i,j\lle N$ and $r\in\bZ_{>0}$, and the defining relations written in terms of the generating series
\beq\label{siju}
s_{ij}(u)=  \delta_{ij}+s_{ij}^{(1)}u^{-1}+s_{ij}^{(2)}u^{-2}+\cdots
\eeq
by the \textit{quaternary} relations,

\beq\label{bcom}
\begin{split}
(u^2-v^2)[s_{ij}(u),s_{kl}(v)]&=  (u+v)(s_{kj}(u)s_{il}(v)-s_{kj}(v)s_{il}(u))\\
&  + (u-v)\Big(\delta_{kj'}\sum_{a=1}^N s_{ia'}(u)s_{al}(v)-\delta_{il'}\sum_{a=1}^N s_{ka'}(v)s_{aj}(u)\Big)\\
& - \delta_{ij'}\Big(\sum_{a=1}^N s_{ka'}(u)s_{al}(v)-\sum_{a=1}^N s_{ka'}(v)s_{al}(u)\Big)
\end{split}
\eeq
and the \textit{unitary} relation
\beq\label{bunit}
\sum_{a=1}^N s_{ia'}(u)s_{aj}(-u)=\delta_{ij'}.
\eeq
\end{dfn}
Define the operator $S(u)\in  \End(\bC^N)\otimes\Y_N[[u^{-1}]]$,
\[
S(u)=\sum_{i,j=1}^N E_{ij}\otimes s_{ij}(u),
\]
and we treat it as a matrix $S(u)=(s_{ij}(u))$. Then the defining relations of $\Y_N$ are given by
\beq\label{quamat}
R(u-v)S_1(u)R'(u+v)S_2(v)=S_2(v)R'(u+v)S_1(u)R(u-v),
\eeq
\beq\label{unimat}
S(u)S'(-u)=\mathbf{1}_N,
\eeq
where $\mathbf{1}_N$ is the identity matrix.
\begin{rem}
The presentation of $\Y_N$ is different from the presentation of the reflection algebra $\mathcal B(N,\lfloor\frac{N}{2}\rfloor)$ in \cite[\S 2]{MR02}; however, it is not hard to see that they are isomorphic, using similar arguments as in \cite[\S2.15]{Mol07}. Indeed, let $B(u)$ be the matrix of generators for $\mathcal B(N,\lfloor\frac{N}{2}\rfloor)$ defined in \cite[(2.13)]{MR02} with the initial matrix $G$ exactly given by $G=(\delta_{ij'})$; then the constant component of $B(u)$ is $G$. The matrix $S(u)$ here corresponds to $B(u)G$; see also the discussion at the end of \cite[\S2]{MR02}.
\end{rem}

It is convenient to work on the extended twisted Yangians defined below instead of twisted Yangians. By abuse of notations, we shall keep using the same notations for various elements, such as $s_{ij}(u)$ and $S(u)$, in the twisted Yangian and the extended twisted Yangian.
\begin{dfn}\label{eradef}
The \textit{extended twisted Yangian} $\scrX_N$ of quasi-split type AIII is the unital associative algebra  with generators $s_{ij}^{(r)}$, where $1\lle i,j\lle N$ and $r\in\bZ_{>0}$ satisfying the quaternary relations \eqref{quamat}, where $s_{ij}(u)$ is again given by \eqref{siju}.
\end{dfn}

\subsection{Basic properties}\label{sec:basics}
Let $\wtl S(u)=\big(S(u)\big)^{-1}=(\tl s_{ij}(u))$.
\begin{prop}[{\cite[Prop. 2.1]{MR02}}]\label{Binv}
In the algebra $\scrX_N$, the product $S(u)S'(-u)$ is a scalar matrix,
\beq\label{cudef}
S(u)S'(-u)=S'(-u)S(u)=c(u)\mathbf{1}_N,
\eeq
where $c(u)$ is an even series in $u^{-1}$ whose coefficients are central in $\scrX_N$. In particular, we have $s_{i'j'}(-u)=c(u)\tl s_{ij}(u)$.
\end{prop}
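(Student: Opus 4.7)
The approach is to exploit the quaternary relation \eqref{quamat} in the specialization $v = -u$. Since $R'(u+v) = 1 - Q/(u+v)$, where $Q := G_1 P G_1 = G_2 P G_2$, is singular at $v = -u$, I first multiply both sides of \eqref{quamat} by $(u+v)$ and pass to the limit, which produces
\[
R(2u)\, S_1(u)\, Q\, S_2(-u) = S_2(-u)\, Q\, S_1(u)\, R(2u).
\]
A direct computation on tensor basis vectors shows that $Q = (G\otimes G) P = P (G\otimes G)$, that $Q^2 = I$, that $[Q, R(2u)] = 0$, and that conjugation by $Q$ simultaneously interchanges the two tensor factors and applies the involution $M \mapsto M'$; in particular, $Q\, S_1(u) = S'_2(u)\, Q$ and $Q\, S_2(-u) = S'_1(-u)\, Q$. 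Substituting these and cancelling the rightmost $Q$ converts the identity above into
\[
R(2u)\, T_1(u) = T_2(-u)\, R(2u),\qquad T(u) := S(u) S'(-u),
\]
where I have used $S(-u) S'(u) = T(-u)$.

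Expanding via $R(2u) = 1 - P/(2u)$ and the standard identity $P T_1 = T_2 P$ reduces the above relation to $2u(T_1(u) - T_2(-u)) = (T_2(u) - T_2(-u))\, P$. Extracting the $(ij,kl)$-matrix entry and choosing $i, j, k, l$ so that exactly one side can be non-zero (for instance $i = j$ with $k \ne l$ and $i \ne l$) forces the off-diagonal entries of $T(u)$ to vanish and all diagonal entries to be equal and to coincide with those of $T(-u)$. This yields $T(u) = c(u)\, \mathbf{1}_N$ with $c(u)$ an even series in $u^{-1}$, establishing both that $S(u)S'(-u)$ is scalar and the evenness of $c(u)$. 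The second equality $S'(-u) S(u) = c(u)\, \mathbf{1}_N$ follows at once from the invertibility of $S(u)$ (whose leading term is $\mathbf{1}_N$): rearranging $S(u) S'(-u) = c(u)\, \mathbf{1}_N$ yields $S'(-u) = c(u)\, S(u)^{-1}$ and hence $S'(-u) S(u) = c(u)\, \mathbf{1}_N$. The ``in particular'' clause $s_{i'j'}(-u) = c(u)\, \tl s_{ij}(u)$ is a component-wise restatement of the same identity $S'(-u) = c(u)\, S(u)^{-1}$.

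For the centrality of the coefficients of $c(u)$, the plan is to show $T_1(u)\, S_2(v) = S_2(v)\, T_1(u)$ in $\End(\bC^N \otimes \bC^N) \otimes \scrX_N$; since $T_1(u) = c(u) \cdot I$, this is equivalent to $[c(u), s_{ab}(v)] = 0$ for all $a, b$. The strategy is to use two instances of the quaternary relation: the original relation (A) governing the exchange of $S_1(u)$ with $S_2(v)$, and a companion (B) obtained from (A) by $u \mapsto -u$ followed by conjugation by $G_1$, governing the exchange of $S'_1(-u)$ with $S_2(v)$. Applying (B) first to transport $S_2(v)$ past $S'_1(-u)$ and then (A) to transport it past $S_1(u)$, the accumulated $R$- and $R'$-matrix factors end up sandwiching the product $S_1(u) S'_1(-u) = c(u)\, I_1$; because this product is scalar in the first auxiliary space, those factors collapse via unitarity and Yang--Baxter identities for $R$ and $R'$, leaving only $[T_1(u), S_2(v)] = 0$. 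I expect the verification of this cancellation to be the main technical obstacle; the argument should closely parallel the standard reflection-equation arguments, as in \cite[Prop.~2.2.3]{Mol07}.
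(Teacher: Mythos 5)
The paper itself offers no proof here — it simply cites \cite[Prop.~2.1]{MR02} — and your argument is essentially the standard one from that source: specialize the quaternary relation at $v=-u$ after clearing the pole of $R'(u+v)$, use the intertwining properties of $Q=G_1PG_1$ to convert the result into $R(2u)T_1(u)=T_2(-u)R(2u)$ for $T(u)=S(u)S'(-u)$, and read off that $T(u)$ is scalar and even. I checked the identities you assert about $Q$ ($Q=(G\otimes G)P=P(G\otimes G)$, $Q^2=I$, $[Q,P]=0$, $QS_1(u)=S_2'(u)Q$, $QS_2(-u)=S_1'(-u)Q$) and the entry extraction from $2u(T_1(u)-T_2(-u))=(T_2(u)-T_2(-u))P$; all are correct. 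Your centrality sketch also goes through: using relation (B) $=$ ($u\mapsto-u$ followed by $G_1$-conjugation of the quaternary relation) to move $S_2(v)$ past $S_1'(-u)$ and then (A) to move it past $S_1(u)$, the accumulated factors are $R'(u+v)R'(-u-v)=1-(u+v)^{-2}$ and $R(u-v)R(v-u)=1-(u-v)^{-2}$, which cancel against the prefactors coming from $R^{-1}$ and $(R')^{-1}$ once they surround the scalar matrix $T_1(u)=c(u)\mathbf 1_N$; one ends with $T_1(u)R(v-u)S_2(v)=R(v-u)S_2(v)T_1(u)$ and cancels $R(v-u)$.

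The one genuine gap is the deduction of $S'(-u)S(u)=c(u)\mathbf 1_N$. From $S(u)S'(-u)=c(u)\mathbf 1_N$ and invertibility of $S(u)$ you get $S'(-u)=S(u)^{-1}c(u)$, hence $S'(-u)S(u)=S(u)^{-1}c(u)S(u)$; writing this as $c(u)S(u)^{-1}$ and concluding $c(u)\mathbf 1_N$ silently commutes $c(u)$ past the entries of $S(u)$, i.e.\ it presupposes exactly the centrality you only establish afterwards. Two clean fixes: either reorder the argument (prove centrality from $S(u)S'(-u)=c(u)\mathbf 1_N$ first — your centrality proof uses only that identity — and then the invertibility step is legitimate), or avoid centrality altogether by noting that applying $u\mapsto-u$ and then the involution $M\mapsto M'=GMG$ to $S(-u)S'(u)=c(-u)\mathbf 1_N=c(u)\mathbf 1_N$ (evenness is already known at this point) gives $S'(-u)S(u)=c(u)\mathbf 1_N$ directly, since $(AB)'=A'B'$ and $S''=S$. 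With either repair the proof is complete and agrees with the cited argument of Molev--Ragoucy.
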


Before further discussing the properties of $\Y_N$, we collect a few facts about the inverse of $T(u)$ of $\rY(\gl_N)$. Define $\wtl T(u):=\big(T(u)\big)^{-1}$ and let $\tl t_{ij}(u)$ be its matrix elements, 
$$
\tl t_{ij}(u)=\delta_{ij}+\sum_{r>0} \tl {t}_{ij}^{(r)}u^{-r}.
$$
Then
\beq\label{T'ij}
\tl t_{ij}(u)=\delta_{ij}+\sum_{k>0} (-1)^k\sum_{a_1,\cdots,a_{k-1}=1}^N t_{ia_1}^\circ(u)t_{a_1a_2}^\circ(u)\cdots t_{a_{k-1}j}^\circ(u),
\eeq
where $t_{ij}^\circ(u)=t_{ij}(u)-\delta_{ij}$. In particular, by taking the coefficient of $u^{-r}$, for $r\gge 1$, one obtains 
\beq\label{t'ijr}
\tl t_{ij}^{(r)}=\sum_{k=1}^r (-1)^k\sum_{a_1,\cdots,a_{k-1}=1}^N\sum_{r_1+\cdots+r_k=r}t_{ia_1}^{(r_1)}t_{a_1a_2}^{(r_2)}\cdots t_{a_{k-1}j}^{(r_k)},
\eeq
where $r_i$ for $1\lle i\lle k$ are positive integers.

It is well known that $\Y_N$ can be identified as a subalgebra of $\rY(\gl_N)$, see \cite[Thm. 3.1]{MR02}. Specifically, the map
\[
S(u)\mapsto T(u)\wtl T'(-u)
\]
defines an algebra embedding $\Y_N\hookrightarrow \rY(\gl_N)$.
Moreover, there is a filtration on $\Y_N$ inherited from the one \eqref{filter:Y} on $\rY(\gl_N)$ such that $\deg s_{ij}^{(r)}=r-1$. Let $\mathcal F_s(\Y_N)$ be the subspace of $\Y_N$ spanned by elements of degree $\lle s$ such that
\begin{align}
\label{filter:B}
    \mathcal F_0(\Y_N) \subset \mathcal F_1(\Y_N) \subset \mathcal F_2(\Y_N) \subset \ldots, 
    \qquad\qquad \Y_N =\bigcup_{s\gge 0}  \mathcal F_s(\Y_N).
\end{align}
Denote by $\gr\, \Y_N$ the associated graded algebra. Let $\bar s_{ij}^{(r)}$ be the image of $s_{ij}^{(r)}$ in the $(r-1)$-st component of $\gr\,\Y_N$. Then
\beq\label{image-quo}
\bar s_{ij}^{(r)}=\bar t_{ij}^{(r)}-(-1)^r\bar t_{i'j'}^{(r)}.
\eeq

\begin{prop}\label{pbwprop1}
Given any linear order on the set of generators
\begin{enumerate}
    \item $s_{ij}^{(r)}$, $s_{kk}^{(r)}$, if $N=2n$;
    \item $s_{ij}^{(r)}$, $s_{kk}^{(r)}$, $s_{n+1,n+1}^{(2r-1)}$, if $N=2n+1$;
\end{enumerate}
where $1\lle i<j\lle N$, $r\in\bZ_{>0}$, $1\lle k\lle n$, then the order monomials in these generators form a basis of $\Y_N$.
\end{prop}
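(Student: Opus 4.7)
The plan is the standard filtration plus PBW argument, using the filtration \eqref{filter:B} on $\Y_N$ with $\deg s_{ij}^{(r)}=r-1$ inherited from the embedding $\Y_N \hookrightarrow \rY(\gl_N)$ (recalled in Section \ref{sec:basics}) and the classical limit isomorphism $\mathrm{gr}\,\rY(\gl_N) \cong \mathrm U(\gl_N[z])$ of \eqref{eq:cl-limitA}. Combining these with the identity \eqref{image-quo}, each generator $\bar s_{ij}^{(r)}$ of $\mathrm{gr}\,\Y_N$ is identified with
\[
\sfx_{ij}^{(r)} := \bigl(e_{ij} - (-1)^r e_{i'j'}\bigr)z^{r-1} \;\in\; \mathrm U(\gl_N[z]),
\]
so $\mathrm{gr}\,\Y_N$ embeds into $\mathrm U(\gl_N[z])$ as the subalgebra generated by the $\sfx_{ij}^{(r)}$'s.

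I would next verify that the linear span $\mathfrak{a} := \mathrm{span}\{\sfx_{ij}^{(r)}\}$ is a Lie subalgebra of $\gl_N[z]$: a direct computation with $[e_{ij},e_{kl}]=\delta_{jk}e_{il}-\delta_{il}e_{kj}$ and the identities $\delta_{j'k'}=\delta_{jk}$, $\delta_{i'l'}=\delta_{il}$ expresses $[\sfx_{ij}^{(r)},\sfx_{kl}^{(s)}]$ as an explicit linear combination of $\sfx_{\bullet,\bullet}^{(r+s-1)}$. In fact $\mathfrak a = \gl_N[z]^\sigma$ for the involution $\sigma(e_{ab}z^r)=(-1)^r e_{a'b'}z^r$, which has the form of \eqref{omeche}. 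Enumerating the orbits of the index involution $(i,j)\mapsto(i',j')$ on $\{1,\dots,N\}^2$, every two-element orbit contributes one independent $\sfx_{ij}^{(r)}$ for each $r\ge 1$, while the unique fixed index $(n+1,n+1)$ in the odd case $N=2n+1$ contributes a basis vector precisely when $r$ is odd, since $\sfx_{n+1,n+1}^{(r)}=(1-(-1)^r)e_{n+1,n+1}z^{r-1}=0$ for even $r$. The generators listed in the proposition pick exactly one representative from each two-orbit (observing that $i<j$ forces $i'>j'$ and $k\le n$ forces $k'>n$) together with the surviving fixed-orbit vectors $s_{n+1,n+1}^{(2r-1)}$ in the odd case; they thus correspond bijectively to a vector-space basis of $\mathfrak a$.

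By the PBW theorem for $\mathrm U(\mathfrak{a})$, the ordered monomials in this basis form a basis of $\mathrm U(\mathfrak a)$. A standard filtration-lifting argument then transfers this to $\Y_N$. Linear independence lifts automatically from the associated graded. For spanning, the quaternary relations \eqref{bcom} allow one to permute factors of any monomial in the $s_{ij}^{(r)}$'s modulo strictly lower filtered degree, while the graded identities
\[
s_{ij}^{(r)}+(-1)^r s_{i'j'}^{(r)}\in \mathcal F_{r-2}(\Y_N)\quad\text{and}\quad s_{n+1,n+1}^{(r)}\in \mathcal F_{r-2}(\Y_N)\text{ for even }r\text{ (odd }N\text{)}
\]
— both immediate from \eqref{image-quo} — allow one to eliminate the non-chosen generators modulo strictly lower degree. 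Iterating, any ordered monomial in arbitrary $s_{ij}^{(r)}$'s is rewritten as a linear combination of ordered monomials in the chosen generators.

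The main delicate point is verifying the equality $\mathrm{gr}\,\Y_N = \mathrm U(\mathfrak a)$ rather than a proper inclusion: this requires the embedding $\Y_N\hookrightarrow\rY(\gl_N)$ to strictly respect the filtration, i.e., that each chosen $s_{ij}^{(r)}$ has filtered degree exactly $r-1$, not less. Granted \eqref{image-quo}, this reduces to observing that the leading term $\sfx_{ij}^{(r)}$ is nonzero in $\mathrm U(\gl_N[z])$ for each chosen generator, which is immediate from linear independence of $\{e_{ab}z^{r-1}\}$ together with the orbit analysis. With this, the inclusion $\mathrm{gr}\,\Y_N\subseteq\mathrm U(\mathfrak a)$ becomes an equality, and the PBW basis descends to $\Y_N$.
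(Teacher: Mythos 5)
Your proposal is correct and follows essentially the same route as the paper, which simply defers to the filtration/associated-graded/PBW argument of Molev--Ragoucy (\cite[Thm.~3.1 \& Coro.~3.2]{MR02}): identify $\gr\,\Y_N$ with $\mathrm U(\gl_N[z]^\vartheta)$ via \eqref{image-quo} and \eqref{isogr}, check that the listed generators project onto a Lie-algebra basis of the fixed-point algebra (your orbit count, including the vanishing of $\sfx_{n+1,n+1}^{(2r)}$, is exactly right), and lift the PBW basis. The only nitpick is in the spanning step: commuting two factors produces, besides strictly lower-degree terms, a term of the \emph{same} filtered degree but with one fewer factor (a single generator $s_{ab}^{(r+s-1)}$), so the rewriting induction should be on degree and then on length rather than on degree alone.
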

\begin{proof}
The proof is parallel to that of \cite[Thm. 3.1 \& Coro. 3.2]{MR02}.
\end{proof}

Let $\vartheta$ be the involution\footnote{We use $\vartheta$ instead of $\omega_0$ from Section~\ref{sec:twloop} as they are slightly different; however, one can show that $\vartheta$ is conjugated to $\omega_0$ by rescaling automorphisms.} of $\gl_N$ defined by
\beq\label{varth}
\vartheta: \gl_N\longrightarrow \gl_N,\quad e_{ij}\mapsto e_{i'j'}.
\eeq
Extend this involution to $\gl_N[z]$ by sending $g\otimes z^r$ to $\vartheta(g)\otimes(-z)^r$ for $g\in \gl_N$ and $r\in\bN$. Let $\gl_N[z]^\vartheta$ be the fixed point subalgebra of $\gl_N[z]$ under the involution $\vartheta$. Then it is well known that the map
\beq\label{isogr}
\mathrm{U}(\gl_N[z]^\vartheta)\longrightarrow \gr\,\Y_N,\qquad \big(e_{ij}+(-1)^re_{i'j'}\big) z^{r} \mapsto \bar s_{ij}^{(r+1)}
\eeq
induces an algebra isomorphism, cf. \eqref{eq:cl-limitA} and \eqref{image-quo}. By restriction, we can also define $\mathfrak{sl}_N[z]^\vartheta$ and $\mathrm U(\mathfrak{sl}_N[z]^\vartheta)$.

\section{Gauss decomposition approach}\label{sec:GD}

In this section, we formulate and study the Gauss decomposition for twisted Yangians. Using the Gaussian generators, we establish in Theorem~\ref{main2} an isomorphism between $\Yi$ introduced in Definition \ref{deftY} and the special twisted Yangian $\SY_N$.

\subsection{Quasi-determinants and Gauss decomposition}
We shall also need the quasi-determinant presentation, see \cite{GGRW:2005}, of Drinfeld current generating series in terms of R-matrix generating series. Let $X$ be a square matrix over a ring with identity such that its inverse matrix $X^{-1}$ exists, 
and such that its $(j,i)$-th entry is an invertible element of the ring.  Then the $(i,j)$-th
\emph{quasi-determinant} of $X$ is defined by the first formula below and denoted graphically by the boxed notation (cf. \cite[\S1.10]{Mol07}):
\begin{equation*}
\vert X\vert _{ij} \stackrel{\text{def}}{=} \left((X^{-1})_{ji}\right)^{-1} = \left\vert  \begin{array}{ccccc} x_{11} & \cdots & x_{1j} & \cdots & x_{1n}\\
&\cdots & & \cdots&\\
x_{i1} &\cdots &\boxed{x_{ij}} & \cdots & x_{in}\\
& \cdots& &\cdots & \\
x_{n1} & \cdots & x_{nj}& \cdots & x_{nn}
\end{array} \right\vert .
\end{equation*}

By \cite[Thm. 4.96]{GGRW:2005}, the matrix $S(u)$, for both $\scrX_N$ and $\Y_N$, has the following Gauss decomposition:
$$
S(u) = F(u) D(u) E(u)
$$
for unique matrices of the form
\begin{equation*}
D(u) = \left[ \begin{array}{cccc} d_1 (u) & &\cdots & 0\\
& d_2 (u) &  &\vdots\ \\
\vdots & &\ddots &\\
0 &\cdots &  &d_{N} (u)
\end{array} \right],
\end{equation*}
\begin{equation*}
E(u)=\left[ \begin{array}{cccc} \!\!1 &e_{12}(u) &\cdots & e_{1N}(u)\!\!  \\
&\ddots & &e_{2N}(u) \!\! \\
& &\ddots & \vdots\\
0 & & &1 
\end{array} \right],\qquad 
F(u) = \left[ \begin{array}{cccc} \!\!1 & &\cdots &0\!\!\\
f_{21}(u) &\ddots & &\vdots\\
\vdots & & \ddots& \\
\!\!f_{N1}(u) & f_{N2}(u) &\cdots &1\!
\end{array} \right],
\end{equation*}
where the matrix entries are defined in terms of quasi-determinants:
\begin{eqnarray}
d_i (u) &=& \left\vert  \begin{array}{cccc} s_{11}(u) &\cdots &s_{1,i-1}(u) &s_{1i}(u) \\
\vdots &\ddots & &\vdots \\
s_{i1}(u) &\cdots &s_{i,i-1}(u) &\mybox{$s_{ii}(u)$}
\end{array} \right\vert, 
\qquad \tl d_i(u)=d_i(u)^{-1}, \label{gd1}
\\
e_{ij}(u) &=&\tl d_i (u) \left\vert  \begin{array}{cccc} s_{11}(u) &\cdots & s_{1,i-1}(u) & s_{1j}(u) \\
\vdots &\ddots &\vdots & \vdots \\
s_{i-1,1}(u) &\cdots &s_{i-1,i-1}(u) & s_{i-1,j}(u)\\
s_{i1}(u) &\cdots &s_{i,i-1}(u) &\mybox{$s_{ij}(u)$}
\end{array} \right\vert,\label{gd2}
\\
f_{ji}(u) &=& \left\vert  \begin{array}{cccc} s_{11}(u) &\cdots &s_{1, i-1}(u) & s_{1i}(u) \\
\vdots &\ddots &\vdots &\vdots \\
s_{i-1,1}(u) &\cdots &s_{i-1,i-1}(u) &s_{i-1,i}(u)\\
s_{j1}(u) &\cdots &s_{j, i-1}(u) &\mybox{$s_{ji}(u)$} 
\end{array} \right\vert\, 
\tl d_{i}(u).\label{gd3}
\end{eqnarray}
The Gauss decomposition can also be written component-wise as, for $i<j$, 
\begin{align}
s_{ii}(u)&=d_i(u)+\sum_{k<i}f_{ik}(u)d_k(u)e_{ki}(u),\nonumber\\
s_{ij}(u)&=d_i(u)e_{ij}(u)+\sum_{k<i}f_{ik}(u)d_k(u)e_{kj}(u),\label{eq:sij-Gauss}\\
s_{ji}(u)&=f_{ji}(u)d_i(u)+\sum_{k<i}f_{jk}(u)d_k(u)e_{ki}(u).\nonumber
\end{align}

We further denote
\begin{align}
    \label{gauss-gen}
e_{ij}(u) &=\sum_{r\gge 1}e_{ij}^{(r)}u^{-r},\quad f_{ji}(u)=\sum_{r\gge 1}f_{ji}^{(r)}u^{-r},\quad d_k(u)=1+\sum_{r\gge 1}d_{k}^{(r)}u^{-r},
\\
e_i(u) &=\sum_{r\gge 1}e_{i}^{(r)}u^{-r}=e_{i,i+1}(u),\quad f_i(u)=\sum_{r\gge 1}f_{i}^{(r)}u^{-r}=f_{i+1,i}(u),\quad 1\lle i<N.
\end{align}

Set 
\beq\label{edfinv}
\begin{split}
&\wtl D(u)=D(u)^{-1}=\sum_{1\lle i\lle N} E_{ii}\otimes \tl d_{i}(u),
\\
&\wtl E(u)=E(u)^{-1}=1+\sum_{1\lle i<j\lle N}E_{ij}\otimes \tl e_{ij}(u),\\
&\wtl F(u)=F(u)^{-1}=1+\sum_{1\lle i<j\lle N}E_{ji}\otimes \tl f_{ji}(u).
\end{split}
\eeq
Then we have 
\beq\label{eq:def-tilde-e-f}
\begin{split}
&\tl e_{ij}(u)=\sum_{i=i_0<i_1<\cdots<i_s=j}(-1)^s e_{i_0i_1}(u)e_{i_1i_2}(u)\cdots e_{i_{s-1}i_s}(u),\\
&\tl f_{ji}(u)=\sum_{i=i_0<i_1<\cdots<i_s=j}(-1)^s f_{i_{s}i_{s-1}}(u)\cdots f_{i_2i_1}(u) f_{i_1i_0}(u).
\end{split}
\eeq

\subsection{A homomorphism $\scrX_{N-2m}\to \scrX_{N}$}

Unlike the case of type AI in \cite{KLWZ23a}, the commutator relations \eqref{bcom} for type AIII involve summations. Consequently, there is no obvious embedding from $\scrX_{N-m}$ to $\scrX_{N}$ for $m>0$. From the viewpoint of Satake diagrams, one still expects a homomorphism from $\scrX_{N-2m}$ to $\scrX_{N}$. The main goal of this section is to construct such a homomorphism, following a similar strategy of \cite[\S3]{JLM18}.

For $2\lle  i\lle 2' (=N-1)$, by \eqref{bcom}, we have 
\beq\label{emrel1}
s_{11}(u+1)s_{i 1}(u)=s_{i1}(u+1)s_{11}(u).
\eeq
Therefore, we can rewrite the following quasi-determinant
\begin{align*}
\begin{vmatrix}s_{11}(u) & s_{1j}(u)\\
s_{i1}(u) & \mybox{$s_{ij}(u)$}\end{vmatrix}&=s_{ij}(u)-s_{i1}(u)s_{11}(u)^{-1}s_{1j}(u)\\
&=s_{11}(u+1)^{-1}\big(s_{11}(u+1)s_{ij}(u)-s_{i1}(u+1)s_{1j}(u)\big).
\end{align*}
Set 
\beq\label{newt}
\mc T_{ij}(u)=s_{11}(u+1)s_{ij}(u)-s_{i1}(u+1)s_{1j}(u)=s_{11}(u+1)\begin{vmatrix}s_{11}(u) & s_{1j}(u)\\
s_{i1}(u) & \mybox{$s_{ij}(u)$}\end{vmatrix},
\eeq
and introduce
\beq\label{gamma}
\begin{split}
\Gamma(u):=&\sum_{a_i,b_i}E_{a_1b_1}\otimes E_{a_2b_2}\otimes \Gamma_{b_1b_2}^{a_1a_2}(u)\\
=&\,R_{12}(1)S_1(u+1){R}_{12}'(2u+1)S_2(u)
=S_2(u){R}_{12}'(2u+1)S_1(u+1)R_{12}(1),
\end{split}
\eeq
where $\Gamma_{b_1b_2}^{a_1a_2}(u)$ are matrix entries for $\Gamma(u)$ and the last equality follows from \eqref{quamat}.
\begin{lem}\label{Blem}
We have
\begin{enumerate}
    \item  $[s_{11}(u),\mc T_{ij}(v)]=0$, $2\lle i,j\lle 2'$;
\item $\Gamma_{1j}^{1i}(u)=\mc T_{ij}(u)$, $2\lle i,j\lle 2'$;
\item $\Gamma_{b_1b_2}^{a_1a_2}(u)=-\Gamma_{b_1b_2}^{a_2a_1}(u)=-\Gamma_{b_2b_1}^{a_1a_2}(u)$.
\end{enumerate}
\end{lem}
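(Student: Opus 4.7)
The plan is to prove parts (3), (2), (1) in that order, moving from essentially formal observations to a genuinely computational check.

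For (3), I would exploit that $R_{12}(1) = I - P_{12}$ and $P_{12}^2 = I$ together yield $P_{12} R_{12}(1) = R_{12}(1) P_{12} = -R_{12}(1)$. The first expression for $\Gamma(u)$ in \eqref{gamma} places $R_{12}(1)$ on the left, so $P_{12} \Gamma(u) = -\Gamma(u)$, which in components is $\Gamma^{a_2 a_1}_{b_1 b_2}(u) = -\Gamma^{a_1 a_2}_{b_1 b_2}(u)$. The second expression places $R_{12}(1)$ on the right, so $\Gamma(u) P_{12} = -\Gamma(u)$, giving $\Gamma^{a_1 a_2}_{b_2 b_1}(u) = -\Gamma^{a_1 a_2}_{b_1 b_2}(u)$.

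For (2), I would expand
\begin{equation*}
\Gamma(u) = (I - P_{12})\, S_1(u+1)\, \Bigl( I - \tfrac{1}{2u+1} P'_{12} \Bigr) S_2(u)
\end{equation*}
into four pieces and extract the coefficient of $E_{11} \otimes E_{ij}$ for $2 \le i,j \le N-1$. Using $P_{12}(E_{ab}\otimes E_{cd}) = E_{cb}\otimes E_{ad}$ and $P'_{12} = \sum_{m,n} E_{m'n'} \otimes E_{nm}$, a short index chase shows that the two pieces containing $P'_{12}$ force some index equal to $N$ (through $n' = 1$, i.e.\ $n = N$), and hence drop out since $i, j \le N-1$. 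The surviving pieces $S_1(u+1)S_2(u)$ and $-P_{12}S_1(u+1)S_2(u)$ contribute $s_{11}(u+1)s_{ij}(u) - s_{i1}(u+1)s_{1j}(u)$, which is $\mathcal{T}_{ij}(u)$ by \eqref{newt}.

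For (1), I would proceed by direct calculation from \eqref{bcom}. Setting $(i,j) = (1,1)$ with $2 \le k, l \le N-1$, every Kronecker-delta term on the right of \eqref{bcom} vanishes, leaving the clean relation
\begin{equation*}
(u-w)[s_{11}(u), s_{kl}(w)] = s_{k1}(u) s_{1l}(w) - s_{k1}(w) s_{1l}(u).
\end{equation*}
Using Leibniz on the definition \eqref{newt}, $[s_{11}(u), \mathcal{T}_{ij}(v)]$ breaks into commutators with $s_{11}(v+1)$, $s_{ij}(v)$, $s_{i1}(v+1)$, $s_{1j}(v)$; the first vanishes by the $(k,l) = (1,1)$ case. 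The remaining three combine into an expression with denominators $(u-v)$ and $(u-v-1)$. The key step is to invoke \eqref{emrel1} at parameter $v$, namely $s_{11}(v+1) s_{i1}(v) = s_{i1}(v+1) s_{11}(v)$, to cancel two cross terms; a second application of the displayed commutator relation (with $(k,l) = (i,1)$ and $w = v+1$) reduces what is left to a telescoping identity between $(u-v)^{-1}$ and $(u-v-1)^{-1}$ that collapses to zero.

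The main obstacle is (1). Each individual commutator is explicit, but grouping the four resulting products so that \eqref{emrel1} can be applied at the right place, and then reconciling the spectral shifts $u-v$ and $u-v-1$, is the delicate step. If the bookkeeping becomes unwieldy, an alternative route is to derive $[s_{11}(u), \Gamma^{1i}_{1j}(v)] = 0$ via an extended RTT-type relation obtained by applying the quaternary relation twice in a three-fold tensor product, and then invoke (2); but the direct approach above seems shorter.
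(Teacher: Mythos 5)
Your proposal is correct and follows essentially the same route as the paper's proof: part (3) from the idempotence identity $P_{12}R_{12}(1)=R_{12}(1)P_{12}=-R_{12}(1)$, part (2) by expanding \eqref{gamma} and observing that the $P_{12}'$ pieces cannot contribute to the coefficient of $E_{11}\otimes E_{ij}$ with $2\lle i,j\lle 2'$, and part (1) by expanding $[s_{11}(u),\mc T_{ij}(v)]$ via \eqref{bcom}, cancelling cross terms with \eqref{emrel1}, and reducing the remainder (after combining the $(u-v)^{-1}$ and $(u-v-1)^{-1}$ denominators) to the quaternary relation for $[s_{11}(v+1),s_{i1}(u)]$. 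The only cosmetic differences are the order in which you treat the three parts and your phrasing of the $R_{12}(1)$ identity.
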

\begin{proof}
(1) Note that by \eqref{bcom} we have $[s_{11}(u),s_{11}(v)]=0$. It follows from \eqref{bcom} that
\begin{align*}
&[s_{11}(u),\mc T_{ij}(v)]=[s_{11}(u),s_{11}(v+1)s_{ij}(v)-s_{i1}(v+1)s_{1j}(v)]\\
=&\,\frac{1}{u-v}s_{11}(v+1)\Big(s_{i1}(u)s_{1j}(v)- s_{i1}(v)s_{1j}(u) \Big)-\frac{1}{u-v-1}\Big(s_{i1}(u)s_{11}(v+1)\\
&\quad\quad -s_{i1}(v+1)s_{11}(u)\Big)s_{1j}(v)-\frac{1}{u-v}s_{i1}(v+1)\Big(s_{11}(u)s_{1j}(v)- s_{11}(v)s_{1j}(u) \Big).
\end{align*}
Due to \eqref{emrel1}, it suffices to show that
\begin{align*}
\frac{1}{u-v}s_{11}(v+1)s_{i1}(u)&-\frac{1}{u-v-1}s_{i1}(u)s_{11}(v+1) \\&+\frac{1}{(u-v)(u-v-1)}s_{i1}(v+1)s_{11}(u)=0,
\end{align*}
which is equivalent to
\[
(v+1-u)[s_{11}(v+1),s_{i1}(u)]=s_{i1}(v+1)s_{11}(u)-s_{i1}(u)s_{11}(v+1).
\]
This follows directly from \eqref{bcom}.

(2) Computing $\Gamma_{1j}^{1i}(u)$, $2\lle i,j\lle 2'$, using the definition \eqref{gamma}, one finds that it is given by $$s_{11}(u+1)s_{ij}(u)-s_{i1}(u+1)s_{1j}(u)$$ which coincides with $\mc T_{ij}(u)$ in \eqref{newt}.

(3) Note that $(1-P_{12}) R_{12}(1)=2R_{12}(1)= R_{12}(1)(1-P_{12})$. Thus $R_{12}(1)$ remains unchanged when multiplying by $(1-P_{12})/2$ from the left or the right. Then multiplying by $(1-P_{12})/2$ from the left to \eqref{gamma}, one derives $\Gamma_{b_1b_2}^{a_1a_2}(u)=-\Gamma_{b_1b_2}^{a_2a_1}(u)$. The other identity $\Gamma_{b_1b_2}^{a_1a_2}(u)=-\Gamma_{b_2b_1}^{a_1a_2}(u)$ is obtained by multiplying by $(1-P_{12})/2$ from the right to \eqref{gamma}.
\end{proof}

We will need the following simplified expression of \eqref{ybeq} when $v=u-1$.
\begin{lem}[\text{cf. \cite[Coro. 1.7.3]{Mol07}}]
\label{Rsim}
We have the following relations,
\be
R_{12}(1)R_{13}(u)R_{23}(u-1)=R_{12}(1)\Big(1-\frac{P_{13}+P_{23}}{u-1}\Big),
\ee
\be
R_{23}(u-1)R_{13}(u)R_{12}(1)=\Big(1-\frac{P_{13}+P_{23}}{u-1}\Big)R_{12}(1).
\ee
\end{lem}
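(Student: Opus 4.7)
The plan is to exploit the antisymmetrizer nature of $R_{12}(1) = 1 - P_{12}$: it satisfies $R_{12}(1)(1+P_{12}) = (1+P_{12})R_{12}(1) = 0$. For the first identity, I would begin by expanding
\begin{equation*}
R_{13}(u)R_{23}(u-1) = 1 - \frac{P_{13}}{u} - \frac{P_{23}}{u-1} + \frac{P_{13}P_{23}}{u(u-1)}.
\end{equation*}
After left-multiplying by $R_{12}(1)$ and clearing denominators, the target identity reduces to checking that $R_{12}(1)\bigl[(u-1)P_{13} + uP_{23} - P_{13}P_{23}\bigr] = u R_{12}(1)(P_{13}+P_{23})$, equivalently $R_{12}(1)\,P_{13}(1+P_{23}) = 0$. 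This last identity follows from the standard relation $P_{12}P_{13} = P_{13}P_{23}$ between transpositions, which yields $P_{13}(1+P_{23}) = (1+P_{12})P_{13}$, so that $R_{12}(1)\,P_{13}(1+P_{23}) = (1-P_{12})(1+P_{12})P_{13} = 0$.

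For the second identity, I would combine the first one with the Yang--Baxter equation \eqref{ybeq} specialized to $v = u-1$, which gives $R_{12}(1)R_{13}(u)R_{23}(u-1) = R_{23}(u-1)R_{13}(u)R_{12}(1)$. It then suffices to verify that $R_{12}(1)$ commutes with $P_{13}+P_{23}$, so that the RHS of the first identity can be rewritten with $R_{12}(1)$ moved to the right. This commutation is immediate from the conjugation identities $P_{12}P_{13}P_{12} = P_{23}$ and $P_{12}P_{23}P_{12} = P_{13}$, whence $P_{12}(P_{13}+P_{23}) = (P_{13}+P_{23})P_{12}$. I do not expect any serious obstacle in this proof; the entire argument is a direct algebraic manipulation driven by the antisymmetrizer property of $R(1)$, and the result is a classical simplification (recorded e.g.\ in \cite[Coro.~1.7.3]{Mol07}).
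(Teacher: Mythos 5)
Your proof is correct and complete. The paper itself gives no argument for this lemma, only a citation to Molev's book, and your direct verification is exactly the standard one: the expansion of $R_{13}(u)R_{23}(u-1)$ reduces the first identity to $R_{12}(1)P_{13}(1+P_{23})=0$, which follows from $P_{12}P_{13}=P_{13}P_{23}$ and $(1-P_{12})(1+P_{12})=0$; the second identity then follows from the Yang--Baxter equation at $v=u-1$ together with $[P_{12},P_{13}+P_{23}]=0$. All the permutation identities you invoke are valid under the paper's conventions, so there is nothing to add.
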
 


\begin{prop}\label{embedB}
The map $s_{ij}(u)\mapsto \mc T_{ij}(u)$, $2\lle i,j\lle 2'$, defines a homomorphism $\scrX_{N-2}\to \scrX_{N}$.
\end{prop}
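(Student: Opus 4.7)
The goal is to verify that the matrix $\mc T(u) = (\mc T_{ij}(u))_{2 \lle i,j \lle 2'}$, viewed as an element of $\End(\bC^{N-2}) \otimes \scrX_N[[u^{-1}]]$, satisfies the quaternary relation \eqref{quamat} defining $\scrX_{N-2}$. The strategy is to exploit the auxiliary matrix $\Gamma(u) \in \End(\bC^N)^{\otimes 2} \otimes \scrX_N[[u^{-1}]]$ from \eqref{gamma}. By Lemma~\ref{Blem}(2) its $(1i,1j)$-entries are exactly $\mc T_{ij}(u)$, and by Lemma~\ref{Blem}(3) (a consequence of the antisymmetrizer $R_{12}(1)=1-P_{12}$ appearing in $\Gamma$) any non-vanishing entry must be antisymmetric in each auxiliary factor, so selecting the first index to be $1$ on each side picks out exactly the block indexed by $\{2,\ldots,2'\}$.

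The key step is to establish a ``higher'' reflection equation for $\Gamma$ in $\End(\bC^N)^{\otimes 4} \otimes \scrX_N[[u^{-1},v^{-1}]]$, of the form
\[
\mathbf{R}(u,v)\,\Gamma_{12}(u)\,\mathbf{R}'(u,v)\,\Gamma_{34}(v) \;=\; \Gamma_{34}(v)\,\mathbf{R}'(u,v)\,\Gamma_{12}(u)\,\mathbf{R}(u,v),
\]
where $\mathbf{R}(u,v)$ and $\mathbf{R}'(u,v)$ are certain products of $R$- and $R'$-matrices (at appropriately shifted spectral parameters) connecting the tensor factors $\{1,2\}$ to $\{3,4\}$. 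This will be derived by expanding $\Gamma_{12}(u)$ and $\Gamma_{34}(v)$ via \eqref{gamma}, commuting all four $S$-factors past one another by four applications of the reflection equation \eqref{quamat}, and reorganizing the intermediate $R$-matrices via Yang--Baxter \eqref{ybeq} together with the cubic simplifications from Lemma~\ref{Rsim} at the special spectral point $1$.

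To extract the sought quaternary relation for $\mc T(u)$, I restrict to matrix entries in which the first and third tensor components are fixed to $e_1$. On this restricted subspace, the R-matrices $R_{13}, R_{14}, R_{23}$ and their primed versions act by controlled substitutions; combined with the antisymmetry from Lemma~\ref{Blem}(3), the higher reflection equation collapses to one involving only $\mc T_{ij}(u)$ and $\mc T_{kl}(v)$ paired with the R-matrix $R_{24}$, restricted to $\mathrm{span}\{e_i : 2\lle i\lle 2'\}^{\otimes 2}$, which is precisely the R-matrix governing $\scrX_{N-2}$ (and similarly for $R'_{24}$). Finally, Lemma~\ref{Blem}(1), asserting that $s_{11}(u+1)$ commutes with every $\mc T_{ij}(v)$, lets us cancel the $s_{11}(u+1)$ and $s_{11}(v+1)$ prefactors implicit in $\mc T(u)$ and $\mc T(v)$ from both sides, leaving exactly \eqref{quamat} for $\mc T$.

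The main technical obstacle is deriving the higher reflection equation for $\Gamma$: one must juggle four $S$-factors and a dozen $R$- and $R'$-matrices across a 4-fold auxiliary tensor product, and the cubic simplifications of Lemma~\ref{Rsim} at $u=1$ are essential for keeping the R-matrix algebra tractable. Once this equation is in hand, the restriction and cancellation steps are routine, and the conclusion that $s_{ij}(u)\mapsto \mc T_{ij}(u)$ extends to an algebra homomorphism follows immediately.
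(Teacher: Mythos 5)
Your proposal follows essentially the same route as the paper: both establish the fused relation for $\Gamma_{12}(u)$ and $\Gamma_{34}(v)$ sandwiched between products of $R$- and $R'$-matrices (derived from \eqref{quamat}, \eqref{ybeq}, and the simplifications of Lemma~\ref{Rsim} at spectral point $1$), then restrict to basis vectors of the form $e_1\otimes e_j\otimes e_1\otimes e_l$ and use the antisymmetry of Lemma~\ref{Blem}(3) to collapse everything to the quaternary relation for $\mc T(u)$ in the factors $2$ and $4$. The only cosmetic difference is your final appeal to Lemma~\ref{Blem}(1) to cancel $s_{11}$-prefactors, which is unnecessary since the map sends $s_{ij}(u)$ to $\mc T_{ij}(u)$ itself (prefactor included) and the fused relation already yields the reflection equation for $\mc T$ directly.
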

\begin{proof}We first introduce some shorthand notations. 
Let $u$ and $v$ be parameters. Set $a=u-v$, $\tl a=u+v$. We have the following equality in the algebra $\End(\bC^N)^{\otimes 4}\otimes \scrX_N$, 
\beq\label{pf1}
\begin{split}
&R_{23}(a-1)R_{13}(a)R_{24}(a)R_{14}(a+1)\Gamma_{12}(u)R'_{14}(\tl a+1)R'_{24}(\tl a)R'_{13}(\tl a+2)R'_{23}(\tl a+1)\Gamma_{34}(v)\\
&=\Gamma_{34}(v)R'_{23}(\tl a+1)R'_{13}(\tl a+2)R'_{24}(\tl a)R'_{14}(\tl a+1)\Gamma_{12}(u)R_{14}(a+1)R_{24}(a)R_{13}(a)R_{23}(a-1).
\end{split}
\eeq

This follows from the Yang-Baxter equation \eqref{ybeq}, and the relations \eqref{twistedR}, \eqref{quamat}. We shall rewrite both sides of \eqref{pf1} by Lemma \ref{Rsim} and then equate certain matrix elements.

Consider the right hand side of \eqref{pf1}. Applying \eqref{gamma} and Lemma \ref{Rsim}, we have
\begin{align*}
\Gamma_{34}(v)&R'_{23}(\tl a+1)R'_{13}(\tl a+2)R'_{24}(\tl a)R'_{14}(\tl a+1)\Gamma_{12}(u)R_{14}(a+1)R_{24}(a)R_{13}(a)R_{23}(a-1)\\
&=\Gamma_{34}(v)\Big(1-\frac{P_{13}'+P_{23}'}{\tl a+1}\Big)\Big(1-\frac{P_{14}'+P_{24}'}{\tl a}\Big)\Gamma_{12}(u)\Big(1-\frac{P_{14}+P_{24}}{a}\Big)\Big(1-\frac{P_{13}+P_{23}}{a-1}\Big).
\end{align*}
Then we apply the operator above to a basis vector of the form $e_1\otimes e_j\otimes e_1\otimes e_l$ for $j,l\in\{2,\cdots,2'\}$. First, an application of the factor
\[
\Big(1-\frac{P_{14}+P_{24}}{a}\Big)\Big(1-\frac{P_{13}+P_{23}}{a-1}\Big)
\]
on $e_1\otimes e_j\otimes e_1\otimes e_l$ provides
\beq\label{pf2}
\begin{split}
\frac{a-2}{a-1}&\Big(e_1\otimes e_j\otimes e_1\otimes e_l  -\frac{1}{a}e_1\otimes e_l\otimes e_1\otimes e_j\Big)- \frac{1}{a-1}(e_{1}\otimes e_1\otimes e_j\otimes e_l) \\
&+ \frac{1}{a(a-1)}(e_l\otimes e_1\otimes e_j\otimes e_1+e_1\otimes e_l\otimes e_j\otimes e_1) - \frac{a-2}{a(a-1)}(e_l\otimes e_j\otimes e_1\otimes e_1).
\end{split}
\eeq
It follows from Lemma \ref{Blem} (3) that terms inside the second and third brackets in \eqref{pf2} are annihilated by $\Gamma_{12}(u)$. We consider the further application of the rest factors acting on the last term in \eqref{pf2}. The application of $\Gamma_{12}(u)$ on $e_l\otimes e_j\otimes e_1\otimes e_1$ gives vectors of the form $e_a\otimes e_b\otimes e_1\otimes e_1$. A further application of the factor
\beq\label{pf3}
\Big(1-\frac{P_{13}'+P_{23}'}{\tl a+1}\Big)\Big(1-\frac{P_{14}'+P_{24}'}{\tl a}\Big)
\eeq
on $e_a\otimes e_b\otimes e_1\otimes e_1$ results in
\begin{align*}
e_a\otimes e_b\otimes e_1\otimes e_1&-\frac{1}{\tl a+1}(e_{1'}\otimes e_b\otimes e_1\otimes e_{a'}+e_{1'}\otimes e_b\otimes e_{a'}\otimes e_{1})\\
&-\frac{1}{\tl a+1}(e_a\otimes e_{1'}\otimes e_{b'}\otimes e_{1}+e_a\otimes e_{1'}\otimes e_1\otimes e_{b'})\\
&+\frac{1}{\tl a(\tl a+1)}(e_{1'}\otimes e_{1'}\otimes e_{a'}\otimes e_{b'}+e_{1'}\otimes e_{1'}\otimes e_{b'}\otimes e_{a'}).
\end{align*}
Again by Lemma \ref{Blem} (3), $\Gamma_{34}(v)$ annihilates the above vectors. Thus it suffices to consider the action of the operator 
\be
\Gamma_{34}(v)\Big(1-\frac{P_{13}'+P_{23}'}{\tl a+1}\Big)\Big(1-\frac{P_{14}'+P_{24}'}{\tl a}\Big)\Gamma_{12}(u)
\ee
on the terms inside the first bracket in \eqref{pf2}. Note that the action of $\Gamma_{12}(u)$ gives vectors of the form $e_{a}\otimes e_b\otimes e_1\otimes e_{c}$ where $2\lle c\lle 2'$. A similar calculation as above shows that the restriction of the image of $e_{a}\otimes e_b\otimes e_1\otimes e_{c}$ under the operator
$$
\Gamma_{34}(v)\Big(1-\frac{P_{13}'+P_{23}'}{\tl a+1}\Big)\Big(1-\frac{P_{14}'+P_{24}'}{\tl a}\Big)
$$
to the subspace spanned by the vectors of the form $e_{1}\otimes e_i\otimes e_1\otimes e_k$ with $2\lle i,k\lle 2'$ is nonzero only if $a=1$ and $2\lle b\lle 2'$. Moreover,
\begin{align*}
\Big(1-\frac{P_{13}'+P_{23}'}{\tl a+1}\Big)\Big(1-\frac{P_{14}'+P_{24}'}{\tl a}\Big)e_{1}\otimes e_b\otimes e_1\otimes e_{c}\equiv  \Big(1-\frac{P_{24}'}{\tl a}\Big)e_{1}\otimes e_b\otimes e_1\otimes e_{c},
\end{align*}
when the symbol $\equiv$ means we only keep the basis vectors which can give a nonzero contribution to the coefficients of $e_1\otimes e_i\otimes e_1\otimes e_k$ after the subsequent application of the operator $\Gamma_{34}(v)$.

To sum up, we have proved that the restriction of the operator on the right hand side of \eqref{pf1} to the subspace spanned by the basis vectors of the form $e_1\otimes e_j\otimes e_1\otimes e_l$ with $2\lle j,l\lle 2'$ coincides with the operator
\beq\label{pf4}
\frac{a-2}{a-1}\,\Gamma_{34}(v)\Big(1-\frac{P_{24}'}{\tl a}\Big)\Gamma_{12}(u)\Big(1-\frac{P_{24}}{a}\Big)=\frac{a-2}{a-1}\, \mc T_{4}(v) R_{24}'(u+v) \mc T_{2}(u)R_{24}(u-v),
\eeq
where $\mc T(u)= \big(\mc T_{ij}(u)\big)_{2\lle i,j\lle 2'}$ and the subscripts in $\mc T_{4}(v),\mc T_{2}(u)$ indicate which component they act on. The equality in \eqref{pf4} follows from Lemma~\ref{Blem} (2).


For the left hand side of \eqref{pf1}, we again apply it to the basis vectors of the form $e_1\otimes e_j\otimes e_1\otimes e_l$ with $2\lle j,l\lle 2'$ and look at the coefficients of the basis vectors of the same form in the image. Then the same argument as for the right hand side (with the reversed factors in the operators) implies that the coefficients of such basis vectors coincide with those of the operator
\beq\label{pf5}
\frac{a-2}{a-1}\,\Big(1-\frac{P_{24}}{a}\Big)\Gamma_{12}(u)\Big(1-\frac{P_{24}'}{\tl a}\Big)\Gamma_{34}(v)=\frac{a-2}{a-1}\,R_{24}(u-v) \mc T_{2}(u) R_{24}'(u+v) \mc T_{4}(v),
\eeq
where the equality follows again by Lemma~\ref{Blem} (2).

Note that $R_{24}(u-v)$ and $R_{24}'(u+v)$ are the R-matrices used to define the extended twisted Yangian $\scrX_{N-2}$. Therefore, by equating the matrix elements of the operators \eqref{pf4} and \eqref{pf5}, we get the R-matrix form of the defining relations for the algebra $\scrX_{N-2}$ is satisfied by the series $\mathcal T_{ij}(u)=\Gamma_{1j}^{1i}(u)$, see Lemma~\ref{Blem} (2), as required.
\end{proof}

We also need the following generalization of Proposition \ref{embedB}. Fix a positive integer $m$ such that
$m\leqslant n$ if $N=2n+1$ and $m\leqslant n-1$ if $N=2n$.
Suppose that the generators $s_{ij}^{(r)}$ of the algebra $\scrX_{N-2m}$ are
labelled by the indices
$m+1\leqslant i,j\leqslant (m+1)'$ and $r>0$.

\begin{prop}\label{prop:red}
The mapping
\beq\label{redu}
\psi_m:s_{ij}(u)\mapsto \left|\begin{matrix}
s_{11}(u)&\dots&s_{1m}(u)&s_{1j}(u)\\
\dots&\dots&\dots&\dots\\
s_{m1}(u)&\dots&s_{mm}(u)&s_{mj}(u)\\
s_{i1}(u)&\dots&s_{im}(u)&\mybox{$s_{ij}(u)$}
\end{matrix}\right|,\qquad m+1\leqslant i,j\leqslant (m+1)',
\eeq
defines an algebra homomorphism $\scrX_{N-2m}\to \scrX_{N}$.
\end{prop}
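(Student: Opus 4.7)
The plan is to establish this by induction on $m$, with the case $m=1$ essentially being Proposition~\ref{embedB}, modulo a cosmetic adjustment that I'll explain first. Observe that the map in \eqref{redu} with $m=1$ is related to the map of Proposition~\ref{embedB} by the identity
\[
\mc T_{ij}(u)=s_{11}(u+1)\cdot\psi_1\big(s_{ij}(u)\big).
\]
By Lemma~\ref{Blem}(1), $s_{11}(v+1)$ commutes with every $\mc T_{kl}(u)$ for $2\lle k,l\lle 2'$, and hence (by invertibility of $s_{11}$ as a formal series) also with every $\psi_1(s_{kl}(u))$. Consequently, if the R-matrix relations defining $\scrX_{N-2}$ are satisfied by the $\mc T_{ij}(u)$, then, after stripping off the commuting central-like factors $s_{11}(u+1)s_{11}(v+1)$ from both sides of \eqref{quamat}, the same relations are satisfied by the quasi-determinants $\psi_1(s_{ij}(u))$. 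This settles the base case $m=1$.

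For the inductive step, I would apply the base case internally. Namely, the very same argument that proves Proposition~\ref{embedB} for the pair $(\scrX_{N-2},\scrX_N)$ (the identity \eqref{pf1} and its analysis through Lemma~\ref{Rsim}) applies verbatim to any quasi-split extended twisted Yangian, in particular to the pair $(\scrX_{N-2m},\scrX_{N-2m+2})$. Thus for each $k$ with $1\lle k\lle m$ there is a homomorphism
\[
\psi_1^{(k)}:\scrX_{N-2k}\longrightarrow \scrX_{N-2k+2},\qquad s_{ij}(u)\mapsto \left|\begin{matrix} s_{kk}(u) & s_{kj}(u)\\ s_{ik}(u) & \mybox{$s_{ij}(u)$}\end{matrix}\right|,
\]
where the entries on the right are the generators of $\scrX_{N-2k+2}$. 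Composing these homomorphisms yields
\[
\psi_1^{(1)}\circ \psi_1^{(2)}\circ\cdots \circ \psi_1^{(m)}:\scrX_{N-2m}\longrightarrow \scrX_N,
\]
which is manifestly an algebra homomorphism.

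The crucial step is then to check that this composition coincides with the map $\psi_m$ in \eqref{redu}. This is precisely the content of the noncommutative Sylvester (hereditary) identity for quasi-determinants (see \cite[Thm.~1.5.2]{GGRW:2005} and \cite[Prop.~1.11.2]{Mol07}): iterating $2\times 2$ quasi-determinants whose entries are themselves $k\times k$ quasi-determinants, with overlapping pivot blocks, produces the $(k{+}1)\times(k{+}1)$ quasi-determinant of the ambient matrix. Applied inductively to the tower $\psi_1^{(1)},\psi_1^{(2)},\dots$, each step enlarges the pivot block by one row and column, and after $m$ iterations delivers exactly the $(m{+}1)\times(m{+}1)$ quasi-determinant written on the right-hand side of \eqref{redu}.

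The main obstacle is to justify the applicability of the Sylvester identity in the present noncommutative setting; but this is a purely combinatorial statement about matrices over an arbitrary ring and does not rely on any further relations in $\scrX_N$, so citing \cite{GGRW:2005,Mol07} suffices. The only remaining bookkeeping is the verification that the indices in $\psi_1^{(k)}$ are labelled compatibly at each stage of the composition, which is immediate since the image of the corner entry $s_{ij}(u)$ at each step remains in the corner position available for the next application of $\psi_1$.
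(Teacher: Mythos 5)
Your proof is correct and takes essentially the same route as the paper's: the base case is Proposition~\ref{embedB}, and the inductive step identifies the composition of one-step shift homomorphisms with the $(m+1)\times(m+1)$ quasi-determinant via the noncommutative Sylvester theorem, exactly the argument the paper defers to \cite[Proof of Prop.~3.7]{JLM18}. Your explicit reconciliation of the map $s_{ij}(u)\mapsto \mc T_{ij}(u)$ of Proposition~\ref{embedB} with the quasi-determinant map $\psi_1$, by stripping the factor $s_{11}(u+1)$ using Lemma~\ref{Blem}(1) and the homogeneity of the quaternary relations, is a point the paper leaves implicit and is handled correctly.
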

\begin{proof}
The $m=1$ case is Proposition~\ref{embedB}. The general case follows from an induction on $m$ parallel to the one in \cite[Proof of Prop. 3.7]{JLM18}, by using the Sylvester theorem for quasi-determinants and Proposition~\ref{embedB}.
\end{proof}

The homomorphisms $\psi_m$ have the following consistence property. We write $\psi_m=\psi_m^{(N)}$ to indicate the dependence of $N$. For $l\in\bN$, we have the corresponding homomorphism
\[
\psi_m^{(N-2l)}:\scrX_{N-2m-2l}\to \scrX_{N-2l}
\]
given by \eqref{redu}.

\begin{prop}\label{cons}
We have the equality of algebra homomorphisms,
\[
\psi_l^{(N)}\circ \psi_{m}^{(N-2l)}=\psi_{l+m}^{(N)}.
\]
\end{prop}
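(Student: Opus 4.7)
The plan is to reduce to generators and apply the noncommutative Sylvester identity for quasi-determinants. By Proposition~\ref{prop:red}, both $\psi_l^{(N)}\circ \psi_m^{(N-2l)}$ and $\psi_{l+m}^{(N)}$ are algebra homomorphisms $\scrX_{N-2m-2l}\to \scrX_N$, so it suffices to check they agree on the generating series $s_{ij}(u)$, $l+m+1\leqslant i,j\leqslant (l+m+1)'$.

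For such $i,j$, I would first write out both sides explicitly. By definition \eqref{redu}, $\psi_{l+m}^{(N)}(s_{ij}(u))$ is the $(l+m+1)\times(l+m+1)$ quasi-determinant with rows labelled $\{1,\dots,l+m,i\}$ and columns $\{1,\dots,l+m,j\}$, with the boxed pivot in the bottom-right corner. On the other side, $\psi_m^{(N-2l)}(s_{ij}(u))$ is the $(m+1)\times(m+1)$ quasi-determinant in $\scrX_{N-2l}$ with rows $\{l+1,\dots,l+m,i\}$ and columns $\{l+1,\dots,l+m,j\}$, built from the generating series $s_{ab}(u)$ of $\scrX_{N-2l}$. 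Applying $\psi_l^{(N)}$ to this element replaces each entry $s_{ab}(u)$ by the $(l+1)\times(l+1)$ quasi-determinant from \eqref{redu} with rows $\{1,\dots,l,a\}$ and columns $\{1,\dots,l,b\}$.

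The heart of the argument is then the noncommutative Sylvester identity for quasi-determinants (see \cite[Thm.~1.5.2]{GGRW:2005}; cf.\ \cite[\S1.10]{Mol07}), applied to the submatrix of $S(u)$ indexed by rows $\{1,\dots,l+m,i\}$ and columns $\{1,\dots,l+m,j\}$, with the top-left $l\times l$ block $\bigl(s_{ab}(u)\bigr)_{1\leqslant a,b\leqslant l}$ as the distinguished principal submatrix. This identity says exactly that the big $(l+m+1)\times(l+m+1)$ quasi-determinant equals the $(m+1)\times(m+1)$ quasi-determinant whose entries are the $(l+1)\times(l+1)$ quasi-determinants obtained by adjoining one extra row and column from $\{l+1,\dots,l+m,i\}\times\{l+1,\dots,l+m,j\}$ to the pivot block. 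This matches the two descriptions above, so the desired equality follows. The same Sylvester-type reduction was used inductively in the proof of Proposition~\ref{prop:red} (following \cite[Proof of Prop.~3.7]{JLM18}), so no new identity is needed.

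The one technical point that requires care, which I would expect to be the only real obstacle, is to justify that $\psi_l^{(N)}$ commutes with the formal inversions hidden inside the quasi-determinants $\beta_{ab}(u):=\psi_l^{(N)}(s_{ab}(u))$. This is clean: each intermediate principal quasi-determinant $d_a(u)$ arising in \eqref{gd1} is a power series in $u^{-1}$ with constant term $1$, hence invertible in $\scrX_{N-2l}[[u^{-1}]]$, and an algebra homomorphism extended coefficient-wise in $u^{-1}$ sends the inverse of such a series to the inverse of its image. With this bookkeeping in place, the argument reduces to one invocation of Sylvester's identity, and an optional induction on $l$ (reducing to the base case $l=1$ handled by Proposition~\ref{embedB}) can be substituted if one prefers not to quote Sylvester directly.
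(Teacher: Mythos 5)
Your proposal is correct and follows essentially the same route as the paper: the paper's proof simply defers to the argument of \cite[Prop.~3.8]{JLM18}, which is precisely the reduction to generators plus the noncommutative Sylvester identity for quasi-determinants that you spell out (and which already underlies the inductive proof of Proposition~\ref{prop:red}). Your added remark that the Sylvester identity is a formal ring-theoretic identity, so the non-Yangian commutation relations of $\scrX_N$ cause no trouble, and that the homomorphisms commute with inversion of power series with constant term $1$, is exactly the right bookkeeping.
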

\begin{proof}
Follows from the same argument as in \cite[Prop. 3.8]{JLM18}.
\end{proof}

For subsets $\{a_1,\dots,a_k\}$ and $\{b_1,\dots,b_k\}$ of $\{1,\dots,N\}$,
introduce quantum minors by the formula
\be
{\mathcal S\,}^{a_1\dots\,a_k}_{b_1\dots\, b_k}(u)=
\sum_{ \sigma\in \fkS_k} \mathrm{sign}~\sigma\cdot s_{a_{ \sigma(1)}b_1}(u)\dots
s_{a_{ \sigma(k)}b_k}(u-k+1).
\ee
These are formal series in $u^{-1}$ with coefficients in $\scrX_{N}$. 

\begin{prop}\label{prop:quasiqua}
For all $m+1\leqslant i,j\leqslant (m+1)'$, we have the identity
\be
\left|\begin{matrix}
s_{11}(u)&\dots&s_{1m}(u)&s_{1j}(u)\\
\dots&\dots&\dots&\dots\\
s_{m1}(u)&\dots&s_{mm}(u)&s_{mj}(u)\\
s_{i1}(u)&\dots&s_{im}(u)&\mybox{$s_{ij}(u)$}
\end{matrix}\right|={\mathcal S}^{1\dots\, m}_{1\dots\, m}(u+m)^{-1}\cdot
{\mathcal S}^{1\dots\, m\, i}_{1\dots\, m\, j}(u+m).
\ee
\end{prop}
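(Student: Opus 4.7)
The plan is to argue by induction on $m$, paralleling the proof of the analogous identity in the Yangian $\rY(\gl_N)$ given in \cite[Prop.~1.11.1]{Mol07} and its twisted variant in \cite[Prop.~3.9]{JLM18}.

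For the base case $m=0$, the $1\times 1$ quasi-determinant on the LHS equals $s_{ij}(u)$, while the RHS reads $\mc S^{\emptyset}_{\emptyset}(u)^{-1}\cdot\mc S^{i}_{j}(u)=s_{ij}(u)$, so the identity holds trivially.

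For the inductive step, apply the noncommutative Sylvester identity for quasi-determinants with pivot $s_{11}(u)$. This re-expresses the $(m+1)\times(m+1)$ quasi-determinant on the LHS as an $m\times m$ quasi-determinant whose entries are the $2\times 2$ quasi-determinants $s_{kl}(u)-s_{k1}(u)s_{11}(u)^{-1}s_{1l}(u)$ for $k\in\{2,\ldots,m,i\}$ and $l\in\{2,\ldots,m,j\}$. By \eqref{newt} each such entry equals $s_{11}(u+1)^{-1}\mc T_{kl}(u)$, and $\mc T_{kl}(u)$ is precisely the image of the generator $s_{kl}(u)$ under the homomorphism $\psi_1:\scrX_{N-2}\to\scrX_{N}$ of Proposition~\ref{embedB}. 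Using Lemma~\ref{Blem}(1) to move the scalar factor $s_{11}(u+1)^{-1}$ freely through the quasi-determinant, and invoking Proposition~\ref{cons} to rewrite $\psi_1\circ\psi_{m-1}^{(N-2)}=\psi_m^{(N)}$, the problem reduces to the inductive hypothesis applied inside $\scrX_{N-2}$ together with a transfer identity describing how $\psi_1$ interacts with quantum minors.

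Concretely, the key auxiliary identity to be established is of the form
\begin{equation*}
\psi_1\!\left(\mc S^{a_1\ldots a_k}_{b_1\ldots b_k}(u+1)\right)\;=\;s_{11}(u+k+1)^{-1}s_{11}(u+k)^{-1}\cdots s_{11}(u+2)^{-1}\,\mc S^{1\,a_1\ldots a_k}_{1\,b_1\ldots b_k}(u+k+1),
\end{equation*}
valid for $k\gge 1$ and $2\lle a_p,b_p\lle 2'$. To prove it I substitute $\psi_1(s_{kl}(u))=s_{11}(u+1)^{-1}\mc T_{kl}(u)$ into the definition of the LHS quantum minor, pull out all $s_{11}$-factors using the commutativity from Lemma~\ref{Blem}(1), and match the resulting antisymmetrized product of $\mc T$-entries against a Laplace-type expansion of the RHS quantum minor along its first row and column. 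The cleanest way to implement the matching is through the antisymmetrizer realization of $\mc S^{c_1\ldots c_{k+1}}_{d_1\ldots d_{k+1}}(u+k)$ as a matrix element of $A^{(k+1)}S_1(u+k)S_2(u+k-1)\cdots S_{k+1}(u)$, combined with the fused expression \eqref{gamma} of $\mc T(u)$ in terms of R-matrices and $S_1,S_2$.

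The main obstacle is precisely this Laplace-type matching: since the quaternary relations \eqref{bcom} are more intricate than the RTT relations of $\rY(\gl_N)$, verifying that the fused antisymmetrizer identity holds inside $\scrX_N$ requires a careful computation with the R-matrix form \eqref{quamat} of the defining relations and the explicit construction \eqref{gamma} of $\Gamma(u)$. Once this transfer identity is established, feeding it back into the Sylvester expansion (together with the overall shift $u\mapsto u+m$ in the statement) closes the induction.
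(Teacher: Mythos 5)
Your argument is a plan rather than a proof: the entire weight of the induction rests on the ``transfer identity'' $\psi_1\big(\mc S^{a_1\ldots a_k}_{b_1\ldots b_k}(u+1)\big)=s_{11}(u+k+1)^{-1}\cdots s_{11}(u+2)^{-1}\,\mc S^{1\,a_1\ldots a_k}_{1\,b_1\ldots b_k}(u+k+1)$, which you explicitly flag as ``the main obstacle'' and do not establish. This is precisely where the twisted setting bites: the quantum minor $\mc S^{1\,a_1\ldots a_k}_{1\,b_1\ldots b_k}$ involves index pairs $(c,d)$ with $c+d=N+1$ (e.g.\ $c=2$, $d=2'$), so reordering its factors invokes exactly the extra reflection terms of \eqref{bcom} (the $\delta_{kj'}$, $\delta_{il'}$, $\delta_{ij'}$ summations) that are absent in $\rY(\gl_N)$. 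A ``Laplace-type matching'' via antisymmetrizers is therefore not a routine transcription of the Yangian computation, and without it the induction does not close. (The Sylvester step itself, and the identification of the inner entries with $s_{11}(u+1)^{-1}\mc T_{kl}(u)$ via \eqref{newt} and Lemma~\ref{Blem}(1), are fine.)

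The paper proves the proposition by a much shorter route that sidesteps all of this. It observes that the identity is known for $\rY(\gl_N)$, and that both sides are \emph{linear} in the last-column series $s_{aj}(u)$ ($1\lle a\lle m$ or $a=i$), which can be kept rightmost throughout. The only commutators $[s_{ab}(u),s_{cd}(v)]$ among the entries actually appearing that differ from the Yangian ones are those pairing a last-row entry with a last-column entry (the condition $c=d'$ forces $c=i$, $d=j$ with $i=j'$); since one never needs to move a last-column entry past a last-row entry, the Yangian computation carries over verbatim to $\scrX_N$. If you want to salvage your approach you would need to either prove the fused antisymmetrizer identity in the reflection algebra directly (a genuinely nontrivial R-matrix computation), or import the paper's ordering-and-linearity observation — at which point the induction becomes unnecessary.
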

\begin{proof}
It follows from e.g. \cite[\S1.11]{Mol07} that the identity holds for the Yangian $\rY(\gl_N)$. Note that the commutator relations between the generating series $s_{ab}(u)$ of $\scrX_N$ appearing in this identity are the same as for the Yangian $\rY(\gl_N)$, except for the commutators $[*_1,*_2]$ where $*_1$ is from the last row and $*_2$ is from the last column. However, we shall always keep the series $s_{aj}(u)$ for $1\lle a\lle m$ or $a=i$ to the right-most. Moreover, the quantum minors depends linearly on such generating series. Thus the identity does not depends on such commutators and hence it holds in $\scrX_N$ as well. 
\end{proof}

The following is a counterpart of the corresponding property of the Yangian
$\rY(\gl_N)$; see, e.g., \cite{BK05}.

\begin{cor}\label{cor:commu}
We have the relations
\be
\big[s_{ab}(u),\psi_m(s_{ij}(v))\big]=0
\ee
for all $1\leqslant a,b\leqslant m$ and $m+1\leqslant i,j\leqslant (m+1)'$. 
\end{cor}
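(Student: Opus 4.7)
The plan is to reduce this to the corresponding well-known statement for the Yangian $\rY(\gl_N)$, following the Brundan-Kleshchev strategy.

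First, I would invoke Proposition~\ref{prop:quasiqua} to rewrite
\[
\psi_m(s_{ij}(v)) = {\mathcal S}^{1\dots m}_{1\dots m}(v+m)^{-1}\cdot {\mathcal S}^{1\dots m\, i}_{1\dots m\, j}(v+m),
\]
so it suffices to show that $s_{ab}(u)$ commutes with both quantum minors ${\mathcal S}^{1\dots m}_{1\dots m}(v+m)$ and ${\mathcal S}^{1\dots m\, i}_{1\dots m\, j}(v+m)$, for $1\le a,b\le m$. Commutation with the first minor implies commutation with its inverse, so together this gives the claim.

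Next, I would observe that the defining relation \eqref{bcom} for $\scrX_N$ reduces to the pure Yangian relation \eqref{Trel} whenever the three Kronecker deltas $\delta_{kj'}$, $\delta_{il'}$, $\delta_{ij'}$ all vanish. Concretely, in every commutator $[s_{ab}(u), s_{cd}(v)]$ arising in the expansion of $[s_{ab}(u), {\mathcal S}^{1\dots m\, i}_{1\dots m\, j}(v+m)]$, the row index $a$ and column index $b$ lie in $\{1,\dots,m\}$, while $c\in\{1,\dots,m,i\}$ and $d\in\{1,\dots,m,j\}$ with $m+1\le i,j\le (m+1)'$. Since $b\le m$ forces $b'\ge N-m+1 > (m+1)'$, we have $c\ne b'$; similarly $a\ne d'$ and $a\ne b'$. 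Hence all three delta-terms in \eqref{bcom} vanish, so the commutators in question obey exactly the same relations as the corresponding generating series $t_{ab}(u)$, $t_{cd}(v)$ of $\rY(\gl_N)$.

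Consequently, the identities
\[
[t_{ab}(u), {\mathcal T}^{1\dots m}_{1\dots m}(v)] = 0, \qquad [t_{ab}(u), {\mathcal T}^{1\dots m\, i}_{1\dots m\, j}(v)] = 0,
\]
valid in $\rY(\gl_N)$ for $1\le a,b\le m$ and $m+1\le i,j$ (a classical fact, see e.g.\ \cite[\S1.11]{Mol07} or \cite{BK05}), transport verbatim to the analogous identities for ${\mathcal S}^{1\dots m}_{1\dots m}(v+m)$ and ${\mathcal S}^{1\dots m\, i}_{1\dots m\, j}(v+m)$ in $\scrX_N$. Combining these two vanishings yields $[s_{ab}(u), \psi_m(s_{ij}(v))] = 0$.

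The main (minor) obstacle is checking carefully that every commutator that appears when expanding the quantum minors genuinely avoids the extra reflection-type terms of \eqref{bcom}; this is the index-range verification above. No new calculation beyond what is already available for $\rY(\gl_N)$ is then needed.
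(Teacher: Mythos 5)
Your proposal follows the same route as the paper: reduce via Proposition~\ref{prop:quasiqua} to showing that $s_{ab}(u)$ commutes with the two quantum minors, and then transport the classical $\rY(\gl_N)$ identities by checking that the reflection terms of \eqref{bcom} never intervene. Your index-range computation is correct as far as it goes, but it only covers the commutators in which $s_{ab}(u)$ (with $a,b\le m$) is one of the two arguments; these indeed have all three Kronecker deltas equal to zero. What it does not cover are the commutators among the minor's own entries that are needed when one reorganizes the resulting products to see that the total sum vanishes, and there one of the deltas can survive: for a commutator of the form $[s_{i\,\bullet}(\cdot),s_{\bullet\, j}(\cdot)]$ (first factor from row $i$, second from column $j$, with $i,j\in\{m+1,\dots,(m+1)'\}$) the term $\delta_{i,j'}$ need not vanish, since $i+j=N+1$ is possible. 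So the claim that the Yangian identities ``transport verbatim'' is not yet justified by your verification alone. The paper closes exactly this gap in the proof of Proposition~\ref{prop:quasiqua}: the quantum minor depends linearly on the column-$j$ entries, which are always kept in the rightmost position, so the derivation never invokes a row-$i$-versus-column-$j$ commutator. Adding that one observation (or an equivalent bookkeeping argument) completes your proof; otherwise the argument coincides with the paper's.
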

\begin{proof}
By Proposition \ref{prop:quasiqua}, it suffices to check that $s_{ab}(u)$ commutes with the quantum minors. This follows essentially the same argument as in the proof of Proposition \ref{prop:quasiqua}.
\end{proof}

\subsection{Gaussian generators and their properties}
Recall the definition of the central series $c(u)$ from \eqref{cudef} and note that the unitary relation in $\Y_N$ \eqref{unimat} is equivalent to $c(u)=1$.

\begin{lem}\label{efaiii}
In the algebra $\scrX_N$, we have $c(u)\tl d_{i'}(u)=d_{i}(-u)$,
\beq\label{efd}
\tl d_i(u)d_{i+1}(u)=\tl d_{i'-1}(-u)d_{i'}(-u),\quad  \tl e_{ij}(u)=f_{i'j'}(-u), \quad  \tl f_{ji}(u)=e_{j'i'}(-u).
\eeq
In particular, we have $e_{i}(u)=-f_{\tau i}(-u)$.
\end{lem}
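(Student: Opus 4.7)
The plan is to combine Proposition~\ref{Binv} with the uniqueness of $UDL$ Gauss factorization. Starting from $S(u)S'(-u)=c(u)\mathbf{1}_N$, I would rewrite $S'(-u)=c(u)S(u)^{-1}$ in two different ways and match the resulting factors.

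On the one hand, using $S(u)=F(u)D(u)E(u)$ and the centrality of $c(u)$,
\[
S'(-u) \;=\; \wtl E(u)\,\bigl(c(u)\wtl D(u)\bigr)\,\wtl F(u),
\]
which is a $UDL$ factorization since $\wtl E(u)$ is upper unitriangular and $\wtl F(u)$ is lower unitriangular (see \eqref{edfinv}). On the other hand, applying the involution $M\mapsto M'=GMG^{-1}$, which relabels $(i,j)\mapsto (i',j')$ and thereby reverses the triangularity pattern, yields the alternative factorization
\[
S'(-u) \;=\; F'(-u)\, D'(-u)\, E'(-u),
\]
in which $F'(-u)$ is upper unitriangular with $(i,j)$-entry $f_{i'j'}(-u)$ for $i<j$, $D'(-u)$ is diagonal with $(i,i)$-entry $d_{i'}(-u)$, and $E'(-u)$ is lower unitriangular with $(i,j)$-entry $e_{i'j'}(-u)$ for $i>j$. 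Matching the three factors by uniqueness of the $UDL$ decomposition gives
\[
\tl e_{ij}(u)=f_{i'j'}(-u),\qquad \tl f_{ji}(u)=e_{j'i'}(-u),\qquad c(u)\tl d_{i'}(u)=d_{i}(-u),
\]
after relabeling $i\leftrightarrow i'$ in the diagonal identity. The final claim $e_i(u)=-f_{\tau i}(-u)$ then follows by specializing $\tl e_{ij}(u)=f_{i'j'}(-u)$ at $j=i+1$: from \eqref{eq:def-tilde-e-f} we have $\tl e_{i,i+1}(u)=-e_i(u)$, while $(i+1)'=i'-1=\tau i$, so the right-hand side is $f_{i',i'-1}(-u)=f_{\tau i}(-u)$.

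It remains to derive the identity $\tl d_i(u)\,d_{i+1}(u)=\tl d_{i'-1}(-u)\,d_{i'}(-u)$. Using that $c(u)$ is even (Proposition~\ref{Binv}) and substituting the diagonal identity in the forms $d_{i+1}(u)=c(u)\tl d_{i'-1}(-u)$ and $\tl d_i(u)=c(u)^{-1}d_{i'}(-u)$, one computes
\[
\tl d_i(u)\,d_{i+1}(u) \;=\; c(u)^{-1}d_{i'}(-u)\cdot c(u)\tl d_{i'-1}(-u) \;=\; d_{i'}(-u)\,\tl d_{i'-1}(-u),
\]
which equals $\tl d_{i'-1}(-u)\,d_{i'}(-u)$ provided $[d_{i'-1}(u),d_{i'}(u)]=0$. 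The main technical point of the proof is therefore the commutativity of the diagonal Gauss generators $d_i(u)$; I would expect this to be established separately, either directly from the defining relations of $\scrX_N$ by R-matrix manipulations in the spirit of \cite{BK05,JLM18}, or by transporting the known commutativity of the Cartan Gauss generators of $\rY(\gl_N)$ across the embedding recalled in Section~\ref{sec:basics}.
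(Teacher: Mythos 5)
Your argument is correct and essentially identical to the paper's: both combine Proposition~\ref{Binv} with the uniqueness of the Gauss decomposition, the paper matching the entries of $\wtl S(u)=\wtl E(u)\wtl D(u)\wtl F(u)$ against \eqref{eq:sij-Gauss} while you match the three factors of $S'(-u)$ in matrix form, which is the same computation. The commutativity $[d_i(u),d_{i+1}(u)]=0$ that you correctly flag as the remaining input for the second identity in \eqref{efd} is equally (and only implicitly) used in the paper's proof and is recorded there only afterwards, in Lemma~\ref{lemnew1}.
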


Equivalently, we have $\wtl E(u)=F(-u)'$ in the matrix form; see \eqref{mat'} and \eqref{edfinv}. 

\begin{proof}
By \eqref{eq:def-tilde-e-f} we have $e_{i}(u)=-\tl e_{i,i+1}(u)$. Thus \eqref{efd} implies that $e_{i}(u)=-\tl e_{i,i+1}(u)=-f_{\tau i}(-u)$.

It remains to show \eqref{efd}. Consider the Gauss decompositions of $S(u)$ and $\wtl S(u)$,
\[
S(u)=F(u)D(u)E(u),\qquad \wtl S(u)=\wtl E(u) \wtl D(u)\wtl F(u),
\]
where $\wtl E(u), \wtl D(u), \wtl F(u)$ are defined in \eqref{edfinv}. Then for $1\lle i<j\lle N$, we have
\begin{align*}
\tl s_{i'i'}(u)&=\tl d_{i'}(u)+\sum_{k<i}\tl e_{i'k'}(u)\tl d_{k'}(u)\tl f_{k'i'}(u),\nonumber\\
\tl s_{i'j'}(u)&=\tl d_{i'}(u)f_{i'j'}(u)+\sum_{k<i}\tl e_{i'k'}(u)\tl d_{k'}(u)\tl f_{k'j'}(u),\\
\tl s_{j'i'}(u)&=\tl e_{j'i'}(u)\tl d_{i'}(u)+\sum_{k<i}\tl e_{j'k'}(u)\tl d_{k'}(u)\tl f_{k'i'}(u).\nonumber
\end{align*}
Comparing it with \eqref{eq:sij-Gauss} and using $s_{ij}(-u)=c(u)\tl s_{i'j'}(u)$ from Proposition \ref{Binv}, we find that
\[
c(u)\tl d_{i'}(u)=d_{i}(-u),\quad \tl e_{j'i'}(u)=f_{ji}(-u),\quad \tl f_{i'j'}(u)=e_{ij}(-u),
\]
as the Gauss decomposition is unique. Hence the lemma follows.
\end{proof}

\begin{lem}\label{shiftlem}
Suppose $m\lle \lfloor \tfrac{N-1}{2}\rfloor$, then the map $\psi_m:\scrX_{N-2m}\to \scrX_N$ sends
\[
d_i(u)\to d_{m+i}(u),\quad e_{ij}(u)\to e_{m+i,m+j}(u),\quad f_{ji}(u)\to f_{m+j,m+i}(u).
\]
\end{lem}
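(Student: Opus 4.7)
The strategy is to deduce this lemma directly from the noncommutative Sylvester theorem for quasi-determinants, following the same approach employed in \cite{BK05} and \cite[Prop.~3.9]{JLM18} for the Yangian and the type AI twisted Yangian, respectively. Recall that the Sylvester identity (cf.~\cite[\S 1.5]{GGRW:2005} or \cite[Prop.~1.11.2]{Mol07}) states: for an $n \times n$ matrix $X = (x_{ab})$ over a ring and a fixed $k < n$, if for $k < p, q \leq n$ one sets
\[
z_{pq} = \left|\begin{matrix}
x_{11} & \cdots & x_{1k} & x_{1q} \\
\vdots & \ddots & \vdots & \vdots \\
x_{k1} & \cdots & x_{kk} & x_{kq} \\
x_{p1} & \cdots & x_{pk} & \mybox{$x_{pq}$}
\end{matrix}\right|,
\]
then $|X|_{pq} = |Z|_{pq}$ with $Z = (z_{pq})_{k < p, q \leq n}$.

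The first step is to apply this identity to $X = \bigl(s_{ab}(u)\bigr)_{1 \leq a, b \leq m+i}$ with $k = m$ and $p = q = m+i$. Each resulting Sylvester entry $z_{m+a, m+b}$ for $1 \leq a, b \leq i$ is precisely the defining quasi-determinant of $\psi_m(s_{ab}(u))$ in \eqref{redu}, where I use the natural indexing $1, \dots, N-2m$ for the generators of $\scrX_{N-2m}$. Hence
\[
d_{m+i}(u) = |S(u)|_{m+i, m+i} = \bigl|\bigl(\psi_m(s_{ab}(u))\bigr)_{1 \leq a, b \leq i}\bigr|_{i, i} = \psi_m(d_i(u))
\]
by the definition \eqref{gd1} of $d_i(u)$ in $\scrX_{N-2m}$, settling the $d$-case.

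For $e_{m+i, m+j}(u)$, I would apply Sylvester's identity to the $(m+i) \times (m+i)$ matrix obtained from $\bigl(s_{ab}(u)\bigr)_{1 \leq a, b \leq m+i}$ by replacing its last column with $\bigl(s_{a, m+j}(u)\bigr)_{1 \leq a \leq m+i}$, still breaking off the upper-left $m \times m$ block. The Sylvester entries then match $\psi_m(s_{ab}(u))$ for $1 \leq a, b \leq i-1$ in the first $i-1$ columns and $\psi_m(s_{aj}(u))$ for $1 \leq a \leq i$ in the last column. Comparing with the numerator in \eqref{gd2}, this yields $d_{m+i}(u) \cdot e_{m+i, m+j}(u) = \psi_m\bigl(d_i(u) \cdot e_{ij}(u)\bigr)$, and combining with the $d$-case (invertibility being automatic since each $d(u) = 1 + O(u^{-1})$) gives $\psi_m(e_{ij}(u)) = e_{m+i, m+j}(u)$. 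An entirely symmetric row-variant argument, starting from the matrix obtained by replacing the last \emph{row} of $\bigl(s_{ab}(u)\bigr)_{1 \leq a, b \leq m+i}$ with $\bigl(s_{m+j, b}(u)\bigr)_{1 \leq b \leq m+i}$, handles $\psi_m(f_{ji}(u)) = f_{m+j, m+i}(u)$.

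I do not anticipate any serious obstacle: Sylvester's identity for quasi-determinants is purely formal and requires no commutativity of entries, so the verification is completely independent of the R-matrix relations defining $\scrX_N$. The only bookkeeping to be careful about is that in the $e$ and $f$ cases the column/row insertions must be oriented so that the Sylvester output matches \eqref{gd2} and \eqref{gd3} exactly on the nose; once this is checked, the conclusion follows mechanically from the definition \eqref{redu} of $\psi_m$ together with the Gauss decomposition formulas.
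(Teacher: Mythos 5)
Your proof is correct and follows essentially the same route as the paper's: the paper's one-line argument invokes the quasi-determinant formulas \eqref{gd1}--\eqref{gd3} together with Propositions~\ref{embedB} and~\ref{cons}, whose underlying engine is exactly the noncommutative Sylvester identity you apply directly (cf.\ the analogous argument in \cite[Prop.~3.7--3.8]{JLM18}). Your observation that quasi-determinants commute with the algebra homomorphism $\psi_m$ and that all needed inverses exist because each $d(u)=1+O(u^{-1})$ covers the only points requiring care.
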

\begin{proof}
The lemma follows from \eqref{gd1}--\eqref{gd3} and Propositions \ref{embedB}, \ref{cons}; cf. the proof of \cite[Coro. 3.2]{KLWZ23a}.
\end{proof}

Due to Lemma \ref{shiftlem}, we call $\psi_m$ a \emph{shift homomorphism}.

\begin{lem}\label{lemnew1}
Suppose $m\lle \lfloor \tfrac{N-1}{2}\rfloor$. We have $[d_i(u),d_j(v)]=0$ for $1\lle i,j\lle N$ and 
\[
[d_{i}(u),e_j(v)]=[d_{i}(u),f_j(v)]=0,
\]
for (1) $1\lle i\lle m, m<j<(m+1)'$ and (2) $1\lle j< m, m+1\lle i\lle (m+1)'$.
\end{lem}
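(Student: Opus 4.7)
The lemma has two claims: (a) mutual commutativity $[d_i(u),d_j(v)]=0$ among all diagonal series, and (b) commutation of $d_i(u)$ with $e_j(v), f_j(v)$ in the specified ranges. The plan is to derive both from Corollary~\ref{cor:commu} together with Lemma~\ref{shiftlem}, supplemented for (a) by the reflection symmetry of Lemma~\ref{efaiii} to handle extreme indices.

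Claim (b) will follow once I observe two complementary descriptions of Gaussian generators under the shift $\psi_m:\scrX_{N-2m}\to\scrX_N$. In case (1), the hypothesis $m<j<(m+1)'$ places $e_j(v)=\psi_m(e_{j-m}(v))$ and $f_j(v)=\psi_m(f_{j-m}(v))$ in the image of $\psi_m$ by Lemma~\ref{shiftlem}, while the quasi-determinant formula \eqref{gd1} expresses $d_i(u)$ for $i\le m$ as a formal power series whose coefficients (after expansion of the formal inverses $s_{aa}(u)^{-1}$) are polynomials in $\{s_{ab}^{(r)}:1\le a,b\le i\le m\}$; Corollary~\ref{cor:commu} then forces $d_i(u)$ to commute with every element of $\psi_m(\scrX_{N-2m})$. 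Case~(2) is strictly symmetric, with $d_i(u)=\psi_m(d_{i-m}(u))$ now in the shift image while $e_j(v)$ and $f_j(v)$ for $j<m$ involve only $\{s_{ab}^{(r)}:a,b\le j+1\le m\}$.

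For claim (a), the base case $[d_1(u),d_1(v)]=[s_{11}(u),s_{11}(v)]=0$ follows by specializing \eqref{bcom} to $(i,j,k,l)=(1,1,1,1)$, where the relation collapses to a nonzero polynomial multiple of the commutator and forces it to vanish. For $i\le j$ with $i+j\le N$ the same Corollary~\ref{cor:commu} mechanism applies with $m=i$: $d_j(v)=\psi_i(d_{j-i}(v))$ lies in the shift image, while $d_i(u)$ is built from $\{s_{ab}(u):a,b\le i\}$, yielding $[d_i(u),d_j(v)]=0$. The diagonal case $i=j$ (with $i-1\le\lfloor(N-1)/2\rfloor$) reduces via $\psi_{i-1}$ to the base case inside $\scrX_{N-2(i-1)}$. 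The remaining range $i+j>N$ is dispatched using the reflection identity $c(u)\tl d_{k'}(u)=d_k(-u)$ from Lemma~\ref{efaiii}: since $c(u)$ is central and $[X,Y]=0$ is equivalent to $[\tl X,\tl Y]=0$ for invertible formal series, the commutator $[d_i(u),d_j(v)]$ reduces to one with $j'=N+1-j$, thus with a smaller index sum already handled.

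The main bookkeeping subtlety lies in the odd-$N$ middle index $i=(N+1)/2$, for which $i=i'$ and the reflection is unhelpful; here I will take $m=(N-1)/2$ so that $d_i=\psi_m(s_{11})$ and invoke the fact that $\scrX_1$ is commutative, so $[d_i(u),d_i(v)]=\psi_m([s_{11}(u),s_{11}(v)])=0$. Beyond this careful case analysis, I do not foresee any substantial technical obstacle; the argument is a mild variation of the Gauss-decomposition technique used for the ordinary Yangian and its type~AI twisted analog in \cite{BK05, KLWZ23a}.
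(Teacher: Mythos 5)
Your proposal is correct and takes essentially the same route as the paper's (very terse) proof, which likewise derives everything from Corollary~\ref{cor:commu}, Lemma~\ref{shiftlem}, the reflection identities of Lemma~\ref{efaiii}, and the base case $[d_1(u),d_1(v)]=0$ extracted from \eqref{bcom}. Your more explicit case analysis — in particular using the reflection $c(u)\tl d_{i'}(u)=d_i(-u)$ for large indices and reducing the odd-$N$ middle diagonal entry to the commutativity of $\scrX_1$ — merely fills in details the paper leaves implicit.
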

\begin{proof}
This is a corollary of Corollary \ref{cor:commu}, Lemma \ref{efaiii}, and Lemma \ref{shiftlem}. Note that $[d_1(u),d_1(v)]=0$ follows from \eqref{bcom} and this implies by Lemma \ref{shiftlem} that $[d_i(u),d_i(v)]=0$ for $1\lle i\lle N$.
\end{proof}

\begin{lem}\label{lem:eta}
There is an anti-automorphism $\eta$ for $\mathscr X_N$ (and for $\Y_N$) defined by
\begin{align}\label{eta}
\eta:S(u)\longrightarrow S^t(u),\qquad s_{ij}(u)\mapsto s_{ji}(u).
\end{align}
Moreover, for $1\lle i<j\lle N$ and $\ 1\lle k\lle N$, we have
\[
\eta\big(e_{ij}(u)\big)=f_{ji}(u),\qquad \eta\big(f_{ji}(u)\big)=e_{ij}(u),\qquad \eta\big(d_{k}(u)\big)=d_{k}(u).
\]
\end{lem}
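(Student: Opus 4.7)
The plan is to verify that $\eta$ is well-defined by checking that the defining relations of $\mathscr X_N$ (and the extra unitary relation for $\Y_N$) are preserved, and then to deduce the statements about Gaussian generators from the uniqueness of the Gauss decomposition.

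To package the bookkeeping, for any matrix $A$ with entries in $\mathscr X_N$ living in $\End(\bC^N)^{\otimes k}\otimes\mathscr X_N$, define $A^\ast := \eta(A)^{t_1\cdots t_k}$, where $\eta$ is applied to each entry and $t_i$ is the transposition in the $i$-th auxiliary tensor factor. The key observation is that since $\eta$ reverses the order of algebra multiplication and the transposition reverses the order of matrix multiplication in the auxiliary spaces, the combined operation satisfies $(AB)^\ast = B^\ast A^\ast$ for any compatible $A,B$. Moreover:
\begin{itemize}
\item $S^\ast(u)=S(u)$, because the transpose in the single auxiliary space cancels the effect of $\eta$ on generators at the level of matrix entries;
\item $R^\ast(u)=R(u)$ and $R'^\ast(u)=R'(u)$, because $R,R'$ have scalar entries (so $\eta$ acts trivially) and $P^{t_1t_2}=P$, hence $R(u)^{t_1t_2}=R(u)$ and $R'(u)^{t_1t_2}=G_1R(u)^{t_1t_2}G_1=R'(u)$.
\end{itemize}
Applying $\ast$ to both sides of the defining relation \eqref{quamat} then interchanges its two sides (since it reverses the order of the four factors while fixing each of them individually). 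This shows that the relation is preserved, so $\eta$ extends to an anti-endomorphism of $\mathscr X_N$; involutivity on generators implies it is an anti-automorphism. For $\Y_N$, the unitary relation \eqref{unimat}, written entry-wise as $\sum_k s_{ik}(u)s_{k'j'}(-u)=\delta_{ij}$, is mapped by $\eta$ to $\sum_k s_{j'k'}(-u)s_{ki}(u)=\delta_{ij}$, which is the $(j,i)$-entry of $S'(-u)S(u)=\mathbf 1_N$ from Proposition~\ref{Binv}. So $\eta$ descends to $\Y_N$.

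For the Gauss-decomposition statement, apply $\ast$ to $S(u)=F(u)D(u)E(u)$. Using $S^\ast=S$ and $(AB)^\ast = B^\ast A^\ast$, one gets
\begin{equation*}
S(u) = E^\ast(u)\,D^\ast(u)\,F^\ast(u).
\end{equation*}
Since $E(u)$ is upper unitriangular, $E^\ast(u)$ is lower unitriangular; similarly $F^\ast(u)$ is upper unitriangular and $D^\ast(u)$ is diagonal. By the uniqueness of the Gauss decomposition of $S(u)$, one concludes $E^\ast(u)=F(u)$, $D^\ast(u)=D(u)$, and $F^\ast(u)=E(u)$. Reading off the $(i,j)$-entry of $E^\ast(u)=F(u)$ for $i>j$ yields $\eta(e_{ji}(u))=f_{ij}(u)$; likewise for $F^\ast(u)=E(u)$ and the diagonal equation.

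The main obstacle is purely notational: keeping careful track of how $\eta$ interacts with the tensor-factor transpositions, in particular verifying the identities $(AB)^\ast=B^\ast A^\ast$ and the invariance of $R,R',S$ under $\ast$. Once these are established, both the verification of the defining relations and the identification of the Gaussian generators become formal consequences.
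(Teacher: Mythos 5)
Your proposal is correct and follows essentially the same route as the paper: the paper also treats the preservation of the reflection relation as a routine check and then obtains the identities for $e_{ij}(u)$, $f_{ji}(u)$, $d_k(u)$ by applying $\eta$ to the Gauss decomposition (component-wise via \eqref{eq:sij-Gauss} rather than your matrix-level $\ast$-formalism) and invoking its uniqueness. Your write-up simply makes the "straightforward" verification explicit, and all the individual steps ($(AB)^\ast=B^\ast A^\ast$, $S^\ast=S$, $R^\ast=R$, $R'^\ast=R'$, and the descent to $\Y_N$ via Proposition~\ref{Binv}) check out.
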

\begin{proof}
It is straightforward to prove that $\eta$ defines an anti-automorphism for $\mathscr X_N$ and $\Y_N$. Applying $\eta$ to \eqref{eq:sij-Gauss}, the second statement follows from the uniqueness of Gauss decomposition.
\end{proof}

\begin{lem}\label{lem:de-gen}
The algebra $\scrX_N$ is generated by the coefficients of $d_i(u)$ and $e_j(u)$, where $1\lle i\lle N$ and $1\lle j<N$.
\end{lem}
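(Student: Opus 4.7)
The plan is to reduce the generating set of $\scrX_N$ in three successive steps, peeling off the $f$'s and then the non-simple $e$'s.

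\emph{Step 1 (reduction to all Gaussian generators).} The Gauss decomposition $S(u)=F(u)D(u)E(u)$ together with the componentwise identities \eqref{eq:sij-Gauss} expresses each $s_{ij}(u)$ as a polynomial in the coefficients of $d_k(u)$, $e_{kl}(u)$, $f_{lk}(u)$ for $1\lle k\lle N$, $1\lle k<l\lle N$. Since by definition the coefficients of the $s_{ij}(u)$'s generate $\scrX_N$, so do the coefficients of all the Gaussian series.

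\emph{Step 2 (elimination of the $f$'s).} Reindexing the identity $\tl e_{ij}(u)=f_{i'j'}(-u)$ from Lemma~\ref{efaiii} yields
\[
f_{ab}(u)=\tl e_{a',b'}(-u),\qquad a>b,
\]
and the quasi-determinant expansion \eqref{eq:def-tilde-e-f} writes $\tl e_{a',b'}(-u)$ as a noncommutative polynomial in the coefficients of the $e_{kl}(-u)$ with $k<l$. Hence every coefficient of every $f_{ab}(u)$ lies in the subalgebra generated by coefficients of $e_{kl}(u)$ for $1\lle k<l\lle N$, so $\scrX_N$ is generated by the coefficients of $d_i(u)$ ($1\lle i\lle N$) and $e_{ij}(u)$ ($1\lle i<j\lle N$).

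\emph{Step 3 (reduction to simple $e_j$'s).} It remains to show that each non-simple $e_{ij}(u)$, with $j-i\gge 2$, lies in the subalgebra generated by coefficients of $d_k(u)$ and of the simple currents $e_l(u)=e_{l,l+1}(u)$. I would induct on $j-i$ via a recursion of the shape
\[
e_{ij}(u)=[\,e_{i,j-1}(u),\,e_{j-1}^{(1)}\,]+(\text{correction terms already expressible in }d_k,\,e_l),
\]
obtained by unpacking the quaternary relation \eqref{bcom} at the level of Gauss-decomposed series. To control the combinatorics I would first verify such an identity in the smallest meaningful rank (namely in $\scrX_3$, where $e_{13}(u)$ must be recovered from $e_1(u)$ and $e_2(u)$, with possibly a $d$-dependent correction), and then transport the result into arbitrary $N$ via the shift homomorphism $\psi_m:\scrX_{N-2m}\to \scrX_N$ of Proposition~\ref{prop:red}, using the compatibility $\psi_m(e_{ij}(u))=e_{m+i,m+j}(u)$ recorded in Lemma~\ref{shiftlem}.

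\emph{Main obstacle.} The delicate step is Step~3: producing the precise recursive bracket formula. Unlike in the untwisted Yangian $\rY(\gl_N)$, the quaternary relation \eqref{bcom} carries summation terms coupling the $i\leftrightarrow i'$ symmetry, and these contribute corrections involving $d$'s (and shorter $e$'s) that must be absorbed into the induction hypothesis. Restricting the direct computation to low rank and lifting through the shift homomorphism, whose availability in both parities of $N$ is guaranteed by Proposition~\ref{prop:red}, should keep the required explicit calculation tractable while still covering all $N>1$.
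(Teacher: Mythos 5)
Your Steps 1 and 2 are correct and coincide with the paper's first reduction: Lemma~\ref{efaiii} together with \eqref{eq:def-tilde-e-f} eliminates all the $f_{ji}(u)$, so it suffices to generate the coefficients of the $e_{kl}(u)$ with $k<l$. The gap is in Step 3, in the proposed division of labor between the low-rank computation and the shift homomorphism. By Lemma~\ref{shiftlem}, $\psi_m$ sends $e_{ij}(u)$ to $e_{m+i,m+j}(u)$, so its image only contains Gaussian series whose indices lie in $\{m+1,\dots,(m+1)'\}$. Consequently, a bracket identity verified in $\scrX_3$ (or any low rank) and pushed forward by $\psi_m$ never yields an identity for $e_{1j}(u)$ or $e_{iN}(u)$ in $\scrX_N$ with $N\gge 4$: the indices $1$ and $N$ lie outside the range of every $\psi_m$ with $m\gge 1$. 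But these boundary series are exactly the ones a new argument must supply, since the inner block $d_k(u),e_{ij}(u),f_{ji}(u)$ with $2\lle k\lle 2'$ and $2\lle i<j\lle 2'$ is already handled by $\psi_1$ together with induction on $N$. As stated, your lifting step therefore covers nothing beyond what that induction already gives, and the recursion you write down for general $e_{ij}(u)$ is never actually established where it is needed.

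What is required --- and what the paper's proof supplies --- is a direct computation in $\scrX_N$ for the first row. Working with $s_{1j}(u)=d_1(u)e_{1j}(u)$ rather than with $e_{1j}(u)$ itself, taking the coefficient of $v$ in the quaternary relation \eqref{bcom} gives
\[
s_{1,j+1}(u)=[\,s_{1j}(u),\,s_{j,j+1}^{(1)}\,]-\delta_{jj'}\,s_{1,j'-1}(u)+\delta_{1,j'-1}\,s_{j'j}(u),
\]
with $s_{j,j+1}^{(1)}=e_j^{(1)}$, and one then checks that both correction terms are generated by the induction hypotheses (on $j$ for the first; on $N$, via $\psi_1$, for the second, which only occurs when $j'=2$). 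The last column is recovered for free from $e_{jN}(u)=\tl f_{j'1}(-u)$ and \eqref{eq:def-tilde-e-f}. Your proposal correctly isolates this recursion as the crux, but it neither derives it nor proposes a mechanism capable of reaching the cases where it is needed.
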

\begin{proof}
We say that a series in $u^{-1}$ can be generated if its coefficients can be generated by the coefficients of $d_i(u)$ and $e_j(u)$, where $1\lle i\lle N$ and $1\lle j<N$.

By Lemma \ref{efaiii} and \eqref{eq:def-tilde-e-f}, it suffices to show that $e_{kl}(u)$ with $k<l$ can be generated. We prove it by induction on $N$. The base case $N=2$ is trivial. Now assume $N\gge 3$. Then it follows from Lemma \ref{shiftlem} and the induction hypothesis that $d_{k}(u), e_{ij}(u), f_{ji}(u)$ for $2\lle k\lle 2'$ and $2\lle i<j\lle 2'$ can be generated.

Then we prove that $e_{1j}(u)$ for $j\gge 3$. Note that $s_{1j}(u)=d_1(u)e_{1j}(u)$, it suffices to show $s_{1j}(u)$ can be generated by another induction on $j$. Now let $2\lle j<N$ and suppose that $s_{1k}(u)$ for $1\lle k\lle j$ can be generated. By \eqref{bcom}, we have
\begin{align*}
(u^2-v^2)[s_{1j}(u),s_{j,j+1}(v)]=&\,(u+v)\big(s_{jj}(u)s_{1,j+1}(v)-s_{jj}(v)s_{1,j+1}(u)\big)\\
+& \, (u-v)\Big(\delta_{jj'}\sum_{a=1}^N s_{1a'}(u)s_{a,j+1}(v)-\delta_{1(j+1)'}\sum_{a=1}^N s_{ja'}(v)s_{aj}(u)\Big).
\end{align*}
Taking the coefficients of $v$, we have
\beq\label{0000-}
s_{1,j+1}(u)=[s_{1j}(u),s_{j,j+1}^{(1)}]-\delta_{jj'}s_{1,j'-1}(u)+\delta_{1,j'-1}s_{j'j}(u)
\eeq
Clearly, $[s_{1j}(u),s_{j,j+1}^{(1)}]$ can be generated. If $j=j'$, then $s_{1,j'-1}(u)=s_{1,j-1}(u)=d_1(u)e_{1,j-1}(u)$ can be generated by induction hypothesis. If $j'=2$, then $s_{j'j}(u)=s_{2j}(u)$ can be expressed by
\[
s_{j'j}(u)=\begin{cases}d_2(u)e_{2j}(u)+f_{1}(u)d_1(u)e_{1j}(u),&\text{ if }j> 2.\\
d_2(u)+f_{1}(u)d_1(u)e_{1}(u),  &\text{ if }j=2.
\end{cases}
\]
can also be generated by induction hypothesis. Thus it follows from \eqref{0000-} that $s_{1,j+1}(u)$ can be generated.

Similarly, one proves that $f_{j1}(u)$ can also be generated. Finally, using \eqref{eq:def-tilde-e-f}, we find that $e_{jN}(u)=\tl f_{j'1}(-u)$ can be generated. Similarly, $f_{Nj}(u)$ can also be generated. The proof is complete.
\end{proof}

\begin{prop}
Fix a linear ordering on the elements $f_{ji}^{(r)}$, $1\lle i<j\lle N$, $d_{kk}^{(r)}$, $1\lle k \lle \lfloor \tfrac{N}{2}\rfloor$, and $d_{n+1,n+1}^{(2r-1)}$ if $N=2n+1$, $r>0$. Then the ordered monomials of these elements form a basis of $\Y_N$.
\end{prop}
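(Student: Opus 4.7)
The strategy is to transfer the PBW basis of $\Y_N$ in the $s$-generators from Proposition \ref{pbwprop1} to the Gaussian generators by means of the Gauss decomposition \eqref{eq:sij-Gauss} and a filtration argument. Throughout, write $n=\lfloor N/2\rfloor$.

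\textbf{Step 1 (generation).} I would first check that the listed generators already generate $\Y_N$. By Lemma \ref{lem:de-gen}, the algebra $\scrX_N$ is generated by the coefficients of the $d_i(u)$ and the $e_j(u)$; the identity $\tl e_{ij}(u)=f_{i'j'}(-u)$ from Lemma \ref{efaiii} together with $E(u)=\wtl E(u)^{-1}$ expresses every $e_{ij}(u)$ as a polynomial in the entries of $F(u)$, so $\scrX_N$ is generated by the coefficients of $d_i(u)$ and $f_{ji}(u)$. Descending to $\Y_N$, the relation $c(u)=1$ gives $\tl d_{i'}(u)=d_i(-u)$, which expresses $d_i(u)$ for $i>n$ via $d_{i'}(u)$. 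In the case $N=2n+1$, applying this to $i=n+1$ (with $(n+1)'=n+1$) yields $d_{n+1}(u)d_{n+1}(-u)=1$, which recursively recovers each $d_{n+1}^{(2r)}$ from the odd coefficients $d_{n+1}^{(2s-1)}$ with $s\lle r$ (using that all coefficients of $d_{n+1}(u)$ commute by Lemma \ref{lemnew1}).

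\textbf{Step 2 (associated graded identification).} Equip $\Y_N$ with the filtration from \eqref{filter:B}, so that $\gr\,\Y_N\cong \rU(\gl_N[z]^\vartheta)$ via \eqref{isogr}. The Gauss decomposition \eqref{eq:sij-Gauss} shows that each correction term $f_{jk}(u)d_k(u)e_{ki}(u)$ (or $f_{ik}(u)d_k(u)e_{ki}(u)$) is a sum of products of three Gaussian generators whose filtration degrees add to at most $r-3$ for the coefficient of $u^{-r}$, strictly less than $\deg s_{ji}^{(r)}=r-1$. Consequently,
\begin{equation*}
\bar s_{ii}^{(r)}=\bar d_i^{(r)}\quad(1\lle i\lle N),\qquad \bar s_{ji}^{(r)}=\bar f_{ji}^{(r)}\quad(1\lle i<j\lle N),
\end{equation*}
in $\gr\,\Y_N$.

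\textbf{Step 3 (basis transfer).} The anti-automorphism $\eta$ of Lemma \ref{lem:eta} sends $s_{ij}^{(r)}\mapsto s_{ji}^{(r)}$, fixes the diagonal generators $s_{kk}^{(r)}$, and preserves the filtration. Applying $\eta$ to the PBW basis of Proposition \ref{pbwprop1} therefore yields a basis of $\Y_N$ consisting of (reverse-)ordered monomials in the lower-triangular $s_{ji}^{(r)}$ for $i<j$, the diagonal $s_{kk}^{(r)}$ for $1\lle k\lle n$, and additionally $s_{n+1,n+1}^{(2r-1)}$ if $N=2n+1$; since the proposition allows any linear ordering, this reorientation is harmless. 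By Step 2, the images of these monomials in $\gr\,\Y_N$ coincide (under the fixed ordering) with the corresponding ordered monomials in $\bar f_{ji}^{(r)}$, $\bar d_k^{(r)}$, and $\bar d_{n+1}^{(2r-1)}$. Hence the latter form a basis of $\gr\,\Y_N$, and the standard filtered-PBW lifting (combined with Step 1 for spanning) promotes this to the statement that the ordered monomials in $\{f_{ji}^{(r)},d_k^{(r)},d_{n+1}^{(2r-1)}\}$ form a basis of $\Y_N$.

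\textbf{Main obstacle.} The crux is the degree bookkeeping in Step 2 — one must verify that the cross-terms in \eqref{eq:sij-Gauss} really drop filtration degree, since a miscount would ruin the identification of leading terms. A related subtlety, present only when $N=2n+1$, is ensuring that the reduction from all of $d_{n+1}(u)$ to its odd coefficients (via $d_{n+1}(u)d_{n+1}(-u)=1$) is compatible with the graded picture; this is automatic once one observes that in $\rU(\gl_N[z]^\vartheta)$ the diagonal element $e_{n+1,n+1}z^r$ is $\vartheta$-fixed precisely when $r$ is even, matching exactly the surviving odd-indexed $\bar d_{n+1}^{(2r-1)}$.
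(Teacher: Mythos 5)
Your argument is correct and follows essentially the same route as the paper: identify $f_{ji}^{(r)}$ and $d_{k}^{(r)}$ with $s_{ji}^{(r)}$ and $s_{kk}^{(r)}$ modulo lower filtration degree via the Gauss decomposition \eqref{eq:sij-Gauss} and then transfer the PBW basis of Proposition~\ref{pbwprop1} (the paper's proof is exactly this, compressed into two lines, with your Step 3 via $\eta$ making explicit the passage from upper- to lower-triangular generators). One cosmetic slip: when the $d_k(u)$ factor contributes its constant term $1$, a cross-term of \eqref{eq:sij-Gauss} contributing to the coefficient of $u^{-r}$ can have degree up to $r-2$ rather than $r-3$, but this still lies strictly below $\deg s_{ji}^{(r)}=r-1$, so the identification of leading terms is unaffected.
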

\begin{proof}
Recall the filtration from \eqref{filter:B} and the Gauss decomposition from \eqref{eq:sij-Gauss}. Then we have $f_{ji}^{(r)}\equiv s_{ji}^{(r)}$ and $d_{ii}^{(r)}\equiv s_{ii}^{(r)}$, modulo terms of degree at most $r-2$. Thus the statement follows from Proposition \ref{pbwprop1}.
\end{proof}

\subsection{Special twisted Yangians}\label{sec:hb}
For later use, we define the following generating series, for $0\lle i\lle N$ and $1\lle j< N$,
\begin{enumerate}
    \item if $N=2n$ is even, then we set
\begin{align}
& b_j(u)= \sqrt{-1}f_j(u+\tfrac{n-j}{2}),\label{beven}\\
& h_i(u)=\tl d_{i}(u+\tfrac{n-i}{2})d_{i+1}(u+\tfrac{n-i}{2})\label{heven};
\end{align}
\item if $N=2n+1$ is odd, then we set
\begin{align}
&b_j(u)= \sqrt{-1}f_j(u+\tfrac{n-j}{2}+\tfrac{1}{4}),\label{bodd}\\
&h_i(u)=
\begin{cases}
\big(1+\tfrac{1}{4u}\big)\tl d_{i}(u+\tfrac{1}{4})d_{i+1}(u+\tfrac{1}{4}), &\text{ if } i=n,\\
\big(1-\tfrac{1}{4u}\big)\tl d_{i}(u-\tfrac{1}{4})d_{i+1}(u-\tfrac{1}{4}), &\text{ if } i=n+1,\\
\tl d_{i}(u+\tfrac{n-i}{2}+\tfrac{1}{4})d_{i+1}(u+\tfrac{n-i}{2}+\tfrac{1}{4}), &\text{ if } i\ne n,n+1.
\end{cases}\label{hodd}
\end{align}
\end{enumerate}
Here we set $d_0(u)=d_{N+1}(u)=1$. Clearly, we have
\beq\label{prod}
\prod_{i=0}^N h_i\big(u-\tfrac{N-2i}{4}\big)=1-\delta_{N,\mathrm{odd}}\frac{1}{16u^2},
\eeq
where $\delta_{N,\mathrm{odd}}=1$ if $N$ is odd and $\delta_{N,\mathrm{odd}}=0$ otherwise.
\begin{rem}
Note that our special shifts satisfy
\begin{align*}
& \frac{n-i}{2}+\frac{n-\tau i}{2}=0,\hskip1.87cm\text{ if }N=2n,\\
& \frac{n-i}{2}+\frac{1}{4}+\frac{n-\tau i}{2}+\frac{1}{4}=0,\quad \text{ if }N=2n+1.
\end{align*}
The purpose to multiply $\sqrt{-1}$ is to change a sign so that it matches with the relations in Definition \ref{deftY}. The other modifications become apparent later in the calculation of relations of low rank cases.
\end{rem}

It follows from Lemma \ref{efaiii} that
\begin{align}
\label{i=tauih}
&h_{\tau i}(u)=h_{i}(-u),\\  
&b_{i}(u)=\begin{cases}
-\sqrt{-1}\,e_{\tau i}(-u+\tfrac{n- \tau i}{2}),&\text{ if }N=2n.\\
-\sqrt{-1}\,e_{\tau i}(-u+\tfrac{n- \tau i}{2}+\frac{1}{4}), &\text{ if }N=2n+1.
\end{cases}.\label{fi=tauie}
\end{align}
In particular, if $N=2n$ is even, then $h_n(u)$ is an even series in $u^{-1}$. Moreover, under the anti-automorphism $\eta$, we have
\beq\label{etahb}
\eta (h_i(u))=h_i(u),\qquad \eta(b_{i}(u))=-b_{\tau i}(-u).
\eeq

Introduce $h_{i,r}$ and $b_{j,r}$ for $0\lle i\lle N$, $1\lle j<N$, and $r\in\bN$ as follows,
\begin{align}
h_i(u)=1+\sum_{r\gge 0}h_{i,r}u^{-r-1},\qquad b_j(u)=\sum_{r\gge 0}b_{j,r}u^{-r-1},\label{bhcom}
\end{align}
namely they are coefficients of $h_i(u)$ and $b_j(u)$. It is also convenient to set $h_{i,-1}=1$ and $h_{i,r}=0$ if $r<-1$.
\begin{dfn}\label{defSY}
The \emph{special twisted Yangian $\SY_N$} is the subalgebra of $\Y_N$ generated by $b_{i,r}$ and $h_{i,r}$ for $1\lle i<N$ and $r\in\bN$.    
\end{dfn}

Define the root vectors $b_{\alpha,r}=b_{ji;r}$ for $\alpha=\alpha_i+\cdots+\alpha_{j-1}$ with $1\lle i<j\lle N$ and $r\in\bN$ recursively as follows, cf. \eqref{bjir1}, 
\beq\label{bjir2}
b_{i+1,i;r}=b_{i,r},\qquad b_{\alpha,r}=b_{ji;r}=[b_{j-1,0},b_{j-1,i;r}].
\eeq
Recall $\I_\tau^0$, $\I_{\ne}^0$, and $\I_{=}^0$ from \eqref{I-tau} and \eqref{eq:I+-}. Then we have
\beq\label{I+-A}
\begin{split}
&\I_{\ne}^0=\{1,\dots,n-1\},\qquad \I_{=}^0=\{n\},\qquad \text{ if }N=2n;\\
&\I_{\ne}^0=\{1,\dots,n\},\hskip 1.45cm \I_{=}^0=\varnothing,\hskip 1.1cm \text{ if }N=2n+1.
\end{split}
\eeq
\begin{prop}\label{prop:pwb}
The ordered monomials of 
\beq\label{dd34}
\{b_{\alpha,r},h_{0,2r},h_{i,r},h_{j,2r+1}~|~\alpha\in\cR^+,i\in\I^0_{\ne},j\in\I_{=}^0,r\in\N\}
\eeq
(with respect to any fixed total ordering) are linearly independent in  $\Y_N$. In particular, the ordered monomials of 
$$
\{b_{\alpha,r},h_{i,r},h_{j,2r+1}~|~\alpha\in\cR^+,i\in\I_{\ne}^0,j\in\I_{=}^0,r\in\N\}
$$ 
(with respect to any fixed total ordering) are linearly independent in  $\SY_N$.
\end{prop}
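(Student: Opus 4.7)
The approach is to use the filtration $\deg s_{ij}^{(r)} = r-1$ on $\Y_N$ from \eqref{filter:B} together with the isomorphism $\rU(\gl_N[z]^\vartheta) \cong \gr\,\Y_N$ from \eqref{isogr}. Since linear independence in $\gr\,\Y_N$ lifts to linear independence in $\Y_N$, the classical PBW theorem for $\rU(\gl_N[z]^\vartheta)$ reduces the proposition to showing that the leading symbols of the proposed generators form a linearly independent (in fact, basis) subset of the Lie algebra $\gl_N[z]^\vartheta$.

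First I would compute the leading symbols of the Gaussian generators. From the component-wise Gauss decomposition \eqref{eq:sij-Gauss} and induction, one has $d_i^{(r)} \equiv s_{ii}^{(r)}$ and $f_{ji}^{(r)} \equiv s_{ji}^{(r)}$ modulo $\mathcal F_{r-2}(\Y_N)$. Since a shift $u \to u+c$ preserves the leading symbol of each coefficient, the definitions \eqref{beven}--\eqref{hodd} yield $\overline{b_{i,r}} = \sqrt{-1}\,\overline{s_{i+1,i}^{(r+1)}}$ and $\overline{h_{i,r}} \equiv \overline{s_{i+1,i+1}^{(r+1)}} - \overline{s_{ii}^{(r+1)}}$ for interior nodes, with the analogous expressions for $h_0$ and with the $1 \pm \tfrac{1}{4u}$ corrections in the odd-$N$ case contributing only to lower-order terms in the filtration. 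Under \eqref{isogr}, these symbols correspond in $\gl_N[z]^\vartheta$ to
\begin{align*}
\overline{b_{i,r}} &\longleftrightarrow \sqrt{-1}\,\bigl(e_{i+1,i} + (-1)^r e_{\tau i,\tau i+1}\bigr)z^r, \\
\overline{h_{i,r}} &\longleftrightarrow \bigl((e_{i+1,i+1}-e_{ii}) + (-1)^r(e_{\tau i,\tau i} - e_{\tau i+1,\tau i+1})\bigr)z^r, \\
\overline{h_{0,r}} &\longleftrightarrow \bigl(e_{11} + (-1)^r e_{NN}\bigr)z^r.
\end{align*}
A height induction on $\alpha$ via \eqref{bjir2} then identifies $\overline{b_{\alpha,r}}$ with $\pm\sqrt{-1}\,(e_{ji} + (-1)^r e_{j'i'})z^r$ for $\alpha = \alpha_i+\cdots+\alpha_{j-1}$: the bracket in $\gr\,\Y_N$ of homogeneous elements of degrees $r,s$ has degree at most $r+s$, and its top-degree part computes the Lie bracket in $\gl_N[z]^\vartheta$.

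Finally, a dimension count at each graded level $r$ confirms that the listed symbols exhaust a basis of the level-$r$ component of $\gl_N[z]^\vartheta$. The $\overline{b_{\alpha,r}}$ give the full off-diagonal part of dimension $|\cR^+|$. For the Cartan part, passing to $\zeta_i(u) = \log h_i(u)$ converts the telescoping identity \eqref{prod} together with $h_{\tau i}(u)=h_i(-u)$ into a single relation $\sum_i \bar\zeta_{i,r}=0$ per level $r$. For $N=2n$ this simplifies to $(1+(-1)^{r+1})\bigl(\bar\zeta_{0,r}+\sum_{i=1}^{n-1}\bar\zeta_{i,r}\bigr) + \bar\zeta_{n,r}=0$, which forces $\bar h_{n,r}=0$ for $r$ even and makes $\bar h_{0,r}$ redundant for $r$ odd---precisely matching the prescription to include $h_{j,2r+1}$ for $j\in\I_{=}^0$ and $h_{0,2r}$, and to omit the redundant counterparts; the odd-$N$ case is entirely analogous. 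Hence PBW gives linear independence of ordered monomials in the leading symbols in $\gr\,\Y_N$, whence in $\Y_N$. The $\SY_N$ statement follows since $\SY_N \subset \Y_N$ contains all proposed generators except $h_{0,2r}$. The main technical obstacle is the height induction identifying $\overline{b_{\alpha,r}}$, which requires careful bookkeeping of the shifts in \eqref{beven}--\eqref{hodd} and their interaction with the nested brackets defining the root vectors.
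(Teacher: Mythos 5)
Your proposal is correct and follows essentially the same route as the paper: pass to $\gr\,\Y_N\cong\rU(\gl_N[z]^\vartheta)$ via \eqref{isogr}, compute the leading symbols of the Gaussian generators (your formulas agree with \eqref{helper1}), and invoke the PBW theorem. The only caveat is that $\overline{b_{\alpha,r}}$ is not exactly $\pm\sqrt{-1}\,(e_{ji}+(-1)^re_{j'i'})z^r$ but only equal to it modulo Cartan symbols and root vectors of strictly smaller height (as in the paper's \eqref{helper2}, since the cross brackets such as $[e_{j,j-1},e_{i',(j-1)'}]$ need not vanish near the middle of the diagram); the height-triangularity you acknowledge makes this harmless for linear independence.
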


\begin{proof}
Let $\bar b_{ji;r}$, $\bar h_{i,r}$ be the images of $b_{ji;r}$, $h_{i,r}$ in the associated graded algebra $\gr\,\Y_N$, respectively. Then by \eqref{isogr}, we have 
\beq\label{helper1}
\begin{split}
&\bar b_{i+1,i;r}=\sqrt{-1}\big(e_{i+1,i}+(-1)^re_{i'-1,i'}\big)z^r,\\
&\bar h_{0,r}=\big(e_{11}+(-1)^re_{NN})\big)z^r,\\
&\bar h_{i,r}=\big(e_{i+1,i+1}-e_{ii}+(-1)^r(e_{i'-1,i'-1}-e_{i'i'})\big)z^r.    
\end{split}
\eeq
If $1\lle i<j\lle N$, we set $\alpha=\alpha_i+\cdots+\alpha_{j-1}$. Recall $\vartheta$ from \eqref{varth}. Then it follows from \eqref{bjir2} that
\beq\label{helper2}
\begin{split}
\bar b_{\alpha,r}\in (\sqrt{-1})^{j-i}\big(f_{\alpha}&+(-1)^r\vartheta(f_{\alpha})\big)z^r+\sum_{1\lle i< N} \bC\bar h_{i,r}\\
&+\sum_{\mu:\mathrm{ht}(\mu) <\mathrm{ht}(\alpha)}\C\big(f_{\mu}+(-1)^r\vartheta(f_{\mu})\big)z^r,
\end{split}
\eeq
where $\mathrm{ht}(\mu)$ denotes the height of the root $\mu$. It follows from \eqref{helper1}, \eqref{helper2} and the PBW theorem that the images of the ordered monomials of these elements  are linearly independent in the associated graded algebra $\gr\,\Y_N$, completing the proof.
\end{proof}

We will see soon that they actually form a basis for the corresponding algebras. 

\subsection{Explicit isomorphism}\label{sec:iso-GD}
In this subsection, we discuss the explicit isomorphism between the twisted Yangian $\Yi$ defined in Drinfeld type presentation and the (special) twisted Yangians constructed via the R-matrix presentation.

\begin{thm}\label{main2}
Let $N>1$ and $\Yi$ be the algebra defined in Definition~\ref{deftY} of type $\mathrm{AIII}_{N-1}$. Then there is an algebra isomorphism,
\beq\label{isoexp}
\begin{split}
\Phi: &\,\Yi\to \SY_{N},\\
&\, h_{i,r}\mapsto h_{i,r},\quad b_{i,r}\mapsto b_{i,r},
\end{split}
\eeq
where $h_{i,r},b_{i,r}$ on the right-hand side are defined in \eqref{beven}-\eqref{hodd} for $i\in \I^0$, $r\in\bN$.
\end{thm}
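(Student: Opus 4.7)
The approach is to verify that $\Phi$ is a well-defined algebra homomorphism (the nontrivial step), observe that surjectivity follows automatically from the definition of $\SY_N$, and then establish injectivity via a PBW basis comparison.

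First, to make the verification tractable, I would invoke the reduced presentation of Theorem~\ref{thm:redu}: it suffices to check the current relations \eqref{qsconj0}--\eqref{qsconj3} together only with the \emph{finite-type} Serre relations \eqref{finserre1-ty}--\eqref{finserre4-ty} for the elements $h_{i,r}, b_{i,r} \in \SY_N$ defined in \eqref{beven}--\eqref{hodd}. Otherwise, the full quasi-split Serre relations \eqref{qsconj8}--\eqref{qsconj10} involving arbitrary parameters $k_1, k_2, r$ would make direct verification impractical.

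Second, the verification is reduced to low rank. The base cases $N = 2, 3, 4, 5$ are to be treated by direct computation in Section~\ref{sec:lowrk} using the quaternary relations \eqref{bcom}. Both parities must appear because the shift homomorphism $\psi_m \colon \scrX_{N-2m} \to \scrX_N$ from Proposition~\ref{prop:red} preserves the parity of $N$. By Lemma~\ref{shiftlem}, $\psi_m$ transports Gaussian generators by $d_i \mapsto d_{m+i}$, $e_j \mapsto e_{m+j}$, $f_j \mapsto f_{m+j}$, and the spectral shifts built into \eqref{beven}--\eqref{hodd} are chosen precisely so that $\psi_m$ intertwines the definitions of $h_{i,r}, b_{i,r}$ at rank $N-2m$ with those at rank $N$ at the corresponding indices. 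Since every defining relation of $\Yi$ involves at most three consecutive nodes, choosing $m$ so that these nodes land in a central window allows one to pull the relation back to $\scrX_{N'}$ for some $N' \in \{2,3,4,5\}$ of matching parity, where it has already been verified.

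Third, surjectivity of $\Phi$ is immediate since $\SY_N$ is defined (Definition~\ref{defSY}) as the subalgebra of $\Y_N$ generated by the very elements $h_{i,r}, b_{i,r}$. For injectivity, Proposition~\ref{prop:span} provides a spanning set of $\Yi$ by ordered monomials in the set \eqref{eq:pbw-elem}, while Proposition~\ref{prop:pwb} shows that the $\Phi$-images of these monomials are linearly independent in $\SY_N$. Hence $\Phi$ is bijective, and as a byproduct this also establishes the AIII$_{2n}$ case of the PBW Theorem~\ref{thm:pbw}.

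The principal obstacle is verifying the new Serre relation \eqref{qsconj10} in the odd-$N$ base case $N = 3$ (and its extension in $N = 5$), where $c_{i, \tau i} = -1$ holds at the swapped middle nodes. This relation is genuinely novel compared to \cite{KLWZ23a, KLWZ23b} and has no analog in the degeneration picture of Theorem~\ref{th:A}. The unusual $\frac{1}{4}$-shifts in \eqref{bodd}--\eqref{hodd}, together with the extra prefactors $1 \pm \frac{1}{4u}$ at the middle Cartan currents, are not cosmetic: they are dictated precisely by the requirement that the Gauss-decomposition computation in these low rank cases reproduce \eqref{qsconj10} in its clean form with coefficient $\tfrac{4}{3}$ and the geometric series in $3^{-p}$. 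Close second in difficulty is the $\tau$-fixed Serre relation \eqref{qsconj8} in the even-$N$ case $N = 4$, which mirrors its split-type analog but still requires delicate computation to produce the infinite sum $\sum_{p\ge 0} 2^{-2p}$ on the right-hand side.
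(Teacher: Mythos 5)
Your overall architecture coincides with the paper's: verify the relations in $\scrX_N$ for $N=2,3,4,5$, propagate to general $N$ with the parity-preserving shift homomorphism $\psi_m$, get surjectivity from Definition~\ref{defSY}, and get injectivity by sending the spanning set of Proposition~\ref{prop:span} to the linearly independent set of Proposition~\ref{prop:pwb} (which, as you note, yields the AIII$_{2n}$ case of Theorem~\ref{thm:pbw} as a byproduct). However, there are two genuine problems with the reductions you propose.

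First, invoking Theorem~\ref{thm:redu} to replace the Serre relations \eqref{qsconj8}--\eqref{qsconj10} by their finite-type versions is circular precisely in the case where you need it most. The proof of Theorem~\ref{thm:redu} rests on Corollary~\ref{cor:PBW2}, which rests on Theorem~\ref{thm:pbw}; and for type AIII$_{2n}$ (i.e.\ $N$ odd) the linear independence in Theorem~\ref{thm:pbw} is itself deduced from the proof of Theorem~\ref{main2} --- the degeneration route of Theorem~\ref{thm:main1} is unavailable there. Since you also note that your proof of Theorem~\ref{main2} is what establishes the AIII$_{2n}$ PBW theorem, you cannot feed Theorem~\ref{thm:redu} into it. The paper avoids this: for \eqref{qsconj10} it uses Proposition~\ref{prop-serre}, which reduces the general relation to the degree-zero case \eqref{ppf0} using only the relations \eqref{qsconj0}--\eqref{qsconj3} (no PBW input), and the degree-zero case is Proposition~\ref{zeroserre}; the relations \eqref{qsconj8} and \eqref{qsconj9} are verified in full generality in $\scrX_4$ and $\scrX_5$ directly (Lemmas~\ref{serreai}, \ref{mmm}). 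You should cite Proposition~\ref{prop-serre} rather than Theorem~\ref{thm:redu}, and verify \eqref{qsconj8}, \eqref{qsconj9} without the reduced presentation.

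Second, your claim that every defining relation can be pulled back along some $\psi_m$ to $\scrX_{N'}$ with $N'\in\{2,3,4,5\}$ is false for relations among nodes far from the middle. The image of $\psi_m:\scrX_{N-2m}\to\scrX_N$ involves only the Gaussian generators with indices in the central window $\{m+1,\dots,(m+1)'\}$, so a relation between, say, $b_1$ and $b_2$ in $\scrX_{10}$ is not in the image of any $\psi_m$ with $m>0$. The paper handles these outer relations separately: the first $m\times m$ block of $S(u)$ satisfies the $\rY(\gl_m)$ relations \eqref{Trel}, so the relations among $h_i, b_j$ for $i,j$ away from $\lfloor N/2\rfloor$ reduce to the known Drinfeld relations of the type A Yangian (Proposition~\ref{propA}, Section~\ref{secA}, together with Lemmas~\ref{typeArel} and \ref{lemnew1}). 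Your reduction scheme must be supplemented by this type A argument; only the relations involving nodes near $\lfloor N/2\rfloor$ are delegated to the low-rank computations via $\psi_m$.
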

\begin{proof}
In the next section, we shall prove that the defining relations for $\Yi$ are satisfied in $\scrX_{N}$ for small rank cases $N=2,3,4,5$, and then the case for general $N$ follows by applying the shift homomorphism $\psi_m$ defined in Proposition~\ref{prop:red} and Lemma \ref{shiftlem}. Since $\SY_N$ is a subquotient of $\scrX_N$, the defining relations for $\Yi$ are also satisfied in $\SY_{N}$. Thus $\Phi$ is an algebra homomorphism. 

It follows by Definition \ref{defSY} that $\Phi$ is surjective. Therefore, it suffices to show that $\Phi$ is injective. For that, we only need to prove that a spanning set of $\Yi$ is sent to a set of linearly independent vectors of $\SY_{N}$. 

By Proposition \ref{prop:span}, the ordered monomials of the elements in the set \eqref{eq:pbw-elem} with the index sets given by \eqref{I+-A} in $\Yi$ form a spanning set of $\Yi$. It is clear from \eqref{bjir1} and \eqref{bjir2} that $\Phi$ sends $b_{\alpha,r}$ to $b_{\alpha,r}$ for $\alpha\in\mc R^+$ and $r\in\bN$. Thus, these ordered monomials are sent via $\Phi$ to the ordered monomials of the elements in the set \eqref{dd34} in $\SY_N$ which are linearly independent in $\SY_N$ by Proposition \ref{prop:pwb}.
\end{proof}

We can also obtain a Drinfeld type presentation for the twisted Yangian $\Y_N$. To that end, we use the following notation, which is inconsistent with the notation used before.  

Set $\tau(0)=N$, $\tau(N)=0$, $c_{0i}=-\delta_{1i}$ and $c_{Ni}=-\delta_{N-1,i}$ for $1\lle i<N$.

\begin{thm}\label{main3}
The twisted Yangian $\Y_N$ is isomorphic to the algebra generated by $h_{i,r},b_{j,r}$, $0\lle i\lle N$, $1\lle j<N$, $r\in\bN$, subject to the relations \eqref{qsconj0}--\eqref{qsconj10} together with the relation \eqref{prod} and $h_0(u)=h_{N}(-u)$. Moreover, the ordered monomials of 
\beq\label{ggen}
\{b_{\alpha,r},h_{0,2r},h_{i,r},h_{j,2r+1}~|~\alpha\in\cR^+,i\in\I^0_{\ne},j\in\I_{=}^0,r\in\N\}
\eeq
(with respect to any fixed total ordering) form a basis of $\Y_N$.
\end{thm}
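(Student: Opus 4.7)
The plan is to bootstrap the isomorphism $\Yi \cong \SY_N$ of Theorem~\ref{main2} to an identification of the algebra $\mathscr A$ (presented by the stated generators and relations) with the full twisted Yangian $\Y_N$, the essential new feature being one additional Cartan series $h_0(u)$ (with $h_N(u)$ determined by the symmetry $h_0(u) = h_N(-u)$), subject to the single scalar identity \eqref{prod} that comes from the unitary relation. I would first define the map $\Phi : \mathscr A \to \Y_N$ sending each generator to the element of $\Y_N$ prescribed by \eqref{beven}--\eqref{hodd} (with the convention $d_0(u) = d_{N+1}(u) = 1$). The relations \eqref{qsconj0}--\eqref{qsconj10} restricted to $\I^0 = \{1,\dots,N-1\}$ are already established by Theorem~\ref{main2}. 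For $i \in \{0, N\}$, only $j = 1$ or $j = N-1$ yield non-trivial $h$-$b$ relations (since $c_{0j} = -\delta_{1j}$ and $c_{Nj} = -\delta_{N-1,j}$); these are verified by direct low-rank calculations as in Section~\ref{sec:lowrk} and then promoted to general rank by the shift homomorphism of Proposition~\ref{prop:red} combined with Lemma~\ref{shiftlem}. The symmetry $h_0(u) = h_N(-u)$ extends \eqref{i=tauih} to $i = 0$ using the identity $\tl d_N(u) = d_1(-u)$ of Lemma~\ref{efaiii} (valid in $\Y_N$, where $c(u) = 1$), and \eqref{prod} is itself a reformulation of the unitary relation $S(u) S'(-u) = \mathbf{1}_N$ upon expansion through the Gauss decomposition.

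For surjectivity, by Lemma~\ref{lem:de-gen} the algebra $\Y_N$ is generated by the coefficients of $d_i(u)$ for $1 \leq i \leq N$ and $e_j(u)$ for $1 \leq j < N$. The $e_j$ correspond to our $b_j$ by \eqref{fi=tauie} up to a sign and a shift, and the series $d_1(u)$ coincides (up to a shift) with $h_0(u)$. Given $d_1(u)$, one recursively recovers $d_2(u),\dots,d_N(u)$ from the identities $h_i(u) = \tl d_i(u + \text{shift})\, d_{i+1}(u + \text{shift})$ defining $h_i$, so every generator of $\Y_N$ lies in the image of $\Phi$.

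Injectivity together with the PBW basis \eqref{ggen} is established simultaneously. A standard filtration argument with $\deg h_{i,r} = \deg b_{j,r} = r + 1$, parallel to Proposition~\ref{prop:span}, shows that the ordered monomials in \eqref{ggen} span $\mathscr A$; crucially, the restriction to only the \emph{even} coefficients $h_{0, 2r}$ is forced by $h_0(u) = h_N(-u)$ together with \eqref{prod}, which jointly express each odd coefficient $h_{0, 2r+1}$ as a polynomial in $\{h_{0, 2s}\}_{s \leq r}$ and elements of $\SY_N$, via an elementary power-series computation in the spirit of Lemma~\ref{rvnew}. The images of these monomials in $\Y_N$ are then linearly independent by Proposition~\ref{prop:pwb}, so $\Phi$ is injective and \eqref{ggen} is the desired basis. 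The principal technical obstacle is precisely this elimination: combining $h_0(u) = h_N(-u)$ with \eqref{prod} yields an even-in-$u$ identity of the form $h_0(u - N/4)\, h_0(-u - N/4) = (1 - \delta_{N,\mathrm{odd}}/(16 u^2))\, P(u)^{-1}$, where $P(u) = \prod_{i=1}^{N-1} h_i(u - (N-2i)/4) \in \SY_N$ (note $P(-u) = P(u)$ by \eqref{i=tauih}); this must be expanded carefully and solved recursively for the odd coefficients $h_{0, 2r+1}$, uniformly handling both parities of $N$ and tracking the shifts $\tfrac{n-i}{2}$ and the $\tfrac{1}{4}$ corrections in \eqref{bodd}--\eqref{hodd}.
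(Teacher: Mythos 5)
Your proposal is correct and follows essentially the same route as the paper: a homomorphism built from Theorem~\ref{main2} plus low-rank/shift-homomorphism verification of the extra relations involving $h_0,h_N$, surjectivity via Lemma~\ref{lem:de-gen}, and injectivity by using \eqref{prod} together with $h_0(u)=h_N(-u)$ to eliminate the odd coefficients $h_{0,2r+1}$, so that the monomials in \eqref{ggen} span as in Proposition~\ref{prop:span} and are independent by Proposition~\ref{prop:pwb}. The only differences are expository (you spell out the even-product identity and the parity check $P(-u)=P(u)$ slightly more explicitly than the paper does).
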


\begin{proof} 
Let $\mathfrak Y^\imath$ be the algebra generated by $h_{i,r},b_{j,r}$, $0\lle i\lle N$, $1\lle j<N$, $r\in\bN$ subject to the relations \eqref{qsconj0}--\eqref{qsconj10}, \eqref{prod}, and $h_0(u)=h_{N}(-u)$. Similar to the proof of Theorem \ref{main2}, there is an algebra homomorphism from 
$$
\Xi:\mathfrak Y^\imath\to\Y_N,\quad h_{i,r}\mapsto h_{i,r},\quad b_{j,r}\to b_{j,r}.
$$
Indeed, in the next section, \eqref{qsconj0}--\eqref{qsconj10} hold in the extended twisted Yangian $\scrX_N$, and hence holds in $\Y_N$. By Proposition \ref{Binv} and Lemma \ref{efaiii}, the unitary relation \eqref{bunit} or \eqref{unimat} implies that $d_1(u)=\tl d_N(-u)$ in $\Y_N$, i.e., $h_0(u)=h_{N}(-u)$.

It remains to prove that $\Xi$ is an isomorphism. It is clear from Lemma \ref{lem:de-gen} and \eqref{fi=tauie} that $\Xi$ is surjective. Now we establish the injectivity. Note that by the relation \eqref{prod} and $h_0(u)=h_{N}(-u)$, we have
\[
h_0\big(u-\tfrac{N}4\big)h_0\big(\hskip-0.1cm-u-\tfrac{N}4\big)=\Big(1-\delta_{N,\mathrm{odd}}\frac{1}{16u^2}\Big)
\prod_{1\lle i<N} h_i\big(u-\tfrac{N-2i}{4}\big)^{-1}.
\]
Thus $h_{0,2s+1}$ for $s\in\bN$ can be expressed as polynomials in $h_{0,2r}$ and $h_{i,r}$ for $1\lle i<N$ and $r\in\bN$. With this observation, we can argue in the same way as Proposition \ref{prop:span} that the ordered monomials of elements in \eqref{ggen} span the algebra $\mathfrak Y^\imath$. By Proposition \ref{prop:pwb}, the $\Xi$-images of these ordered monomials are linearly independent and hence the ordered monomials of \eqref{ggen} form a basis for $\mathfrak Y^\imath$. This shows that $\Xi$ is injective.
\end{proof}

\subsection{Center of twisted Yangians}
As an application, we take the chance to discuss a set of algebraically independent generators of the center $\Y_N$ in terms of the generating series $d_i(u)$ for $1\lle i\lle N$. 

Define
\beq\label{cu}
\mathcal C(u)=\begin{cases}
d_1(u+n-\tfrac12)d_2(u+n-\tfrac32)\cdots d_N(u-n+\tfrac12),\qquad \text{ if }N=2n,\\
d_1(u+n)d_2(u+n-1)\cdots  d_N(u-n),\qquad \hskip 1.45cm \text{ if }N=2n+1.
\end{cases}
\eeq
Recall $c(u)$ from \eqref{cudef} and note that $c(u)=1$ in $\Y_N$; see \eqref{unimat} and Proposition \ref{Binv}. Then it follows from Lemma \ref{efaiii} that
\beq\label{cu-sym}
\mathcal C(u)\mathcal C(-u)=1.
\eeq
Define the elements $\mathcal C_i\in\Y_N$ by
\[
\mathcal C(u)=1+\sum_{i\gge 1}\mathcal C_iu^{-i}.
\]
Denote by $\mathscr{ZY}_N$ the center of the twisted Yangian $\Y_N$. 
\begin{thm}\label{thm:center}
We have the following statements.
\begin{enumerate}
    \item The coefficients of the elements $\mathcal C(u)$ are central in $\Y_N$.
    \item The elements $\mathcal C_{2r+1}$ for $r\in\bN$ are algebraic free generators of the center $\mathscr{ZY}_N$ of $\Y_N$.
    \item We have an algebra isomorphism $\Y_N\cong \mathscr{ZY}_N\otimes \SY_N$. Moreover, the center of $\SY_N$ is trivial.
    \item We have $\SY_N=\rY(\mathfrak{sl}_N)\cap \Y_N$.
\end{enumerate}
\end{thm}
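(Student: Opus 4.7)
I would first establish centrality of the coefficients of $\mathcal C(u)$. Since $[d_i(u), d_j(v)] = 0$ by Lemma~\ref{lemnew1}, $\mathcal C(u)$ trivially commutes with every $h_j(v)$; the content is showing $[\mathcal C(u), b_j(v)] = 0$ for each $1 \le j < N$. Translating the Drinfeld commutation formula~\eqref{ppf6} back to the Gaussian side via the isomorphism of Theorem~\ref{main2}, one obtains a quasi-commutation relation of the form $d_i(u) b_j(v) = R_{ij}(u, v)\, b_j(v) d_i(u) + (\text{correction})$ for certain rational factors $R_{ij}$. The specific shifts in the definition~\eqref{cu} of $\mathcal C(u)$ are chosen precisely so that, when $\mathcal C(u)$ is commuted past $b_j(v)$ one factor $d_i$ at a time, the product $\prod_i R_{ij}(u+s_i, v)$ telescopes to $1$ and the correction terms cancel pairwise. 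This is the twisted analogue of the standard proof that the quantum determinant is central in $\rY(\gl_N)$.

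\textbf{Part (2).} The identity $\mathcal C(u)\mathcal C(-u) = 1$ in~\eqref{cu-sym} expresses the even-index coefficients as polynomials in the odd-index ones, reducing the problem to showing the $\mathcal C_{2r+1}$ are algebraically independent and generate the center. A direct route is to identify $\mathcal C(u)$ with the Sklyanin determinant of \cite[Thm.~3.4]{MR02} up to a normalization arising from the Gauss-decomposition shift convention, inheriting both algebraic independence and generation from \emph{loc. cit.} Alternatively, one can pass to the associated graded $\gr\,\Y_N \cong \mathrm{U}(\gl_N[z]^\vartheta)$ via \eqref{filter:B}--\eqref{isogr}, compute the leading symbols of $\mathcal C_{2r+1}$ as invariant elements of the twisted current algebra, and verify algebraic independence at that level; generation of the full center then follows from the tensor decomposition in part (3) combined with the triviality of the center of $\SY_N$.

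\textbf{Part (3).} To establish $\Y_N \cong \mathscr{ZY}_N \otimes \SY_N$, I would show that multiplication $\mathscr{ZY}_N \otimes \SY_N \to \Y_N$ is a linear isomorphism, which is then automatically an algebra isomorphism since $\mathscr{ZY}_N$ is central. Comparing the PBW basis of $\Y_N$ from Theorem~\ref{main3} with the one of $\SY_N$ from Proposition~\ref{prop:pwb}, the additional basis elements $\{h_{0,2r}\}_{r \ge 0}$ appearing in~\eqref{ggen} but not in $\SY_N$ can be traded, via a triangular change of basis on the filtration~\eqref{filter:B}, for the central elements $\{\mathcal C_{2r+1}\}_{r \ge 0}$. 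Triviality of the center of $\SY_N$ then follows formally: any central $z \in \SY_N$ also commutes tautologically with $\mathscr{ZY}_N$, hence with all of $\Y_N$, so $z \in \SY_N \cap \mathscr{ZY}_N = \C$ by the uniqueness in the tensor decomposition.

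\textbf{Part (4) and main obstacle.} Each automorphism $T(u) \mapsto g(u) T(u)$ of $\rY(\gl_N)$ induces an automorphism $S(u) \mapsto f(u) S(u)$ of $\Y_N$ with $f(u) = g(u)/g(-u)$; by uniqueness of the Gauss decomposition this acts as $d_i(u) \mapsto f(u) d_i(u)$ while fixing every $e_j(u)$ and $f_j(u)$. Consequently the defining generators $h_i, b_i$ of $\SY_N$ are all fixed, yielding $\SY_N \subseteq \rY(\mathfrak{sl}_N) \cap \Y_N$. Conversely, any fixed $x \in \Y_N$ decomposes via (3) as $x = \sum_I p_I q_I$ with $q_I \in \SY_N$ linearly independent and $p_I \in \C[\mathcal C_1, \mathcal C_3, \ldots]$; since $\mathcal C(u) \mapsto \prod_i f(u+s_i)\,\mathcal C(u)$ and this scaling factor can be made a nontrivial series by varying $g$, stability of $x$ forces each $p_I$ to be constant, so $x \in \SY_N$. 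The principal technical hurdle is the telescoping verification in (1); if a direct identification of $\mathcal C(u)$ with the Sklyanin determinant can be made precise (as envisioned in the remark following Theorem~\ref{thm:center}), both (1) and (2) can be substantially shortened.
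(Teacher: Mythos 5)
Your overall architecture matches the paper's: reduce (1) to commutators of $\mathcal C(u)$ with $e_j(v),f_j(v)$, use $\mathcal C(u)\mathcal C(-u)=1$ plus the associated graded $\gr\,\Y_N\cong\mathrm U(\gl_N[z]^\vartheta)$ for (2), a product-and-intersection argument for (3), and the scaling automorphisms for (4) (your automorphism argument for (4) is in fact more explicit than the paper's one-line reduction to the graded level, and it is valid: $\mu_g$ induces $S(u)\mapsto \tfrac{g(u)}{g(-u)}S(u)$, hence $d_i(u)\mapsto f(u)d_i(u)$ with $e_{ij},f_{ji}$ fixed). However, two steps do not work as you describe them.

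First, in part (1) you propose to obtain the quasi-commutation relations $d_i(u)b_j(v)=R_{ij}(u,v)b_j(v)d_i(u)+\cdots$ by ``translating \eqref{ppf6} back to the Gaussian side via Theorem~\ref{main2}.'' This cannot work: the Drinfeld presentation, and hence the isomorphism $\Phi$ of Theorem~\ref{main2}, only sees the ratios $h_i(u)=\tl d_i(u)d_{i+1}(u)$ (suitably shifted), while the individual series $d_i(u)$ do not lie in $\SY_N$ at all. The needed individual relations must be taken from the R-matrix side --- Proposition~\ref{propA} for the type~A part, Lemmas~\ref{b2ser} and the $N=3$ computations for the middle nodes, propagated by the shift homomorphisms $\psi_m$. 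Once those are granted, your telescoping is exactly what the paper does in Lemmas~\ref{centralb2} and \ref{centralb3} (note the ``corrections'' there are not cancelled pairwise but are discarded because they contribute no negative powers of $v$); for $i<n$ the paper instead shortcuts via the centrality of $d_1(u)\cdots d_n(u-n+1)$ in the type~A subalgebra, quoting \cite[Thm.~8.6]{BK05}, combined with Lemma~\ref{lemnew1}.

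Second, in part (2) your route (a) is not available: the identification of $\mathcal C(u)$ with the Sklyanin determinant is explicitly left open in the remark following the theorem, so nothing can be ``inherited from \emph{loc.\ cit.}'' Your route (b) is the paper's, but your claim that \emph{generation} of the center follows from (3) together with the triviality of the center of $\SY_N$ is circular, since that triviality is itself deduced (both by you and by the paper) from already knowing $\mathscr{ZY}_N$. The non-circular argument is direct: the symbol of a central element of $\Y_N$ is central in $\gr\,\Y_N\cong\mathrm U(\gl_N[z]^\vartheta)=\C[\mathcal I z^{2r}]_{r\gge0}\otimes\mathrm U(\mathfrak{sl}_N[z]^\vartheta)$, whose center is $\C[\mathcal I z^{2r}]_{r\gge0}$, and $\overline{\mathcal C}_{2r+1}=2\mathcal I z^{2r}$; one then strips leading terms inductively. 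With these two repairs your proof goes through and coincides in substance with the paper's.
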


In particular, (4) implies that our Definition~\ref{defSY} for the special twisted Yangian $\SY_N$ is equivalent to the one in \cite[Def. 2.9.1]{Mol07}.

\begin{proof}
(1) By Theorem \ref{main2} or Lemma \ref{lemnew1}, we have $[d_i(u),d_j(v)]=0$ for $1\lle i,j\lle N$. Thus it suffices, by Lemmas \ref{efaiii} and \ref{lem:de-gen}, to verify that 
$$
[\mathcal C(u),e_i(v)]=[\mathcal C(u),f_i(v)]=0,\qquad 1\lle i\lle n:=\lfloor \tfrac{N}{2}\rfloor.
$$ 
If $i<n$, then by Proposition \ref{propA} below and \cite[Thm. 8.6]{BK05} we have $d_1(u)\cdots d_n(u-n+1)$ commutes with $e_i(v)$ and $f_i(v)$ if $1\lle i<n$. Thus it follows from Lemma \ref{lemnew1} that $[\mathcal C(u),e_i(v)]=[\mathcal C(u),f_i(v)]=0$ for $1\lle i<n$. 

Hence it remains to verify that $[\mathcal C(u),e_n(v)]=[\mathcal C(u),f_n(v)]=0$. Again by Lemma \ref{lemnew1}, it reduces to verify the statement for the cases $N=2$ and $N=3$ which have been done in Lemmas \ref{centralb2}, \ref{centralb3}.

(2) It follows from \eqref{cu-sym} that $\mathcal C_{2r}$ can be expressed by $\mathcal C_i$ for $i<2r$. Thus it is easy to see by induction that all $\mathcal C_i$ can be expressed in terms of $\mathcal C_{2r+1}$ for $r\in\bN$. It suffices to prove the statement in the associated graded algebra $\gr\,\Y_N$. Let $\overline{\mc C}_i$ be the image of $\mc C_i$ in the $(i-1)$-st component of $\gr\,\Y_N$.

Recall from Section \ref{sec:basics} that $\gr\,\Y_N\cong \mathrm{U}(\gl_N[z]^\vartheta)$. By \eqref{isogr}, one easily sees that
\[
\overline{\mc C}_{2r+1}=2\mathcal I\,z^{2r},\qquad \mathcal I=e_{11}+\cdots+e_{NN}.
\]
To complete the proof, it suffices to show that the center of the algebra $\mathrm{U}(\gl_N[z]^\vartheta)$ is generated by the elements $\mathcal I\,z^{2r}$ with $r\in\bN$. Note that
\beq\label{grdec}
\mathrm{U}(\gl_N[z]^\vartheta)=\bC[\mathcal I\,z^{2r}]_{r\gge 0}\otimes \mathrm{U}(\mathfrak{sl}_N[z]^\vartheta).
\eeq
The rest is similar to the end of the proof of \cite[Thm. 3.4]{MR02}.

(3) As e.g. \cite[Lem. 3.11]{KLWZ23a}, one verifies that $\Y_N=\mathscr{ZY}_N\cdot \SY_N$. Thus it suffices to show that $\mathscr{ZY}_N\cap\SY_N=\{1\}$. Again, it reduces to the associated graded level which follows easily as the image of $\SY_N$ in $\gr\,\Y_N$ equals $\mathrm{U}(\mathfrak{sl}_N[z]^\vartheta)$; see also \eqref{grdec}.

(4) It is clear by the definition of $\rY(\mathfrak{sl}_N)$ and the Gauss decomposition that $\SY_N\subset \rY(\mathfrak{sl}_N)\cap \Y_N$. Thus, to show the equality, it suffices to note that we have the corresponding equality in $\gr\,\rY(\gl_N)$ (recall that the filtration on $\Y_N$ is induced from the one on $\rY(\gl_N)$).
\end{proof}

\begin{rem}
It would be interesting to investigate the relation between the Sklyanin determinant of $S(u)$ and the central series $\mathcal C(u)$; see \cite[Thm. 3.4]{MR02}.
\end{rem}

\section{Relations between Gaussian generators}\label{sec:lowrk}
In this section, we work out the relations of type A and the relations between Gaussian generators when $N=2,3,4,5$. For low rank situation, there will be two main cases, i.e. with a fixed point ($N=2,4$) for the the  Dynkin diagram automorphism $\tau$ and without a fixed point ($N=3,5$).
\subsection{Relations of type A}\label{secA}
Suppose $N\gge 2m \gge  4$. By \eqref{bcom}, there is a homomorphism from
\[
\rY(\gl_{m})\to \Y_N,\quad t_{ij}(u)\mapsto s_{ij}(u),\quad 1\lle i,j\lle m.
\]
Therefore, the relations among $d_i(u),e_{j}(v),f_{k}(w)$ for $1\lle i\lle m$ and $1\lle j,k<m$ are the same as  those in $\rY(\gl_m)$. These relations are given in \cite[Thm. 5.2 and its proof]{BK05} which we shall list below. It is not hard to see from the PBW theorem that this homomorphism is indeed an embedding. However, we do not need this fact.

Let $n=\lfloor\tfrac{N}2\rfloor$. For $1\lle i<n$, set
\[
e_i^\circ(u)=\sum_{r\gge 2}e_i^{(r)}u^{-r},\qquad f_i^\circ(u)=\sum_{r\gge 2}f_i^{(r)}u^{-r},\qquad \zeta_i(u)=\tl d_i(u)d_{i+1}(u).
\]

\begin{prop}\label{propA}
The following relations hold in $\scrX_N$, with the conditions on the indices $1\lle i,j<n$ and $1\lle k,l\lle n$,
\begin{align*}
&[d_k(u),d_k(v)]=0,\\
&[e_i(u),f_j(v)]=\delta_{ij}\frac{\zeta_i(u)-\zeta_i(v)}{u-v},\\
&[d_k(u),e_j(v)]=(\delta_{k,j+1}-\delta_{kj})\frac{d_k(u)(e_j(u)-e_j(v))}{u-v},\\
&[d_k(u),f_j(v)]=(\delta_{kj}-\delta_{k,j+1})\frac{(f_j(u)-f_j(v))d_k(u)}{u-v},\\
&[e_i(u),e_i(v)]=\frac{(e_i(u)-e_i(v))^2}{u-v},\\
&[f_i(u),f_i(v)]=-\frac{(f_i(u)-f_i(v))^2}{u-v},\\
&[e_i(u),e_j(v)]=[f_i(u),f_j(v)]=0,\qquad \text{ if }c_{ij}=0.
\end{align*}
Moreover, we have, for $1\lle i\lle n-2$,
\begin{align*}
u[e_i^\circ(u),e_{i+1}(v)]-v[e_i(u),e_{i+1}^\circ(v)]&=e_i(u)e_{i+1}(v),\\
u[f_i^\circ(u),f_{i+1}(v)]-v[f_i(u),f_{i+1}^\circ(v)]&=-f_{i+1}(v)f_{i}(u),
\end{align*}
and the Serre relations, for $1\lle i,j<n$ with $|i-j|=1$,
\begin{align*}
\big[e_i(u),[e_i(v),e_j(w)]\big]+\big[e_i(v),[e_i(u),e_j(w)]\big]&=0,\\
\big[f_i(u),[f_i(v),f_j(w)]\big]+\big[f_i(v),[f_i(u),f_j(w)]\big]&=0.
\end{align*}
\end{prop}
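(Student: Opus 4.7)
The plan is to reduce everything to the corresponding statement for the Yangian $\rY(\gl_n)$, where $n=\lfloor N/2\rfloor$, and then quote the known result \cite[Thm.~5.2]{BK05}.

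First, I would establish the algebra homomorphism
\[
\phi:\rY(\gl_n)\longrightarrow\scrX_N,\qquad t_{ij}(u)\mapsto s_{ij}(u),\qquad 1\lle i,j\lle n,
\]
which was already indicated at the beginning of this subsection. To verify that the Yangian defining relation \eqref{Trel} follows from the quaternary relation \eqref{bcom}, I only need to check that the three ``extra'' terms on the right-hand side of \eqref{bcom} (those carrying the Kronecker factors $\delta_{kj'}$, $\delta_{il'}$, and $\delta_{ij'}$) vanish whenever all four indices $i,j,k,l$ lie in $\{1,\ldots,n\}$. This is immediate from $a'=N+1-a\gge N+1-n\gge n+1$ for $a\lle n$, which forces $a'\notin\{1,\ldots,n\}$; no $\delta$ of the form $\delta_{b,a'}$ with $a,b\lle n$ can be nonzero.

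Next I would observe that the Gaussian generators listed in the statement depend only on entries $s_{ab}(u)$ with $a,b\lle n$. Indeed, by the quasi-determinant formulae \eqref{gd1}--\eqref{gd3} together with $e_i(u)=e_{i,i+1}(u)$ and $f_i(u)=f_{i+1,i}(u)$, each of $d_k(u)$ with $1\lle k\lle n$ and $e_i(u),f_i(u)$ with $1\lle i<n$ is expressible as a quasi-determinant whose entries are $s_{ab}(u)$ with $a,b\lle n$. Consequently, under $\phi$, these Gaussian generators are exactly the images of the corresponding Gaussian generators of $\rY(\gl_n)$, defined by the same quasi-determinant formulae applied to $T(u)$ in place of $S(u)$.

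Having matched the generators, every relation listed in the proposition is the $\phi$-image of the corresponding relation among Gaussian generators of $\rY(\gl_n)$ proved in \cite[Thm.~5.2 and its proof]{BK05}: the commutators $[d_k(u),d_l(v)]$, $[e_i(u),f_j(v)]$, $[d_k(u),e_j(v)]$, $[d_k(u),f_j(v)]$, the $[e_i(u),e_i(v)]$ and $[f_i(u),f_i(v)]$ identities, the vanishing for $c_{ij}=0$, the mixed $u,v$ weighted commutators relating $e_i^\circ,e_{i+1}$ (and similarly for $f$), and the cubic Serre relations. Since each of these is a formal identity in $\rY(\gl_n)$ and $\phi$ is an algebra homomorphism, the analogous identities hold in $\scrX_N$.

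The main (mild) obstacle is bookkeeping in Step~1: one must confirm that for every appearance of a prime-indexed Kronecker symbol in \eqref{bcom}, the index bounds imposed by the proposition force the symbol to be zero. This is a one-line check once one writes $a'=N+1-a$ and uses $N\gge 2n$. No further calculation is needed, since the algebraic content of all the relations has been done in \cite{BK05}.
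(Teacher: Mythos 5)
Your proposal is correct and follows essentially the same route as the paper: the authors likewise note that the quaternary relation \eqref{bcom} restricted to indices in $\{1,\dots,n\}$ loses all prime-indexed terms (since $a'=N+1-a>n$ for $a\lle n$) and hence yields a homomorphism $\rY(\gl_n)\to\scrX_N$, $t_{ij}(u)\mapsto s_{ij}(u)$, under which the relevant Gaussian generators correspond, so the listed relations are imported directly from \cite[Thm.~5.2 and its proof]{BK05}.
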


Recall Lemma \ref{efaiii} and the definition of $h_i(u),b_i(u)$ from \eqref{beven}--\eqref{hodd}. It is clear that the commutators relations \eqref{qsconj0}-\eqref{qsconj4} between the generating series $h_i(u),b_i(u)$ for $i$ far away from $\lfloor\tfrac{N}{2}\rfloor$ can be deduced from the relations listed in Proposition \ref{propA} (exactly as $\rY(\mathfrak{sl}_m)$); see also Lemma \ref{typeArel} and Lemma \ref{lemnew1}. Thus, we are left with verifying the relations for $i$ close to $\lfloor\tfrac{N}{2}\rfloor$. By Lemma \ref{shiftlem}, it suffices to do that for the case when $N$ is small, namely $N=2,3,4,5$. Note that the case $N=4,5$ is mainly for the Serre relations.

\subsection{Relations in $\scrX_2$}
It has been known since \cite{MR02} that this case is essentially the Olshanski's twisted Yangian for $\mathfrak o_2$ \cite{Ol92}.
\begin{lem}\label{b2ser}
We have the following relations in $\scrX_2$,
\begin{align}
[d_i(u),d_j(v)]&=0,\label{b2dd}\\
e(u)&=-f(-u),\label{b2e=f}\\
\tl d_1(u)d_2(u)&=\tl d_1(-u)d_2(-u),\label{b2d=d}\\
[d_1(u),f(v)]&=\frac{1}{u-v}(f(u)-f(v))d_1(u)+\frac{1}{u+v}d_1(u)(e(u)+f(v)),\label{b2d1f}\\
[d_2(u),f(v)]&=\frac{1}{u-v}(f(v)-f(u))d_2(u)-\frac{1}{u+v}d_2(u)(e(u)+f(v)),\label{b2d2f}\\
[f(u),f(v)]&=-\frac{1}{u-v}(f(u)-f(v))^2+\frac{1}{u+v}\big(\tl d_1(u)d_2(u)-\tl d_1(v)d_2(v)\big).\label{b2ff}
\end{align}
\end{lem}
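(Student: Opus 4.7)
The six identities split into two groups. The first pair, \eqref{b2e=f} and \eqref{b2d=d}, are immediate consequences of Lemma \ref{efaiii}: from $\tl e_{12}(u)=-e(u)$ (via \eqref{eq:def-tilde-e-f}) combined with $\tl e_{12}(u)=f_{1'2'}(-u)=f(-u)$ we read off $e(u)=-f(-u)$, while \eqref{b2d=d} is the $i=1$, $N=2$ instance of $\tl d_i(u)d_{i+1}(u)=\tl d_{i'-1}(-u)d_{i'}(-u)$. For the remaining four commutator identities, my plan is to specialize the quaternary relation \eqref{bcom} at $N=2$ to appropriate index tuples and extract relations between Gaussian generators using the Gauss decomposition
\[
s_{11}(u)=d_1(u),\quad s_{12}(u)=d_1(u)e(u),\quad s_{21}(u)=f(u)d_1(u),\quad s_{22}(u)=d_2(u)+f(u)d_1(u)e(u),
\]
proceeding in an order of increasing complexity so that each derivation only uses commutators already in hand.

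Concretely, the derivation would proceed in five stages. \emph{Stage 1:} setting $(i,j,k,l)=(1,1,1,1)$ in \eqref{bcom} makes every $\delta$-term vanish (since $1'=2\neq 1$) and the equation collapses to $(u+v)(u-v-1)[d_1(u),d_1(v)]=0$, giving $[d_1(u),d_1(v)]=0$. \emph{Stage 2:} for \eqref{b2d1f}, specialize to $(i,j,k,l)=(1,1,2,1)$; now $\delta_{kj'}=\delta_{2,2}=1$ contributes the summation $\sum_a s_{1a'}(u)s_{a1}(v)=s_{12}(u)s_{11}(v)+s_{11}(u)s_{21}(v)$, which after substituting the Gauss decomposition and using Stage 1 yields an equation for $[d_1(u),f(v)]\cdot d_1(v)$, whose rational factor decomposes by partial fractions into the advertised $1/(u-v)$ and $1/(u+v)$ pieces. \emph{Stage 3:} for \eqref{b2ff}, take $(i,j,k,l)=(2,1,2,1)$; all three $\delta$-terms now contribute, and using Stages 1--2 to commute $d_1$ past $f$ allows the equation to be solved for $[f(u),f(v)]$, with the $\tl d_1(u)d_2(u)-\tl d_1(v)d_2(v)$ term arising from the $d_2$-piece of $s_{22}(u)s_{11}(v)-s_{22}(v)s_{11}(u)$. \emph{Stage 4:} for \eqref{b2d2f}, extract from $[s_{22}(u),s_{21}(v)]$ at $(i,j,k,l)=(2,2,2,1)$ and subtract the $f\,d_1\,e$-contribution using Stages 1--3. \emph{Stage 5:} the remaining cases of \eqref{b2dd}, namely $[d_1(u),d_2(v)]=0$ and $[d_2(u),d_2(v)]=0$, come from $[s_{11}(u),s_{22}(v)]$ and $[s_{22}(u),s_{22}(v)]$ after eliminating the $f\,d_1\,e$ cross-terms via Stages 2--4.

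The main obstacle will be the careful bookkeeping of the twisted $\sum_a$-summations in \eqref{bcom}, which are responsible for the novel $(u+v)^{-1}$-type singularities that distinguish \eqref{b2d1f}--\eqref{b2ff} from the ordinary Yangian relations in Proposition \ref{propA}. Cleanly separating the $1/(u-v)$ and $1/(u+v)$ poles on the right-hand sides requires substituting the Gauss decomposition on both sides of \eqref{bcom} simultaneously and proceeding in the above order so that each stage uses only previously established identities; once Stages 1 and 2 are in place, the rest is largely rearrangement using the already-derived $[d_1(u),f(v)]$ formula to reorder products that appear in the higher-order stages.
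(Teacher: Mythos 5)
Your handling of \eqref{b2e=f}, \eqref{b2d=d} (via Lemma \ref{efaiii}) and of the two substantive commutators is essentially the paper's argument: the paper specializes \eqref{bcom} at $(i,j,k,l)=(1,1,1,2)$ and $(1,2,1,2)$ to obtain formulas for $[d_1(u),e(v)]$ and $[e(u),e(v)]$ and then converts them using $e(u)=-f(-u)$, whereas you work directly with $[s_{11}(u),s_{21}(v)]$ and $[s_{21}(u),s_{21}(v)]$; this is only a cosmetic transposition. Where you genuinely diverge is in Stages 4 and 5. The paper gets $[d_1(u),d_2(v)]=[d_2(u),d_2(v)]=0$ and \eqref{b2d2f} almost for free from Proposition \ref{Binv}: since $S(u)S'(-u)=c(u)\mathbf{1}$ with $c(u)$ central, one has $d_2(u)=c(u)\tl d_1(-u)$, so every commutator involving $d_2$ reduces to one involving $d_1$, and \eqref{b2d2f} is literally \eqref{b2d1f} after this substitution. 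Your proposed direct computations from $[s_{22}(u),s_{21}(v)]$, $[s_{11}(u),s_{22}(v)]$ and $[s_{22}(u),s_{22}(v)]$ would eventually reproduce this, but at a considerable cost in bookkeeping that the centrality of $c(u)$ is designed to avoid.

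There is also a small but genuine ordering problem in your plan. In Stage 3 the quaternary relation at $(2,1,2,1)$ yields $(u+v-1)[s_{21}(u),s_{21}(v)]=s_{22}(u)s_{11}(v)-s_{22}(v)s_{11}(u)$, and after pushing the $d_1$'s to the right and right-multiplying by $\tl d_1(v)\tl d_1(u)$ the $d_2$-contribution comes out as $d_2(u)\tl d_1(u)-d_2(v)\tl d_1(v)$, not $\tl d_1(u)d_2(u)-\tl d_1(v)d_2(v)$. Matching the stated form of \eqref{b2ff} therefore requires $[d_1(u),d_2(v)]=0$, which in your ordering is only proved in Stage 5 — and Stage 5 in turn cites Stages 2--4. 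This is fixable in either of two ways: import $d_2(u)=c(u)\tl d_1(-u)$ from Proposition \ref{Binv} at the outset (the paper's route), or note that the specialization of \eqref{bcom} at $(1,1,2,2)$ expresses $(u-v)(u+v-1)[s_{11}(u),s_{22}(v)]$ purely in terms of $s_{12}$ and $s_{21}$, so $[d_1(u),d_2(v)]=0$ can be extracted using only Stages 1--2 and moved ahead of Stage 3. With that reordering your argument goes through.
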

\begin{proof}
Equations \eqref{b2dd}--\eqref{b2d=d} follow directly from Lemma \ref{efaiii} and Lemma \ref{lemnew1}.

\mybox{Equations \eqref{b2d1f}--\eqref{b2d2f}}. Setting $i=j=k=1$ and $l=2$ in \eqref{bcom} and using \eqref{b2dd}, we have
\beq\label{b2pf1}
(u^2-v^2)[d_1(u),e(v)]=(u+v)d_1(u)(e(v)-e(u))-(u-v)(e(v)+f(u))d_1(u).
\eeq
Thus \eqref{b2d1f} follows from \eqref{b2e=f} and \eqref{b2pf1}. By Lemma \ref{efaiii}, we have $d_2(u)=c(u)\tl d_1(-u)$. Note that $c(u)$ is central. Therefore \eqref{b2d2f} follow from \eqref{b2e=f} and \eqref{b2d1f}.

\mybox{Equation \eqref{b2ff}}. Setting $i=k=1$ and $j=l=2$, we have
\[
(u+v)[s_{12}(u),s_{12}(v)]=[s_{12}(u),s_{12}(v)]+s_{11}(u)s_{22}(v)-s_{11}(v)s_{22}(u).
\]
Rewriting it in terms of Gaussian generating series, we obtain
\beq\label{b2pf3}
\begin{split}
&\, d_1(u)e(u)d_1(v)e(v)-d_1(v)e(v)d_1(u)e(u)\\
=&\,\frac{1}{u+v}\Big(d_1(u)e(u)d_1(v)e(v)-d_1(v)e(v)d_1(u)e(u)+d_1(u)d_2(v)\\&\qquad \qquad +d_1(u)f(v)d_1(v)e(v)-d_1(v)d_2(u)-d_1(v)f(u)d_1(u)e(u)\Big)
\end{split}
\eeq
By \eqref{b2pf1}, we have
\beq\label{b2pf4}
e(v)d_1(u)=d_1(u)e(v)-\frac{1}{u-v}d_1(u)(e(v)-e(u))+\frac{1}{u+v}(e(v)+f(u))d_1(u).
\eeq
Using \eqref{b2pf4} to commute $e(u)d_1(v)$ and $e(v)d_1(u)$ in \eqref{b2pf3}, we find that the left-hand side of \eqref{b2pf3} is transformed to
\begin{align*}
&d_1(u)d_1(v)[e(u),e(v)]-\frac{1}{u-v}d_1(u)d_1(v)(e(u)-e(v))^2+\\ &\frac{1}{u+v}\Big(d_1(u)e(u)d_1(v)e(v)+d_1(u)f(v)d_1(v)e(v)-d_1(v)e(v)d_1(u)e(u)-d_1(v)f(u)d_1(u)e(u)\Big).
\end{align*}
Thus it follows from \eqref{b2pf3} that
\[
[e(u),e(v)]=\frac{1}{u-v}(e(u)-e(v))^2-\frac{1}{u+v}\big(\tl d_1(u)d_2(u)-\tl d_1(v)d_2(v)\big).
\]
By \eqref{b2e=f}, we obtain \eqref{b2ff}.
\end{proof}

It is convenient to use the following notation. We write
\beq\label{simdef}
A(u,v)\simeq B(u,v)
\eeq
if $A(u,v)$ and $B(u,v)$ have the same coefficients of $u^{-r-1}v^{-s-1}$ for $r,s\in\bN$. Later, we will also use the same notation for the case $r\in \bZ$ and $s\in\bN$. When the case $r\in\bZ$ is used, we will clarify it further.

Recall $b(u)=\sqrt{-1}f(u)$ and $h(u)=\tl d_1(u)d_2(u)$ from \eqref{beven} and \eqref{heven}, respectively. Here we drop the subscript $i$ as the rank is one. 

\begin{lem}\label{b2s}
We have the relations in $\scrX_2$ in terms of generating series,
\begin{align*}
&[h(u),h(v)]=0,\qquad h(u)=h(-u),\\
&[b(u),b(v)]=-\frac{1}{u-v}(b(u)-b(v))^2-\frac{1}{u+v}(h(u)-h(v)),\\\
&[h(u),b(v)]\simeq\frac{1}{u^2-v^2}\big((2v+1)h(u)b(v)+(2v-1)b(v)h(u)\big).
\end{align*}
\end{lem}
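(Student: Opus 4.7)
The plan is to derive each identity directly from Lemma~\ref{b2ser} under the substitutions $b(u)=\sqrt{-1}\,f(u)$ and $h(u)=\tl d_1(u)d_2(u)$. The first line is essentially free: since $[d_i(u),d_j(v)]=0$ for all $i,j$ by \eqref{b2dd}, the same commutativity transfers to $\tl d_1$, giving $[h(u),h(v)]=0$; the symmetry $h(u)=h(-u)$ is exactly \eqref{b2d=d}. For the commutator $[b(u),b(v)]$, note first that $[b(u),b(v)]=-[f(u),f(v)]$, then plug in \eqref{b2ff} and use the elementary identity $(f(u)-f(v))^2=-(b(u)-b(v))^2$ to match the stated sign pattern.

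The third identity is the main computation. The plan is to apply the Leibniz rule,
\[
[\tl d_1(u)\,d_2(u),f(v)]=[\tl d_1(u),f(v)]\,d_2(u)+\tl d_1(u)\,[d_2(u),f(v)],
\]
compute $[\tl d_1(u),f(v)]=-\tl d_1(u)[d_1(u),f(v)]\tl d_1(u)$ by differentiating the identity $\tl d_1(u)d_1(u)=1$, and substitute \eqref{b2d1f} together with \eqref{b2d2f}. Using $d_1\tl d_1=1$ together with the commutativity $[\tl d_1(u),d_2(u)]=0$, the two summands should combine into
\[
[h(u),f(v)] = -\frac{2}{u-v}\tl d_1(u)(f(u)-f(v))d_2(u) - \frac{1}{u+v}\{h(u),\,e(u)+f(v)\}.
\]
Multiplying by $\sqrt{-1}$ and translating back via $\sqrt{-1}(f(u)-f(v))=b(u)-b(v)$ and $\sqrt{-1}(e(u)+f(v))=b(v)-b(-u)$ (the latter using \eqref{b2e=f}), and then invoking $h(u)=h(-u)$ to merge the $b(-u)$ contributions with the $b(v)$ ones, should yield the claimed formula after clearing the factor $(u^2-v^2)$.

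The main obstacle is the summand $\tl d_1(u)(f(u)-f(v))d_2(u)$: since $\tl d_1(u)$ commutes with neither $f(u)$ nor $f(v)$, it cannot be immediately absorbed into $h(u)$. The plan to resolve this is to move $f(v)$ through $\tl d_1(u)$ and $d_2(u)$ by one further application of \eqref{b2d1f}--\eqref{b2d2f}; the resulting commutators and product terms are expected to supply precisely the coefficients $(2v+1)$ and $(2v-1)$ in front of $h(u)b(v)$ and $b(v)h(u)$, while the residual pieces (involving only the initial coefficients $b_{1,0}$ and $b_{1,1}$) form the omitted ``$\cdots$''. This matches the Drinfeld-type shape predicted by \eqref{qsconj5}--\eqref{qsconj2} specialized to $c_{11}=c_{\tau 1,1}=2$.
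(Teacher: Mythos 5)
Your proposal is correct and follows the route the paper intends (the paper defers to the parallel computation in \cite[Lem.~4.2]{KLWZ23a}): the first two identities are immediate from \eqref{b2dd}--\eqref{b2d=d} and \eqref{b2ff} after the substitutions $b=\sqrt{-1}f$, $h=\tl d_1 d_2$, and the third follows from your Leibniz-rule computation together with commuting $f(v)$ past $\tl d_1(u)$ and $d_2(u)$ via \eqref{b2d1f}--\eqref{b2d2f}, which indeed yields $h(u)b(v)\simeq\frac{(u-v+1)(u+v-1)}{(u+v+1)(u-v-1)}\,b(v)h(u)$ and hence the coefficients $2v\pm 1$ after clearing denominators. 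The one imprecision is your description of the omitted ``$\cdots$'': these are the terms that are series in $u$ alone (such as $\tl d_1(u)f(u)d_2(u)$ and the $e(u)=-f(-u)$ pieces), which contribute only nonnegative powers of $v$ when expanded in the region $|u|\gg|v|$ and are therefore irrelevant for the coefficients of $u^{-r-1}v^{-s-1}$ with $s\in\bN$ --- they are not terms built from $b_{1,0},b_{1,1}$, nor do they need to be ``merged'' using $h(u)=h(-u)$, but this does not affect the validity of the argument.
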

\begin{proof}
The proof is parallel to that of \cite[Lem. 4.2]{KLWZ23a}.
\end{proof}

\begin{prop}\label{X=2}
For $r,s\in\bN$, we have
\begin{align*}
[h_r,h_s] &=0,\\
[b_{r+1},b_s]-[b_r,b_{s+1}] &=b_rb_s+b_sb_r-2(-1)^rh_{r+s+1},\\
[h_{r+2},b_s]-[h_r,b_{s+2}] &=2(b_{s+1}h_r+h_rb_{s+1})+[h_r,b_s].
\end{align*}
\end{prop}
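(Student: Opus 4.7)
The three identities are the specializations to the case $N=2$ (where the unique simple node is $\tau$-fixed with $c_{ii}=2$) of the defining relations \eqref{qsconj0}, \eqref{qsconj3}, and \eqref{qsconj2} of $\Yi$. The plan is to derive each of them by extracting the coefficient of $u^{-r-1}v^{-s-1}$ from the generating-series identities of Lemma~\ref{b2s}, after clearing denominators. The first identity $[h_r,h_s]=0$ is immediate from $[h(u),h(v)]=0$.

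For the second identity, I would multiply
\[
[b(u),b(v)]=-\frac{1}{u-v}(b(u)-b(v))^2-\frac{1}{u+v}(h(u)-h(v))
\]
by $(u-v)$ so that both sides are genuine formal power series in $u^{-1},v^{-1}$. (The quotient $(b(u)-b(v))/(u-v)$ is well-defined, and $(h(u)-h(v))/(u+v)$ is as well since $h(u)=h(-u)$ forces $h_{2k}=0$ and hence divisibility by $u+v$.) Extracting the coefficient of $u^{-r-1}v^{-s-1}$ yields $[b_{r+1},b_s]-[b_r,b_{s+1}]$ on the left; on the right, $-(b(u)-b(v))^2$ contributes $\{b_r,b_s\}$, while a direct expansion based on the factorization
\[
u^{2k}-v^{2k}=(u-v)(u+v)(u^{2k-2}+u^{2k-4}v^2+\cdots+v^{2k-2})
\]
shows that the coefficient of $u^{-r-1}v^{-s-1}$ in $-\tfrac{u-v}{u+v}(h(u)-h(v))$ equals $-2(-1)^r h_{r+s+1}$ (which is automatically zero when $r+s$ is odd, in agreement with $h_{2k}=0$).

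For the third identity, I would multiply
\[
[h(u),b(v)]=\frac{1}{u^2-v^2}\big((2v+1)h(u)b(v)+(2v-1)b(v)h(u)\big)+\cdots
\]
by $(u^2-v^2)$. Using the splitting $(2v+1)h(u)b(v)+(2v-1)b(v)h(u)=2v\{h(u),b(v)\}+[h(u),b(v)]$, a comparison of the $u^{-r-1}v^{-s-1}$ coefficients then yields $[h_{r+2},b_s]-[h_r,b_{s+2}]$ on the left and $2\{h_r,b_{s+1}\}+[h_r,b_s]$ on the right, assuming the unwritten $\cdots$ terms in Lemma~\ref{b2s} (which are of lower order in $v^{-1}$, as in the parallel split-type argument of \cite{KLWZ23a}) contribute nothing to this particular coefficient. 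The main obstacle I anticipate is the careful bookkeeping in the $\frac{u-v}{u+v}$ expansion of the second step --- the signs there must conspire to produce the factor $(-1)^r$; everything else reduces to routine coefficient extraction.
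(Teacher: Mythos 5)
Your proposal is correct and follows essentially the same route as the paper: the paper's proof (modeled on \cite[Prop.~4.3]{KLWZ23a}) likewise extracts the coefficient of $u^{-r-1}v^{-s-1}$ from the generating-series identities of Lemma~\ref{b2s}, expanded in the region $|u|\gg|v|$, and your bookkeeping for the $\frac{u-v}{u+v}(h(u)-h(v))$ term does yield $-2(-1)^r h_{r+s+1}$ as claimed. Your caveat about the unwritten $\cdots$ terms is exactly the point the paper records: those terms involve only nonnegative powers of $v$, so they cannot contribute to the coefficient of $v^{-s-1}$ with $s\in\bN$.
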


\begin{proof}
The proof is parallel to that of  \cite[Prop. 4.3]{KLWZ23a} by taking the coefficients of $u^{-r-1}v^{-s-1}$ for $r,s\in\bN$ from the relations (expanded in the region $|u|\gg |v|$) in Lemma \ref{b2s}. Note that we can take $r\in \bZ$ in the third relation as the power of $v$ in the terms we dropped are nonnegative; see the proof of Proposition \ref{X=3prop} below for more details.
\end{proof}
\begin{rem}
If we set $b(u)=f(u)$, then we have
\[
[b(u),b(v)]=-\frac{1}{u-v}(b(u)-b(v))^2+\frac{1}{u+v}(h(u)-h(v)).
\]
The purpose for the extra $\sqrt{-1}$ in \eqref{beven} and \eqref{bodd} is to change $+$ in the above equation to $-$ so that it will match with the formulas in split type \cite{KLWZ23a,KLWZ23b} and Definition \ref{deftY}.
\end{rem}

\begin{lem}\label{centralb2}
The coefficients of $d_1(u)d_2(u-1)$ are central elements in $\scrX_2$.
\end{lem}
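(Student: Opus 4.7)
The plan is to prove centrality by checking that $\mathcal{C}(u):=d_1(u)d_2(u-1)$ commutes with a generating set of $\scrX_2$. By Lemma~\ref{lem:de-gen}, the coefficients of $d_1(u),d_2(u),e(u)$ generate $\scrX_2$. Since $[d_i(u),d_j(v)]=0$ by \eqref{b2dd}, the coefficients of $\mathcal{C}(u)$ automatically commute with those of $d_1(v)$ and $d_2(v)$. The anti-automorphism $\eta$ of Lemma~\ref{lem:eta} fixes each $d_i(u)$ (hence $\mathcal C(u)$) while swapping $e\leftrightarrow f$, so $[\mathcal{C}(u),e(v)]=0$ if and only if $[\mathcal{C}(u),f(v)]=0$, and it suffices to prove the latter.

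Next, applying the Leibniz rule together with \eqref{b2d1f} and \eqref{b2d2f} (the latter with $u$ replaced by $u-1$), I would expand
\begin{align*}
[\mathcal{C}(u),f(v)]
&=\frac{(f(u)-f(v))\mathcal{C}(u)}{u-v}+\frac{d_1(u)(e(u)+f(v))d_2(u-1)}{u+v}\\
&\quad +\frac{d_1(u)(f(v)-f(u-1))d_2(u-1)}{u-v-1}-\frac{\mathcal{C}(u)(e(u-1)+f(v))}{u+v-1}.
\end{align*}
Using \eqref{b2d1f}--\eqref{b2d2f} again, I would then commute every $f(v)$ sitting between $d_1(u)$ and $d_2(u-1)$ out to one side of $\mathcal{C}(u)$, and similarly bring each $e(u)$, $e(u-1)$, $f(u-1)$ into position against $\mathcal{C}(u)$. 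The identity $e(u)=-f(-u)$ from \eqref{b2e=f}, the symmetry $h(u)=h(-u)$ (equivalently $\tl d_1(u)d_2(u)=\tl d_1(-u)d_2(-u)$ from \eqref{b2d=d}), and the $[f(u),f(v)]$-relation \eqref{b2ff} then combine with the partial-fraction factors $(u\pm v)^{-1}$ and $(u-1\pm v)^{-1}$ to effect a complete cancellation.

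The main obstacle is the bookkeeping in this final simplification: there are four distinct partial-fraction denominators and numerous terms featuring $f(v), e(u), e(u-1), f(u-1)$ sandwiched between the $d$'s, so the cancellation must be tracked carefully. A cleaner structural alternative is to specialize $v=u-1$ in the defining relation \eqref{quamat}, observe that $R(1)=1-P$ is twice the antisymmetrizer $A_2$ on $\C^2\otimes\C^2$, and multiply both sides by $A_2$; since $A_2XA_2$ reduces to a scalar on the one-dimensional antisymmetric subspace for any $X\in\End(\C^2\otimes\C^2)$, this extracts a Sklyanin-type determinant from $S_1(u)R'(2u-1)S_2(u-1)$ whose centrality is automatic, and a direct matching of Gaussian generators via \eqref{eq:sij-Gauss} identifies this scalar with $\mathcal{C}(u)$ up to a central factor, giving an equivalent but more conceptual proof.
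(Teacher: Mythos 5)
Your reduction to showing $[d_1(u)d_2(u-1),f(v)]=0$ is sound and agrees with the paper's first step, and your four-term expansion via \eqref{b2d1f}--\eqref{b2d2f} is correct. The gap is that the entire content of the lemma sits in the ``complete cancellation'' you assert but do not carry out. Executing your plan exactly requires moving $f(v)$, $e(u)$ and $f(u-1)$ past $d_2(u-1)$, i.e.\ substituting $v\mapsto -u$ and $v\mapsto u-1$ into \eqref{b2d2f}; the latter substitution hits the $\tfrac{1}{u-v}$ pole and produces divided-difference (derivative-type) terms, and you give no evidence that the resulting collection of terms closes up. The paper sidesteps all of this with one observation your write-up is missing: centrality only requires the vanishing of the coefficients of $u^{-r}v^{-s-1}$ with $s\in\bN$ (these pair against the generators $f^{(s+1)}$), i.e.\ vanishing in the sense of $\simeq$ from \eqref{simdef}. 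Terms such as $f(u)d_1(u)$, $d_1(u)e(u)$ and $\tfrac{1}{u-v-1}d_1(u)f(u-1)d_2(u-1)$ contribute only nonnegative powers of $v$ and may be discarded, which converts \eqref{b2d1f}--\eqref{b2d2f} into the quasi-commutation relations
\begin{align*}
d_1(u)f(v)\simeq \frac{(u+v)(u-v-1)}{(u+v-1)(u-v)}\,f(v)d_1(u),\qquad
d_2(u)f(v)\simeq \frac{(u+v)(u-v+1)}{(u+v+1)(u-v)}\,f(v)d_2(u),
\end{align*}
and the two rational factors (the second evaluated at $u\mapsto u-1$) multiply to $1$. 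That two-line check is the proof; in your version it is precisely the step that is left unverified.

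Your alternative via the antisymmetrizer is also not a proof as stated. That $A_2\,S_1(u)R'(2u-1)S_2(u-1)\,A_2$ equals a scalar series times $A_2$ is indeed automatic from $\operatorname{rank}A_2=1$, but the \emph{centrality} of that scalar is not automatic: specializing \eqref{quamat} at $v=u-1$ and sandwiching with $A_2$ only yields a symmetry of the scalar, and its centrality is the content of \cite[Thm.~3.4]{MR02}, which needs a separate argument with a third auxiliary space. Moreover the identification of this scalar with $d_1(u)d_2(u-1)$ up to an invertible central factor is exactly the Sklyanin-determinant-versus-Gaussian-generators comparison that the paper flags as not carried out (though for $N=2$ it is a feasible computation). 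So neither route in your proposal is complete as written.
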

\begin{proof}
Since $[d_i(u),d_j(v)]=0$ and $e(u)=-f(-u)$ by \eqref{b2dd} and \eqref{b2e=f}, respectively, it suffices to prove that $[d_1(u)d_2(u-1),f(v)]=0$. By ignoring the terms like $f(u)d_1(u)$ and $d_1(u)e(u)$ in \eqref{b2d1f} and \eqref{b2d2f} (as these series do not contribute if we expand them in the region $|u|\gg |v|$), we have
\begin{align*}
d_1(u)f(v)\simeq \frac{(u+v)(u-v-1)}{(u+v-1)(u-v)}f(v)d_1(u),\\
d_2(u)f(v)\simeq \frac{(u+v)(u-v+1)}{(u+v+1)(u-v)}f(v)d_2(u).
\end{align*}
Thus $d_1(u)d_2(u-1)f(v)\simeq f(v)d_1(u)d_2(u-1)$ and the statement follows.
\end{proof}

\subsection{Relations in $\scrX_4$}\label{secN=4}
Let us consider the case $N=4$. Thanks to Lemma \ref{shiftlem}, we have the shift homomorphism
\[
 \psi_1:  \scrX_2\to \scrX_4,\quad d_{i}(u)\to d_{i+1}(u),\quad e(u)\mapsto e_{2}(u),\quad f(u)\mapsto f_{2}(u).
\]
Thus, by Lemmas \ref{efaiii}, \ref{b2ser} and Proposition \ref{propA}, we immediately have the following relations
\begin{align}
&[d_i(u),d_j(v)]=[d_1(u),e_2(v)]=[d_1(u),f_2(v)]=0,\label{ddb4}\\
&(u-v)[e_1(u),f_1(v)]=\tl d_1(u)d_2(u)-\tl d_1(v)d_2(v),\label{e1f1b4}\\
&(u-v)[e_1(u),e_1(v)]=(e_1(u)-e_1(v))^2,\label{e1e1b4}\\
&(u-v)[d_1(u),e_1(v)]=d_1(u)(e_1(v)-e_1(u)),\label{d1e1b4}\\
&(u-v)[d_2(u),e_1(v)]=d_2(u)(e_1(u)-e_1(v)),\label{d2e1b4}\\
&(u-v)[d_1(u),f_1(v)]=(f_1(u)-f_1(v))d_1(u),\label{d1f1b4}\\
&[e_2(u),e_2(v)]=\frac{1}{u-v}\big(e_2(u)-e_2(v)\big)^2-\frac{1}{u+v}\big(\tl d_2(u)d_3(u)-\tl d_2(v)d_3(v)\big).\label{e2e2b4}
\end{align}

\begin{lem}\label{be1e2lem}
We have
\begin{align}
(u-v)[e_1(u),e_2(v)]&=e_1(u)e_2(v)-e_1(v)e_2(v)-e_{13}(u)+e_{13}(v),\label{be1e2}\\
(u-v)[f_1(u),f_2(v)]&=f_2(v)f_1(v)-f_2(v)f_1(u)+f_{31}(u)-f_{31}(v).\label{bf1f2}
\end{align}
\end{lem}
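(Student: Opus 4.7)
The plan is to derive \eqref{be1e2} directly from the defining quaternary relation \eqref{bcom}, and then obtain \eqref{bf1f2} by applying the anti-automorphism $\eta$ of Lemma~\ref{lem:eta}. Specifically, I would start from \eqref{bcom} with $(i,j,k,l) = (1,2,2,3)$ and first observe that all three correction terms vanish in the $N=4$ setting: $\delta_{kj'} = \delta_{2,3} = 0$, $\delta_{il'} = \delta_{1,2} = 0$, and $\delta_{ij'} = \delta_{1,3} = 0$. Thus \eqref{bcom} collapses to the Yangian-style identity
\[
(u-v)[s_{12}(u), s_{23}(v)] = s_{22}(u)\, s_{13}(v) - s_{22}(v)\, s_{13}(u).
\]

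The next step is to substitute the Gauss decomposition expressions from \eqref{eq:sij-Gauss}, namely $s_{12}(u) = d_1(u) e_1(u)$, $s_{13}(v) = d_1(v) e_{13}(v)$, $s_{23}(v) = d_2(v) e_2(v) + f_1(v) d_1(v) e_{13}(v)$, and $s_{22}(u) = d_2(u) + f_1(u) d_1(u) e_1(u)$. Using the commutation rules among Gauss generators already established in \eqref{ddb4}--\eqref{e2e2b4}, together with the auxiliary relation $(u-v)[d_1(u), e_{13}(v)] = d_1(u)\bigl(e_{13}(v) - e_{13}(u)\bigr)$ (derived in parallel from \eqref{bcom} with $(i,j,k,l)=(1,1,1,3)$, whose correction terms also vanish), one can move every diagonal factor $d_i$ and every lower-triangular $f_1$ to the outside of the expression. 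After cancelling the common factor $d_1(u) d_2(v)$ from the left, the equation reduces to the desired identity \eqref{be1e2}.

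The second relation \eqref{bf1f2} will then follow at once by applying $\eta$ to both sides of \eqref{be1e2} and using $\eta(e_j(u)) = f_j(u)$, $\eta(e_{13}(u)) = f_{31}(u)$, together with $\eta([A,B]) = [\eta(B),\eta(A)]$. The reversal of multiplication order produced by the anti-automorphism precisely accounts for the different placement of factors on the right-hand side of \eqref{bf1f2} compared with \eqref{be1e2}.

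The main obstacle will be the algebraic bookkeeping in the substitution-and-simplification step: expanding $s_{22}(u) s_{13}(v) - s_{22}(v) s_{13}(u)$ via the Gauss decomposition produces a significant number of cross-terms containing products like $f_1(u) d_1(u) e_1(u) \cdot d_1(v) e_{13}(v)$, and one must repeatedly commute $f_1$, $d_1$, $d_2$ past $e_1$, $e_{13}$ using the relations listed just before the lemma. The clean form of the right-hand side of \eqref{be1e2} will emerge only after several cancellations, which ultimately reflect the fact that the correction terms in \eqref{bcom} happen to vanish for this particular choice of indices.
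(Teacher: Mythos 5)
Your proposal is correct and follows essentially the same route as the paper: starting from \eqref{bcom} with $(i,j,k,l)=(1,2,2,3)$, observing that all the $\delta$-correction terms vanish for these indices, substituting the Gauss decomposition, and simplifying with the previously established commutation relations (including the auxiliary identity for $[d_1(u),e_{13}(v)]$, which the paper also derives from \eqref{bcom}). The only cosmetic difference is that for \eqref{bf1f2} you apply the anti-automorphism $\eta$, whereas the paper invokes Lemma~\ref{efaiii}; both give the reduction to \eqref{be1e2} in one line.
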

\begin{proof}
By Lemma \ref{efaiii}, it suffices to show \eqref{be1e2}. By \eqref{bcom}, we have 
\[
(u-v)[s_{12}(u),s_{23}(v)]=s_{22}(u)s_{13}(v)-s_{22}(v)s_{13}(u).
\]
In terms of Gaussian generators, we have
\beq\label{b4pf1}
\begin{split}
(u-v)&[d_1(u)e_1(u),d_2(v)e_2(v)+f_1(v)d_1(v)e_{13}(v)]\\=\, &\big(d_2(u)+f_1(u)d_1(u)e_1(u)\big)d_1(v)e_{13}(v)-\big(d_2(v)+f_1(v)d_1(v)e_1(v)\big)d_1(u)e_{13}(u).
\end{split}
\eeq
We shall transform the left-hand side of \eqref{b4pf1}. Expanding the commutator, the left-hand side of \eqref{b4pf1} becomes
\begin{align*}
(u-v)\Big(d_1(u)& e_1(u)d_2(v) e_2(v)-d_2(v) e_2(v)d_1(u) e_1(u)\\
+&\, d_1(u) e_1(u)f_1(v) d_1(v)e_{13}(v)-f_1(v)d_1(v)e_{13}(v)d_1(u)e_1(u)\Big).
\end{align*}
Permuting the products $e_1(u)d_2(v)$, $e_2(v)d_1(u)$, and $e_1(u)f_1(v)$ using \eqref{d2e1b4}, \eqref{ddb4}, \eqref{e1f1b4}, respectively,  we have
\begin{align*}
&\,d_1(u) \big((u-v)d_2(v)e_1(u)-d_2(v)(e_1(u)-e_1(v))\big) e_2(v)-(u-v)d_2(v) d_1(u)e_2(v) e_1(u)\\
+&\,d_1(u) \big((u-v)f_1(v)e_1(u)+\tl d_1(u)d_2(u)-\tl d_1(v)d_2(v)\big) d_1(v)e_{13}(v)\\
&\hskip7.9cm-(u-v)f_1(v)d_1(v)e_{13}(v)d_1(u)e_1(u),
\end{align*}
which simplifies further to
\begin{align*}
d_1(u)d_2(v)\big((u-v)[e_1(u),e_2(v)]-(e_1(u)-e_1(v))e_2(v)-e_{13}(v)\big)+d_1(v)d_2(u)e_{13}(v)\\
+(u-v)\big(d_1(u)f_1(v)e_1(u)d_1(v)e_{13}(v)-f_1(v)d_1(v)e_{13}(v)d_1(u)e_1(u)\big).
\end{align*}
Therefore, it suffices to show that
\begin{align*}
(u-v)\big(& d_1(u)f_1(v)e_1(u)d_1(v)e_{13}(v)-f_1(v)d_1(v)e_{13}(v)d_1(u)e_1(u)\big)\\
=&\,f_1(u)d_1(u)e_1(u) d_1(v)e_{13}(v)- f_1(v)d_1(v)e_1(v) d_1(u)e_{13}(u).
\end{align*}
Applying \eqref{d1f1b4}, i.e. $(u-v)d_1(u)f_1(v)-f_1(u)d_1(u)=(u-v)f_1(v)d_1(u)-f_1(v)d_1(u)$, it reduces to show
\beq\label{b4pf2}
(u-v)[d_1(u)e_1(u),d_1(v)e_{13}(v)]=d_1(u)e_1(u)d_1(v)e_{13}(v)-d_1(v)e_1(v)d_1(u)e_{13}(u),
\eeq
which is equivalent to $(u-v)[s_{12}(u),s_{13}(v)]=s_{12}(u)s_{13}(v)-s_{12}(v)s_{13}(u)$ and follows from \eqref{bcom}.
\end{proof}

\begin{lem}\label{lemhelp1}
We have
\beq\label{e1e13}
[e_1(u),e_{13}(v)-e_1(v)e_2(v)]=-[e_1(u),e_2(v)]e_1(u).
\eeq
\end{lem}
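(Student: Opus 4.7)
The plan is to establish \eqref{e1e13} by combining two preliminary identities. Set $a := u-v$ throughout.

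First, I derive an explicit formula, denoted $(\dagger)$, for $[e_1(u), e_{13}(v)]$ from the R-matrix relation $(u-v)[s_{12}(u), s_{13}(v)] = s_{12}(u)s_{13}(v) - s_{12}(v)s_{13}(u)$, which is \eqref{b4pf2} (a consequence of \eqref{bcom}). Substituting the Gauss decomposition $s_{1k}(u) = d_1(u)e_{1k}(u)$ for $k=2,3$, using $[d_1(u), d_1(v)] = 0$, and pushing the $d_1$-factors to the left via \eqref{d1e1b4} together with the analogue $(u-v)[d_1(u), e_{13}(v)] = d_1(u)(e_{13}(v) - e_{13}(u))$ (obtained from the same argument applied to $[s_{11}(u), s_{13}(v)]$ in \eqref{bcom}), one clears $d_1(u)d_1(v)$ to obtain
\[
a(a-1)[e_1(u), e_{13}(v)] = (a-1)\bigl(e_1(v) - e_1(u)\bigr)\bigl(e_{13}(v) - e_{13}(u)\bigr) - a\,[e_1(u), e_{13}(u)].
\]
Invoking \eqref{be1e2} to rewrite $e_{13}(v) - e_{13}(u) = a[e_1(u), e_2(v)] - (e_1(u) - e_1(v))e_2(v)$ and \eqref{e1e1b4} to replace $(e_1(u)-e_1(v))^2$ by $a[e_1(u), e_1(v)]$, this simplifies to
\begin{align*}
[e_1(u), e_{13}(v)] = &\,-(e_1(u)-e_1(v))[e_1(u), e_2(v)] + [e_1(u), e_1(v)]\,e_2(v) \\
&\, - \frac{[e_1(u), e_{13}(u)]}{u-v-1}. \qquad \qquad \qquad (\dagger)
\end{align*}

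Next, applying the derivation $[e_1(u),\,\cdot\,]$ to \eqref{be1e2} and using the Leibniz rule together with $[e_1(u), e_1(u)] = 0$ yields a second identity, denoted $(B)$:
\begin{align*}
(u-v)\bigl[e_1(u), [e_1(u), e_2(v)]\bigr] = &\,(e_1(u)-e_1(v))[e_1(u), e_2(v)] - [e_1(u), e_1(v)]\,e_2(v) \\
&\, + [e_1(u), e_{13}(v)] - [e_1(u), e_{13}(u)]. \qquad (B)
\end{align*}
Substituting $(\dagger)$ into the right-hand side of $(B)$, the pairs of terms $\pm(e_1(u)-e_1(v))[e_1(u), e_2(v)]$ and $\pm[e_1(u), e_1(v)]\,e_2(v)$ cancel, and the remaining terms simplify to $-\tfrac{u-v}{u-v-1}[e_1(u), e_{13}(u)]$; multiplying through by $(u-v-1)/(u-v)$ produces
\[
(u-v-1)\bigl[e_1(u), [e_1(u), e_2(v)]\bigr] + [e_1(u), e_{13}(u)] = 0.
\]
Feeding this back into $(\dagger)$ and expanding $\bigl[e_1(u), [e_1(u), e_2(v)]\bigr] = e_1(u)[e_1(u), e_2(v)] - [e_1(u), e_2(v)]\,e_1(u)$ gives
\[
[e_1(u), e_{13}(v)] = [e_1(u), e_1(v)]\,e_2(v) + e_1(v)[e_1(u), e_2(v)] - [e_1(u), e_2(v)]\,e_1(u),
\]
which rearranges to \eqref{e1e13} via the Leibniz identity $[e_1(u), e_1(v)e_2(v)] = [e_1(u), e_1(v)]\,e_2(v) + e_1(v)[e_1(u), e_2(v)]$.

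The main obstacle is the derivation of $(\dagger)$: the calculation is routine but requires careful bookkeeping of how $d_1(u)$ and $d_1(v)$ commute past $e_1$ and $e_{13}$, and a judicious use of \eqref{be1e2} and \eqref{e1e1b4} to extract the nontrivial rational denominator $u-v-1$. The remainder of the argument is a short Leibniz-rule manipulation.
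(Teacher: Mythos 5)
Your proof is correct and rests on the same ingredients as the paper's: the two-ways computation of $[e_1(u),e_{13}(v)]$ from the quaternary relation via the Gauss decomposition (your precursor of $(\dagger)$ is exactly \eqref{b4pf6} rearranged), followed by \eqref{be1e2} and \eqref{e1e1b4}. The only difference is cosmetic: you close the argument by applying $[e_1(u),\cdot]$ to \eqref{be1e2} and eliminating the diagonal term $[e_1(u),e_{13}(u)]$, whereas the paper reduces directly to the auxiliary identity \eqref{b4pf7}; both are short manipulations of the same relations.
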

\begin{proof}
It follows from \eqref{bcom} that
\[
(u-v)[s_{11}(u),s_{13}(v)]=s_{11}(u)s_{13}(v)-s_{11}(v)s_{13}(u),
\]
which implies that
\beq\label{b4pf3}
(u-v)[d_1(u),e_{13}(v)]=d_1(u)(e_{13}(v)-e_{13}(u)).
\eeq
We have
\beq\label{b4pf4}
\begin{split}
d_1(u)e_1(u)d_1(v)e_{13}(v)-&\, d_1(v)e_1(v)d_1(u)e_{13}(u)\\
\stackrel{\eqref{d1e1b4}}{=}&\, d_1(u)\big(d_1(v)e_1(u)+\tfrac{1}{u-v}d_1(v)(e_1(u)-e_1(v))\big)e_{13}(v)\\
-&\,d_1(v)\big(d_1(u)e_1(v)+\tfrac{1}{u-v}d_1(u)(e_1(u)-e_1(v))\big)e_{13}(u).
\end{split}
\eeq
On the other hand, we also have
\beq\label{b4pf5}
\begin{split}
d_1(u)e_1(u)& d_1(v)e_{13}(v)- d_1(v)e_1(v)d_1(u)e_{13}(u)\\
&\, \stackrel{\eqref{b4pf2}}{=}(u-v)\big(d_1(u)e_1(u)d_1(v)e_{13}(v)-d_1(v)e_{13}(v)d_1(u)e_{1}(u)\big)\\
&\, \overset{\eqref{d1e1b4}}{\underset{\eqref{b4pf3}}{=}}d_1(u)\big((u-v)d_1(v)e_1(u)+d_1(v)(e_1(u)-e_1(v))\big)e_{13}(v)\\
& \qquad\qquad \, -d_1(v)\big((u-v)d_1(u)e_{13}(v)-d_1(u)(e_{13}(v)-e_{13}(u))\big)e_1(u).
\end{split}
\eeq
Combining \eqref{b4pf4} and \eqref{b4pf5}, we obtain
\beq\label{b4pf6}
\begin{split}
(u-v)[e_1(u),e_{13}(v)]=&\,\frac{1}{u-v}\big(e_1(u)-e_1(v)\big)\big(e_{13}(v)-e_{13}(u)\big)\\&+\,e_1(v)\big(e_{13}(v)-e_{13}(u)\big)-\big(e_{13}(v)-e_{13}(u)\big)e_1(u).
\end{split}
\eeq
To prove \eqref{e1e13}, it suffices to show
\begin{align*}
(u-v)&\,[e_1(u),e_{13}(v)]\\
=~~&(u-v)\big([e_1(u),e_1(v)]e_2(v)+e_1(v)[e_1(u),e_2(v)]-[e_1(u),e_2(v)]e_1(u)\big)\\
\overset{\eqref{e1e1b4}}{\underset{\eqref{be1e2}}{=}}&\, e_1(u)[e_1(u),e_2(v)]-[e_1(u),e_1(v)e_2(v)]\\
&+e_1(v)\big(e_{13}(v)-e_{13}(u)\big)-\big(e_{13}(v)-e_{13}(u)\big)e_1(u).
\end{align*}
Therefore, by \eqref{b4pf6}, it reduces to show
\beq\label{b4pf7}
e_1(u)[e_1(u),e_2(v)]-[e_1(u),e_1(v)e_2(v)]=\frac{1}{u-v}\big(e_1(u)-e_1(v)\big)\big(e_{13}(v)-e_{13}(u)\big).
\eeq
Rewrite $(u-v)\big(e_1(u)[e_1(u),e_2(v)]-[e_1(u),e_1(v)e_2(v)]\big)$ as
\[
(u-v)\big(e_1(u)[e_1(u),e_2(v)]-[e_1(u),e_1(v)]e_2(v)-e_1(v)[e_1(u),e_2(v)]\big),
\]
then \eqref{b4pf7} follows from \eqref{e1e1b4} and \eqref{be1e2}.
\end{proof}

\begin{lem}\label{sec:A-serre}
We have the following Serre relation,
\[
\big[e_1(u),[e_1(v),e_2(w)]\big]+\big[e_1(v),[e_1(u),e_2(w)]\big]=0.
\]
\end{lem}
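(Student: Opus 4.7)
The proof will proceed by direct computation, using Lemma~\ref{be1e2lem}, Lemma~\ref{lemhelp1}, and equation~\eqref{e1e1b4} as the main inputs. Multiplying the target identity through by $(u-w)(v-w)$ and applying Lemma~\ref{be1e2lem} to both $(u-w)[e_1(u),e_2(w)]$ and $(v-w)[e_1(v),e_2(w)]$, it suffices to show
\[
(v-w)\bigl[e_1(u),A(v,w)\bigr] + (u-w)\bigl[e_1(v),A(u,w)\bigr] \;=\; 0,
\]
where $A(x,w):=\bigl(e_1(x)-e_1(w)\bigr)e_2(w)-\bigl(e_{13}(x)-e_{13}(w)\bigr)$ is the right-hand side of Lemma~\ref{be1e2lem}.

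To compute $[e_1(v), A(u,w)]$ I would expand by the Leibniz rule: the bracket $[e_1(v), e_1(\cdot)]$ is evaluated by \eqref{e1e1b4}; the bracket $[e_1(v), e_2(w)]$ is evaluated again by Lemma~\ref{be1e2lem}; and the bracket $[e_1(v), e_{13}(x)]$ is rewritten via Lemma~\ref{lemhelp1} in the form
\[
[e_1(v),e_{13}(x)] = [e_1(v),e_1(x)]\,e_2(x) + e_1(x)\,[e_1(v),e_2(x)] - [e_1(v),e_2(x)]\,e_1(v),
\qquad x\in\{u,w\}.
\]
This eliminates $e_{13}$ entirely, so that $(u-w)[e_1(v),A(u,w)]$ becomes a rational function of $u,v,w$ whose coefficients lie in the subalgebra generated by $e_1(u),e_1(v),e_1(w),e_2(u),e_2(v),e_2(w)$.

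Adding the $(u\leftrightarrow v)$-swapped expression and regrouping by noncommutative monomial, the cancellations fall into three classes. First, the cubic $e_1$-monomials produced by nested applications of \eqref{e1e1b4} become antisymmetric in $u\leftrightarrow v$ after multiplication by the prefactors $(v-w)$ and $(u-w)$, and hence drop out. Second, the $e_{13}(w)$ contributions (recorded prior to their elimination) are manifestly symmetric in $u,v$, and pair up between the two halves through the prefactors $(v-w)$ and $(u-w)$. Third, the ``tail'' pieces $-[e_1(v),e_2(x)]e_1(v)$ and their $(u,v)$-swapped counterparts $-[e_1(u),e_2(x)]e_1(u)$ combine via the partial-fraction identity $\tfrac{1}{(u-w)(v-w)} = \tfrac{1}{u-v}\bigl(\tfrac{1}{v-w}-\tfrac{1}{u-w}\bigr)$, which converts them into an expression that is antisymmetric under $u\leftrightarrow v$ and so vanishes upon symmetrization.

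\textbf{Main obstacle.} The difficulty is purely bookkeeping: the Leibniz expansion produces on the order of a dozen noncommutative monomials, and several reshufflings (for instance, moving $e_1(u)$ past $[e_1(v),e_2(w)]$) trigger further corrections via \eqref{e1e1b4} and Lemma~\ref{be1e2lem}, each of which must then be expanded and cancelled in turn. The cleanest route is probably to compute $[e_1(v),A(u,w)]$ as a single closed formula first, and then to deduce the Serre identity as a corollary by $(u\leftrightarrow v)$-symmetrization; this isolates the combinatorial identity on rational prefactors from the operator ordering issues.
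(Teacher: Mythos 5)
Your strategy---reduce everything to \eqref{e1e1b4}, Lemma~\ref{be1e2lem} and Lemma~\ref{lemhelp1} and grind out the symmetrized commutator---is exactly the paper's route: the proof given there simply observes that these are the same three commutator relations as in the non-twisted Yangian and cites \cite[Lem.~5.7(ii)]{BK05}, whose argument is the computation you describe (that reference in fact first establishes the diagonal case $[e_1(u),[e_1(u),e_2(w)]]=0$, which the paper also remarks on; your plan bypasses the diagonal case in favour of a direct $u\leftrightarrow v$ symmetrization, which is also viable). So you have identified the right ingredients and the right overall shape of the argument.

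There is, however, one concrete slip in your reduction. Since $(v-w)[e_1(v),e_2(w)]=A(v,w)$ and $(u-w)[e_1(u),e_2(w)]=A(u,w)$, multiplying the Serre expression by $(u-w)(v-w)$ yields
\[
(u-w)\bigl[e_1(u),A(v,w)\bigr]+(v-w)\bigl[e_1(v),A(u,w)\bigr],
\]
whereas you attached the prefactor $(v-w)$ to the $[e_1(u),A(v,w)]$ term and $(u-w)$ to the other. As displayed, your target identity is not equivalent to the lemma, and the cancellation pattern you outline---which explicitly leans on which prefactor multiplies which half---would not close. With the prefactors corrected the plan is the standard one; the three classes of cancellation you describe are asserted rather than verified, but they are precisely the ones carried out in \cite{BK05}, so the corrected computation does go through.
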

\begin{proof}
The proof is parallel to that of \cite[Lem. 5.7 (ii)]{BK05} as the commutator relations used in the proofs are the same, see \eqref{e1e1b4}, \eqref{be1e2}, \eqref{e1e13}; cf. \cite[Lem. 5.5]{BK05}. Note that again we can prove that $\big[e_1(u),[e_1(u),e_2(v)]\big]=0$.
\end{proof}


Next, we discuss the Serre relation involving
\[
\big[e_2(u),[e_2(v),e_1(w)]\big]+\big[e_2(v),[e_2(u),e_1(w)]\big].
\]
To this end, we first need the following lemmas. 
\begin{lem}\label{help1}
We have
\begin{align}
&[e_{13}(u),e_2^{(1)}]=e_1(u),\label{b4pf11}\\
&[e_1^{(1)},\tl d_2(v)]=-e_1(v)\tl d_2(v),\quad [e_1^{(1)}, d_3(v)]=f_3(v) d_3(v),\label{b4pf10}\\
&[e_1(u),e_2^{(1)}]=e_{13}(u),\quad [e_1^{(1)},e_2(v)]=e_{13}(v)-e_1(v)e_2(v),\label{b4pf8}\\
&[e_2^{(1)},e_2(v)]=e_2(v)e_2(v)-1+\tl d_2(v)d_3(v).    \label{b4pf9}
\end{align}
\end{lem}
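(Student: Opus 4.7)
The plan is to derive each of the five identities of Lemma~\ref{help1} by extracting appropriate coefficients from relations already established in this section; only the final identity \eqref{b4pf11} will require a detour through the R-matrix relation \eqref{bcom}. For both identities in \eqref{b4pf8}, I would read off coefficients from \eqref{be1e2}: extracting the $v^0$-coefficient of $(u-v)[e_1(u),e_2(v)]$, only the $-v[e_1(u),e_2(v)]$ term contributes (since $e_2(v)$ has no constant part in $v$), yielding $-[e_1(u),e_2^{(1)}]$, while the $v^0$-coefficient of the right-hand side of \eqref{be1e2} collapses to $-e_{13}(u)$. The companion identity $[e_1^{(1)},e_2(v)]=e_{13}(v)-e_1(v)e_2(v)$ is obtained analogously by extracting the $u^0$-coefficient. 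For \eqref{b4pf9}, I would multiply \eqref{e2e2b4} by $(u^2-v^2)$ to clear the denominators and then extract the $u^1$-coefficient: the left-hand side produces $[e_2^{(1)},e_2(v)]$; the $(u+v)(e_2(u)-e_2(v))^2$ term contributes $e_2(v)^2$ (from the $u^0$-part of $(e_2(u)-e_2(v))^2$); and $-(u-v)(\tl d_2(u)d_3(u)-\tl d_2(v)d_3(v))$ contributes $-1+\tl d_2(v)d_3(v)$.

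The two parts of \eqref{b4pf10} are deduced using type A relations together with elementary algebraic manipulations. Swapping $u,v$ in the identity $(u-v)[d_2(u),e_1(v)]=d_2(u)(e_1(u)-e_1(v))$ from Proposition~\ref{propA} and taking the $u^0$-coefficient yields $[d_2(v),e_1^{(1)}]=-d_2(v)e_1(v)$; the inverse-Leibniz formula $[X,Y^{-1}]=-Y^{-1}[X,Y]Y^{-1}$ combined with $\tl d_2(v)d_2(v)=1$ then produces $[e_1^{(1)},\tl d_2(v)]=-e_1(v)\tl d_2(v)$. For the second part, since $c(u)$ is central by Proposition~\ref{Binv}, the identity $c(u)\tl d_{i'}(u)=d_i(-u)$ from Lemma~\ref{efaiii} at $i=3$, $i'=2$ gives $d_3(v)=c(-v)\tl d_2(-v)$; combining this with the first part of \eqref{b4pf10} and with $e_1(-v)=-f_3(v)$ (from $e_i(u)=-f_{\tau i}(-u)$ and $\tau 1=3$ for $N=4$) yields $[e_1^{(1)},d_3(v)]=c(-v)[e_1^{(1)},\tl d_2(-v)]=-e_1(-v)d_3(v)=f_3(v)d_3(v)$.

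The main obstacle is identity \eqref{b4pf11}, since none of the Gaussian-generator commutators established so far permit a direct evaluation of $[e_{13}(u),e_2^{(1)}]$ without invoking the still-unproved Serre relation $[e_2(u),[e_2(v),e_1(w)]]+[e_2(v),[e_2(u),e_1(w)]]=0$. My plan is to reduce the problem to the R-matrix side. By \eqref{gd2}, $e_{13}(u)=\tl d_1(u)s_{13}(u)$; and from \eqref{eq:sij-Gauss} the $v^{-1}$-coefficient of $s_{23}(v)=d_2(v)e_2(v)+f_{21}(v)d_1(v)e_{13}(v)$ is $e_2^{(1)}$, so $s_{23}^{(1)}=e_2^{(1)}$. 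Since $[d_1(u),e_2(v)]=0$ by Lemma~\ref{lemnew1}, also $[\tl d_1(u),e_2^{(1)}]=0$, and the problem reduces to computing $\tl d_1(u)[s_{13}(u),s_{23}^{(1)}]$. Applying \eqref{bcom} with $(i,j,k,l)=(1,3,2,3)$ (so $\delta_{kj'}=\delta_{2,2}=1$, while $\delta_{il'}=\delta_{ij'}=0$) and extracting the $v^1$-coefficient, the $(u+v)$-bracket contributes $0$ (since both $s_{13}(v)$ and $s_{23}(v)$ have no constant or positive $v$-powers), and the surviving contribution from the sum $-v\sum_a s_{1a'}(u)s_{a3}(v)$ comes only from $a=3$ (using $[v^0]s_{33}(v)=1$), giving $-s_{12}(u)$. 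Therefore $[s_{13}(u),s_{23}^{(1)}]=s_{12}(u)$, and $[e_{13}(u),e_2^{(1)}]=\tl d_1(u)s_{12}(u)=e_{12}(u)=e_1(u)$.
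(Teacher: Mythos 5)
Your proposal is correct and follows essentially the same route as the paper: \eqref{b4pf10}, \eqref{b4pf8}, \eqref{b4pf9} by extracting coefficients from \eqref{d2e1b4}, \eqref{be1e2}, \eqref{e2e2b4} together with Lemma \ref{efaiii}, and \eqref{b4pf11} by reducing to a single quaternary relation from \eqref{bcom}. The only (harmless) deviation is in \eqref{b4pf11}: you use $[s_{13}(u),s_{23}(v)]$, which forces you to handle the extra $\delta_{kj'}$ summation term, whereas the paper picks $[s_{13}(u),s_{32}(v)]$ so that all delta terms vanish and the relation $[s_{13}(u),s_{32}^{(1)}]=s_{12}(u)$ drops out immediately; both computations land on the same identity.
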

\begin{proof}
By Lemma \ref{efaiii}, we have $d_{3}(v)=c(u)\tl d_{2}(-v)$ and $f_3(v)=-e_1(-v)$, the relation \eqref{b4pf10} is straightforward from \eqref{d2e1b4} while the relations \eqref{b4pf8}, \eqref{b4pf9} follow from \eqref{be1e2}, \eqref{e2e2b4}, respectively. 

Then we prove \eqref{b4pf11}. By \eqref{bcom}, we have
\[
(u-v)[s_{13}(u),s_{32}(v)]=s_{33}(u)s_{12}(v)-s_{33}(v)s_{12}(u)
\]
which implies $[s_{13}(u),s_{32}^{(1)}]=s_{12}(u)$. In terms of Gaussian generators, it transforms to
\[
[d_1(u)e_{13}(u),f_{2}^{(1)}]=d_1(u)e_1(u).
\]
By \eqref{ddb4}, we have $[e_{13}(u),f_{2}^{(1)}]=e_1(u)$. Then \eqref{b4pf11} follows from $f_{2}^{(1)}=e_{2}^{(1)}$ as $e_2(u)=-f_2(-u)$ by Lemma \ref{efaiii}.
\end{proof}

\begin{lem}\label{be13e2lem}
We have
\[
[e_{13}(u),e_2(v)]=e_2(v)[e_1(u),e_2(v)]+\frac{1}{u+v}\tl d_2(v)\big(f_3(v)+e_1(u)\big)d_3(v).
\]
\end{lem}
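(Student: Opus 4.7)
The strategy is to reduce $[e_{13}(u),e_2(v)]$ to previously established relations by exploiting the identity $e_{13}(u)=[e_1(u),e_2^{(1)}]$ from \eqref{b4pf8} and applying the Jacobi identity to write
\[
[e_{13}(u),e_2(v)] = \bigl[e_1(u),[e_2^{(1)},e_2(v)]\bigr] - \bigl[e_2^{(1)},[e_1(u),e_2(v)]\bigr].
\]

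For the first double bracket I would substitute $[e_2^{(1)},e_2(v)]=e_2(v)^2-1+\tl d_2(v)d_3(v)$ from \eqref{b4pf9}; the Leibniz rule then reduces it to $\{e_2(v),[e_1(u),e_2(v)]\}+[e_1(u),\tl d_2(v)d_3(v)]$. The mixed commutator is further expanded by Leibniz, using on the one hand the $d$-$e$ integration of \eqref{d2e1b4}, which gives $[e_1(u),\tl d_2(v)]=\tfrac{1}{u-v}(e_1(u)-e_1(v))\tl d_2(v)$, and on the other hand the identity $d_3(v)=c(v)\tl d_2(-v)$ derived from Lemma~\ref{efaiii}: combined with the centrality of $c(v)$ and $e_1(-v)=-f_3(v)$ (Lemma~\ref{efaiii}), this yields $[e_1(u),d_3(v)]=\tfrac{1}{u+v}(e_1(u)+f_3(v))d_3(v)$. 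For the second double bracket I would substitute $[e_1(u),e_2(v)]$ from Lemma~\ref{be1e2lem} and apply $[e_2^{(1)},-]$ term by term using \eqref{b4pf8}, \eqref{b4pf11}, and \eqref{b4pf9}.

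Collecting all contributions, the $(u-v)^{-1}$ pieces regroup, by invoking Lemma~\ref{be1e2lem} in reverse, into $e_2(v)[e_1(u),e_2(v)]$ after the term $[e_1(u),e_2(v)]\,e_2(v)$ from the anticommutator $\{e_2(v),[e_1(u),e_2(v)]\}$ cancels against a symmetric contribution generated from the second double bracket; the remaining $(u+v)^{-1}$ pieces assemble into $\tfrac{1}{u+v}\tl d_2(v)(f_3(v)+e_1(u))d_3(v)$, with the $e_1(v)$-dependent pieces from $[e_1(u),\tl d_2(v)]$ being absorbed into the $(u-v)^{-1}$ collection. The main obstacle will be the algebraic bookkeeping: many intermediate terms share a common symbolic form but differ by the placement of $e_2(v)$, $\tl d_2(v)$, or $d_3(v)$, and one must track positions carefully because $e_2(v)$ does not commute with $\tl d_2(v)d_3(v)$ and $e_1(u)$ only partially commutes past the Gaussian diagonal factors. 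I would verify the final reassembly either by comparing coefficients order-by-order in low degrees of $u^{-1}$ or by bringing both sides over the common denominator $(u^2-v^2)$.
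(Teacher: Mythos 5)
Your proposal is correct, and I checked that the computation closes: writing $X(u,v)=[e_1(u),e_2(v)]$, your first double bracket gives $X(u,v)e_2(v)+e_2(v)X(u,v)+\tfrac{1}{u-v}(e_1(u)-e_1(v))\tl d_2(v)d_3(v)+\tfrac{1}{u+v}\tl d_2(v)(e_1(u)+f_3(v))d_3(v)$, while the second, after substituting Lemma~\ref{be1e2lem}, expanding with \eqref{b4pf8}, \eqref{b4pf9}, \eqref{b4pf11}, and recombining via Lemma~\ref{be1e2lem} in reverse, equals exactly $X(u,v)e_2(v)+\tfrac{1}{u-v}(e_1(u)-e_1(v))\tl d_2(v)d_3(v)$; the difference is the claimed identity. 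Your route differs from the paper's in a genuine way: the paper applies the Jacobi identity to $e_{13}^{(1)}=[e_1^{(1)},e_2^{(1)}]$, derives the relation at the lowest mode in $u$ using only the degree-one commutators of Lemma~\ref{help1}, and then recovers the full $u$-dependence by bracketing with $d_1(u)$ via \eqref{b4pf12} and the vanishing commutators \eqref{ddb4}. You instead keep the full series from the start by using $e_{13}(u)=[e_1(u),e_2^{(1)}]$, which avoids the bootstrapping step but requires the full-series commutators $[e_1(u),\tl d_2(v)]$ and $[e_1(u),d_3(v)]$ (correctly derived from \eqref{d2e1b4} and Lemma~\ref{efaiii}, though only their mode-one versions \eqref{b4pf10} are recorded in the paper) together with an essential use of Lemma~\ref{be1e2lem} both forwards and backwards. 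The paper's bootstrapping trick buys a shorter calculation with fewer series manipulations; your version is more direct and makes the origin of the $\tfrac{1}{u+v}$ tail term transparent, at the cost of heavier bookkeeping, which you rightly flag as the main hazard.
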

\begin{proof}
By Lemma \ref{help1}, we have
\begin{align*}
[e_{13}^{(1)},e_2(v)]=&\,\big[[e_1^{(1)},e_2^{(1)}],e_2(v)\big]=\big[[e_1^{(1)},e_2(v)],e_2^{(1)}\big]+[e_1^{(1)},\big[e_2^{(1)},e_2(v)]\big]\\
=&\,[e_{13}(v)-e_1(v)e_2(v),e_2^{(1)}]+[e_1^{(1)},e_2(v)e_2(v)+\tl d_2(v)d_3(v)]\\
=&\, e_1(v)-e_{13}(v)e_2(v)+e_1(v)\big(e_2(v)e_2(v)-1+\tl d_2(v)d_3(v)\big)\\
&\, +\big(e_{13}(v)-e_1(v)e_2(v)\big)e_2(v)+e_2(v)[e_1^{(1)},e_2(v)]+[e_1^{(1)},\tl d_2(v)d_3(v)]\\
=&\, e_2(v)[e_1^{(1)},e_2(v)]+ e_1(v) \tl d_2(v)d_3(v)+[e_1^{(1)},\tl d_2(v)]d_3(v)+\tl d_2(v)[e_1^{(1)},d_3(v)]\\
=&\, e_2(v)[e_1^{(1)},e_2(v)] + \tl d_2(v)f_3(v)d_3(v).
\end{align*}
Finally, by \eqref{d1e1b4} and \eqref{b4pf3}, we have
\beq\label{b4pf12}
[d_1(u),e_1^{(1)}]=d_1(u)e_1(u),\qquad [d_1(u),e_{13}^{(1)}]=d_1(u)e_{13}(u).
\eeq
Using \eqref{ddb4}, \eqref{d1e1b4}, \eqref{b4pf12} and $f_3(v)=-e_1(-v)$ (by Lemma \ref{efaiii}), the lemma follows from the equality above by taking the commutator relation with $d_1(u)$.
\end{proof}

\begin{lem}\label{be1e2e2lem}
We have
\[
\big[[e_1(u),e_2(v)],e_2(v)\big]=\frac{\tl d_2(v)}{u-v-1}\Big(\frac{f_3(v)+e_1(v+1)}{2v+1}-\frac{f_3(v)+e_1(u)}{u+v}\Big)d_3(v).
\]
\end{lem}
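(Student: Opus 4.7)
The plan is to deduce the formula in two stages, starting from the commutator formula of Lemma \ref{be1e2lem} and the ``$e_{13}$-rule'' of Lemma \ref{be13e2lem}.

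\medskip

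For Stage~1, I take the commutator of both sides of \eqref{be1e2} with $e_2(v)$. Since $[e_2(v),e_2(v)]=0$, each product $e_1(\bullet)e_2(v)$ on the right of \eqref{be1e2} simply bracket-contributes $[e_1(\bullet),e_2(v)]e_2(v)$. For the two $e_{13}$-terms I apply Lemma \ref{be13e2lem}, using that lemma once with its stated parameters and once after specialising $u=v$ to handle $[e_{13}(v),e_2(v)]$. Collecting $[e_1(\bullet),e_2(v)]e_2(v)-e_2(v)[e_1(\bullet),e_2(v)]=[[e_1(\bullet),e_2(v)],e_2(v)]$ on the right side and moving one copy of $[[e_1(u),e_2(v)],e_2(v)]$ to the left, I obtain
\[
(u-v-1)\big[[e_1(u),e_2(v)],e_2(v)\big]
=-\big[[e_1(v),e_2(v)],e_2(v)\big]+\tl d_2(v)\Big(\frac{f_3(v)+e_1(v)}{2v}-\frac{f_3(v)+e_1(u)}{u+v}\Big)d_3(v).
\]

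\medskip

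For Stage~2, I specialise $u=v+1$ in this identity. The left-hand side vanishes, and the resulting equation determines the unknown bracket:
\[
\big[[e_1(v),e_2(v)],e_2(v)\big]
=\tl d_2(v)\Big(\frac{f_3(v)+e_1(v)}{2v}-\frac{f_3(v)+e_1(v+1)}{2v+1}\Big)d_3(v).
\]
Substituting this back into the Stage~1 identity, the $\tl d_2(v)\,(f_3(v)+e_1(v))\,d_3(v)/(2v)$ terms cancel, and dividing by $u-v-1$ produces exactly the stated formula.

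\medskip

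The one technical point requiring care is the specialisation $u=v+1$ in a formal power-series identity. Each side is a well-defined formal series in $u^{-1}$ (with coefficients in $\scrX_4[[v^{-1}]][v]$): the coefficients of $e_1(u)$ and $[[e_1(u),e_2(v)],e_2(v)]$ are independent of $u$, while $\tfrac{1}{u+v}$ and the factor $\tfrac{1}{u-v-1}$ that appears after the final division are expanded in the region of large $|u|$. Substituting $u=v+1$ then amounts to the standard replacement $u^{-r}\mapsto(v+1)^{-r}=\sum_{k\gge 0}\binom{-r}{k}v^{-r-k}$, which is legitimate and is routinely used in the Gauss-decomposition literature \cite{BK05,JLM18}. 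I expect this specialisation step to be the only subtle point; once it is granted, the remainder is purely algebraic rearrangement.
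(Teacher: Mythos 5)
Your proof is correct and takes essentially the same route as the paper's, which defers to the split-type analogue \cite[Lem.~4.8]{KLWZ23a}: bracket \eqref{be1e2} with $e_2(v)$, rewrite the two $e_{13}$-terms via Lemma~\ref{be13e2lem}, and specialize $u=v+1$ to determine the diagonal term $\big[[e_1(v),e_2(v)],e_2(v)\big]$. The specializations $u=v$ and $u=v+1$ that you flag are indeed legitimate, and the cleanest reason is that $f_3(v)+e_1(u)=e_1(u)-e_1(-v)$, so $\frac{f_3(v)+e_1(u)}{u+v}$ is a divided difference whose expansion involves only nonpositive powers of $u$ and $v$, making the substitution termwise finite.
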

\begin{proof}
The proof is similar to that of \cite[Lemma 4.8]{KLWZ23a} using Lemmas \ref{be1e2lem} and \ref{be13e2lem}.
\end{proof}

It is convenient to set
\begin{align*}
&\beta_1(u,v)=\frac{1}{u+v}\tl d_2(v)\big(f_3(v)+e_1(u)\big)d_3(v),\\
&\beta_2(u,v)=-\frac{1}{u+v}\big(\tl d_2(u)d_3(u)-\tl d_2(v)d_3(v)\big),\\
&\beta_3(u,v)=\frac{\tl d_2(v)}{u-v-1}\Big(\frac{f_3(v)+e_1(v+1)}{2v+1}-\frac{f_3(v)+e_1(u)}{u+v}\Big)d_3(v).
\end{align*}
Then it follows from Lemma \ref{be13e2lem}, \eqref{e2e2b4}, and Lemma \ref{be1e2e2lem}, respectively, that
\begin{align}
[e_{13}(u),e_2(v)]&=e_2(v)[e_1(u),e_2(v)]+\beta_1(u,v),\label{beta1b}\\
[e_2(u),e_2(v)]&=\frac{1}{u-v}(e_2(u)-e(v))^2+\beta_2(u,v),\label{beta2b}\\
\big[[e_1(u),e_2(v)],e_2(v)\big]&=\beta_3(u,v).\label{beta3b}
\end{align}
Comparing these relations with their counterparts in \cite[\S5]{BK05}, one finds that the series $\beta_i(u,v)$ are extra terms appearing in the relations of twisted Yangians. By the same strategy of \cite[Lem. 4.9]{KLWZ23a}, we obtain the following.

\begin{lem}\label{serreai}
We have
\[
\begin{split}
\big[[e_1(u),e_2(v)],e_2(w)\big]+\{v\leftrightarrow w\}= \frac{1}{u-v}\Big( \beta_1(v,w)-&\,\beta_1(u,w)+(e_1(u)-e_1(v))\beta_2(v,w)\\
&+\beta_3(u,w)-\beta_3(v,w)\Big)+\{v\leftrightarrow w\}.
\end{split}
\]
\end{lem}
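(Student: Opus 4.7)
The plan is to follow the strategy of \cite[Lem.~4.9]{KLWZ23a}, which carries out the analogous computation for split type AI: start from the explicit expression \eqref{be1e2} for $(u-v)[e_1(u),e_2(v)]$, apply $\mathrm{ad}\,e_2(w)$ as a derivation, and then symmetrize in $v\leftrightarrow w$. The three relations \eqref{beta1b}--\eqref{beta3b} are precisely tailored so that every commutator that arises splits into a ``Yangian part'' (formally identical to the untwisted $\rY(\gl_N)$ case) and a ``twisted correction'' given by one of the $\beta_i$. After symmetrization, the Yangian parts assemble into the classical type A Serre relation \cite[Lem.~5.7]{BK05}, which vanishes, while the $\beta_i$-parts combine to give the stated right-hand side.

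More precisely, multiplying \eqref{be1e2} by $(u-v)^{-1}$ and bracketing with $e_2(w)$ yields
\begin{align*}
(u-v)\big[[e_1(u),e_2(v)],e_2(w)\big]
=&\,[e_1(u)e_2(v),e_2(w)]-[e_1(v)e_2(v),e_2(w)]\\
&-[e_{13}(u),e_2(w)]+[e_{13}(v),e_2(w)].
\end{align*}
Each term on the right expands via the Leibniz rule and is rewritten using \eqref{beta1b} (for the two $e_{13}$-brackets, producing the $\beta_1(v,w)-\beta_1(u,w)$ contribution together with a factor $e_2(w)[e_1(\cdot),e_2(w)]$ that feeds back into the ``Yangian'' side of the identity), \eqref{beta2b} (for $[e_2(v),e_2(w)]$, whose $\frac{1}{v-w}(e_2(v)-e_2(w))^2$ piece is antisymmetric in $v,w$ and therefore drops out after the $v\leftrightarrow w$ symmetrization, leaving only $(e_1(u)-e_1(v))\beta_2(v,w)$), and \eqref{beta3b} (for reordering the residual $[e_1(\cdot),e_2(w)]\,e_2(v)$ factor, which contributes $\beta_3(u,w)-\beta_3(v,w)$).

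After dividing by $(u-v)$ and adding the $v\leftrightarrow w$ image, the contributions not involving any $\beta_i$ form exactly the classical $\fksl_N$-type Serre combination of \cite[Lem.~5.7]{BK05} (since \eqref{d1e1b4}, \eqref{e1e1b4}, and \eqref{be1e2} coincide formally with their Yangian counterparts) and therefore vanish. What remains is precisely the right-hand side stated in the lemma. The main obstacle is bookkeeping rather than anything conceptual: one must order the non-commuting factors so that each $\beta_i$ appears in the exact form demanded by the statement, and verify that the various rational-function coefficients in $u,v,w$ collapse cleanly into the displayed $\frac{1}{u-v}$ factor. Both are routine finite checks reducible to partial fractions, so no further structural input beyond \eqref{beta1b}--\eqref{beta3b} is needed.
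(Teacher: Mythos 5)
Your proposal matches the paper's own argument: the paper proves this lemma by rerunning the Brundan--Kleshchev Serre computation \cite[Lem. 5.7]{BK05} while tracking the extra $\beta_i$-terms in \eqref{beta1b}--\eqref{beta3b}, exactly as in \cite[Lem. 4.9]{KLWZ23a}, which is precisely your strategy. One parenthetical justification is inaccurate --- the piece $\tfrac{1}{u-v}(e_1(u)-e_1(v))\tfrac{(e_2(v)-e_2(w))^2}{v-w}$ does not drop out by antisymmetry under $v\leftrightarrow w$, since the prefactor $\tfrac{1}{u-v}(e_1(u)-e_1(v))$ is not symmetric in $v,w$; it instead cancels against the other non-$\beta$ terms as part of the overall \cite{BK05} computation --- but this does not affect your (correct) conclusion that the Yangian parts vanish and the $\beta_i$-parts yield the stated right-hand side.
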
 
Here $\{v\leftrightarrow w\}$ denotes a summand obtained from the prior one with $v, w$ switched.

Recall $b_i(u)$ and $h_i(u)$ from \eqref{beven} and \eqref{heven}, respectively.

\begin{prop}
We have the following Serre relations 
\begin{align*}
&\big[b_1(u),[b_1(v),b_2(w)]\big]+\big[b_1(v),[b_1(u),b_2(w)]\big]=0,\\
&\big[b_3(u),[b_3(v),b_2(w)]\big]+\big[b_3(v),[b_3(u),b_2(w)]\big]=0,\\
\Sym_{k_1,k_2}&\big[b_{2,k_1},[b_{2,k_2},b_{j,r}]\big]=(-1)^{k_1}\sum_{p\gge 0}2^{-2p}\big([h_{2,k_1+k_2-2p-1},b_{j,r+1}]-\{h_{2,k_1+k_2-2p-1},b_{j,r}\}\big),
\end{align*}
where $j=1,3$.
\end{prop}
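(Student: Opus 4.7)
The three identities split into two families: the first two are standard type~A Serre relations after a parameter shift, while the third is the genuinely twisted Serre relation characteristic of the middle node fixed by $\tau$.

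For the first relation I will apply the anti-automorphism $\eta$ of Lemma~\ref{lem:eta} to the type~A Serre relation of Lemma~\ref{sec:A-serre}. Since $\eta(e_i(u))=f_i(u)$ and $\eta\bigl([X,[Y,Z]]\bigr)=[\eta(X),[\eta(Y),\eta(Z)]]$, the identity $[e_1(u),[e_1(v),e_2(w)]]+\{u\leftrightarrow v\}=0$ becomes $[f_1(u),[f_1(v),f_2(w)]]+\{u\leftrightarrow v\}=0$. Substituting $b_1(u)=\sqrt{-1}\,f_1(u+\tfrac{1}{2})$ and $b_2(w)=\sqrt{-1}\,f_2(w)$ from \eqref{beven} then yields the first relation, the overall factor $(\sqrt{-1})^3$ being absorbed into the zero right-hand side. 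For the second relation, I will use $f_i(u)=-e_{\tau i}(-u)$ from Lemma~\ref{efaiii} to rewrite $b_3(u)$ and $b_2(w)$ as constant multiples of $e_1(\tfrac{1}{2}-u)$ and $e_2(-w)$; the required identity is then a reparametrisation of Lemma~\ref{sec:A-serre}.

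For the twisted Serre relation (the $j=1$ case), the starting point is Lemma~\ref{serreai}, which expresses $\bigl[[e_1(u),e_2(v)],e_2(w)\bigr]+\{v\leftrightarrow w\}$ as a combination of the series $\beta_1,\beta_2,\beta_3$. Applying $\eta$ turns the left-hand side into $\Sym_{v,w}[f_2(v),[f_2(w),f_1(u)]]$; the substitutions $b_2(u_i)=\sqrt{-1}\,f_2(u_i)$, $b_1(v)=\sqrt{-1}\,f_1(v+\tfrac{1}{2})$ together with $h_2(u)=\tl d_2(u)d_3(u)$ then identify this, up to a sign, with $\Sym_{u_1,u_2}[b_2(u_1),[b_2(u_2),b_1(v)]]$. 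The case $j=3$ will be handled by the parallel argument, using $\tau$-symmetry via $e_{\tau i}(u)=-f_i(-u)$ to swap the roles of $b_1$ and $b_3$.

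The main obstacle is the right-hand side of this third identity. After applying $\eta$ and the shift $u\mapsto v+\tfrac{1}{2}$, one must verify that the combination $\frac{1}{u-v}\bigl(\beta_1(v,w)-\beta_1(u,w)+(e_1(u)-e_1(v))\beta_2(v,w)+\beta_3(u,w)-\beta_3(v,w)\bigr)+\{v\leftrightarrow w\}$ collapses to the compact form $\frac{4}{u_1+u_2}\Sym_{u_1,u_2}\frac{u_2(v-1)h_2(u_2)b_1(v)-u_2(v+1)b_1(v)h_2(u_2)}{4u_2^2-1}$ of \eqref{gconj6}. The $h_2$-terms will emerge naturally from $\beta_2(v,w)=-(v+w)^{-1}(\tl d_2(v)d_3(v)-\tl d_2(w)d_3(w))$, while the $b_1$-terms from $\beta_1,\beta_3$ carry $\tl d_2(v)(f_3(v)+e_1(\cdot))d_3(v)$ factors which, via $f_3(v)=-e_1(-v)$ and the shift, must recombine into the prescribed antisymmetric expression; the denominators $2v+1$, $u-v-1$ and $u+v$ hidden inside $\beta_3$ should simplify against $\frac{1}{u_1+u_2}$ after symmetrisation to produce the target denominator $4u_2^2-1$. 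Extracting coefficients of $u_1^{-k_1-1}u_2^{-k_2-1}v^{-r-1}$ from the resulting generating-function identity will then yield the stated Serre relation.
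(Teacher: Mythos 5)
Your proposal is correct and follows essentially the same route as the paper: the first two identities come from the type~A Serre relation of Lemma~\ref{sec:A-serre} via the anti-automorphism $\eta$ of Lemma~\ref{lem:eta} and the identification $e_i(u)=-f_{\tau i}(-u)$ of Lemma~\ref{efaiii}, and the third from Lemma~\ref{serreai} together with $\eta$. The one computation you leave unverified --- that the $\beta_1,\beta_2,\beta_3$ combination collapses to the generating-function form \eqref{gconj6} --- is exactly the step the paper also defers, citing the parallel split-type calculation in \cite[Thm.~4.10]{KLWZ23a}.
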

\begin{proof}
The first equality follows from Lemma \ref{sec:A-serre} by applying the anti-automorphism $\eta$ (recall $\eta$ is defined in Lemma~\ref{lem:eta}), while the second one follows from Lemma \ref{efaiii} and Lemma \ref{sec:A-serre}. The proof for the third equality is parallel to the counterpart in \cite[Thm. 4.10]{KLWZ23a} by using Lemma \ref{serreai} and the anti-automorphism $\eta$.
\end{proof}

We have established the desired Serre relations in $\scrX_4$. Let us consider the other relations.

\begin{prop}\label{X=4prop}
For $i,j\in\{1,2,3\}$ and $r,s\in\bN$, we have 
\begin{align}
&[h_{i,r},h_{j,s}]=0,\qquad h_{i,r}=(-1)^{r+1}h_{\tau i,r},\label{hhb4}\\
&[\X_{1,r},\X_{3,s}]=  (-1)^{r}h_{3,r+s}, \qquad [\X_{3,r},\X_{1,s}]=  (-1)^{r}h_{1,r+s},   
\label{bbb4-}
\\
&[b_{i,r+1},b_{j,s}]  - [b_{i,r },b_{j,s+1 }]  =\frac{c_{ij}}{2}\{b_{i,r },b_{j,s}\}-2\delta_{i,\tau j} (-1)^{r}h_{\tau i,r+s+1 },\label{bbb4}\\
&[h_{i,r+2},b_{j,s}] - [h_{i,r},b_{j,s+2}]=\frac{c_{ij}-c_{i,\tau j}}{2}\{h_{i,r+1},b_{j,s}\}\notag\\
&\qquad\qquad\qquad \qquad \qquad \ \  +\frac{c_{ij}+c_{i,\tau j}}{2} \{h_{i,r},b_{j,s+1}\}+\frac{c_{ij}c_{\tau i,j}}{4} [h_{i,r}, b_{j,s}].\label{hbb4}
\end{align}
Here we allow $r\in\bZ$ in the relation \eqref{hbb4} with $h_{i,-1}=1$ and $h_{i,r}=0$ for $r<-1$.
\end{prop}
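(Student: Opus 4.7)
The plan is to verify each identity by converting the Gauss-decomposition relations already established in Section~\ref{secN=4} into current-variable form via the substitutions \eqref{beven}-\eqref{heven} (with $n=2$), and then reading off coefficients of $u^{-r-1}v^{-s-1}$. The relations split naturally into three groups.

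\textbf{Immediate cases.} The commutativity $[h_{i,r},h_{j,s}]=0$ in \eqref{hhb4} follows from $[d_i(u),d_j(v)]=0$ (Lemma~\ref{lemnew1}), while $h_{i,r}=(-1)^{r+1}h_{\tau i,r}$ is a direct consequence of \eqref{i=tauih}. For the single fixed index $i=j=2$ the shift $\tfrac{n-i}{2}$ vanishes, so the shift homomorphism $\psi_1:\scrX_2\to\scrX_4$ of Proposition~\ref{prop:red} sends $b(u)\mapsto b_2(u)$ and $h(u)\mapsto h_2(u)$ on the nose; applying $\psi_1$ to Proposition~\ref{X=2} therefore yields \eqref{bbb4} and \eqref{hbb4} at $(i,j)=(2,2)$ (note $c_{22}=c_{\tau 2,2}=2$, $\delta_{2,\tau 2}=1$). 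The relation \eqref{bbb4-} between the $\tau$-paired indices $1$ and $3$ is obtained by rewriting $b_3(v)=-\sqrt{-1}\,e_1(-v+\tfrac12)$ via \eqref{fi=tauie} and using the $[e_1,f_1]$ formula in Proposition~\ref{propA}, producing the generating-function identity
\[
(u+v)[b_1(u),b_3(v)]=\zeta_1(-v+\tfrac12)-\zeta_1(u+\tfrac12)=h_3(v)-h_1(u),
\]
from which \eqref{bbb4-} follows by coefficient extraction.

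\textbf{Single-index outer cases.} The generators $d_1,d_2,e_1,f_1$ satisfy the Yangian-type commutators of Proposition~\ref{propA}; after the shift $u\mapsto u+\tfrac12$ these translate verbatim into \eqref{bbb4} and \eqref{hbb4} for $(i,j)=(1,1)$ (with $c_{11}=2$, $c_{31}=0$), exactly as in the Yangian calculation of \cite{BK05}. The $(3,3)$-case follows either by the anti-automorphism $\eta$ of Lemma~\ref{lem:eta} combined with \eqref{etahb}, or directly via the identifications \eqref{efd} which send the $\{d_3,d_4,e_3,f_3\}$-subalgebra to the $\{d_1,d_2,e_1,f_1\}$-subalgebra with $u\leftrightarrow -u$. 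For the mixed cases $(i,j)\in\{(1,3),(3,1)\}$ of \eqref{hbb4}, I would use $h_{3,r}=(-1)^{r+1}h_{1,r}$ to reduce to $[h_1,b_3]$, then bracket \eqref{bbb4-} with the modified Cartan currents $\tl h_{i,r}$ of Lemma~\ref{rvnew} (cf.\ Section~\ref{sec:qsSerre}) to extract the coefficients $c_{13}=0$, $c_{\tau 1,3}=2$ required on the right.

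\textbf{Cross relations (main obstacle).} The remaining cases couple $\{b_1,h_1\}$ with $\{b_2,h_2\}$, and symmetrically $\{b_3,h_3\}$ with $\{b_2,h_2\}$. For \eqref{bbb4} at $(i,j)=(1,2)$, Lemma~\ref{be1e2lem} provides
\[
(u-v)[f_1(u),f_2(v)]=f_2(v)f_1(v)-f_2(v)f_1(u)+f_{31}(u)-f_{31}(v);
\]
substituting $u\mapsto u+\tfrac12$ and taking the difference $[b_{1,r+1},b_{2,s}]-[b_{1,r},b_{2,s+1}]$ cancels the $f_{31}$-contribution and leaves exactly $-\tfrac12\{b_{1,r},b_{2,s}\}$, as demanded by $c_{12}=-1$ and $\delta_{1,\tau 2}=0$. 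For \eqref{hbb4} at $(i,j)\in\{(1,2),(2,1)\}$ --- the genuinely new and most delicate step --- I would combine the Yangian-type commutators $[d_k,e_2]$, $[d_k,f_2]$ of Proposition~\ref{propA} with Lemmas~\ref{be13e2lem} and~\ref{be1e2e2lem}; the key point is that the $\beta$-corrections $\beta_1(u,v),\beta_2(u,v),\beta_3(u,v)$ of Section~\ref{secN=4} assemble, under the shifts $+\tfrac12,\,0,\,-\tfrac12$ prescribed by \eqref{beven}-\eqref{heven}, into precisely the coefficients $(c_{ij}\pm c_{\tau i,j})/2$ and $c_{ij}c_{\tau i,j}/4$ appearing on the right of \eqref{hbb4}. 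Tracking these shifts together with the sign contributed by the $\sqrt{-1}$ factor in \eqref{beven} --- which is indeed what forces the particular shifts chosen in the definitions --- is the main computational hurdle. Finally, the extension to $r\in\bZ$ with $h_{i,-1}=1$ and $h_{i,r}=0$ for $r<-1$ is automatic: specialising the resulting relation to $r=-2,-1$ recovers the initial conditions \eqref{qsconj5'}-\eqref{qsconj5}, which can also be checked directly at the level of low-degree Gaussian generators.
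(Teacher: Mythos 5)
Most of your outline tracks the paper's own proof: \eqref{hhb4} from Lemma~\ref{lemnew1} and \eqref{i=tauih}; \eqref{bbb4-} from \eqref{e1f1b4} and Lemma~\ref{efaiii}; the $(2,2)$ cases by pushing Proposition~\ref{X=2} through the shift homomorphism $\psi_1$; the $(1,2)$ case of \eqref{bbb4} from Lemma~\ref{be1e2lem}; and the reductions via the anti-automorphism $\eta$ and $h_{3,r}=(-1)^{r+1}h_{1,r}$. Your generating-function identity $(u+v)[b_1(u),b_3(v)]=h_3(v)-h_1(u)$ and the coefficient extraction for \eqref{bbb4} at $(1,2)$ both check out.

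However, there is a genuine gap exactly at the step you yourself flag as the main obstacle, namely \eqref{hbb4} for $(i,j)=(1,2)$ and $(2,1)$. First, Proposition~\ref{propA} does \emph{not} supply the commutators $[d_k(u),e_2(v)]$ and $[d_k(u),f_2(v)]$ when $N=4$: its hypotheses restrict the root-current index to $1\lle j<n$, and here $n=2$, so only $j=1$ is covered. The commutators with $e_2,f_2$ are precisely the non-Yangian ones -- e.g.\ $[d_2(u),f_2(v)]$ carries the reflection term $\tfrac{1}{u+v}d_2(u)(e_2(u)+f_2(v))$, obtained by applying $\psi_1$ to \eqref{b2d1f}--\eqref{b2d2f} -- and it is this $\tfrac{1}{u+v}$ pole, interacting with the $\pm\tfrac12$ shifts, that produces the coefficients $\tfrac{c_{ij}\pm c_{\tau i,j}}{2}$ and in particular the $\tfrac{c_{ij}c_{\tau i,j}}{4}$ term; a purely Yangian $[d,f]$ relation could never generate it. Second, Lemmas~\ref{be13e2lem} and~\ref{be1e2e2lem} and the series $\beta_1,\beta_2,\beta_3$ concern commutators among the raising currents $e_1,e_{13},e_2$ and feed into the Serre identity of Lemma~\ref{serreai}; they say nothing about Cartan--root commutators $[h_i(u),b_j(v)]$ and cannot be assembled into \eqref{hbb4}. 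The correct route is: for $(1,2)$ write $h_1(u)=\tl d_1(u+\tfrac12)d_2(u+\tfrac12)$, use $[d_1(u),f_2(v)]=0$ from \eqref{ddb4} and the $\psi_1$-image of \eqref{b2d1f}; for $(2,1)$ write $h_2(u)=\tl d_2(u)d_3(u)$ and use $\eta$ applied to \eqref{d2e1b4} together with Lemma~\ref{efaiii} to get $[\tl d_2(u),f_1(v)]$ and $[d_3(u),f_1(v)]$, the latter again carrying a $\tfrac{1}{u+v}$ term. Finally, the extension to $r\in\bZ$ is not recovered by ``specialising to $r=-2,-1$''; it holds because the terms dropped in the generating-function computation depend on $u$ alone and hence contribute only to nonnegative powers of $v$, leaving the identity valid for all integer $r$ once $s\in\bN$.
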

\begin{proof}
We shall frequently use \eqref{i=tauih}--\eqref{etahb} obtained from Lemma \ref{efaiii}. 
The relation \eqref{hhb4} follows from Lemma \ref{lemnew1}. The relation \eqref{bbb4-} is immediate from \eqref{e1f1b4} and Lemma \ref{efaiii}. The relation \eqref{bbb4-} clearly implies the relation \eqref{bbb4} for $(i,j)=(1,3),(3,1)$. The relation \eqref{bbb4} for $(i,j)=(1,2),(2,3)$ is a corollary of Lemma \ref{be1e2lem} with \eqref{fi=tauie}. The relation \eqref{bbb4} for $(i,j)=(3,3)$ follows from \eqref{e1e1b4} while the relation \eqref{bbb4} for $(i,j)=(2,2)$ follows by applying the shift homomorphism $\psi_1$ to the corresponding rank 1 relation in Proposition \ref{X=2}. Applying the anti-automorphism $\eta$, we obtain the relation \eqref{bbb4} for $(i,j)=(1,1)$ as $\eta$ sends $b_1(u)$ to $-b_3(-u)$ by \eqref{etahb}.

By the anti-automorphism $\eta$, $h_1(u)=h_3(-u)$ and \eqref{etahb} (or Lemma \ref{efaiii}), it suffices to consider the relation \eqref{hbb4} for $(i,j)$ such that $i,j\lle 2$. The relation \eqref{hbb4} for $(i,j)=(1,1)$ is the same as  Yangian of type A as observed in Section \ref{secA}, while this relation for $(i,j)=(2,2)$ follows from Proposition \ref{X=2} by applying the shift homomorphism $\psi_1$ to the corresponding relation in $\scrX_2$. 
 
We show the relation \eqref{hbb4} for $(i,j)=(1,2)$. Applying the shift homomorphism $\psi_1$ to the relation \eqref{b2d1f} in $\scrX_2$, we obtain
\[
[d_2(u),f_2(v)]=\frac{1}{u-v}\big(f_2(u)-f_2(v)\big)d_2(u)+\frac{1}{u+v}d_2(u)\big(e_2(u)+f_2(v)\big).
\]
Note that $[d_1(u),f_2(v)]=0$ and then $[\tl{d}_1(u),f_2(v)]=0$. By definition, $h_1(u)=\tl{d}_1(u+\frac{1}{2}) d_2(u+\frac{1}{2})$ and $b_2(v)=\sqrt{-1}f_2(v)$. Then we have
\begin{align*}
\big(u-v+\tfrac{1}{2}\big)&\big(u+v+\tfrac{1}{2}\big) [h_1(u),b_2(v)]\\
=\, &\big(u+v+\tfrac{1}{2}\big) \big(b_2(u)-b_2(v)\big)h_1(u) + \big(u-v+\tfrac{1}{2}\big)h_1(u)\big(\sqrt{-1}e_2(u)+b_2(v)\big).
\end{align*}
The desired statement follows by taking the coefficients of $u^{-r-1}v^{-s-1}$ for $r\in\Z,s\in \bN$ in this identity. Here the terms $b_2(u)h_1(u)$ and $h_1(u)e_2(u)$ in the RHS do not contribute to the coefficients of $u^{-r-1}v^{-s-1}$ for $r\in\Z,s\in\bN$. 

Finally, it remains to consider the relation \eqref{hbb4} for $(i,j)=(2,1)$. Applying the anti-automorphism $\eta$ to \eqref{d2e1b4}, we have
\[
[\tl d_2(u),f_1(v)]=\frac{1}{u-v}\tl d_2(u)\big(f_1(u)-f_1(v)\big).
\]
By Lemma \ref{efaiii}, we further have
\beq\label{mm1}
[d_3(u),f_1(v)]=\frac{1}{u+v}d_3(u)\big(e_2(u)+f_1(v)\big).
\eeq
Therefore, we find
\beq\label{mm3}
\begin{split}
(u^2-v^2)&[\tl d_2(u)d_3(u),f_1(v)]\\
&=(u+v)\tl d_2(u)(f_1(u)-f_1(v))d_3(u)+(u-v)\tl d_2(u)d_3(u)(e_3(u)+f_1(v))\\
&\simeq -(u+v)\tl d_2(u)f_1(v)d_3(u)+(u-v)\tl d_2(u)d_3(u)f_1(v).
\end{split}
\eeq
By \eqref{mm1}, we have 
\beq\label{mm2}
(u+v)f_1(v)d_3(u)\simeq (u+v-1)d_3(u)f_1(v).
\eeq
Plugging \eqref{mm2} into \eqref{mm3}, we obtain
\[
(u^2-v^2)\big[\tl d_2(u)d_3(u),f_1(v)\big]\simeq (-2v+1)\tl d_2(u)d_3(u)f_1(v).
\]
By definition, we have $h_2(u)=\tl d_2(u)d_3(u)$ and $b_1(v)=\sqrt{-1}f_1(v+\frac{1}{2})$, and then the relation \eqref{hbb4} for $(i,j)=(2,1)$ follows by taking the components of this relation.
\end{proof}

\begin{rem}
Similarly, we can also compute the relations involving $h_0(u)$ or $h_4(u)$ that are required to complete the proof of Theorem \ref{main3}. These relations are much simpler to obtain as $h_0(u)$ is $d_1(u+1)$. So we omit the details.
\end{rem}

\subsection{Relations in $\scrX_3$}

We start with listing relations between Gaussian generators $d_1(u)$, $d_2(u)$, $e_1(u)$, $f_1(u)$; cf. Lemma \ref{efaiii}.
\begin{lem}
We have 
\begin{align}
[d_i(u),d_j(v)]&=0,\label{ddb3}\\
[d_1(u),e_1(v)]&=\frac{1}{u-v}d_1(u)(e_1(v)-e_1(u)),\label{d1e1b3}\\
[d_1(u),f_1(v)]&=\frac{1}{u-v}(f_1(u)-f_1(v))d_1(u),\label{d1f1b3}\\
[e_1(u),e_1(v)]&=\frac{1}{u-v}(e_1(u)-e_1(v))^2,\label{e1e1b3}\\
[f_1(u),f_1(v)]&=-\frac{1}{u-v}(f_1(u)-f_1(v))^2,\label{f1f1b3}\\
[e_1(u),f_1(v)]&=\frac{1}{u-v}\big(\tl d_1(u)d_2(u)-\tl d_1(v)d_2(v)\big)+\frac{1}{u+v}\big(e_{13}(u)+e_1(u)f_1(v)+f_{31}(v)\big),\label{e1f1b3}\\
[d_2(u),e_1(v)]&=\frac{1}{u-v}d_2(u)(e_1(u)-e_1(v))-\frac{1}{u+v}(e_1(v)+f_2(u))d_2(u),\label{d2e1b3}\\
[d_2(u),f_1(v)]&=\frac{1}{u-v}(f_1(v)-f_1(u))d_2(u)+\frac{1}{u+v}d_2(u)(f_1(v)+e_2(u)).\label{d2f1b3}
\end{align}
\end{lem}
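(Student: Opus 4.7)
The plan is to derive each of the eight identities directly from the quaternary relation \eqref{bcom}, in the spirit of the proof of Lemma \ref{b2ser}, since for $N=3$ no shift homomorphism from $\scrX_{N-2}$ is available and the Yangian embedding $\rY(\gl_m)\hookrightarrow \Y_N$ of Section \ref{secA} requires $N\gge 2m\gge 4$. Throughout I use the conventions $1'=3$, $2'=2$, $3'=1$ and repeatedly invoke the Gauss decomposition \eqref{eq:sij-Gauss}. The identity \eqref{ddb3} is already a special case of Lemma \ref{lemnew1}.

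The four ``Yangian-type'' identities \eqref{d1e1b3}--\eqref{f1f1b3} come from choosing the indices $(i,j,k,l)$ in \eqref{bcom} so that all three Kronecker deltas $\delta_{kj'}$, $\delta_{il'}$, $\delta_{ij'}$ vanish: take $(1,1,1,2)$ for \eqref{d1e1b3}, $(1,1,2,1)$ for \eqref{d1f1b3}, $(1,2,1,2)$ for \eqref{e1e1b3}, and $(2,1,2,1)$ for \eqref{f1f1b3}. In each case the right-hand side of \eqref{bcom} reduces to the familiar Yangian form; substituting $s_{12}=d_1e_1$, $s_{21}=f_1d_1$ and cancelling the $d_1$ factors using the already-derived relations \eqref{ddb3}--\eqref{d1f1b3} then isolates the desired commutator exactly as in the derivation of Proposition \ref{propA}.

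The three twisted identities \eqref{e1f1b3}, \eqref{d2e1b3}, \eqref{d2f1b3} require index choices for which at least one Kronecker delta is nonzero, and it is precisely this twist contribution that produces the $\frac{1}{u+v}$ correction. For \eqref{e1f1b3} I take $(i,j,k,l)=(1,2,2,1)$: here $\delta_{kj'}=\delta_{2,2}=1$ contributes the extra term $(u-v)\sum_{a=1}^{3}s_{1a'}(u)s_{a1}(v)$, namely $(u-v)\bigl(s_{13}(u)s_{11}(v)+s_{12}(u)s_{21}(v)+s_{11}(u)s_{31}(v)\bigr)$. Expanding this via Gauss decomposition with $s_{13}=d_1e_{13}$ and $s_{31}=f_{31}d_1$, invoking the previously derived relations \eqref{ddb3}--\eqref{f1f1b3}, and separating the $\frac{1}{u-v}$ and $\frac{1}{u+v}$ pieces by partial fractions of $\frac{1}{u^2-v^2}$, one reads off \eqref{e1f1b3}. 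For \eqref{d2e1b3} I apply \eqref{bcom} with $(2,2,1,2)$ and extract $[d_2(u),e_1(v)]$ from $[s_{22}(u),s_{12}(v)]$ via $s_{22}=d_2+f_1d_1e_1$, peeling off the $[f_1d_1e_1,s_{12}]$ part using the Yangian-type relations above; \eqref{d2f1b3} follows either from the parallel choice $(2,2,2,1)$, or more economically by applying the anti-automorphism $\eta$ of Lemma \ref{lem:eta} to \eqref{d2e1b3}.

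The main technical obstacle is the bookkeeping for \eqref{e1f1b3}: after Gauss expansion the three summands in the twist term contribute a substantial collection of cross terms involving $d_1,d_2,e_1,f_1,e_{13},f_{31}$, and matching them against the compact right-hand side requires careful partial-fraction splitting of $\frac{1}{u^2-v^2}=\frac{1}{2u}\bigl(\frac{1}{u-v}+\frac{1}{u+v}\bigr)$ and the identification of $\tl d_1d_2$ via $\sum_a s_{1a'}s_{a1}$. This step is mechanical but error-prone; once \eqref{e1f1b3} is in hand, the remaining two identities are obtained by shorter, parallel computations.
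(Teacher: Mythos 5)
Your proposal is correct and follows essentially the same route as the paper: every identity is extracted from the quaternary relation \eqref{bcom} with suitable index choices (identical to the paper's for \eqref{e1f1b3}), rewritten via the Gauss decomposition \eqref{eq:sij-Gauss}, with \eqref{ddb3} coming from Lemma \ref{lemnew1} and the anti-automorphism $\eta$ of Lemma \ref{lem:eta} supplying the mirror relations. The only cosmetic differences are that the paper also invokes $\eta$ for \eqref{d1f1b3} and \eqref{f1f1b3}, and reaches \eqref{d2e1b3} by first isolating the coefficient relation for $e_1^{(1)}$ from $[s_{12}(u),s_{22}(v)]$ (which keeps only a single twist term) rather than working with $[s_{22}(u),s_{12}(v)]$ directly.
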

\begin{proof}
We verify the essential relations \eqref{ddb3}, \eqref{e1e1b3}, \eqref{e1f1b3}, \eqref{d2e1b3} as the other relations follow from the essential ones by taking the anti-automorphism $\eta$; see Lemma \ref{lem:eta}.

The relation \eqref{ddb3} follows from Lemma \ref{lemnew1}. Then \eqref{d1e1b3} follows from \eqref{bcom} with $i=j=k=1$, $l=2$ and $[d_1(u),d_1(v)]=0$. Applying the anti-automorphism $\eta$ to \eqref{d1e1b3}, we obtain \eqref{d1f1b3}.

\mybox{Equation \eqref{e1e1b3}}. By \eqref{bcom} with $i=k=1$ and $j=l=2$, we have $[s_{12}(u),s_{12}(v)]=0$ which implies that
\beq\label{b3pf2}
d_1(u) e_1(u)d_1(v) e_1(v)=d_1(v) e_1(v)d_1(u) e_1(u).
\eeq
Using \eqref{d1e1b3} to commute $d_1(u)$ and $e_1(v)$, we have
\begin{align*}
d_1(u)\Big(d_1(v)e_1(u)+&\frac{1}{u-v}d_1(v)(e_1(u)-e_1(v))\Big)e_1(v)\\
&=d_1(v)\Big(d_1(u)e_1(v)+\frac{1}{u-v}d_1(u)(e_1(u)-e_1(v))\Big)e_1(u).
\end{align*}
Canceling $d_1(u)d_1(v)$, we obtain \eqref{e1e1b3}.

\mybox{Equation \eqref{e1f1b3}}. We first claim that
\beq\label{b3pf1}
\begin{split}
(u-v)\big([d_1(u)e_1(u),f_1(v)d_1(v)]&-d_1(u)[e_1(u),f_1(v)]d_1(v)\big)\\
&=f_1(u)d_1(u)e_1(u)d_1(v)-f_1(v)d_1(v)e_1(v)d_1(u).
\end{split}
\eeq
This is equivalent to
\begin{align*}
\big((u-v)d_1(u)f_1(v)&-f_1(u)d_1(u)\big)e_1(u)d_1(v)\\
&=f_1(v)d_1(v)\big((u-v)d_1(u)e_1(u)-e_1(v)d_1(u)\big).
\end{align*}
Note that by \eqref{d1f1b3}, we have
\[
(u-v)d_1(u)f_1(v)-f_1(u)d_1(u)=f_1(v)\big((u-v)d_1(u)-d_1(u)\big).
\]
Hence, to prove \eqref{b3pf1}, it reduces to show
\[
\big((u-v)d_1(u)-d_1(u)\big)e_1(u)d_1(v)=d_1(v)\big((u-v)d_1(u)e_1(u)-e_1(v)d_1(u)\big),
\]
that is
\[
(u-v)d_1(u)[d_1(v),e_1(u)]=d_1(v)e_1(v)d_1(u)-d_1(u)e_1(u)d_1(v).
\]
Applying \eqref{d1e1b3} to the left-hand side, it transforms to
\[
d_1(v)[d_1(u),e_1(v)]=d_1(u)[d_1(v),e_1(u)]
\]
which follows directly from \eqref{d1e1b3} by applying it to both sides.

Let us come back to \eqref{e1f1b3}. By \eqref{bcom} with $i=l=1$ and $j=k=2$ in terms of Gaussian generators, we have
\begin{align*}
(u^2-v^2)&[d_1(u)e_1(u),f_1(v)d_1(v)]\\
=\, &(u+v)\big(d_2(u)d_1(v)+f_1(u)d_1(u)e_1(u)d_1(v)-d_2(v)d_1(u)-f_1(v)d_1(v)e_1(v)d_1(u)\big)\\
&\qquad \qquad +(u-v)\big(d_1(u)e_{13}(u)d_1(v)+d_1(u)e_1(u)f_1(v)d_1(v)+d_1(u)f_{31}(v)d_1(v)\big).
\end{align*}
Now using \eqref{b3pf1} for $[d_1(u)e_1(u),f_1(v)d_1(v)]$ and multiplying $\tl d_1(u)$, $\tl d_1(v)$ from the left and the right, respectively, one finds \eqref{e1f1b3}. 

\mybox{Equation \eqref{d2e1b3}}. Taking the coefficients of $u$ in \eqref{bcom} with $i=1,j=k=l=2$ in terms of Gaussian generators, we find that
\beq\label{b3pf3}
[e_1^{(1)},d_2(v)+f_1(v)d_1(v)e_1(v)]=d_1(v)e_1(v)+f_2(v)d_2(v)+f_{31}(v)d_1(v)e_1(v).
\eeq
It follows from \eqref{b3pf2} that 
\beq\label{b3pf4}
[e_1^{(1)},d_1(v)e_1(v)]=0.
\eeq
Note also that, by \eqref{e1f1b3}, we have
\beq\label{b3pf5}
[e_1^{(1)},f_1(v)]=1-\tl d_1(v)d_2(v)+f_{31}(v).
\eeq
Combining \eqref{b3pf3}, \eqref{b3pf4}, and \eqref{b3pf5}, we conclude that
\beq\label{b3pf6}
[e_1^{(1)},d_2(v)]=d_2(v)e_1(v)+f_2(v)d_2(v).
\eeq
On the other hand, we have $[d_1(u),e_1^{(1)}]=d_1(u)e_1(u)$ by \eqref{d1e1b3} and $f_2(v)=-e_1(-v)$ by Lemma~ \ref{efaiii}. Therefore, we find that
\begin{align*}
d_1(u)[e_1(u),d_2(v)]&\stackrel{\eqref{ddb3}}{=}[d_1(u)e_1(u),d_2(v)]=[d_1(u),[e_1^{(1)},d_2(v)]]\\
&\stackrel{\eqref{b3pf6}}{=}[d_1(u),d_2(v)e_1(v)+f_2(v)d_2(v)]\\
&\stackrel{\eqref{ddb3}}{=}d_2(v)[d_1(u),e_1(v)]+[d_1(u),f_2(v)]d_2(v)\\
&\stackrel{\eqref{d1e1b3}}{=}\frac{1}{u-v}d_2(v)d_1(u)\big(e_1(v)-e_1(u)\big)+\frac{1}{u+v}d_1(u)\big(e_1(u)+f_2(v)\big)d_2(v),
\end{align*}
completing the proof of \eqref{d2e1b3}.
\end{proof}

\begin{lem}\label{centralb3}
The coefficients of $d_1(u+1)d_2(u)d_3(u-1)$ are central elements in $\scrX_3$.
\end{lem}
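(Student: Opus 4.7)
The strategy follows the $\simeq$-calculus introduced in the proof of Lemma \ref{centralb2}. Write $C(u) = d_1(u+1)d_2(u)d_3(u-1)$. By \eqref{ddb3}, $C(u)$ already commutes with every $d_j(v)$, so by Lemma \ref{lem:de-gen} it remains to establish commutation with $e_1(v)$ and $e_2(v)$. Two symmetries collapse this to a single computation: the anti-automorphism $\eta$ of Lemma \ref{lem:eta} fixes each $d_i(u)$, hence fixes $C(u)$, and swaps $e_j \leftrightarrow f_j$; and Lemma \ref{efaiii} gives $e_2(v) = -f_1(-v)$. Thus it suffices to prove $[C(u), f_1(v)] = 0$: commutation with $e_1(v)$ will follow by $\eta$, commutation with $e_2(v)$ by the substitution $v \mapsto -v$, and commutation with $f_2(v)$ again by $\eta$.

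For this single commutator, I would compute in the $\simeq$ sense of \eqref{simdef} how each factor of $C(u)$ commutes with $f_1(v)$. Solving \eqref{d1f1b3} for $d_1(u) f_1(v)$ and discarding the $u$-only term $f_1(u) d_1(u)$, then shifting $u \mapsto u+1$, gives $d_1(u+1) f_1(v) \simeq \tfrac{u-v}{u-v+1}\, f_1(v) d_1(u+1)$. A parallel manipulation of \eqref{d2f1b3}, discarding the terms $f_1(u) d_2(u)$ and $d_2(u) e_2(u)$, yields
\[
d_2(u) f_1(v) \simeq \frac{(u+v)(u-v+1)}{(u+v-1)(u-v)}\, f_1(v) d_2(u).
\]
The treatment of $d_3(u-1)$ is the main obstacle: none of the listed relations \eqref{ddb3}--\eqref{d2e1b3} directly address $[d_3(u), f_1(v)]$. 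To circumvent this, I would invoke the identity $d_3(u) = c(u)\tl d_1(-u)$ from Lemma \ref{efaiii}, use the centrality of $c(u)$ from Proposition \ref{Binv}, and conjugate \eqref{d1f1b3} by $d_1(w)\tl d_1(w)=1$ to derive $[\tl d_1(w), f_1(v)] = (w-v)^{-1}\, \tl d_1(w)(f_1(v) - f_1(w))$. Specializing $w = 1-u$ and discarding the $u$-only term $d_3(u-1) f_1(1-u)$ gives
\[
d_3(u-1) f_1(v) \simeq \frac{u+v-1}{u+v}\, f_1(v) d_3(u-1).
\]

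Chaining these three $\simeq$ relations through $C(u)$ and using $[d_i(u), d_j(v)] = 0$ to freely reshuffle the $d$'s, the product of the three rational factors collapses to $1$, yielding $C(u) f_1(v) \simeq f_1(v) C(u)$. Since $C(u)-1$ is a series in $u^{-1}$ with no constant term and $f_1(v)$ likewise has no $v^0$-term, the relation $[C(u), f_1(v)] \simeq 0$ accounts for all nontrivial coefficients of the commutator and is therefore equivalent to $[C(u), f_1(v)] = 0$ as a formal series in $u^{-1}, v^{-1}$, exactly as in the $r\in\mathbb{Z}$ observation of Proposition \ref{X=2}. The seemingly ad hoc shift pattern $(u+1, u, u-1)$ in $C(u)$ is precisely what forces the three rational factors to cancel, consistent with the interpretation of $C(u)$ as a shifted quantum determinant for $\scrX_3$.
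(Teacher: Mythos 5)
Your proof is correct and is exactly the argument the paper intends: its own proof is a one-line reference to the $\simeq$-computation of Lemma \ref{centralb2}, and your write-up carries out that computation for $N=3$, correctly supplying the two extra ingredients it requires, namely the reduction to the single commutator $[C(u),f_1(v)]=0$ via the anti-automorphism $\eta$ together with $e_2(v)=-f_1(-v)$, and the treatment of $d_3(u-1)$ through $d_3(u)=c(u)\tl d_1(-u)$ with $c(u)$ central. The three rational factors $\tfrac{u-v}{u-v+1}$, $\tfrac{(u+v)(u-v+1)}{(u+v-1)(u-v)}$, $\tfrac{u+v-1}{u+v}$ do multiply to $1$, and your justification for upgrading $\simeq 0$ to $=0$ is sound since the commutator involves only strictly negative powers of both $u$ and $v$.
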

\begin{proof}
The proof is similar to that of Lemma \ref{centralb2}.
\end{proof}


Recall $b_{i,r}$ and $h_{i,r}$ for $i\in\{1,2\}$,  $r\in\bN$ from \eqref{bodd}, \eqref{hodd}, and \eqref{bhcom}.
\begin{prop}\label{X=3prop}
For $i,j\in\{1,2\}$ and $r,s\in\bN$, we have 
\begin{align}
&[h_{i,r},h_{j,s}]=0,\qquad h_{i,r}=(-1)^{r+1}h_{\tau i,r},\label{hhb3}\\
&[b_{i,r+1},b_{j,s}]  - [b_{i,r },b_{j,s+1 }]  =\frac{c_{ij}}{2}\{b_{i,r },b_{j,s}\}-2\delta_{i,\tau j} (-1)^{r}h_{\tau i,r+s+1 },\label{bbb3}\\
&[h_{i,r+2},b_{j,s}] - [h_{i,r},b_{j,s+2}]=\frac{c_{ij}-c_{i,\tau j}}{2}\{h_{i,r+1},b_{j,s}\}\notag\\
&\qquad\qquad\qquad \qquad \qquad \ \  +\frac{c_{ij}+c_{i,\tau j}}{2} \{h_{i,r},b_{j,s+1}\}+\frac{c_{ij}c_{\tau i,j}}{4} [h_{i,r}, b_{j,s}].\label{hbb3}
\end{align}
Here we allow $r\in\bZ$ in the relation \eqref{hbb3} with $h_{i,-1}=1$ and $h_{i,r}=0$ for $r<-1$.
\end{prop}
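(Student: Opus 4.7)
The plan is to verify the three families of relations by translating them, via the definitions \eqref{bodd}--\eqref{hodd}, to identities involving the Gauss generators $d_i(u), e_1(v), f_1(w)$ whose commutation relations are listed in \eqref{ddb3}--\eqref{d2f1b3}, and then extracting the coefficients of $u^{-r-1}v^{-s-1}$.

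First, I would dispatch \eqref{hhb3}. Commutativity $[h_{i,r},h_{j,s}]=0$ is immediate from \eqref{ddb3} (and Lemma~\ref{lemnew1}) since each $h_i(u)$ is a product of mutually commuting series in the $d_k$'s times a scalar prefactor. For the symmetry $h_{i,r}=(-1)^{r+1}h_{\tau i,r}$, the identity to verify is $h_1(u)=h_2(-u)$. Writing $h_2(-u)=(1+\tfrac{1}{4u})\tl d_2(-u-\tfrac14)d_3(-u-\tfrac14)$ and using Lemma~\ref{efaiii} in the forms $d_3(-u-\tfrac14)=c(u+\tfrac14)\tl d_1(u+\tfrac14)$ and $\tl d_2(-u-\tfrac14)=d_2(u+\tfrac14)/c(-u-\tfrac14)$, the desired equality reduces to $c(u+\tfrac14)=c(-u-\tfrac14)$, which holds since $c(u)$ is even in $u^{-1}$ by Proposition~\ref{Binv}. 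This calibration is precisely what the $1\pm\tfrac{1}{4u}$ prefactors in \eqref{hodd} are designed for.

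Next, for \eqref{bbb3}, I would work case by case. The diagonal cases $(i,j)=(1,1)$ and $(2,2)$ follow directly from \eqref{f1f1b3} and \eqref{e1e1b3} (using $f_2(v)=-e_1(-v)$ from Lemma~\ref{efaiii}) by substituting $u\mapsto u+\tfrac14$, $v\mapsto v+\tfrac14$ and extracting coefficients; the absence of a right-hand Cartan term matches $\delta_{i,\tau i}=0$. The essential case is $(i,j)=(1,2)$, where $\delta_{1,\tau 2}=1$ and one must produce the term $-2(-1)^r h_{2,r+s+1}$. Using $b_2(v)=-\sqrt{-1}\,e_1(-v+\tfrac14)$, one rewrites $[b_1(u),b_2(v)]$ as $-[e_1(-v+\tfrac14),f_1(u+\tfrac14)]$ and invokes \eqref{e1f1b3}. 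The spurious terms $e_{13}$, $e_1f_1$, $f_{31}$ in \eqref{e1f1b3} enter only through the denominator $(u+v)^{-1}$ and, after the $\tfrac14$-shifts become a pole at $u+v=-\tfrac12$; this is exactly the pole that the $(1-\tfrac{1}{4v})$ factor in $h_2(v)$ is tailored to cancel, leaving the clean relation. The $(2,1)$ case follows by the anti-automorphism $\eta$ of Lemma~\ref{lem:eta} (or a parallel computation).

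For \eqref{hbb3}, the plan is to translate each pair $(i,j)$ to a commutator of the form $[\tl d_k(u)d_{k+1}(u),f_l(v)]$ or its $e_l$ counterpart, using \eqref{d1e1b3}--\eqref{d2f1b3}. The key observation is that $[\tl d_k d_{k+1},f_l(v)]$ produces simple poles at $u-v=\pm\tfrac12$ and at $u+v=0$; after incorporating the $\tfrac14$-shifts the former yield the $\tfrac{c_{ij}\pm c_{\tau i,j}}{2}$ anticommutator terms and the latter yields (once multiplied by the $1\pm\tfrac{1}{4u}$ prefactor) the $\tfrac{c_{ij}c_{\tau i,j}}{4}[h_{i,r},b_{j,s}]$ term that distinguishes the quasi-split case. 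As before, the cases $(i,i)$ reduce to type-A-like identities while the mixed cases $(1,2),(2,1)$ are the substantive ones; the extension from $r\in\bN$ to $r\in\bZ$ is harmless because the terms dropped when passing from the generating-function identity to its coefficient form contain only non-negative powers of $v$, so the calculation goes through for $r\in\bZ$ as long as $s\in\bN$.

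The main obstacle will be the bookkeeping in the mixed cases of \eqref{bbb3} and \eqref{hbb3}: one must track how the factor $\frac{1}{u+v}$ in \eqref{e1f1b3}, the $\tfrac14$-shifts in \eqref{bodd}--\eqref{hodd}, and the scalar prefactors $(1\pm\tfrac{1}{4u})$ conspire to eliminate the auxiliary quasi-determinants $e_{13}$, $f_{31}$ and to produce only the Cartan series $h_i(u)$ on the right-hand side. Once this cancellation is verified in generating-function form, extracting coefficients is routine.
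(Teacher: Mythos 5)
Your strategy coincides with the paper's: reduce everything to the Gaussian-generator relations \eqref{ddb3}--\eqref{d2f1b3}, use Lemma~\ref{efaiii} and the anti-automorphism $\eta$ to cut the case analysis down to $(i,j)=(1,1)$ and $(1,2)$, and extract coefficients of $u^{-r-1}v^{-s-1}$ after discarding terms that contribute only nonnegative powers of one of the variables. Two mechanisms are misdescribed, though neither is fatal. First, in the $(i,j)=(1,2)$ case of \eqref{bbb3} the term $e_1(u)f_1(v)$ from \eqref{e1f1b3} is not spurious: after the substitution $[b_1(u),b_2(v)]=-[e_1(-v+\tfrac14),f_1(u+\tfrac14)]$ it becomes $-b_2(v)b_1(u)$ and, combined with the $\tfrac12$ in the prefactor $(u-v+\tfrac12)$, is exactly what produces the anticommutator $\tfrac{c_{12}}{2}\{b_1,b_2\}$; the genuinely discardable terms $e_{13}$ and $f_{31}$ carry no denominator at all once $(u-v)$ is cleared and are dropped simply because each depends on only one of the two variables. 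Second, the prefactor $1-\tfrac{1}{4v}$ in $h_2$ does not cancel a pole at $u+v=-\tfrac12$ (the Cartan contribution sits over $u+v$, not $u+v+\tfrac12$); its role is to convert the numerator $2(v-\tfrac14)$ coming from $\tfrac{u-v+1/2}{u+v}$ into $2v$, so that the surviving term is exactly $-\tfrac{2v}{u+v}\,h_2(v)$. Likewise, in \eqref{hbb3} the relevant poles of $[\tl d_1(u)d_2(u),f_1(v)]$ sit at $u-v=0$, at $u-v=1$ (after commuting $f_1(v)$ past $\tl d_1(u)$ via \eqref{d1f1b3}), and at $u+v=0$, not at $u-v=\pm\tfrac12$. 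Once these bookkeeping points are straightened out, the computation closes exactly as you outline.
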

\begin{proof}
The relation \eqref{hhb3} is clear from \eqref{i=tauih}, \eqref{ddb3}, and the definition of $h_{i,r}$.

The relation \eqref{bbb3} for $i=j$ is obvious from \eqref{e1e1b3} and \eqref{f1f1b3}. Here for \eqref{e1e1b3}, we apply \eqref{fi=tauie} to get the relation \eqref{bbb3} for $i=j=2$. Then we consider the case $(i,j)=(1,2)$. It follows from \eqref{e1f1b3} and $e_1(u)=-f_2(-u)$ (by Lemma \ref{efaiii}) that
\begin{align*}
(u-v)&[f_1(u),f_2(v)]\\
&=\frac{u-v}{u+v}\big(\tl d_1(u)d_2(u)-\tl d_1(-v)d_2(-v)\big)+e_{13}(-v)-f_2(v)f_1(u)+f_{31}(u).
\end{align*}
Using $\tl d_2(v)d_3(v)=\tl d_1(-v)d_2(-v)$ from Lemma \ref{efaiii} and rewriting it in terms of $b_1(u)$ and $b_2(v)$, we have
\beq\label{mm-00}
\begin{split}
\big(u-v+\tfrac12\big)[b_1(u),b_2(v)]
=\frac{u-v+\frac12}{u+v}\Big(\tl d_2(v-\tfrac14)d_3(v-\tfrac14)-\tl d_1(u+\tfrac14)d_2(u+\tfrac14)\Big)\\-e_{13}(-v+\tfrac14)-b_2(v)b_1(u)-f_{31}(u+\tfrac14).
\end{split}
\eeq
Expanding the RHS in the region $|u|\gg |v|$, then the term
\beq\label{996}
\frac{u-v+\frac12}{u+v} \tl d_1(u+\tfrac14)d_2(u+\tfrac14)=\Big(1-2\big(v-\tfrac14\big)\sum_{k\gge 0}(-1)^k\frac{v^k}{u^{k+1}}\Big)\tl d_1(u+\tfrac14)d_2(u+\tfrac14)
\eeq
does not contribute to the coefficients of $u^{-r-1}v^{-s-1}$ with $r\in\bZ$ and $s\in \bN$. For the relation \eqref{bbb3}, we shall only consider the coefficients of $u^{-r-1}v^{-s-1}$ with $r\in\bN$ and $s\in \bN$. Hence we can drop terms like $\tl d_1(u+\tfrac14)d_2(u+\tfrac14)$, $f_{31}(u+\tfrac14)$, $e_{13}(-v+\tfrac14)$ in \eqref{mm-00} and this gives rise to
\[
\big(u-v+\tfrac12\big)[b_1(u),b_2(v)]\simeq \frac{u-v+\frac12}{u+v} \tl d_2(v-\tfrac14)d_3(v-\tfrac14)-b_2(v)b_1(u).
\]
Thus we have
\[
(u-v)[b_1(u),b_2(v)]\simeq\Big(1-\frac{2( v-\tfrac14)}{u+v}\Big)\tl d_2(v-\tfrac14)d_3(v-\tfrac14)-\frac{1}{2}\big\{b_{1}(u),b_2(v)\big\},
\]
which implies further
\[
(u-v)[b_1(u),b_2(v)]\simeq -\frac{1}{2}\big\{b_1(u),b_2(v)\big\}-\frac{2v}{u+v} h_2(v).
\]
By taking the coefficients of $u^{-r-1}v^{-s-1}$ for $r,s\in\bN$, we obtain the relation \eqref{bbb3} for $(i,j)=(1,2)$.

We consider the relation \eqref{hbb3}. Note that $h_1(u)=h_2(-u)$ by \eqref{i=tauih} and the anti-automorphism $\eta$ sends $b_1(u)$ to $-b_2(-u)$; see \eqref{etahb}. It suffices to consider the case $(i,j)=(1,1)$. It follows from \eqref{d1f1b3} and \eqref{d2f1b3} that
\beq\label{1helper}
\begin{split}
[\tl d_1(u)&d_2(u),f_1(v)]\\&=\frac{2}{u-v}\tl d_1(u)(f_1(v)-f_1(u))d_2(u)+\frac{1}{u+v}\tl d_1(u)d_2(u)(f_1(v)+e_2(u)).    
\end{split}
\eeq
Thus, we have
\beq\label{2hper}
[\tl d_1(u)d_2(u),f_1(v)]\simeq\frac{2}{u-v}\tl d_1(u)f_1(v)d_2(u)+\frac{1}{u+v}\tl d_1(u)d_2(u)f_1(v),
\eeq
where we dropped terms like $\tl d_1(u)f_1(u)d_2(u)$. By \eqref{d1f1b3}, we find
\[
\frac{1}{u-v}\tl d_1(u)f_1(v)\simeq \frac{1}{u-v-1}f_1(v)\tl d_1(u).
\]
Plugging it into \eqref{2hper}, clearing the denominator, substituting $u\to u+\frac14$ and $v\to v+\frac14$ and multiplying both sides by $\sqrt{-1}\big(1+\frac{1}{4u}\big)$, we obtain
\begin{align*}
\big(u+v+\tfrac{1}{2}\big)&\big(u-v-1\big)\big[h_1(u),b_1(v)\big]\\
\simeq\,&2 \big(u+v+\tfrac{1}{2}\big)b_1(v)h_1(u)+ \big(u-v-1\big)h_1(u)b_1(v),
\end{align*}
which gives rise to
\begin{align*}
\big(u^2-v^2\big)\big[h_1(u),b_1(v)\big]
\simeq \frac{3u+v}{2} \big\{h_1(u),b_1(v)\big\}-\frac{1}{2}[h_1(u),b_1(v)].                               
\end{align*}
Taking the coefficients of $u^{-r-1}v^{-s-1}$  with $r\in\bZ,s\in\bN$, one finds the relation \eqref{hbb3} for $(i,j)=(1,1)$. Here we can make $r\in\Z$ as the terms we dropped give rise to $u^{k}v^{l}$ for $l\gge 0$ if we expand the denominator in the region $|u|\gg |v|$ as exhibited in \eqref{996}.
\end{proof}


We need one more lemma for the Serre relations among generators of degree zero, which will be sufficient to deduce more general Serre relations; see Proposition \ref{prop-serre}.
\begin{prop}\label{zeroserre}
We have
\[
\big[b_{1,0},[b_{1,0},b_{2,0}]\big]=4b_{1,0},\quad \big[b_{2,0},[b_{2,0},b_{1,0}]\big]=4b_{2,0}.
\]
\end{prop}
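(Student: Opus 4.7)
The plan is to first reduce the two identities to one via the anti-automorphism $\eta$ from Lemma~\ref{lem:eta}, then compute the remaining double bracket by direct application of the quaternary relation \eqref{bcom} in $\scrX_3$. Since $\eta(b_i(u)) = -b_{\tau i}(-u)$ by \eqref{etahb}, matching coefficients of $u^{-r-1}$ gives $\eta(b_{i,r}) = (-1)^r b_{\tau i,r}$; in particular $\eta(b_{1,0}) = b_{2,0}$ and $\eta(b_{2,0}) = b_{1,0}$. Because $\eta$ is an anti-automorphism, one checks that $\eta([x,[x,y]]) = [\eta(x),[\eta(x),\eta(y)]]$, so applying $\eta$ to the first identity yields the second. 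It therefore suffices to prove $[b_{1,0},[b_{1,0},b_{2,0}]] = 4 b_{1,0}$.

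From the shift conventions \eqref{bodd} and the Gauss decomposition (only the leading coefficient matters), $b_{1,0} = \sqrt{-1}\,f_1^{(1)} = \sqrt{-1}\,s_{21}^{(1)}$ and $b_{2,0} = \sqrt{-1}\,f_2^{(1)} = \sqrt{-1}\,s_{32}^{(1)}$. Since $(\sqrt{-1})^3 = -\sqrt{-1}$, the desired identity is equivalent to
\[
[s_{21}^{(1)},[s_{21}^{(1)},s_{32}^{(1)}]] = -4\, s_{21}^{(1)}.
\]

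The main step is to compute the four brackets $[s_{21}^{(1)}, s_{ij}^{(1)}]$ for $(i,j) \in \{(3,2),(3,1),(2,2),(3,3)\}$. For each pair I would apply the quaternary relation \eqref{bcom} with the corresponding choice of indices $(2,1,i,j)$ and extract the coefficient of $u^{1}v^{-1}$. Since $s_{ij}(u) - \delta_{ij}$ has only negative powers of the spectral parameter, only a handful of terms on each right-hand side produce a nonzero contribution, yielding (as I have verified)
\[
[s_{21}^{(1)},s_{32}^{(1)}] = -s_{31}^{(1)} + s_{22}^{(1)} - s_{33}^{(1)}, \quad [s_{21}^{(1)},s_{31}^{(1)}] = s_{21}^{(1)},
\]
\[
[s_{21}^{(1)},s_{22}^{(1)}] = -s_{21}^{(1)} - s_{23}^{(1)}, \quad [s_{21}^{(1)},s_{33}^{(1)}] = s_{23}^{(1)}.
\]
Next, Lemma~\ref{efaiii} gives $e_2(u) = -f_1(-u)$, whence $e_{23}^{(1)} = f_{21}^{(1)}$; since $s_{23}^{(1)} = e_{23}^{(1)}$ at leading order in the Gauss decomposition, we have $s_{23}^{(1)} = s_{21}^{(1)}$ in $\scrX_3$. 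Substituting,
\[
[s_{21}^{(1)},[s_{21}^{(1)},s_{32}^{(1)}]] = -[s_{21}^{(1)},s_{31}^{(1)}] + [s_{21}^{(1)},s_{22}^{(1)}] - [s_{21}^{(1)},s_{33}^{(1)}] = -s_{21}^{(1)} - 2\,s_{21}^{(1)} - s_{21}^{(1)} = -4\,s_{21}^{(1)},
\]
as required.

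The main obstacle is not conceptual but bookkeeping: each application of \eqref{bcom} carries four sum-type terms guarded by $\delta$-factors, and the outer $(u^2 - v^2)$ prefactor together with the $\pm(u\pm v)$ prefactors of the sums can easily produce sign slips. However, extracting the single coefficient of $u^{1}v^{-1}$ rather than a full series keeps every application short: only terms that combine a nonzero constant of some $s_{ii}(u)$ with a single first-order generator $s_{jk}^{(1)}$ can contribute, and these are few in number.
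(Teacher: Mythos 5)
Your proof is correct, and the inner computation takes a genuinely different route from the paper's. The paper first expresses $[b_{1,0},b_{2,0}]=-h_{1,0}+\tfrac14+f_{31}^{(1)}$ via the already-derived Gaussian relation \eqref{e1f1b3}, then finishes with one direct application of \eqref{bcom} (to get $[b_{1,0},f_{31}^{(1)}]=b_{1,0}$) together with $[h_{1,0},b_{1,0}]=3b_{1,0}$ from \eqref{hbb3}. You instead work entirely at the level of the first-order R-matrix coefficients: extracting the coefficient of $u^{1}v^{-1}$ from \eqref{bcom} gives the exact closed formula $[s_{ij}^{(1)},s_{kl}^{(1)}]=\delta_{kj}s_{il}^{(1)}-\delta_{il}s_{kj}^{(1)}+\delta_{kj'}s_{i'l}^{(1)}-\delta_{il'}s_{kj'}^{(1)}$ (i.e., the $s_{ij}^{(1)}$ realize $\gl_3^\vartheta$ on the nose, with no lower-order corrections), and your four special cases, the identification $b_{1,0}=\sqrt{-1}\,s_{21}^{(1)}$, $b_{2,0}=\sqrt{-1}\,s_{32}^{(1)}$, $s_{23}^{(1)}=s_{21}^{(1)}$, the sign bookkeeping with $(\sqrt{-1})^3$, and the $\eta$-reduction of the second identity to the first all check out. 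Your argument is more self-contained (it does not rely on the derived relations \eqref{e1f1b3} and \eqref{hbb3}), at the cost of four separate applications of the quaternary relation; the paper's version reuses machinery already in place and is shorter on the page. Either way the result is established.
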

\begin{proof}
We only prove the first relation. The other one is similar or can be obtained by taking the anti-automorphism $\eta$ from Lemma \ref{lem:eta}.

It follows from  \eqref{bodd}, \eqref{e1f1b3}, and Lemma \ref{efaiii} that
\[
[b_{1,0},b_{2,0}]=-[f_1^{(1)},f_2^{(1)}]=-[f_1^{(1)},e_1^{(1)}]=-h_{1,0}+\frac{1}4+f_{31}^{(1)}.
\]
On the other hand, setting $i=2,j=k=3,l=1$ in \eqref{bcom}, we find
\[
[s_{23}(u),s_{31}(v)]=\frac{1}{u-v}\big(s_{33}(u)s_{21}(v)-s_{33}(v)s_{21}(u)\big),
\]
which implies further
\begin{align*}
[b_{1,0},f_{31}^{(1)}]&=\sqrt{-1}\,[f_{1}^{(1)},f_{31}^{(1)}]=\sqrt{-1}\,[e_{2}^{(1)},f_{31}^{(1)}]\\&=\sqrt{-1}\,[s_{23}^{(1)},s_{31}^{(1)}]=\sqrt{-1}\,s_{21}^{(1)} =\sqrt{-1}\,f_1^{(1)}=b_{1,0}.
\end{align*}
Since by \eqref{hbb3} that $[h_{1,0},b_{1,0}]=3b_{1,0}$, we deduce from above that $\big[b_{1,0},[b_{1,0},b_{2,0}]\big]=4b_{1,0}$.
\end{proof}

\begin{cor}\label{X=3cor}
We have for $i\neq j, i,j\in \{1,2\}$,
\begin{align*}
\mathrm{Sym}_{k_1,k_2}\big[b_{i,k_1},[b_{i,k_2},b_{j,r}] \big] =\frac{4}{3}\,\mathrm{Sym}_{k_1,k_2}(-1)^{k_1}\sum_{p=0}^{k_1+r}3^{-p}[b_{i,k_2+p},h_{\tau i,k_1+r-p}].
\end{align*}
\end{cor}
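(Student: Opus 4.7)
The plan is to reduce directly to Proposition~\ref{prop-serre}, which already establishes that the full Serre relation \eqref{qsconj10} follows inductively from the defining relations \eqref{qsconj0}--\eqref{qsconj3} together with the degree-zero identity $\big[b_{i,0},[b_{i,0},b_{\tau i,0}]\big]=4b_{i,0}$ whenever $c_{i,\tau i}=-1$. Thus the task reduces to verifying that these hypotheses are met for the Gaussian generators $h_{i,r},b_{i,r}$ of $\scrX_3$ with $i\in\{1,2\}$.

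First, I would observe that in type AIII$_2$ the underlying finite-type Cartan matrix is that of $\fksl_3$ and the involution $\tau$ interchanges the two simple nodes, so $c_{1,\tau 1}=c_{1,2}=-1$ and symmetrically $c_{2,\tau 2}=-1$, placing us exactly in the regime where Proposition~\ref{prop-serre} applies. Second, the Drinfeld-type relations \eqref{qsconj0}--\eqref{qsconj3} are precisely the content of \eqref{hhb3}--\eqref{hbb3} in Proposition~\ref{X=3prop}; note that the relation \eqref{qsconj4} is vacuous here since in rank two there is no pair $(i,j)$ with $c_{ij}=0$ and $j\ne\tau i$. Third, Proposition~\ref{zeroserre} supplies the degree-zero identity for both $i=1$ and $i=2$, which is precisely the base case required to start the induction in Proposition~\ref{prop-serre}.

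With all three inputs in hand, Proposition~\ref{prop-serre} applies verbatim and yields \eqref{qsconj10} for arbitrary $k_1,k_2,r\in\bN$, which is exactly the statement of Corollary~\ref{X=3cor}. The substantive work has already been carried out: the quaternary relation \eqref{bcom} was used in Proposition~\ref{zeroserre} to evaluate $[b_{1,0},b_{2,0}]$ and $[b_{1,0},f_{31}^{(1)}]$ in terms of $h_{1,0}$ and $b_{1,0}$, while the Cartan-current relations needed for the inductive step were verified in Proposition~\ref{X=3prop}. Consequently the corollary is a formal consequence of the preceding results, and no genuine computational obstacle remains.
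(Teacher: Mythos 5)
Your proposal is correct and follows essentially the same route as the paper: the paper's proof of Corollary~\ref{X=3cor} is precisely the one-line deduction from Propositions~\ref{X=3prop} and \ref{zeroserre} via Proposition~\ref{prop-serre}. Your additional checks (that $c_{i,\tau i}=-1$ holds in type AIII$_2$ and that the required inputs are exactly \eqref{qsconj0}--\eqref{qsconj3} plus the degree-zero Serre identity) are accurate and merely make explicit what the paper leaves implicit.
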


\begin{proof}
This identity follows from Propositions~\ref{X=3prop}-\ref{zeroserre} using Proposition~\ref{prop-serre}.
\end{proof}

\subsection{Relations in $\scrX_5$} The calculations for \eqref{qsconj0}-\eqref{qsconj9} in $\scrX_5$ are very similar to the previous subsections, while the relation \eqref{qsconj10} in $\scrX_5$ follows by applying the shift homomorphism $\psi_1$ (see Proposition~\ref{prop:red}, Lemma \ref{shiftlem}, and Corollary~\ref{X=3cor}); also note that \eqref{qsconj8} does not show up in $\scrX_5$. Here we only provide details for the verification of the relation \eqref{qsconj9}.


\begin{lem}\label{mmm}
We have the following relations in $\scrX_5$,
\begin{align*}
\big[e_1(u),[e_1(v),e_2(w)]\big]+\big[e_1(v),[e_1(u),e_2(w)]\big]=0,\\
\big[e_2(u),[e_2(v),e_1(w)]\big]+\big[e_2(v),[e_2(u),e_1(w)]\big]=0.
\end{align*}
\end{lem}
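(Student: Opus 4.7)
The plan is to adapt the argument of Lemma~\ref{sec:A-serre}, which established the analogous Serre relation in $\scrX_4$, to the present setting in $\scrX_5$. The crucial observation is that in $\scrX_5$ one has $1'=5$, $2'=4$, $3'=3$, so the twist contributions appearing in the quaternary relation~\eqref{bcom} — namely the terms controlled by $\delta_{ij'}$, $\delta_{kj'}$, $\delta_{il'}$ — can only arise when one of the index pairs $(i,j)$, $(k,j)$, $(i,l)$ equals $(3,3)$. For each of the specific commutators $[s_{ij}(u), s_{kl}(v)]$ with $i,j,k,l \in \{1,2,3\}$ that appear in deriving the auxiliary relations below, this pair $(3,3)$ never occurs. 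Hence each such commutator takes the same form as in the ordinary Yangian $\rY(\gl_3)$.

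First I would establish in $\scrX_5$ the analogs of \eqref{e1e1b4}, \eqref{be1e2}, and \eqref{e1e13}, together with the mirrored versions needed for the second Serre relation, namely $(u-v)[e_2(u), e_2(v)] = (e_2(u)-e_2(v))^2$, a formula for $[e_2(u), e_{13}(v)]$, and a formula computing $[e_2(u), e_{13}(v) - e_1(v)e_2(v)]$ in terms of $[e_2(u), e_1(v)]$ and $e_2(v)$. Each proof proceeds line-by-line as in the corresponding result for $\scrX_4$ (Lemmas~\ref{be1e2lem}, \ref{lemhelp1}, and~\ref{be13e2lem} with the twist-contribution $\tl d_2(v)(f_3(v)+e_1(u))d_3(v)/(u+v)$ replaced by zero), where every use of \eqref{bcom} is checked against the index constraint above and confirmed to produce no twist correction.

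Once these commutator relations are available, both Serre identities follow by invoking the purely algebraic manipulation used in the proof of \cite[Lem.~5.7]{BK05}, which depends only on the relations listed above. The first identity recovers the proof structure of Lemma~\ref{sec:A-serre}. The second identity uses the orientation-reversed variant of the same argument: commute $e_2(u)$ into $[e_2(v), e_1(w)]$, expand using the $e_2$-analog of \eqref{e1e13}, and collect terms after symmetrization in $u,v$.

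The bulk of the work is routine bookkeeping. The main technical obstacle is verifying the $e_2$-analog of \eqref{e1e13} without any twist correction, since the required commutators $[s_{23}(u), s_{13}(v)]$ and $[s_{22}(u), s_{13}(v)]$ come close to the forbidden pair $(i,j)=(3,3)$ without triggering it. This is precisely why the argument goes through in odd rank $N=5$ despite the coincidence $3=3'$, and accordingly no new ideas beyond those of Section~\ref{secN=4} are required.
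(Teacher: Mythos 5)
Your proposal is correct and follows essentially the same route as the paper: one checks that the commutators among the relevant series $s_{ab}(u)$ (all with $a,b\in\{1,2,3\}$, never hitting the index coincidences $i=j'$, $k=j'$, $i=l'$, which for $N=5$ would require the pair $(3,3)$) carry no twist corrections, so the auxiliary identities of Section~\ref{secN=4} hold with the $\beta$-type terms replaced by zero, and the Serre relations then follow from the argument of \cite[Lem.~5.7]{BK05}. The paper's remark following the lemma makes your index-bookkeeping observation explicit, so no genuinely new content is added or missing.
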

\begin{proof}
Again, one shows the same relations as in Lemmas \ref{be1e2lem} and \ref{lemhelp1}. Then the verification of the first Serre relation follows from exactly the same arguments as in Lemmas \ref{be1e2lem}--\ref{sec:A-serre}.

For the second Serre relation, we have the following identities; cf. Lemma \ref{help1},
\begin{align}
&[e_{13}(u),e_2^{(1)}]=0,\label{b4pf11-}\\
&[e_1^{(1)},\tl d_2(v)]=-e_1(v)\tl d_2(v), \label{b4pf10-}\\
&[e_1(u),e_2^{(1)}]=e_{13}(u),\quad [e_1^{(1)},e_2(v)]=e_{13}(v)-e_1(v)e_2(v),\label{b4pf8-}\\
&[e_2^{(1)},e_2(v)]=e_2(v)e_2(v).\label{b4pf9-}
\end{align}
Here \eqref{b4pf11-} follows from
\[
(u-v)[s_{13}(u),s_{23}(v)]=s_{23}(u)s_{13}(v)-s_{23}(v)s_{13}(u).
\]
The relations \eqref{b4pf10-} and \eqref{b4pf8-} are proved the same as the corresponding relations in \eqref{b4pf10} and \eqref{b4pf8}. The relation \eqref{b4pf9-} follows from the relation $(u-v)[e_2(u),e_2(v)]=(e_2(u)-e_2(v))^2$ (which follows from \eqref{e1e1b3} and Lemma \ref{shiftlem}).

Repeating the argument as in Lemma \ref{be13e2lem}, one obtains $[e_{13}(u),e_2(v)]=e_2(v)[e_1(u),e_2(v)]$ which is exactly the relation for non-twisted Yangian of type A; cf. \cite[Lem. 5.5 (iii)]{BK05}. The rest is done similarly as in \cite[Lem. 5.7 (i)]{BK05}.
\end{proof}
\begin{rem}
The lemma above can be proved similarly to Proposition \ref{prop:quasiqua} as follows. The series $e_1(u)$ is expressed in terms of $s_{11}(u)$ and $s_{12}(u)$ while $e_2(u)$ is expressed in terms of $s_{11}(u)$, $s_{13}(u)$, $s_{21}(u)$ and $s_{23}(u)$. Note that the commutator relations between these series $s_{ab}(u)$ are the same as in $\rY(\gl_N)$ (with $s_{ab}(u)$ replaced by $t_{ab}(u)$); see \eqref{Trel} and \eqref{bcom}. Thus these Serre relations hold as the same Serre relations hold in $\rY(\gl_m)$.
\end{rem}

\begin{cor}
We have the following Serre relations in $\scrX_5$,
\begin{align*}
\big[b_i(u),[b_i(v),b_j(w)]\big]+\big[b_i(v),[b_i(u),b_j(w)]\big]=0,
\end{align*}
for the pairs $(i,j)=(1,2),(2,1), (3,4),(4,3)$.
\end{cor}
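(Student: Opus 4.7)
The plan is to deduce the four stated Serre relations from the two Serre relations for $e_1(u),e_2(u)$ established in Lemma~\ref{mmm}, together with two symmetries that are already available: the anti-automorphism $\eta$ of Lemma~\ref{lem:eta} (which satisfies $\eta(e_i(u))=f_i(u)$ and reverses products), and the identity $e_i(u)=-f_{\tau i}(-u)$ of Lemma~\ref{efaiii}. In $\scrX_5$ we have $N=5$, $n=2$, and $\tau 1=4,\ \tau 2=3$.

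First I would handle the pairs $(1,2)$ and $(2,1)$. Applying $\eta$ to
\[
\big[e_1(u),[e_1(v),e_2(w)]\big]+\big[e_1(v),[e_1(u),e_2(w)]\big]=0
\]
and using that $\eta$ is an anti-automorphism with $\eta(e_i(u))=f_i(u)$, one obtains
\[
\big[f_1(u),[f_1(v),f_2(w)]\big]+\big[f_1(v),[f_1(u),f_2(w)]\big]=0,
\]
and analogously for the second relation of Lemma~\ref{mmm}. Substituting the spectral shifts from \eqref{bodd}, namely $f_1 \rightsquigarrow b_1$ at $u\mapsto u+\tfrac{3}{4}$ and $f_2 \rightsquigarrow b_2$ at $u\mapsto u+\tfrac{1}{4}$ (up to the overall factor $(\sqrt{-1})^3$, which is harmless since the right-hand side is zero), yields the $(1,2)$ and $(2,1)$ Serre relations.

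Next I would handle the pairs $(3,4)$ and $(4,3)$. Substituting $(u,v,w)\mapsto(-u,-v,-w)$ in the first Serre relation of Lemma~\ref{mmm} and invoking Lemma~\ref{efaiii} in the form $e_1(-u)=-f_4(u)$ and $e_2(-w)=-f_3(w)$, one gets
\[
\big[f_4(u),[f_4(v),f_3(w)]\big]+\big[f_4(v),[f_4(u),f_3(w)]\big]=0,
\]
and similarly from the second relation of Lemma~\ref{mmm} one obtains the analogous identity with the roles of $f_4$ and $f_3$ swapped. Reinterpreting via \eqref{bodd} (i.e.\ $f_3 \rightsquigarrow b_3$ at $u\mapsto u-\tfrac{1}{4}$, $f_4\rightsquigarrow b_4$ at $u\mapsto u-\tfrac{3}{4}$) gives the $(4,3)$ and $(3,4)$ relations respectively.

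There is no real obstacle: the Serre relations are polynomial identities in the coefficients of the current generators, invariant under a uniform shift of each spectral parameter, so the shifts built into the definitions \eqref{beven}--\eqref{hodd} do not interfere. The only conceptual point is to notice that the two involutions $\eta$ and ``$u\mapsto -u$ with $e_i\leftrightarrow -f_{\tau i}$'' generate a Klein four group acting on the generators which carries the two Serre relations of Lemma~\ref{mmm} to the required four Serre relations for the $b_i(u)$'s.
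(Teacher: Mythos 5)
Your proposal is correct and follows essentially the same route as the paper, whose proof of this corollary is exactly ``apply the anti-automorphism $\eta$ and Lemma~\ref{efaiii} to Lemma~\ref{mmm}''; you have simply spelled out which symmetry handles which pair of indices and checked the harmless sign and shift factors. No gaps.
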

\begin{proof}
Follows from Lemma \ref{mmm} by applying the anti-automorphism $\eta$ and Lemma \ref{efaiii}.
\end{proof}


\end{document}